  \DeclareFontShape{T1}{lmr}{m}{scit}{<->ssub*lmr/m/scsl}{}%
\theoremstyle{plain}
\newtheorem{theorem}{Theorem}[section]
\newtheorem*{theorem*}{Theorem}
\newtheorem{proposition}[theorem]{Proposition}
\newtheorem{corollary}[theorem]{Corollary}
\newtheorem{example}[theorem]{Example}
\newtheorem{lemma}[theorem]{Lemma}
\newtheorem{problem}[theorem]{Problem}
\theoremstyle{definition}
\newtheorem{definition}[theorem]{Definition}
\theoremstyle{remark}
\newtheorem{claim}{Claim}[theorem]
\newtheorem{ejemplo}{{\sc Example}}
\newtheorem{notas}[theorem]{{\sc Remark}}
\newcommand{\nrm}[1]{\|#1\|}
\DeclareMathOperator{\osc}{\mathrm{Osc}}
\DeclareMathOperator{\eqp}{\mathrm{Equi}}
\newcommand{\prop}{\begin{proposition}}
\newcommand{\fprop}{\end{proposition}}
\newcommand{\cor}{\begin{corollary}}
\newcommand{\fcor}{\end{corollary}}
\newcommand{\defi}{\begin{definition}\rm}
\newcommand{\fdefi}{\end{definition}}
\newcommand{\eje}{\begin{ejemplo}}
\newcommand{\feje}{\end{ejemplo}}
\newcommand{\ejes}{\begin{ejemplos}}
\newcommand{\fejes}{\end{ejemplos}}
\newcommand{\lema}{\begin{lemma}}
\newcommand{\flema}{\end{lemma}}
\newcommand{\teor}{\begin{theorem}}
\newcommand{\fteor}{\end{theorem}}
\newcommand{\nota}{\begin{notas}\rm}
\newcommand{\fnota}{ \end{notas}}
\newcommand{\clam}{\begin{claim}}
\newcommand{\fclam}{\end{claim}}
\newcommand{\clams}{\begin{claim*}}
\newcommand{\fclams}{\end{claim*}}
\newcommand{\lclam}{\begin{lclaim}}
\newcommand{\flclam}{\end{lclaim}}
\newcommand{\prucl}{\prue[Proof of Claim:]}
\newcommand{\fprucl}{\fprue}
\newcommand{\ben}{\begin{enumerate}}
\newcommand{\een}{\end{enumerate}}
\newcommand{\bit}{\begin{itemize}}
\newcommand{\eit}{\end{itemize}}
\newcommand{\bmc}[1]{\overline{#1}^\mr{BM} }
\newcommand{\mc}[1]{\mathcal{#1}}
\newcommand{\mbb}[1]{\mathbb{#1}}
\newcommand{\mr}[1]{\mathrm{#1}}
\newcommand{\mk}[1]{\mathfrak{#1}}
\newcommand{\mbf}[1]{\mathbf{#1}}
\DeclareMathOperator{\epi}{\mr{Epi}}
\DeclareMathOperator{\aut}{\mr{Aut}}
\newcommand{\acts}{\curvearrowright}
\newcommand{\casos}{\begin{itemize}}
\newcommand{\fcasos}{\end{itemize}\setcounter{cs}{1}}
\newcommand{\ro}{\varrho}
\DeclareMathOperator{\quo}{/\hspace{-0.1cm}/}
\DeclareMathOperator{\iso}{\mathrm{Iso}}
\DeclareMathOperator{\Iso}{\mathrm{Iso}}
\DeclareMathOperator{\flim}{\mathrm{F}\!\lim}
\DeclareMathOperator{\age}{\mathrm{Age}}
\newcommand{\conj}[2]{ \{ {#1}\,:\,{#2} \} }
\newcommand{\om}{\omega}
\newcommand{\Om}{\Omega}
\newcommand{\hp}{$\mathrm{(HP)}$\xspace}
\newcommand{\jep}{$\mathrm{(JEP)}$\xspace}
\newcommand{\nap}{$\mathrm{(NAP)}$\xspace}
\newcommand{\ap}{$\mathrm{(AP)}$\xspace}
\newcommand{\napp}{weak amalgamation\xspace}
\newcommand{\arp}{$\mathrm{(ARP) }$\xspace}
\newcommand{\arpp}{$\mathrm{(ARP}^+\mathrm{)}$\xspace}
\newcommand{\sarpp}{$\mathrm{(SARP}^+\mathrm{)}$\xspace}
\newcommand{\auhp}{weak Fraïssé\xspace}
\newcommand{\sauhp}{Fraïssé\xspace}
\newcommand{\auh}{$\mathrm{(AuH)}$\xspace}
\newcommand{\latuh}{$\mathrm{(}\diamond\mr{-uH)}$\xspace}
\newcommand{\latauh}{$\mathrm{(}\diamond\mr{-AuH)}$\xspace}
\newcommand{\uh}{$\mathrm{(uH)}$\xspace}
\newcommand{\fh}[1]{$\mathrm{(}#1\mathrm{-H)}$\xspace}
\newcommand{\afh}[1]{$\mathrm{(A}#1\mathrm{H)}$\xspace}
\newcommand{\buit}{\emptyset}
\newcommand{\ga}{\gamma}
\DeclareMathOperator{\id}{\text{Id}}
\newcommand{\al}{\alpha}
\newcommand{\de}{\delta}
\newcommand{\De}{\Delta}
\newcommand{\la}{\lambda}
\newcommand{\La}{\Lambda}
\newcommand{\sig}{\sigma}
\newcommand{\Sig}{\Sigma}
\newcommand{\vphi}{\varphi}
\DeclareMathOperator{\emb}{\mathrm{emb}}
\DeclareMathOperator{\Emb}{\mathrm{Emb}}
\newcommand{\vep}{\varepsilon}
\newcommand{\eqs}{{\mathfrak X}}
\newcommand{\R}{{\mathbb R}}
\newcommand{\F}{{\mathbb F}}
\newcommand{\N}{{\mathbb N}}
\newcommand{\Q}{{\mathbb Q}}
\newcommand{\C}{{\mathbb C}}
\newcommand{\rest}{\upharpoonright}
\newcommand{\im}{\mathrm{Im}}
\newcommand{\diam}{\text{diam}}
\newcommand{\supp}{\mathrm{supp\, }}
\newcommand{\con}{\subseteq}
\newcommand{\prue}{\begin{proof}}
\newcommand{\fprue}{\end{proof}}
\title{Amalgamation and Ramsey properties of $L_p$ spaces}
\author[V. Ferenczi]{V. Ferenczi}
\address{Departamento de Matem\'atica, Instituto de Matem\'atica e Estat\'istica,
Universidade de S\~ao Paulo, 
rua do Mat\~ao, 1010, 
05508-090 S\~ao Paulo, SP, Brazil}
\email{ferenczi@ime.usp.br}
\author[J. L{{o}}pez-Abad]{J. L{{o}}pez-Abad}
\address{Departamento de Matem\'{a}ticas Fundamentales,
Facultad de Ciencias, UNED, 28040 Madrid, Spain}
\email{abad@mat.uned.es}
\author[B. Mbombo]{B. Mbombo}
\address{Departamento de Matem\'atica, Instituto de Matem\'atica e Estat\'istica,
Universidade de S\~ao Paulo, 
rua do Mat\~ao, 1010, 
05508-090 S\~ao Paulo, SP, Brazil}
\email{bricerodriguembombo@cegepjonquiere.ca}
\author{S. Todorcevic}
\address{Institut de Math\'{e}matiques de Jussieu, UMR 7586, 2 place Jussieu - Case 247, 75222 Paris Cedex 05, France, and Department of Mathematics, University of Toronto, Toronto, Canada, M5S 2E4}
\email{stevo@math.toronto.edu}
\subjclass[2010]{Primary 46A22, 46B04, 05C55; Secondary, 37B05, 46B42, 54H20} 
\thanks{V. Ferenczi, J. Lopez-Abad and B. Mbombo  were supported by Fapesp, projects 2012/20084-1, 2013/11390-4, 2013/24827-1 and 2016/25574-8. V. Ferenczi was supported by CNPq, projects 303034/2015-7 and 303721/2019-2. V. Ferenczi and S. Todorcevic were supported by USP Cofecub project number 2013-7/31466UC. J. Lopez-Abad   was partially supported 
    by the Ministerio de Econom\'\i a y Competitividad under grant MTM2016-76808-P.  S.Todorcevic was also supported by grants from NSERC(455916)   and CNRS(UMR7586).    This work was initiated during a visit of
J.L.-A.\ to the Universidade de Sao P\~{a}ulo in 2014, and continued during
visits of  J.L.-A. to the Fields Institute in the Fall 2014,  
a visit of all the authors at the Banff International Research Station in
occasion of the Workshop on Homogeneous Structures in the Fall 2015, a visit of J. L-A to the University of Toronto,  and a visit of V. F. to UNED in the winter of 2018. The
hospitality of all these institutions is gratefully acknowledged. }
\keywords{ Ramsey property, amalgamation, Fraïssé
theory, isometries on $L_p$ spaces, ultrahomogeneity, extreme amenability}
\begin{document}
\begin{abstract}
We study  the dynamics of the group of isometries of $L_p$-spaces. In particular, we study the canonical actions of these groups on the space of $\de$-isometric embeddings of finite dimensional subspaces of $L_p(0,1)$ into itself, and we show that for every real number $1\le p<\infty$ with  $p \neq 4,6,8,\ldots$ they are $\vep$-transitive provided that $\de$ is small enough.  We achieve this by extending the classical equimeasurability principle  of  Plotkin and  Rudin. We define the central notion of a Fra\"iss\'e Banach space which underlies these results and of which the known separable examples are the spaces $L_p(0,1)$, $p \neq 4,6,8,\ldots$ and the Gurarij space.   We also give a proof of the  Ramsey property of the classes $\{\ell_p^n\}_n$, $p\neq 2,\infty$, viewing it as a multidimensional Borsuk-Ulam statement. We relate this to an arithmetic version of the Dual Ramsey Theorem of Graham and  Rothschild as well as to the notion of a spreading vector of   Matou\v{s}ek and  R\"{o}dl. Finally, we give a version of the Kechris-Pestov-Todorcevic correspondence that links the dynamics of the group of isometries of an approximately ultrahomogeneous space $X$ with a Ramsey property of the collection of finite dimensional subspaces of $X$.   
\end{abstract} 
\maketitle

\section{Introduction}
It is a classical result of  A.  Pełczyński and S. Rolewicz   \cite{PelRol}  that the spaces $L_p(0,1)$ are  almost transitive, in the sense that the   group of linear isometric surjections $\iso(L_p(0,1))$  acts almost transitively on the corresponding unit sphere of $L_p(0,1)$.  This was later extended by  W. Lusky  \cite{luss} who proved that in fact, the group $\iso(L_p(0,1))$ for a real  number $1\le p<\infty$ also acts almost transitively on each metric space $\Emb(X, L_p(0,1))$ of linear isometric embeddings from a  finite dimensional subspace $X$ of $L_p(0,1)$ into $L_p(0,1)$,  but only provided that $p=2$ or $p\notin 2\N$.    Other Banach spaces having this property are any Hilbert space or the {\em Gurarij space}, and  recently quasi-Banach spaces with the corresponding property have been found in \cite{CaGaKu}.  
This almost ``ultra'' transitive' property is the metric analogue of the so-called  ultrahomogeneity  property of algebraic structures, the core of Fraïssé theory in model theory, and the proper context for the   combinatorial characterization of the {\em extreme  amenability} of the corresponding automorphism group, known as the {\em Kechris-Pestov-Todorcevic (KPT) correspondence} \cite{KPT}. Recall that a topological group is extremely amenable when each of its continuous actions on a compact space has a fixed point, and that the KPT correspondence states that for ultrahomogeneous structures $\mc M$  the extreme amenability of its automorphism group $\aut(\mc M)$   is exactly the {\em Ramsey property}  of its class of finitely generated substructures, called the {\em age} and denoted by $\age(\mc M)$.  By this means, many new examples of extremely amenable groups have been given (see \cite{KPT}).  The theory of abstract ultrahomogeneous    metric structures has been studied in \cite{BYBeHeUs}, while 
the  KPT correspondence has been recently extended to this context by J. Melleray and T. Tsankov \cite{MeTsa}.  The KPT correspondence  for metric structures was for the first time used in \cite{BaLALuMbo1}, \cite{BaLALuMbo2},  in showing, for example, that the isometry group of the Gurarij space is extremely amenable by supplying  an  appropriate Ramsey  type result which relies on the Graham-Rothschild Theorem. We note that the Gurarij space shares this this property with the infinite dimensional Hilbert spaces  and the spaces $L_p(0,1)$, proved by M. Gromov and V. D. Milman \cite{GrMi}, and  by T. Giordano and V. Pestov \cite{GiPe} respectively,  relying on the method of concentration of measure.   

It follows from the Banach-Lamperti description of isometries of $L_p$ spaces that the isometry groups of    the spaces   $L_p(0,1)$ and $L_q(0,1)$, when $1\le p,q \neq 2 <\infty$,  are  topologically isomorphic.  However, there are canonical actions of the same nature that have very different properties, depending on $p$:  while for $p\notin 2\N$ all the canonical actions by composition $\iso(L_p(0,1))\acts \Emb(X,L_p(0,1))$ are almost transitive, it follows from a work of B. Randrianantoanina \cite{beata} based on an early result of H. P. Rosenthal \cite{Ro}, that   there are finite dimensional subspaces $X$ of $L_{2n}(0,1)$, $n\in \N$, $n>1$,  for which that action is far of being almost transitive, because   $X$ has well complemented and badly complemented copies on $L_{2n} $ (see Proposition \ref{li3rjiwjreer3434t5}).   One of the main goals of this paper is to study the canonical actions of isometry groups of the Lebesgue spaces, not only on the spaces of  isometric embeddings, but also on $\Emb_\de(X,L_p(0,1))$ the class of $\de$-isometric embeddings from $X$ into $L_p(0,1)$.  
While $\de$-isometric embeddings were already considered by M. Lupini \cite{Lup}, in a general theory of stability including, for example, operator spaces and systems, in this paper one of our objectives is to obtain finer results based on weaker (and/or more precise) properties of homogeneity for structures, in such a way that $L_p$ spaces are included in the classes we consider. With these examples in mind, we concentrate on the case of the Banach spaces, and develop a theory which may be specific to the Banach space setting. In particular, although our results should be extendable to the quasi-Banach setting and the case of $L_p$ spaces for $0<p<1$, we shall not consider that situation.

We say that a Banach space $E$ is  {\em Fraïssé} when for every dimension $k$ and $\vep>0$ there is $\de>0$ such that the canonical action by composition $\iso(E)\acts \Emb_\de(X,E)$ is $\vep$-transitive for every $X\in \age_k(E)$, the collection of $k$-dimensional subspaces of $E$. 
We will see that $L_p(0,1)$ is Fraïssé, provided that $p$ is not even.   Other Fraïssé spaces are any Hilbert space and the Gurarij space. 
The interest of these $\vep-\de$ continuity properties can be appreciated by the following:  First of all, they imply  that  for Fraïssé spaces $E$ the Banach-Mazur and a restricted version of the {\em Kadets} pseudometrics are uniformly equivalent  on $\age(E)$; secondly, there is a characterization of the Fraïssé property for Banach spaces by passing to the ultrapower and involving homogeneity on isometric (instead of $\de$-isometric) embeddings in the ultrapower; thirdly, Fraïssé spaces are isometrically determined by the collection of their finite dimensional subspaces and there is a Fraïssé correspondence; finally, the spaces who are finitely representable  on a Fraïssé space $E$ can be isometrically embedded into $E$, and, consequently, the Hilbert space $\ell_2$ is the minimal Fraïssé space and $\mbb G$ is the unique Fraïssé space with trivial cotype.  

The Gurarij space $\mbb G$ is, by definition, an abstract inductive limit of $\ell_\infty^n$'s.  The proof of the Fraïssé property of $\mbb G$ follows from a combination of the existence of  general {\em pushouts} of finite dimensional spaces, and the fact that $\de$-isometric embeddings are in some precise sense $2\de$-close to isometric embeddings.    For  the class $\age(L_p(0,1))$ there is not known full pushout; instead,  for $p\notin 2\N$ there is a restricted version stating that for every $k\in \N$ and $\vep>0$ there is $\de>0$ such that  if  $X,Y,Z\in \age(L_p(0,1))$  with $\dim X=k$, and $\ga\in \Emb_\de(X,Y)$ and $\eta\in \Emb_\de(X,Z)$, then there are $V\in \age(L_p(0,1))$ and isometric embeddings $i: Y\to V$ and $j:Z\to V$ such that 
$\nrm{i\circ \ga -j \circ \eta}\le \vep$.   This is exactly, by means of the  Fraïssé correspondence (see Corollary \ref{ijoiriower3278766755}), the Fraïssé property of $L_p(0,1)$.  We prove this by establishing the {\em approximate equimeasurability principle}, the continuous statement extending the classical equimeasurability principle of  Plotkin and  Rudin: 
Suppose that $\mu,\nu$ are Borel measures on $\R^n$ for which the coordinate  functions  $x_j$ are $p$-integrable.  Then $\widehat{\mu}^{(p)}(a)=\int |1+ \sum_{j<n} a_j x_j|^p d\mu(x)=\int |1+ \sum_{j<n} a_j x_j|^p d\nu(x)=\widehat{\nu}^{(p)}(a)$ for all $a=(a_j)_{j<n}\in \R^n$  then the measures $\mu$ and $\nu$ are equal. We prove that $\mu$
  and $\nu$ are close, for example with respect to the Lévy-Prohorov metric,  when the corresponding characteristics are close, and moreover  we obtain a  full characterization.

We  study  the approximate Ramsey property \arp of the classes $\{\ell_p^n\}_n$, that also can be seen as a version of a multidimensional  Borsuk-Ulam Theorem (see  \S\S\S\ref{Borsuk-Ulam}). In general,   a class of finite dimensional    $\mc G$  has the \arp   when for every $X,Y\in \mc G$ and every $\vep>0$ there is $Z\in \mc G$ such that for every 1-Lipschitz mapping $c:\Emb(X,Z)\to [0,1]$ there is some isometric embedding $\ga:Y\to Z$ such that the oscillation of $c$ in the set of compositions $\ga\circ \Emb(X,Y)$ is at most $\vep$.  It is interesting to mention that, while the proof of the \arp of $\{\ell_\infty^n\}_n$ uses  the dual Ramsey Theorem (DR) of   Graham and  Rothschild  (see \cite{BaLALuMbo1} ), our proof of the \arp of  $\{\ell_p^n\}_n$ $p\neq 2,\infty$    utilize an  arithmetical version of (DR), namely, that for partitions of equal sized pieces.    

We also analyze restricted versions of the previous notions: Given a class $\mc G$ of finite dimensional Banach spaces, we introduce what we call $\mc G$-Fraïssé spaces, those for which the natural actions on $\de$-embeddings are $\vep$-transitive, provided that the embeddings have as domain an element of $\mc G$.  In this way, every $L_p(0,1)$, being $p$ even or not, is the {\em Fraïssé limit} of $\{\ell_p^n\}_n$.   We also restrict the type of embeddings we are interested in, for example by analyzing  Fraïssé lattices, where now isometries and embeddings must respect the lattice structure.   We find the first Fraïssé Banach lattice, an $M$-space, denoted by $\mbb G^\diamond$, that is the lattice version of the Gurarij space, and that has an extremely amenable group of lattice isometries, proved using a KPT correspondence for Banach lattices. 

The paper is organized as follows. In \S \ref{fraisse_banach_spaces} we introduce and study Fraïssé Banach spaces, as well as the local versions of them, meaning that the canonical actions of the rotations are restricted to embeddings defined on spaces a fixed family. For those spaces, we see the uniform equivalence of the Banach-Mazur and the Kadets pseudometrics.  In \S\S\ref{fraisse_correspondence_sda}  we prove the Fraïssé correspondence for Banach spaces, including its local version, and in \S\S\ref{characterization_of_homogeneities} we characterize the Fraïssé property of a Banach space in terms of a uniform equivalence of metrics.  In \S\ref{oi4iuuuuttyy} we introduce the lattice versions, including the proof of the fact that every $L_p(0,1)$  is lattice Fraïssé, a fact that follows from an   approximation result by G. Schechtman \cite{Sch} on $\de$-isometric embeddings defined on $\ell_p^n$.    
Section \ref{oi43894589754789} is devoted to the proof of the approximate equimeasurability principle for $p\notin 2\N$,  in \S\S\ref{auh_for_L_p} we see  how this is used to show that those $L_p(0,1)$ are Fraïssé.  The proof the principle is given in \S\S\ref{approx_equim}.   Section \ref{kemrkwmekorwew} is devoted to the approximate Ramsey property, in particular of the class $\{\ell_p^n\}_n$, and its reformulation à la Borsuk-Ulam is given in \S\S\S \ref{Borsuk-Ulam}, while its proof and the relation with an approximate Ramsey statement for equisurjections is the content of \S\S\ref{arp_of_ell_p_equisurjections}.  The last Section \ref{gurarij_m_space} is dedicated to the existence of a Fraïssé $M$-space whose group of lattice isometries is extremely amenable.

\section{ Fraïssé Banach spaces}\label{fraisse_banach_spaces}
We consider spaces over $\mbb F=\R,\C$;  given $n\in \N$, the {\em unit basis of} $\mbb F^n$, denoted by $(u_j^{(n)})_{j<n}$, or simply $(u_j)_{j<n}$, is the sequence that for each $j$, the $k^\mr{th}$ coordinate of $u_j^{(n)}$ is  delta of Dirac $\de_{j,k}$.   When needed,  will use the set theoretical  convention of identifying   an integer $n$ with  $\{0,1,\dots, n-1\}$. 
Given   Banach spaces $X$ and $Y$ and $\de \ge 0$, 
a {\em $\de$-isometric embedding (or $\de$-isometry)} $T: X \rightarrow Y$ is a linear map such that for all $x \in X$ one has that
    $$\frac1{1+\de}\nrm{x} \leq \nrm{Tx} \leq  (1+\de) \|x\|.$$
  When $\de=0$ we will simply use isometric embeddings   to refer to $0$-isometric embeddings. 
Let $\Emb_\de(X,Y)$ be the collection of $\de$-isometric embeddings between $X$ and $Y$, and let $\mr{Iso}(X)$ be the group of isometries on $X$.  
Given two families $\mc H$ and $\mc G$        of finite dimensional spaces we write $\mc H\preceq \mc G$ when for every $X\in\mc H$ there is $Y\in \mc G$ that is isometric to $X$, denoted by $X\equiv Y$; then $\mc H \equiv \mc G$   denotes that $\mc H\preceq\mc G\preceq \mc H$, and $\mc H_\equiv$ is the class of all finite dimensional spaces with an isometric copy in $\mc H$.

\defi
Given a Banach space $E$, let $\mr{Age}(E)$ be the class of all finite dimensional subspaces  of  $E$.  Following standard convention (see for example \cite[pp 324]{Ho}) we will say that  $X$ and $Y$ have the {\em same age} when $\age(X)\equiv \age(Y)$. Given a family $\mc H$ of finite dimensional spaces, let $\mc H_n$ be the subfamily of $\mc H$ consisting of those spaces of dimension $n$. In particular, we write $\mr{Age}_n(X)$ to denote $(\mr{Age}(X))_n$.  Given a class of finite dimensional Banach spaces $\mc H$ and a Banach space $E$, let $\mc H_E$  be the collection of subspaces of $E$ isometric to some element in $\mc H$.

Recall the {\em gap or opening} metric on $\mr{Age}_n(E)$ is defined by
 $$\La_E(X,Y):=\max\left\{\max_{x\in B_X} \min_{y\in B_Y}\nrm{x-y}_E,\max_{y\in B_Y} \min_{x\in B_X}\nrm{x-y}_E \right\};$$
 in other words, $\La_E(X,Y)$ is the $\nrm{\cdot}_E$-Hausdorff distance between the unit balls of $X$ and $Y$.  This induces the   following {\em Gromov-Hausdorff} function, 
{\em $E$-Kadets}    on  $\mr{Age}_n(E)^2$, defined  as 
$$\ga_E(X,Y):=\inf \conj{\La_E(X_0, Y_0)}{ X_0,Y_0\in \mr{Age}_n(E), \, X_0\equiv X, \, Y_0\equiv Y}.$$
When $E$ is universal $\ga_E$ is the original  {\em Kadets pseudometric} (see \cite{Kad}, \cite{KaOs}), although in general   $\ga_E$ may not be a pseudometric. We will see that in other natural cases, $\ga_E$ satisfies the triangle inequality. It is easy to see that $\ga_E(X,Y)=0$ if and only if $X$ and $Y$ are isometric.   
There is another well-known pseudometric with this property. This is the   {\em Banach-Mazur} pseudometric on $\mr{Age}_n(E)$: 
$$d_\mr{BM}(X,Y):=\log(\inf_{T:X\to Y} \nrm{T}\cdot \nrm{T^{-1}})$$
where the infimum runs over all isomorphisms $T:X\to Y$.  It is well-known that   $(\mr{Age}_n(E),d_\mr{BM})$ is a pre-compact pseudometric space; that is, every sequence in $\mr{Age}_n(E)$ has a $d_\mr{BM}$-convergent subsequence, not necessarily to an element of $\mr{Age}_n(E)$.

\fdefi

 \begin{definition}\label{nnvnjkff}
 Let $E$ be an infinite dimensional  Banach space, and let $\mc G\preceq \age(E)$. 
 \begin{enumerate}[(a)]
\item $E$ is  {\em $\mc G$-homogeneous} \fh{\mc G} when for every $X\in \mc G$  and every   and every $\ga,\eta\in \Emb(X,E)$ there is some $g\in \iso(E)$ such that $g\circ\ga= \eta$; in other words, when for each $X\in \mc G$, the natural action  $\iso(E)\acts \Emb(X,E)$ by composition is transitive.
\item $E$ is   is called \emph{approximately  $\mc G$-homogeneous} \afh{\mc G}  when for every  
 $X\in \mc G$ and every $\vep>0$ the natural action by composition $\iso(E)\acts \Emb(X,E)$ is $\vep$-transitive, that is,   whenever   $\ga,\eta\in \Emb(X,E)$  there is   $g\in \iso(E)$ such that $\nrm{g\circ\ga-\eta}<\vep$.   
\item $E$ is   is called \emph{weak $\mc G$-Fraïssé}  when for every $X\in \mc G$ and every $\vep>0$ there is a $\de>0$ such that the action $\iso(E)\acts \Emb_\de(X,E)$ is $\vep$-transitive.

 \item $E$ is {\em $\mc G$-Fraïssé}   when for every dimension $k\in \N$ and every $\vep>0$ there is a $\de>0$ such that the action $\iso(E)\acts \Emb_\de(X,E)$ is $\vep$-transitive for every  $X\in \mc G_k$.

 \end{enumerate}
 When $\mc G=\age(E)$, then we will use {\em ultrahomogeneous} \uh, {\em approximately ultrahomogeneous} \auh, {\em weak-Fraïssé} and {\em Fraïssé} for the corresponding $\mc G$-homogeneities.    \end{definition}
  It is worth to point out that the notion of approximate ultrahomogeneity defined here coincides with the general notion of approximate ultrahomogeneity when the Banach spaces are regarded as   metric structures (see \cite{BY}, \cite{BYBeHeUs}, \cite{MeTsa}). Also, the  particular case of Fraïssé property with modulus independent of the dimension was studied in \cite{Lup}  and it was named {\em stable} Fraïssé property.

 We will say that a mapping $\varpi:S\times [0,\infty[\to [0,\infty[$, where $S$ is an arbitrary set, is a {\em  modulus} when $\varpi(s,\cdot)$ is increasing and continuous at zero with value zero for every $s\in S$.
 The following is easy to prove.
 \begin{proposition}
\begin{enumerate}[1)]
\item $E$ is weak-Fraïssé if and only if there exists a modulus $\varpi_{\mc G,E}: \mc G\times [0,\infty[\to [0,\infty[$, called modulus of stability of $\mc G$ in $E$, such that 
for every  
 $X\in \mc G$ every $\de\ge 0$ and every $\vep>0$  one has that  $\iso(E)\acts \Emb_\de(X,E)$ is {\em $\varpi(X,\de)+ \vep$-transitive}.

 In this case   $\varpi_{\mc G,E}: \mc G\times [0,\infty[\to [0,\infty[$ defined    for $X\in \mc G$ and  $\de\ge 0$ as $\varpi_{X,E}(\de)=\inf\conj{\vep>0}{\iso(E)\acts \Emb_\de(X,E) \text{ is $\vep$-transitive}}$ is the optimal modulus of stability of $\mc G$ in $E$. 

  \item $E$ is $\mc G$-Fraïssé if and only if  $E$ is weak $\mc G$-Fraïssé  with a modulus that only depends on the dimension of $X\in \mc G$.  In this case, $\varpi_{\mc G,E}:\N\times [0,\infty[\to [0,\infty[$,   
  $$\varpi_{\mc G,E}(k,\de)=\inf\conj{\vep>0}{\iso(E)\acts \Emb_\de(X,E) \text{ is $\vep$-transitive for all $X\in \mc G_k$}}$$ is the optimal modulus of stability of $\mc G$ in $E$.
\item If $\varpi:S\times[0,\infty[\to [0,\infty[$ is a modulus, then 
$$\varpi^*(s,\de):=\inf_{\de'>\de}\varpi(s,\de')$$ is a continuous modulus such that $\varpi(s,\de)\le \varpi^*(s,\de)$.    \qed
\end{enumerate}  
   
 \end{proposition}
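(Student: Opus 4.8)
The plan is to establish the three items in order, taking (3) first because it is an elementary fact about moduli, then (1), and finally (2) as the dimension-uniform refinement of (1); all three reduce to bookkeeping with infima and with the definition of $\vep$-transitivity. For (3) I fix $s\in S$ and set $f:=\varpi(s,\cdot)$, an increasing function with $f(0)=0$ continuous at $0$. The inequality $\varpi(s,\de)\le\varpi^*(s,\de)$ is immediate from monotonicity, since $f(\de)\le f(\de')$ for every $\de'>\de$. Monotonicity of $\varpi^*(s,\cdot)$ follows from the inclusion of index sets $\{\de':\de'>\de_2\}\subseteq\{\de':\de'>\de_1\}$ when $\de_1\le\de_2$, and $\varpi^*(s,0)=\inf_{\de'>0}f(\de')=\lim_{\de'\to 0^+}f(\de')=0$ by continuity of $f$ at $0$. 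The substantive point is continuity of $\varpi^*(s,\cdot)$: because $f$ is monotone, $\varpi^*(s,\de)=\inf_{\de'>\de}f(\de')$ is exactly the right-limit $f(\de^+)$, and the right-limit function of a monotone function is right-continuous (standard $\vep$-argument); this is the content I would record, with the caveat addressed at the end.

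For (1) I prove both implications. The reverse one is direct: given a modulus $\varpi$ with the stated property and given $X\in\mc{G}$, $\vep>0$, continuity of $\varpi(X,\cdot)$ at $0$ lets me pick $\de>0$ with $\varpi(X,\de)<\vep$; applying the hypothesis with slack $\vep':=\vep-\varpi(X,\de)>0$ makes $\iso(E)\acts\Emb_\de(X,E)$ be $\vep$-transitive, which is weak $\mc{G}$-Fraïssé. For the forward implication together with optimality, I set $\varpi_{X,E}(\de):=\inf\conj{\vep>0}{\iso(E)\acts\Emb_\de(X,E)\text{ is }\vep\text{-transitive}}$ and check it is a modulus. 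Nonemptiness of the defining set and finiteness come from the crude bound $\nrm{g\circ\ga-\eta}\le 2(1+\de)$ on $B_X$, valid for all $\ga,\eta\in\Emb_\de(X,E)$ and $g\in\iso(E)$ since $g$ is an isometry. Monotonicity in $\de$ is the inclusion $\Emb_{\de_1}(X,E)\subseteq\Emb_{\de_2}(X,E)$ for $\de_1\le\de_2$, which shows the $\vep$-transitivity sets shrink as $\de$ grows. The one place the hypothesis really enters is continuity at $0$ with value $0$: weak $\mc{G}$-Fraïssé says precisely that for each $\vep>0$ there is $\de_0>0$ with the action $\vep$-transitive on $\Emb_{\de_0}$, so by monotonicity $\varpi_{X,E}(\de)\le\vep$ for all $\de\le\de_0$, and at $\de=0$ this gives $\varpi_{X,E}(0)=0$. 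The transitivity property is unwinding the infimum (for each $\vep>0$ pick an admissible $\vep'<\varpi_{X,E}(\de)+\vep$; then $\vep'$-transitivity gives $(\varpi_{X,E}(\de)+\vep)$-transitivity), and optimality is one line: any competing modulus $\varpi'$ with the transitivity property has $\varpi'(X,\de)+\vep$ admissible for every $\vep>0$, forcing $\varpi_{X,E}(\de)\le\varpi'(X,\de)$.

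For (2) I run the identical argument with the dimension-indexed quantity $\varpi_{\mc{G},E}(k,\de):=\inf\conj{\vep>0}{\iso(E)\acts\Emb_\de(X,E)\text{ is }\vep\text{-transitive for all }X\in\mc{G}_k}$, noting the identity $\varpi_{\mc{G},E}(k,\de)=\sup_{X\in\mc{G}_k}\varpi_{X,E}(\de)$ obtained by intersecting the up-closed $\vep$-transitivity sets over $X\in\mc{G}_k$. The verification that it is a modulus is verbatim that of (1), except that monotonicity, the value $0$ at $\de=0$, and continuity at $0$ must now hold uniformly in $X\in\mc{G}_k$ — which is exactly what the definition of $\mc{G}$-Fraïssé provides, since there the witnessing $\de$ is chosen uniformly over $X\in\mc{G}_k$. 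The reverse implication is again the continuity-at-zero selection of $\de$ with $\varpi_{\mc{G},E}(k,\de)<\vep$, now yielding $\vep$-transitivity simultaneously for all $X\in\mc{G}_k$.

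I expect the only delicate point to be the continuity claim in (3): for a general increasing $\varpi(s,\cdot)$ the regularization $\inf_{\de'>\de}$ yields right-continuity but not necessarily two-sided continuity, since a left jump of $\varpi(s,\cdot)$ survives into $\varpi^*(s,\cdot)$ (e.g.\ $\varpi(s,\cdot)=\mathbbm{1}_{(1,\infty)}$). Thus "continuous" here should be understood as right-continuity, which is exactly the regularity the later $\varpi(X,\de)+\vep$-transitivity statements exploit. A second, purely cosmetic, care throughout is tracking strict versus non-strict inequalities when passing between "$\vep$-transitive" and the infimum defining the optimal modulus.
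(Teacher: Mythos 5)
Your proof is correct, and there is nothing in the paper to compare it against: the authors state this proposition as ``easy to prove'' and give no argument, so your write-up simply supplies the omitted bookkeeping, and it is the natural one. The key points are all in place: well-definedness and finiteness of the optimal modulus via the crude bound $\nrm{g\circ\ga-\eta}\le 2(1+\de)$ on $B_X$, monotonicity from $\Emb_{\de_1}(X,E)\subseteq\Emb_{\de_2}(X,E)$, continuity at zero as a direct restatement of the (weak) Fra\"iss\'e hypothesis, transitivity and optimality by unwinding the infimum, and in item 2 the identity $\varpi_{\mc G,E}(k,\de)=\sup_{X\in\mc G_k}\varpi_{X,E}(\de)$, which correctly exhibits item 2 as item 1 run uniformly over $\mc G_k$ (the uniform choice of $\de$ in the definition of $\mc G$-Fra\"iss\'e being exactly what makes the supremum a modulus).

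Your caveat on item 3 is not cosmetic: it identifies a genuine overstatement in the paper. As written, the claim that $\varpi^*(s,\de)=\inf_{\de'>\de}\varpi(s,\de')$ is a \emph{continuous} modulus is false; your example $\varpi(s,\cdot)=\mathbbm{1}_{(1,\infty)}$ is a legitimate modulus whose regularization $\varpi^*(s,\cdot)=\mathbbm{1}_{[1,\infty)}$ has a left jump at $1$, and in general only right-continuity of $\varpi^*(s,\cdot)$ can be concluded (your monotone right-limit argument proves exactly that). This weaker conclusion is also all the paper ever uses: in the ultrapower proposition the equality $\varpi_{E_{\mc U}}(k,\de)=\varpi^*_E(k,\de)$ and the claim $\varpi_{E_\mc U}(k,\de)\le\inf_{\de'>\de}\varpi_E(k,\de')$ only involve the right-regularization, and the same is true where the Fra\"iss\'e limit of an amalgamation class with modulus $\varpi$ is asserted to have modulus $\varpi^*$. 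So your proposed repair --- read ``continuous'' as ``right-continuous'' --- is the correct one.
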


\begin{example}
A Hilbert space is  \uh and  also is a Fraïssé Banach space with modulus $\varpi(k,\de)=2\de$.
\end{example}
\begin{proof}
Suppose that $\mc H$ is a Hilbert space.  Clearly, a subspace of $\mc H$ is again Hilbert, so, the Banach-Mazur limit of subspaces of $\mc H$ is again (mod isometry) a subspace of $\mc H$.   Suppose now that  $\ga: F\to \mc H$ is a $\de$-isometry, with $F\con \mc H$ finite dimensional. Choose an orthonormal basis $(x_j)_{j<n}$ of $F$, and  such that $(\ga x_j)_{j<n}$ is an orthogonal sequence (the singular value decomposition). Let $\iota: F\to \mc H$ be the isometric linear embedding  defined by $\iota(x_j):= \ga(x_j)/\nrm{\ga(x_j)}$ for all $j<n$. Then given scalars $(a_j)_{j<n}$ we have that 
 \begin{align*}
 \nrm{\sum_j a_j \ga x_j - \sum_j a_j \iota x_j}^2=&\langle \sum_{j} a_j \left( 1-\frac{1}{\nrm{\ga x_j}}\right) \ga x_j, \sum_{j} a_j \left( 1-\frac{1}{\nrm{\ga x_j}}\right) \ga x_j\rangle =\\
 =&  \sum_{j} |a_j |^2 |\nrm{\ga x_j} -1|^2 \le \de^2 \nrm{\sum_j a_j x_j}^2.
  \end{align*} 
 So, if we extend $\iota$ to an isometry $I\in \mr{Iso}(\mc H)$, then $\nrm{\ga - I\rest F}\le \de$, as desired.   
\end{proof}  
\begin{example}
The Gurarij space $\mbb G$ \cite{Gu} is Fraïssé with modulus $\varpi(k,\de)=2\de$ but not \uh.\footnote{In \cite{BaLALuMbo2}, $\de$-embeddings are always considered to be contractions, and because of this the modulus $\varpi$ of Gurarij space can be proved to be $\de$}
\end{example}
\begin{proof}
 It was already known by Gurarij that $\mbb G$ is not even transitive: It is well known that an isometry moves a point of differentiability of the unit sphere to a point of differentiability. Since $\mbb G$ is universal, its unit sphere has points of differentiability  and points of non-differentiability.  On the other direction,  $\mbb G$ is Fraïssé with modulus  $\varpi(k,\de)=\de$  (see \cite[Corollary 2.16]{BaLALuMbo2}, \cite[\S\S 6.1]{Lup} ).
 \end{proof}

\begin{problem}
Are $\mbb G$ and the Hilbert space $\mc H$ the only separable stable Fraïssé Banach spaces?
\end{problem}
 
\begin{example}
For every $1\le p<\infty$ the space $L_p[0,1]$ is $\{\ell_p^n\}_n$-Fraïssé (see Proposition \ref{3l4kirjio34riji43}). In fact, $L_p[0,1]$ is the {\em Fraïssé limit} of $\{\ell_p^n\}_n$ (see  Theorem \ref{oi43hjiio434371}). 
\end{example}

\begin{example}\label{oiioio43447856}
 W. Lusky \cite{luss}, using the equimeasurability theorem of   A. I. Plotkin \cite{Plo1}  and W. Rudin \cite{Ru}, 
  proved that for $p\notin 2\N$ the space $L_p(0,1)$ is (AuH)\footnote{In that paper Lusky states that for every $X\in \age(E)$, $\vep>0$ and $\ga\in \Emb(X,L_p)$ there is a surjective $1+\vep$-isomorphism of $L_p(0,1)$ extending $\ga$, but its proof directly gives the \auh of $L_p(0,1)$ for those $p$'s}.  On the other hand, the isometry group never acts transitively on the unit sphere of $L_p(0,1)$ if $p\neq 2$. One of our main results  will be to show that in fact for $p\notin 2\N$ the space $L_p(0,1)$ is  Fraïssé (Theorem \ref{auh_for_L_p}).

\end{example}

\begin{problem}
Are $\mbb G$, $L_p(0,1)$, $p\neq 4,6,8,\dots$ the only separable Fraïssé Banach spaces?
\end{problem}	
It is well-known there are other almost transitive Banach spaces; for example $E:=L_p(X)$ for any almost transitive Banach space $X$.  In particular,  $L_p(\mbb G)$  could be \auh or even Fraïssé. However this is not so because there are well-complemented and not well complemented isometric copies of $\ell_p^n$,   hence the corresponding Bochner spaces cannot be \auh. A similar reasoning holds for $L_p(L_q)$, at least when $1 \leq p,q <2$ and $p \neq q$. Similarly, although much more complicated, we will see  in the  next Proposition \ref{li3rjiwjreer3434t5}  that the spaces $L_p(0,1)$ for $p=4,6,8,\dots$ cannot be \auh.  This fact was already proved by  Lusky  by using a counterexample of Rudin in \cite{Ru} exposing the non-equimeasurability theorem for  those $p$'s.    

\prop \label{li3rjiwjreer3434t5} Assume $p \in 2\N$, $p \geq 4$. For any $C \geq 1$ and $\de\ge 0$, there  are isometric $X,Y\in \age(L_p(0,1))$ such that for any  bounded linear mapping $T:L_p(0,1)\to L_p(0,1)$, if $T\rest X \in \Emb_\de(X,Y)$, then  $\nrm{T} \geq C$.
 \fprop

\prue   It is proved by B. Randrianantoanina \cite{beata}  that for $p\in 2\N$, $p>2$,  the uncomplemented subspace $Y_p$ of $L_p$ built by  H. Rosenthal in \cite{Ro} is isometric to a certain complemented subspace $Z_p$ of $L_p$ spanned by $3$-valued independent symmetric random variables. Since the space $Z_p$ is the span of a sequence of independent mean zero random variables,  it has an unconditional basis, \cite[Remark 2, page 278]{Ro}. Let $(y_n)$ (respectively $(z_n)$) be the associated basis of $Y_p$ (resp. $Z_p$), and
$Y_n=\langle y_j\rangle_{j<n}$,
 $Z_n=\langle z_j\rangle_{j<n}$.
On the one hand the $Z_n$'s are uniformly complemented in $Z_p$ and therefore in $L_p(0,1)$; on the other hand
the $Y_n$'s are not, otherwise by reflexivity and a weak limit argument,
$Y_p$ would be. In other words,  there are projections $Q_n$    onto $Z_n$ with uniform bound $c$, but $C_n:=\inf\conj{C}{ Y_n {\rm\ is\ } C {\rm -complemented\ in\ }
 L_p(0,1)}$ tends to $+\infty$.
For any extension $T$ on $L_p(0,1)$ of  a  $\de$-isometric map $t$ between $Y_n$ and $Z_n$,   we have that  $t^{-1} Q_n T$ is a projection onto $Y_n$. Since $\|t^{-1} Q_n T\| \geq C_n$, it follows that $\|T\| \geq c^{-1} C_n/(1+\de)$, which tends to $+\infty$.
\fprue
\nota\label{j34irjw4ierw566e}
 The terminology ``homogeneous'' is commonly used in classical  and metric model theory. In Banach space theory,  it  has to be related to the concept of ``disposition'', for example used by V. I. Gurarij in \cite{Gu} to define his space (of universal ``placement'').   A Banach space $E$ is of {\em approximate disposition} when for every $X,Y\in \age(E)$,  $\de>0$,   $\iota\in \Emb(X,Y)$ and every $\ga\in \Emb(X,E)$ there is $\eta\in \Emb_\de(Y,E)$ such that $\eta\circ \iota= \ga$. When $\age(E)$ consists of all finite dimensional normed spaces,  $E$ is called of {\em universal} approximate disposition.  It is easy to see that if $E$ is \auh then it is of approximate disposition.
   
 \fnota

We have the following characterization of $\mc G$-Fraïssé Banach spaces. Note that when $\mc G\preceq \age(E)$ the $E$-Kadets function $\ga_E$ is well defined on $\mc G$. 
\begin{theorem}\label{main_thm_fraisse}  The following are equivalent for a Banach space $E$ and $\mc G\preceq \age(E)$. 

\begin{enumerate}[1)]
\item $E$ is  $\mc G$-Fraïssé and $\ga_E$ is a complete pseudometric on $\mc G$. 

\item  $E$ is  weak $\mc G$-Fraïssé and $\ga_E$ is a complete pseudometric that is  uniformly  equivalent to $d_\mr{BM}$ on $\mc G_k$ for every $k$.
\item   $E$ is  weak $\mc G$-Fraïssé and $\mc G$ is $d_\mr{BM}$-compact. 

\end{enumerate}
Consequently,  $E$ is Fraïssé if and only if it is weak-Fraïssé and $\age(E)$ is $d_\mr{BM}$-compact. 
\end{theorem}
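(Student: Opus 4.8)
The plan is to prove the cycle $1)\Rightarrow 2)\Rightarrow 3)\Rightarrow 1)$ and then read off the final assertion by specializing to $\mc G=\age(E)$. Two elementary facts drive everything. First, on a fixed dimension the gap controls Banach--Mazur: if $\La_E(X_0,Y_0)$ is small with $\dim X_0=\dim Y_0=k$, then $X_0,Y_0$ are $(1+\omega_k(\La_E(X_0,Y_0)))$-isomorphic for a modulus $\omega_k$ with $\omega_k(0+)=0$; hence $d_\mr{BM}\le\log(1+\omega_k(\ga_E))$ on $\mc G_k$, so $\ga_E\to 0$ always forces $d_\mr{BM}\to 0$. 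This direction never uses homogeneity. Second (the converse conversion), if $X,Y\in\mc G_k$ and $S\colon X\to Y$ is a surjective $\de$-isometry, then fixing isometric copies $a\colon X\to E$ and $b\colon Y\to E$ (possible since $\mc G\preceq\age(E)$), both $a$ and $b\circ S$ lie in $\Emb_\de(X,E)$; so if $\iso(E)\acts\Emb_\de(X,E)$ is $\vep$-transitive there is $g\in\iso(E)$ with $\nrm{g\circ a-b\circ S}<\vep$, and then $g(a(X))$ is an isometric copy of $X$ with $\La_E(g(a(X)),b(Y))\le\vep+O(\de)$. Thus $\ga_E(X,Y)\le\vep+O(\de)$. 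This is the mechanism turning Banach--Mazur closeness of domains into $\ga_E$-closeness, and its quality is exactly governed by the modulus of stability.

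For $1)\Rightarrow 2)$: $\mc G$-Fraïssé trivially gives weak $\mc G$-Fraïssé, and $\ga_E$ is assumed a complete pseudometric. For the uniform equivalence on $\mc G_k$, the inequality $d_\mr{BM}\le\log(1+\omega_k(\ga_E))$ is the first fact, while the reverse uniform estimate follows from the second fact using that $\mc G$-Fraïssé supplies a single $\de=\de(k,\vep)$ working for all $X\in\mc G_k$ simultaneously; feeding this uniform $\de$ into the conversion yields $\ga_E(X,Y)\le\vep+O(\de)$ whenever $d_\mr{BM}(X,Y)$ is below the corresponding threshold, uniformly over $\mc G_k$. For $2)\Rightarrow 3)$: since $\mc G\preceq\age(E)$ and $(\age_k(E),d_\mr{BM})$ is precompact, each $\mc G_k$ is $d_\mr{BM}$-totally bounded; and as $\ga_E$ is complete and uniformly equivalent to $d_\mr{BM}$ on $\mc G_k$, the metric $d_\mr{BM}$ is complete there too. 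Totally bounded plus complete gives that each $\mc G_k$ is $d_\mr{BM}$-compact.

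The substance is $3)\Rightarrow 1)$. First, $\ga_E$ is a pseudometric: weak $\mc G$-Fraïssé implies approximate $\mc G$-homogeneity \afh{\mc G} (take $\de=0$, using $\Emb(X,E)\con\Emb_\de(X,E)$), and since $\La_E$ is a genuine Hausdorff metric one proves the triangle inequality for $\ga_E$ by chaining: given near-optimal copies $X_0\equiv X$, $Y_0\equiv Y$, $Y_1\equiv Y$, $Z_1\equiv Z$, use \afh{\mc G} at $Y$ to find $g\in\iso(E)$ carrying $Y_1$ into the $\vep$-neighbourhood of $Y_0$, and estimate $\ga_E(X,Z)\le\La_E(X_0,Y_0)+\La_E(Y_0,g(Y_1))+\La_E(Y_1,Z_1)$. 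Completeness of $\ga_E$ follows because a $\ga_E$-Cauchy sequence is $d_\mr{BM}$-Cauchy (first fact), hence $d_\mr{BM}$-converges in the compact $\mc G_k$ to some $W$, to which it then $\ga_E$-converges by the second fact (topological equivalence). Finally, and this is the heart, one upgrades weak $\mc G$-Fraïssé to $\mc G$-Fraïssé. Working with the optimal modulus $\varpi_{X,E}$, a perturbation estimate shows that if $d_\mr{BM}(X,X')\le\log(1+\eta)$ then $\varpi_{X',E}(\de')\le(1+\eta)\varpi_{X,E}(\de)$ whenever $\de'\le(\de-\eta)/(1+\eta)$, obtained by transporting $\de'$-isometries on $X'$ to $\de$-isometries on $X$ along a surjective $\eta$-isometry $X\to X'$. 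Fixing $k$ and $\vep$, for each $X\in\mc G_k$ weak $\mc G$-Fraïssé gives $\de_X$ with $\varpi_{X,E}(\de_X)<\vep/2$; the perturbation estimate then keeps $\varpi_{X',E}$ small and controlled on a whole $d_\mr{BM}$-ball around $X$. By compactness of $\mc G_k$ finitely many such balls cover, and the minimum of the associated thresholds is a single $\de=\de(k,\vep)$ witnessing $\vep$-transitivity of $\iso(E)\acts\Emb_\de(X',E)$ for every $X'\in\mc G_k$; this is exactly $\mc G$-Fraïssé.

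The final statement is the case $\mc G=\age(E)$ of $1)\Leftrightarrow 3)$, once we observe that for a weak-Fraïssé space the completeness of $\ga_E$ on $\age_k(E)$ is automatic: the conversion mechanism turns a $d_\mr{BM}$-Cauchy sequence into a $\La_E$-Cauchy sequence of isometric copies inside $E$, which converges in the complete gap metric of the Grassmannian of $k$-dimensional subspaces of $E$ to a genuine $W\in\age_k(E)$. I expect the main obstacle to be precisely the compactness-to-uniformity upgrade in $3)\Rightarrow 1)$: the pointwise modulus furnished by weak $\mc G$-Fraïssé must be made uniform over each fixed dimension, and the only leverage is $d_\mr{BM}$-compactness together with the perturbation estimate controlling how the modulus of stability varies as the domain moves in Banach--Mazur distance. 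Care is needed to track the coupled errors between $\de$, $\eta$ and $\vep$ so that the finite subcover genuinely produces one threshold.
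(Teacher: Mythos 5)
Your proposal is correct and follows essentially the same route as the paper's: your two ``elementary facts'' are precisely the paper's preparatory proposition (the estimate $d_\mr{BM}\le 4k\,\La_E$ on a fixed dimension, and the homogeneity-based conversion of Banach--Mazur closeness into $\ga_E$-closeness), and your cycle $1)\Rightarrow 2)\Rightarrow 3)\Rightarrow 1)$ --- including the finite $d_\mr{BM}$-ball cover of $\mc G_k$ with threshold $\de=\tfrac13\min_j\de_{X_j}$ and the transport of $\de$-embeddings along a small-distortion isomorphism $\theta$ --- is exactly the paper's argument, merely rephrased through the modulus $\varpi_{X,E}$. The one caution concerns your final paragraph (a point the paper leaves implicit): under mere weak $\mc G$-Fraïssé the pointwise conversion thresholds are not uniform along a $d_\mr{BM}$-Cauchy sequence, so the coherent $\La_E$-Cauchy chain of copies inside $E$ should instead be built from a $\ga_E$-Cauchy sequence via the homogeneity formula $\ga_E(X,Y)=\inf_{g\in\iso(E)}\La_E(gX,Y)$; since the direction of the ``Consequently'' clause that needs this has the full Fraïssé hypothesis available, your argument does go through.
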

It follows from this that the Hilbert   and the Gurarij spaces are very special Fraïssé spaces: Recall that a Banach space $Y$ is {\em finitely representable} in $X$ if  $\mr{Age}_k(Y)$ is included in the $d_\mr{BM}$-closure $\overline{\mr{Age}_k(X)}^\mr{BM}$ of $\age_k(X)$ for every $k$.  
 \prop\label{iiiobgnghfgdsd}
 Let $E$ be a Fraïssé Banach space. The following are equivalent for  a separable Banach space $X$.
 \begin{enumerate}[1)]
 \item $X$ is finitely representable on $E$.
 \item $X$ can be isometrically embedded into $E$.
  \end{enumerate}  
Consequently, 
\begin{enumerate}[1)]\addtocounter{enumi}{2}
\item  $\ell_2$ is the minimal separable Fraïssé Banach space,
\item  $\mbb G$ is the only separable Fraïssé  Banach space with trivial cotype\footnote{Recall that a  Banach space $X$  has  cotype $q$  for some $2 \le q\le \infty$ when there is some constant $C$ such that  $(\sum_{j=1}^n \nrm{x_j}^q )^{1/q}\le C ((1/2^n)\sum_{(\theta_j)_{j=1}^n \in \{-1,1\}^n}\nrm{\sum_{j=1}^n \theta_j x_j}^q)^{1/q}$ for every finite sequence $x_1,\dots, x_n\in X$. $X$ has no-trivial cotype when $X$ has cotype $q$ for some $2\le q<\infty$.}
\item   $\mbb G$ is the maximal separable Fraïssé Banach space. 
 
\end{enumerate}

 \fprop
\begin{proof}  
{\it 1)} implies {\it 2)}:  Suppose that $X$ is finitely representable in $E$.   Let $(x_j)_{j\in \N}$ be a dense set of $X$, and for each $n$, let $X_n:=\langle x_j\rangle_{j<n}$. Since $X$ is finitely representable and $\age(E)$ is Banach-Mazur compact,  we can find for each $n$  some $\ga_n\in \Emb(X_n,E)$. By the \auh of $E$, for each $n\ge 1$ we can find $g_n\in \iso(E)$ such that $\nrm{g_n\circ \ga_n\rest X_{n-1}-\ga_{n-1}}\le 1/2^n$. Let $\eta_0:=\ga_0$, $\eta_n:=g_1\circ \cdots \circ g_n \circ \ga_n$ for $n\ge 1$. It follows that $\nrm{\eta_{n}\rest X_{n-1}-\eta_{n-1}}\le 1/2^{n}$, $(\eta_l \rest X_n)_{l\ge n}$ is a Cauchy sequence with respect to the norm metric. Since this is a complete metric, we can define $\eta: \langle x_j\rangle_{j\in \N}\to E$, $\eta(x):=\lim_{l\ge n} \eta_l(x)$ if $x\in X_n$. Clearly this is an isometric embedding that extends to $X$.  

{\it 3):} By Dvoretzky's Theorem $\ell_2$ is finitely representable in $E$, so $\ell_2$ isometrically embeds into $E$ if $E$ is Fraïssé.  {\it 4):}   A classical result by B. Maurey and G. Pisier \cite{MaPi} states that $E$ has trivial cotype if and only $E$ contains  all $\ell_\infty^n$'s uniformly; that is, there is some $C\ge 1$ such that  $d_\mr{BM}(\ell_\infty^n, \age_n(E))\le C$ for all $n$.  By the finite version of James theorem, $\ell_\infty^n\in \overline{\age_n(E)}^\mr{BM}$, so it follows that $\ell_\infty^n\in \age(E)$ for every $n$. Then $E$ is universal for separable spaces. Since $\mbb G$ is the unique, up to isometry, universal separable (AuH) Banach space (see \cite{kubissolecki} or Proposition \ref{oi43hjiio43437}), $E$ and $\mbb G$ are isometric.
\end{proof}

For the proof of Theorem \ref{main_thm_fraisse} we will use the next.
\begin{proposition}\label{lkklkir8} Let $E$ be a Banach space, and suppose that $\mc G\preceq \age(E)$. 
\begin{enumerate}[1)]
\item  If $E$ is \afh{\mc G}, then $\ga_E$ is defined on $\mc G_E$ by the formula
$\ga_E(X,Y)=\inf_{g\in \iso (E)} \La_E(g X, Y)$. 
Consequently,  $\ga_E$ is a pseudometric on each $\mc G_k$. 
\item In general,  $d_\mr{BM}(X,Y) \le 4 k \La_E(X,Y)$ for every $X,Y\in \age_k(E)$ such that $\La_E(X,Y)\le 1/(2 k) $, and    consequently,  if $E$ is   \afh{\mc G},    the identity  map $(\mc G_k,\ga_E)\to    (\mc G_k,d_{\mr {BM}} )   $ is uniformly continuous for each $k$.
 
\item If $E$ is weak $\mc G$-Fraïssé, then $ \ga_E$ is a pseudometric on each $\mc G_k$  topologically equivalent to $d_\mr{BM}$.
\end{enumerate}
\end{proposition}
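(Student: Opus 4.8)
The plan is to treat the three items in order, exploiting that each relies on a successively stronger hypothesis: item 1 needs only \afh{\mc G}, item 2 is a soft metric estimate combined with item 1, and item 3 additionally uses the genuine $\de$-isometry content of weak $\mc G$-Fraïssé. Throughout I would first record the two structural facts about the gap metric $\La_E$ that everything rests on: it is invariant under $\iso(E)$, since every $g\in\iso(E)$ carries $B_X$ isometrically onto $B_{gX}$, and it satisfies the triangle inequality, being a Hausdorff distance between compact unit balls.

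For item 1, the formula $\ga_E(X,Y)=\inf_{g\in\iso(E)}\La_E(gX,Y)$ follows once one observes that \afh{\mc G} makes the orbit map $g\mapsto gX$ have $\La_E$-dense range among the isometric copies of $X$ in $E$: given any subspace $X_0\equiv X$, applying \afh{\mc G} to the inclusion of the fixed representative $X$ and to a surjective isometry $X\to X_0$ produces $g$ with $\La_E(gX,X_0)<\vep$. The inequality $\ga_E(X,Y)\le\inf_g\La_E(gX,Y)$ is immediate since each $gX$ is a legal competitor in the definition of $\ga_E$; for the reverse, given a near-optimal pair $(X_0,Y_0)$ I would first use \afh{\mc G} to move $Y_0$ onto $Y$ by an isometry $h$, so that $\La_E(hY_0,Y)<\vep$, then apply the density above to $hX_0\equiv X$, and close the estimate through the middle term $\La_E(hX_0,hY_0)=\La_E(X_0,Y_0)$ using the triangle inequality for $\La_E$. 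Symmetry of $\ga_E$ follows from invariance via $\La_E(gX,Y)=\La_E(X,g^{-1}Y)$, and its triangle inequality from choosing near-optimal $g,h$ around a common middle space and composing; hence $\ga_E$ is a pseudometric on each $\mc G_k$.

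For item 2, I would prove the pointwise estimate by an Auerbach basis argument. Fixing an Auerbach basis $(x_i)_{i<k}$ of $X$ with biorthogonal functionals of norm one, and choosing $y_i\in Y$ with $\nrm{x_i-y_i}\le\La_E(X,Y)=:\eta$, the linear map $T$ determined by $Tx_i=y_i$ satisfies $\nrm{Tx-x}\le k\eta\nrm{x}$ for all $x\in X$; hence $\nrm{T}\le 1+k\eta$, and for $k\eta<1$ one has $\nrm{T^{-1}}\le(1-k\eta)^{-1}$ and $T$ maps onto $Y$. This gives $d_\mr{BM}(X,Y)\le\log\tfrac{1+k\eta}{1-k\eta}$, and the elementary bound $\log\tfrac{1+t}{1-t}\le 4t$ valid for $t\le 1/2$ yields $d_\mr{BM}(X,Y)\le 4k\,\La_E(X,Y)$ whenever $\La_E(X,Y)\le 1/(2k)$. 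Uniform continuity of the identity $(\mc G_k,\ga_E)\to(\mc G_k,d_\mr{BM})$ then follows by combining this with item 1: since $d_\mr{BM}$ depends only on the isometry class, $d_\mr{BM}(X,Y)=d_\mr{BM}(gX,Y)\le 4k\,\La_E(gX,Y)$ for every $g$, and infimizing over $g\in\iso(E)$ gives $d_\mr{BM}(X,Y)\le 4k\,\ga_E(X,Y)$ on the region where $\ga_E(X,Y)\le 1/(2k)$.

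For item 3 I would first note that weak $\mc G$-Fraïssé implies \afh{\mc G}, by restricting the $\vep$-transitivity of the action on $\Emb_\de(X,E)$ to the subset $\Emb(X,E)\sset\Emb_\de(X,E)$, so items 1 and 2 already yield that $\ga_E$ is a pseudometric on $\mc G_k$ and that the identity $(\mc G_k,\ga_E)\to(\mc G_k,d_\mr{BM})$ is continuous. It remains to prove the converse continuity, and this is the step I expect to be the main obstacle, since it is the only place where the $\de$-isometry content is genuinely used. The idea is that a small value of $d_\mr{BM}(X,Y)$ produces, after rescaling, a $\de$-isometry $T\in\Emb_\de(X,E)$ with range $Y$ and small $\de$; feeding $T$ and the inclusion $\iota_X\in\Emb(X,E)$ into the modulus of stability of $\mc G$ in $E$ supplies $g\in\iso(E)$ with $\nrm{g\circ\iota_X-T}<\vep$. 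Unwinding this bounds $\La_E(gX,Y)$ by $\vep$ plus the at most $\de$ failure of $T$ to preserve norms, hence $\ga_E(X,Y)$ is small, so $d_\mr{BM}$-convergence forces $\ga_E$-convergence. The delicate point is that the modulus of stability furnished by weak $\mc G$-Fraïssé may depend on the space $X$, so this argument yields only topological (equivalently, sequential) equivalence rather than uniform equivalence; this is exactly what item 3 asserts, and explains why the uniform statement is reserved for Theorem \ref{main_thm_fraisse} under the stronger $\mc G$-Fraïssé hypothesis.
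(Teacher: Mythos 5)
Your proposal is correct and follows essentially the same route as the paper: the orbit/invariance argument with the triangle inequality for $\La_E$ in item 1, the Auerbach-basis perturbation bound $d_\mr{BM}(X,Y)\le 4k\La_E(X,Y)$ in item 2, and, for item 3, the observation that weak $\mc G$-Fraïssé implies \afh{\mc G} together with feeding the rescaled Banach--Mazur isomorphism, viewed as a $\de$-isometric embedding, into the weak Fraïssé transitivity to bound $\ga_E$ by $d_\mr{BM}$. The only cosmetic difference is that the paper isolates the estimate $\La_E(\ga X,\eta X)\le 2(1+\de)\nrm{\ga-\eta}$ as a standalone claim, whose content you use implicitly in items 1 and 3.
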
  
\begin{proof}
{\it 1):} We  use the following. 
\clam\label{loi43jiori34443}
For $\ga,\eta\in \Emb_\de(X, E)$  one has that $\La_E(\ga X,\eta X)\le 2(1+\de)\nrm{\ga-\eta}$.
\fclam
\prucl	
Given $\ga x\in X$ of norm one,
\begin{align*}
\nrm{\ga x - \frac{\eta x}{\nrm{\eta x}}} \le &  \nrm{\ga x - \eta x }+\nrm{\eta x - \frac{\eta x}{\nrm{\eta x}}}=\nrm{\ga x - \eta x }+|\nrm{\eta x}-1|
= \nrm{\ga x - \eta x }+|\nrm{\eta x}-\nrm{\ga x}|\le \\
\le & 2\nrm{\ga x - \eta x }\le 2(1+\de)\nrm{\ga-\eta}. \qedhere
\end{align*}
\fprucl
Fix $X,Y\in \mc G_E$ with $\dim X=\dim Y$.  Fix also isometric embeddings $\ga:X\to E$, $\eta: Y\to E$  and $\vep>0$,   let $g,h\in \iso(E)$ be such that $\nrm{g\rest  X-\ga},\nrm{h\rest   Y -\eta}\le \vep$.    Since $\La_E$ is invariant for the natural action of $\iso(E)$ on $\age(E)$,
 \begin{align*}
 \La_E(h^{-1}g   X,   Y)=& \La_E(g   X,h  Y)\le  \La_E(g   X,\ga   X)+\La_E(h   Y, \eta   Y) +\La_E(\ga   X, \eta   Y) \le \\
 \le & 2\nrm{g\rest   X-\ga} + 2\nrm{h\rest   Y -\eta} + \La_E(\ga   X,\eta   Y) \le  \La_E(\ga   X,\eta   Y)  +4 \vep.
 \end{align*}

{\it 2)}: 
Let $(x_j)_{j<k}$ be an {\em Auerbach} basis of $X$, that is, $(x_j)_j$ is normalized and  $\max_j |a_j|\le \nrm{\sum_j a_j x_j}$ for every $(a_j)_j$.  Suppose that $\La_E(X,Y)\le 1/(2k)$.  For each $j<k$, let $y_j\in B_Y$ be such that $\nrm{x_j-y_j}\le \La_E(X,Y)$. Given $(a_j)_j$, we have that  $\nrm{\sum_j a_j y_j}\le \nrm{\sum_j a_j x_j} +\max_j |a_j| \sum_j \nrm{x_j-y_j}\le (1+ k \La_E(X,Y))  \nrm{\sum_j a_j x_j} $, and similarly, $\nrm{\sum_j a_j y_j}\ge (1-k \La_E(X,Y))\nrm{\sum_j a_j x_j}$.  Since $\La_E(X,Y)<1/k$, it follows that $\theta:X\to Y$ defined linearly by $x_j\mapsto y_j$  is an isomorphism between $X$ and $Y$, and since $\La_E(X,Y)\le 1/(2k)$ we obtain that $(1-k\La_E(X,Y))\ge1/ (1+2 k \La_E(X,Y))$.  This implies that $\nrm{\theta},\nrm{\theta^{-1}}\le 1+2k\La_E(X,Y)$, and so
$$d_\mr{BM}(X,Y) \le\log( \nrm{\theta}\cdot \nrm{\theta^{-1}})\le \log((1+2k \La_E(X,Y))^2)\le 4 k \La_E(X,Y).$$
It follows from this inequality and the fact that $d_\mr{BM}$ is $\iso(E)$-invariant that    $\id: (\mc G_k,\ga_E)\to    (\mc G_k,d_{\mr {BM}} )   $ is uniformly continuous for each $k$.

{\it 3)}: For suppose that $E$ is weak $\mc G$-Fraïssé. To simplify the notation, we may assume that $\mc G\con \age(E)$. Suppose that $(X_n)_n$ in $\mc G_k$  $d_\mr{BM}$-converges to  $X\in \mc G_k$. Fix $\vep>0$, and let $0<\de\le 1$ be such that $\iso(E)\acts \Emb_\de(X,E)$ is $\vep/4$-transitive. Let $n_0$ be such that $d_\mr{BM}(X,X_n)\le \log({1+\de})$, and for each $n\ge n_0$ choose $\theta_n:X \to X_n$ so that $\nrm{\theta_n}\cdot \nrm{\theta_n}^{-1}\le 1+\de$. Observe that   $\theta_n\in \Emb_{\de}(X,X_n)$. It follows that, for such $n$'s, there is $g_n\in \iso(E)$ such that $\nrm{\theta_n-g_n\rest X}\le \vep/4$, and by Claim \ref{loi43jiori34443}, $\ga_E(X,X_n)\le \La_E(X_n,g_n (X))\le 2(1+\de)\vep/4\le \vep$.  
\end{proof}  

\begin{proof}[{\sc Proof of Theorem \ref{main_thm_fraisse}}] Obviously, the statement we have to prove is equivalent to the corresponding one for any other family $\mc H$ such that $\mc H\equiv \mc G$, so for convenience we assume that $\mc G=\mc G_E$ and that it is $\La_E$-closed in $\age(E)$.

{\it 1)} $\implies$ {\it 2)}:  We already know by Proposition      \ref{lkklkir8}  {\it 2)} that  $\id: (\mc G_k,\ga_E)\to    (\mc G_k,d_{\mr {BM}} )   $ is uniformly continuous for each $k$. The other part readily follows from the next. 

 \clam
Suppose that $\vep>0$ and $0<\de\le 1$. If $\iso(E)\acts \Emb_\de(X,E)$ is $\vep$-transitive, then $d_\mr{BM}(X,Y)\le \de/2$ implies that $\ga_E(X,Y)\le 4 \vep$ for every $Y\in \age(E)$.
 \fclam
 \prucl
  We assume the hypotheses, and  suppose that $d_\mr{BM}(X,Y)\le \de/2$. Let $\theta: X\to Y$ be an isomorphism such that $\nrm{\theta }\nrm{\theta^{-1}}\le e^{\de/2}\le 1+\de$. Hence, there is some $g\in \iso(E)$ such that $\nrm{g\rest X-\theta}\le \vep$. By Claim  \ref{loi43jiori34443}, $\La_E(g X, \theta X)\le 4\vep$, so $\ga_E(X,Y)\le 4\vep$.
 \fprucl
   {\it 2)} $\implies$ {\it 3)}: Suppose that $\ga_E$ is a complete pseudometric that is   uniformly equivalent to $d_\mr{BM}$  on $\mc G_k$ for every $k$. We have to prove that $\mc G_k$ is compact, so let   $(X_n)_n$ be a sequence in $\mc G_k$. Since $d_\mr{BM}$ is a compact pseudometric on the class of all $k$-dimensional Banach spaces, we can extract a $d_\mr{BM}$-Cauchy subsequence $(X_n)_{n\in N}$. Since   $\id: (\mc G_k,d_\mr{BM})\to    (\mc G_k,\ga_E)$    is uniformly continuous,  $(X_n)_{n\in N}$ is $\ga_E$-Cauchy.  Since  $\ga_E$ is   complete in $\mc G$, it follows that $(X_n)_{n\in N}$ $\ga_E$-converges to some $X\in \mc G_k$.   Finally, the identity  $\id: (\mc G_k,\ga_E)\to    (\mc G_k,d_{\mr {BM}} )$ is continuous,  hence, $(X_n)_{n\in N}$ $d_\mr{BM}$-converges to $X$.   

  {\it 3)} $\implies$ {\it 1)} : It follows from Proposition \ref{lkklkir8} {\it 3)} and the fact that we are supposing that $\mc G_k$ is $d_\mr{BM}$-compact that $\ga_E$ is also a compact pseudometric, hence complete.  Fix now $k$ and $\vep>0$, and for each $X\in \mc G_k$,  choose $0< \de_{X}\le 1$ such that $\iso(E)\acts \Emb_{\de_X}(X,E)$ is $\vep/2$-transitive. By compactness of $(\mc G_k,d_\mr{BM})$, we can find $(X_j)_{j<n}$ such that $\mc G_k\con \bigcup_{j<n}B_{\mr{BM}}(X_j,\de_{X_j}/3)$.  We claim that $\de:=(1/3)\min_j \de_{X_j}$ works: For suppose that  $\ga,\eta\in \Emb_\de(X,E)$ with $X\in \mc G_k$.  Let $j<n$  and let $\theta: X_j\to X$ be an isomorphism such that $\nrm{\theta}\nrm{\theta^{-1}}\le  e^{\de_{X_j}/3}$. Since $\de_{X_j}\le 1$,  it follows that 
$$\nrm{\theta},\nrm{\theta^{-1}}\le \nrm{\theta}\nrm{\theta^{-1}}\le 1+\frac{\de_{X_j}}2.$$
 Since $(1+\de)(1+\de_{X_j}/2)\le  (1+\de_{X_j}/3)(1+\de_{X_j}/2)\le 1+\de_{X_j}$,    it follows that $\ga \circ \theta, \eta\circ \theta \in \Emb_{\de_{X_j}}(X_j, E)$, so there is $g\in \iso(E)$ such that $\nrm{g\circ \ga \circ \theta- \eta \circ \theta}\le \vep/2$ . Hence,  $\nrm{g\circ \ga - \eta  }\le (\vep/2) \nrm{\theta^{-1}}\le \vep$.  
 
 \end{proof}  
It is interesting to  note that being Fraïssé   is preserved by taking ultrapowers.  Recall that given a Banach space $E$, and given a non-principal ultrafilter $\mc U$ on $\N$, we write $E_\mc U$ to denote the {\em ultrapower} $E^\N/\mc U$. For each $n\in \N$, let    $\pi_n: E^\N \to E$ be the $n^\mr{th}$ projection, $\pi_n((x_m)_m)=x_n$.  We denote by $\iso(E)_\mc U$ the subgroup of $\iso(E_\mc U)$  consisting of all isometries of the ultrapower $E_\mc U$ of the form $[(x_n)_n]_\mc U\mapsto [(g_n(x_n))_n]_\mc U$ for some sequence $(g_n)_n\in \iso(E)^\mbb N$.    It is well known that $\age(E_\mc U)\equiv \overline{\age(E)}^\mr{BM}$. 

\prop  
Let $E$ be a Banach space, and let $\mc U$ be a non-principal ultrafilter on $\N$. The following are equivalent.
\begin{enumerate}[1)]
\item $E$ is Fraïssé.

\item  $E_\mc U$ is Fraïssé    and $(\iso(E))_\mc U$ is dense in $\iso(E_\mc U)$ with respect to the strong operator topology SOT. 
\item  For  every $X\in \age(E_\mc U)$ one has that  $(\iso(E))_\mc U\acts \Emb(X,E_\mc U)$ is almost transitive. 

\item  For  every $X\in \age(E_\mc U)$ one has that  $(\iso(E))_\mc U\acts \Emb(X,E_\mc U)$ is transitive. 
\item  For  every separable $X\subset E_\mc U$ one has that  $(\iso(E))_\mc U\acts \Emb(X,E_\mc U)$ is transitive.

\item   $E_\mc U$ is \uh and  $(\iso(E))_\mc U$ is dense in $\iso(E_\mc U)$ with respect to the SOT. 
\end{enumerate} 
Moreover, if any of the previous conditions hold we have that 
\begin{equation}\label{wrmlkqw}
\varpi_{E_\mc U}(k,\de)=  \varpi_E^*(k,\de).
\end{equation}
\fprop
In particular, it follows that if $E$ is Fraïssé, then its ultrapowers are  Fraïssé and \uh.

\prue  

{\it 1)}$\implies$ {\it 2)}: 
\clam\label{43iojrio34jrio34rhh}
For every  $k\in \N$, $X\in \age_k(E_\mc U)$ and $0< \xi<\de$  we have    that $(\iso(E))_\mc U$ acts $(\varpi_E(k,\de)+\xi)$-transitively on $\Emb_{\de-\xi}(X,E_\mc U)$.
 In particular, the action $(\iso(E))_\mc U\acts \Emb(X,E_\mc U)$ is approximately transitive for every $X\in \age(E_\mc U)$,  and    
$\varpi_{E_\mc U}(k,\de)\le   \inf_{\de'>\de}\varpi_E(k,\de')$.  
\fclam
\prucl
 For suppose that  $X\in \age_k (E_\mc U)$. Since we know that $\age_k(E_\mc U)\equiv \overline{\age_k(E)}^\mr{BM}\equiv\age_k(E)$, we may assume that $X\in \age_k(E)$.  Let $\ga\in \Emb_{\de-\xi}(X,E_\mc U)$. For every $x\in X$, write $\ga(x)=[(\ga_n(x))]_\mc U$. It follows that the set $A=\conj{n\in \N}{\ga_n\in \Emb_{\de}(X,E)}\in \mc U$, so for each $n\in A$, choose $g_n\in \iso(E)$ such that $\nrm{g_n\rest X- \ga_n}\le \varpi_k(\de)+\xi/2$.  For each $n\notin A$, let $g_n:=\id_E$. Define $I:=[(g_n)_n]_\mc U$; then  $A\con\conj{n\in \N}{ \nrm{\pi_n \circ I \rest X - \ga_n}\le \varpi_k(\de)+\xi}$, so, this set is in $\mc U$. This means that $\nrm{I\rest X-\ga}\le \varpi_k(\de)+\xi$.
 \fprucl
It  follows from  this, and the characterization in Theorem \ref{main_thm_fraisse} that $E_\mc U$ is Fraïssé. Moreover, the Claim  also easily implies that   $(\iso(E))_\mc U$ is dense in $\iso(E_\mc U)$. 

{\it 2)}$\implies$ {\it 3)}:   Since $E_\mc U$ is Fraïssé, it is (AuH), and this together with the fact we are assuming that $(\iso(E))_\mc U$ is dense in $\iso(E_\mc U)$, readily implies {\it 3)}.  

{\it 3)}$\implies$ {\it 4)}:     Suppose that $X\in \age(E_\mc U)$ and  $\ga\in \Emb(X,E_\mc U))$. For every $x\in X$, write $\ga(x)=[(\ga_n(x))]_\mc U$,  and for each $k\in \N$, choose $I_k:=[(g_n^{(k)})]_\mc U\in (\iso(E))_\mc U$ such that 
 $\nrm{I_k\rest X-\ga}< 1/k$.  For each $k$, let $A_k:=\conj{n\in \N}{ \ga_n \in \Emb(X,E)\text{ and } \nrm{g_n^{(k)} \rest X- \ga_n}\le  1/k}\in \mc U$.   By making a small perturbation if needed, we may assume that for every $n\in \N$ one has that $\ga_n $ is not an isometry.  So, in particular, $\bigcap_k A_k=\buit$. For each $n \in \bigcup_k A_k$, let $k(n)$ be the maximal $k$ such that $n\in A_k$, and define $g_n:= g_n^{(k(n))}$, while for $n\notin \bigcup_k A_k$, let $g_n:=\id_E$. Let $I:=[(g_n)]_\mc U$. Then for every $k\in \N$, 
 $A_k\con \conj{n}{\nrm{g_n \rest X-\ga_n}\le 1/k}$, so, this set is in $\mc U$, and  as a consequence we have that $I\rest X= \ga$. 

{\it 4)}$\implies$ {\it 5)}:  the proof is very similar to the previous one of that {\it 3)}$\implies$ {\it 4)}. We leave the details to the reader.

{\it 5)}$\implies$ {\it 6)}:  the fact that $E_\mc U$ is \uh is trivial, and the second fact is a direct consequence of {\it 4)}.

In order to obtain \eqref{wrmlkqw} we will not prove {\it 6)}$\implies$ {\it 1)} directly, but {\it 6)}$\implies$ {\it 2)} and then {\it 2)}$\implies$ {\it 1)}.

{\it 6)}$\implies$ {\it 1)}:  Suppose otherwise that $E$ is not Fraïssé; by Theorem  \ref{main_thm_fraisse}, there is some $k$, $\vep>0$ and for every $n$ some $X_n\in \age_k(E)$ and $\ga_n\in \Emb_{1/n}(X_n,E)$ such that  
\begin{equation}\label{kj4rui34r4}
\inf_{g\in \iso(E)} \nrm{g\rest X_n- \ga_n}>\vep.
\end{equation}
For each $n$, let $(x_j^{(n)})_{j<k}$ be an Auerbach basis of $X_n$, and for each $j<k$ let $x_j:=[(x_j^{(n)})]_\mc U$, $X:=\langle x_j\rangle_{j<k}$, and $\ga:X\to E_\mc U$, $\ga(\sum_j a_j x_j):=[(\ga_n(\sum_j a_j x_j^{(n)}))]_\mc U$.  Then,  $\ga\in \Emb(X,E_\mc U)$; By {\it 6)}, there is $I=[(g_n)]_\mc U\in (\iso(E))_\mc U$ such that $\nrm{I\rest X- \ga}<\vep/2$.  In particular, $A:=\conj{n}{\nrm{g_n\rest X -    \ga_n }\le \vep/2}\in \mc U$.  Let now $B:=\conj{n}{\text{$ (x_j^{(n)})_{j<k} $ and $(x_j)_{j<k}$ are 2-equivalent} }\in \mc U$.   It follows that given $n\in A\cap B$, and given $x= \sum_j a_j x_j^{(n)}\in X_n$, 
$$ \nrm{g_n (x)- \ga_n(x)} =\nrm{\pi_n \circ I (\sum_j a_j x_j)-  \pi_n\ga(\sum_j a_j x_j)}\le \frac{\vep}{2}\nrm{\sum_j a_j x_j}\le \vep \nrm{x}.$$
This contradicts \eqref{kj4rui34r4}

Finally, suppose that $E_\mc U$ is Fraïssé with modulus $\varpi_{E_\mc U}$, and that $(\iso(E))_\mc U$ is dense in $\iso(E_\mc U)$.
\clam\label{43iojrio34jrio34rhha}
$\varpi_{E_\mc U}(k,\de)=\varpi_{E_\mc U}^*(k,\de)=\varpi^*_{E_\mc U}(p,\de)$.
\fclam
\prucl
Fix $\ga\in \Emb_\de(X,E_\mc U)$, for $X\in \age_k(E_\mc U)$, and $\vep>0$. Without loss of generality, we assume that $\pi\circ \ga\notin \Emb(X,E_\mc U)$. For each $k$, let $\de_k:=\de+1/k$, and let $A_k:=\conj{n\in \N}{\pi_n \ga \in \Emb_{\de_k}(X,E_\mc U)}$, that belongs to $\mc U$. Let $\ga_k:X\to E_\mc U$  be  such that $\pi_n\circ \ga_k = \pi_n\circ \ga$ for every $n\in A_k$. Then $\ga_k\in \Emb_{\de_k}(X,E_\mc U)$, so there is some  $I_k:= [(g^{(k)}_n)_n]_\mc U\in (\iso(E))_\mc U$ such that the set  $B_k=\conj{n\in A_k }{\nrm{g_n\rest X-\pi_n \circ \ga_k}\le \varpi_{E_\mc U}(k,\de_k)+\vep}\in \mc U$. For each $n\in \bigcup_{k}B_k$, let $k(n)$ be the maximum of those $k$'s such that $n\in B_k$. For such $n$, let $g_n:=g_n^{(k(n))}$, and for $n\notin \bigcup_k B_k$, let $g_n:=\id_E$. It follows that $I:=[(g_n)_n]_\mc U\in (\iso(E))_\mc U$ satisfies that $\nrm{I\rest X- \ga}\le \inf_{\de'}\varpi(k,\de')+\vep$.  
 \fprucl
Now let us see that $\varpi_E(k,\de)\le \varpi_{E_\mc U}(k,\de)$: given $\ga\in \Emb_\de(X,E_\mc U)$, we define $\bar \ga \in \Emb_\de(X,E_\mc U)$, $\bar\ga(x):= [(\ga(x))]_\mc U$. Let $I=[(g_n)_n]_\mc U\in (\iso(E))_\mc U$ be such that 
$\nrm{I\rest X-\bar\ga}\le \varpi_{E_\mc U}(k,\de)+\vep/2$, so $A=\conj{n\in \N}{ \nrm{g_n\rest X-\ga}\le \varpi_{E_\mc U}(k,\de)+\vep}\in \mc U$; if we choose $n\in A$, then $\nrm{g_n\rest X-\ga}\le  \varpi_{E_\mc U}(k,\de)+\vep$. 
The readers familiar with the logic structures, will notice that  some of the previous implications  are particular instances of standard facts in continuous or classical model theory. For example, 3) implies 5) is the   countable saturation of non-trivial ultrapowers, while the reverse implication is an instance of \L os Theorem (see, for example \cite{Ho} or \cite{BYBeHeUs}).

From this, and the  Claim \ref{43iojrio34jrio34rhh} and Claim \ref{43iojrio34jrio34rhha} we have that $\varpi_{E_\mc U}(k,\de)= \inf_{\de'>\de}\varpi_{E}(k,\de')$. 
\fprue

\cor	
For $p\neq 4,6,8,\dots$, any non-trivial ultrapower of $L_p(0,1)$ is an \uh $L_p$-space.  \qed
\fcor
It had been already observed in \cite[Proposition 4.13]{ACCGM} that any non-trivial ultrapower of $\mbb G$ is ultrahomogeneous. 
We shall now see that Fraïssé Banach spaces are locally determined.  The following  is a slight modification of a similar concept introduced in \cite{Lup}.
\defi\label{wlkrjwjrew87755}
Given a family $\mc G$ of finite dimensional spaces, let $[\mc G]$ be the class of all   Banach spaces $E$ such that   the collection of subspaces of elements of $\mc G_E$ is $\La_E$-dense in $\age(E)$. 
\fdefi
\nota 
It is easily see that if $E\in [\{\ell_p^n\}_n]$, then $E$ is a $\mc L_{p,1+}$-space, that is for every $\de>0$ and every $X\in \age(E)$ there is some $X\con Y\in \age(E)$ such that $d_{\mr BM}(Y,\ell_{p}^{\dim Y})\le \de$.  Conversely, if $E$ is an $\mc L_{p,1+}$-space that is in addition a {\em stable }$\{\ell_p^n\}_n$-Fraïssé space,  then $E\in [\{\ell_p^n\}_n]$. Stable $\mc G$-Fraïssé spaces are   those for which their moduli does not depend on the dimensions.  Given $X\in \age(E)$ and $\vep>0$, let $0<\de\le 1$ be such that $\varpi_{\mc G, E}(\de)\le \vep/4$, and let $X\con Y\in \age(E)$, $\ga\in \Emb_{\de}(\ell_p^n,Y)$ and $\eta\in \Emb(\ell_p^n,E)$  for some $n$.  Find $g\in \iso(E)$ such that $\nrm{\ga- g\circ \eta}\le \vep/4$. Let $Z:= \im (g\circ \eta)$. It follows from Claim \ref{loi43jiori34443} that  $\La_E(Y, Z)\le 2 (1+\de)\nrm{\ga- g\circ \eta}\le \vep$. 
\fnota	

\begin{theorem}\label{k3whiueuiuhiewew}
 Suppose that $E$ and $F$ are $\mc G$-Fraïssé Banach spaces, with $\mc G\preceq \age(E),\age(F)$ and $E\in [\mc G]$. The following are equivalent.
 \begin{enumerate}[1)]
 \item $F\in [\mc G]$.
 \item $E$ is isometric to $F$.   
 \end{enumerate}   
\end{theorem}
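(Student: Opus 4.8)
The direction \textit{2)}$\implies$\textit{1)} is immediate: a surjective isometry $U\colon E\to F$ carries $\age(E)$ onto $\age(F)$, preserves the gap metric in the sense that $\La_E(X,Y)=\La_F(UX,UY)$, and sends copies of elements of $\mc G$ to copies of elements of $\mc G$, i.e.\ $U(\mc G_E)=\mc G_F$. Hence the $\La_E$-density of subspaces of elements of $\mc G_E$ in $\age(E)$ transfers verbatim to $F$, so $F\in[\mc G]$ by Definition \ref{wlkrjwjrew87755}. For \textit{1)}$\implies$\textit{2)} the plan is to build a surjective linear isometry $U\colon E\to F$ by a metric back-and-forth in the spirit of the uniqueness of Fraïssé limits. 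First I would record a preliminary about ages. Since $\mc G\preceq\age(E),\age(F)$, every $G\in\mc G$ embeds isometrically into both spaces, so $\mc G$ and all its subspaces lie in $\age(E)_\equiv\cap\age(F)_\equiv$; conversely $E\in[\mc G]$ says each $X\in\age(E)$ is a $\La_E$-limit of subspaces of $\mc G$-copies, and the domination $d_\mr{BM}(X,Y)\le 4k\,\La_E(X,Y)$ of Proposition \ref{lkklkir8} turns this into $d_\mr{BM}$-approximation. Comparing the two spaces one obtains $\overline{\age(E)}^\mr{BM}=\overline{\age(F)}^\mr{BM}$; in particular every finite dimensional subspace of $E$ admits, for each $\de>0$, a $\de$-isometric embedding into $F$, and symmetrically.

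The heart of the construction is a single extension step phrased \emph{entirely} in terms of $\mc G$-copies, because the hypothesis only supplies homogeneity for embeddings \emph{whose domain lies in $\mc G$}. I would maintain, at stage $n$, a $\mc G$-copy $G_n\con E$, a $\mc G$-copy $G_n''\con F$, and a surjective $\de_n$-isometry $w_n\colon G_n\to G_n''$, with the $\de_n$ and the successive perturbations summable. To perform a forth step capturing a new vector $e\in E$: using $E\in[\mc G]$, choose a $\mc G$-copy $G_{n+1}\con E$ such that $G_n+\langle e\rangle$ is $\La_E$-approximated by a subspace of $G_{n+1}$, which via an Auerbach-basis matching (as in Proposition \ref{lkklkir8}) yields a near-isometry $j\colon G_n+\langle e\rangle\to G_{n+1}$. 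Since $G_{n+1}\in\mc G\preceq\age(F)$ there is an isometric $\psi\colon G_{n+1}\to F$. Now $w_n$ and $\psi\circ j|_{G_n}$ are two near-isometric embeddings of $G_n$ into $F$, and crucially $G_n\in\mc G$, so the $\mc G$-Fraïssé property of $F$ supplies $g\in\iso(F)$ with $\nrm{g\circ\psi\circ j|_{G_n}-w_n}\le\varpi_{\mc G,F}(\dim G_n,\de')+\vep_n$, where $\de'$ is the (small) distortion of these embeddings; Claim \ref{loi43jiori34443} then converts this norm estimate into a gap estimate on the images. Setting $w_{n+1}:=g\circ\psi$ and $G_{n+1}'':=g\psi(G_{n+1})$ extends the data coherently. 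The back step is identical with the roles of $E$ and $F$ interchanged, using $F\in[\mc G]$, $\mc G\preceq\age(E)$, and the $\mc G$-Fraïssé property of $E$.

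Finally I would run the two kinds of steps alternately along dense sequences of $E$ and of $F$, so that $\bigcup_n G_n$ is dense in $E$ and $\bigcup_n G_n''$ is dense in $F$. Choosing the tolerances so that $\sum_n(\de_n+\vep_n)<\infty$ and exploiting that $\varpi_{\mc G,E}(\cdot,\de),\varpi_{\mc G,F}(\cdot,\de)\to 0$ as the input distortions shrink, the maps $w_n$ converge on the dense union to a well-defined linear surjective isometry $U\colon E\to F$.

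The main obstacle is precisely the bookkeeping forced by the non-hereditary nature of $\mc G$: the homogeneity statement may only be invoked when the overlap carried from one stage to the next is a genuine $\mc G$-copy, so every alignment must be routed through the $G_n$'s, while the $[\mc G]$-density is used to fatten arbitrary finite dimensional subspaces into $\mc G$-copies, all with distortions kept under control by the moduli of stability. A secondary point is separability: the clean ``dense union'' argument is for separable $E,F$, and the general case is handled by a transfinite recursion over a well-ordered dense subset, the equality of ages established above ensuring that the construction never gets stuck and that the two density characters match.
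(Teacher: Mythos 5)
Your argument is essentially the paper's own proof: the same back-and-forth routed through $\mc G$-copies, with $E\in[\mc G]$ (resp.\ $F\in[\mc G]$) used to fatten finite dimensional pieces into subspaces of $\mc G$-copies, $\mc G\preceq\age(F)$ (resp.\ $\mc G\preceq\age(E)$) used to move a $\mc G$-copy into the other space, and the $\mc G$-Fraïssé property invoked only on domains that are $\mc G$-copies; the paper merely phrases the stages as approximate embeddings $\ga_k\in\Emb_{\de_{m_k}}(X_{m_k},Y_{n_k})$, $\eta_k\in\Emb_{\de_{n_k}}(Y_{n_k},X_{m_{k+1}})$ with $\eta_k\circ\ga_k$ and $\ga_{k+1}\circ\eta_k$ close to the respective inclusions, which is your $w_n$/$w_n^{-1}$ data in different notation.

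Two corrections are needed. First, your capture condition is too weak: you only require that $G_{n+1}$ approximately contain $G_n+\langle e\rangle$. Then a fixed $x\in G_n$ is only within roughly $\sum_{k>n}\theta_k$ of the later $G_m$'s, this accumulated error does not tend to $0$ as $m\to\infty$, and the limit maps $U,V$ are merely isomorphisms with $\nrm{U},\nrm{V}\le\prod_k(1+\theta_k)$ and $V\circ U\approx\id$ up to the same tails: running the scheme with total error below any prescribed $\epsilon$ only yields $d_\mr{BM}(E,F)=0$, not an isometry. What makes the limit exactly isometric in the paper is that each new copy re-captures the \emph{whole history} with a fresh error tending to zero (the condition $\sum_{k<l}X_k\con_{\vep_l}X_l$), so every fixed vector of the chain is approximated by all sufficiently late copies with error $\to0$; your construction must be set up the same way, and this is also how one ensures that $\bigcup_n G_n$ is genuinely dense. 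Second, your closing remark about the non-separable case is false, and so is the theorem without separability: for $\mc G=\{\ell_2^n\}_n$, $E=\ell_2(\omega_1)$ and $F=\ell_2$, every finite dimensional subspace of either space is isometric to some $\ell_2^k$, so $E,F\in[\mc G]$ and $\mc G\preceq\age(E),\age(F)$, and both spaces are $\mc G$-Fraïssé (the paper's Hilbert space argument uses no separability), yet $E\not\equiv F$. Equality of ages does not control density character; separability of $E$ and $F$ is an implicit hypothesis here, it is exactly what the paper's (and your) countable dense chains use, and it is how the theorem is applied in the uniqueness of $\flim\mc G$.
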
  
This result motivates the next notion.
\defi
Let $\mc G$ be a  class of finite dimensional Banach spaces. The {\em Fraïssé limit} of $\mc G$, denoted by $\flim \mc G$ is, if exists,  the unique separable $\mc G$-Fraïssé Banach space $E$ such that $E\in [\mc G]$.
\fdefi
We have the following interesting fact.
\prop\label{kjriojireee8877444}
If $\flim\mc G$ and $\flim \overline{\mc G}^\mr{BM}$ exists, then they are isometric.
\fprop
\prue
Set $E=\flim\mc G$,  $F=\flim \overline{\mc G}^\mr{BM}$ and $\mc H:=\bmc{\mc G}$. Since  $F$ is $\mc G$-Fraïssé,  by Theorem  \ref{k3whiueuiuhiewew},  it suffices to prove that $F\in [\mc G]$: fix $X\in \age(F)$ and $\vep>0$, and let $Y\in (\bmc{\mc G})_F$ be such that  $X\con_{\vep/2} Y$. Let $0<\de\le 1$ be such that $\varpi_{\mc H, F}(\dim Y, \de)\le \vep/8$, and let $Y_0\in \mc G$, $\ga\in \Emb_{\de}(Y_0, Y)$ and $\eta\in \Emb(Y_0, F)$. Find $g\in \iso(F)$ such that $\nrm{\ga-g \circ \eta}\le \vep/8$. It follows that $Y_1:=\im (g\circ \eta)\in \mc G_F$ and by    Claim  \ref{loi43jiori34443}, $\La_F(Y, Y_1)\le \vep/2$. Hence, 
$X\con_{\vep/2} Y\con_{\vep/2} Y_1$ and so $X\con_{\vep} Y_1$.  
\fprue
\begin{proof}[\sc Proof of Theorem \ref{k3whiueuiuhiewew}]
  ${\it 2)}\implies {\it 1)}$ is trivial. Let us prove  that ${\it 1)}\implies {\it 2)}$:  Given two subspaces $F,G$ of a given space $E$, and $\vep>0$, we write $X\con_\vep Y$ when there is some $Z\con \age(Y)$ of dimension $\dim Z=\dim X$ such that $\La_E(X,Z)\le \vep$.
   Fix two rapidly decreasing strictly positive sequences $(\de_k)_k$ and $(\vep_k)_k$  with 
 \begin{enumerate}[i)]
 \item   $\de_k\le \vep_k\le 1$ for every $k$.
\item $\prod_k (1+\de_k)\le \sqrt{2}$, $\sum_{k>l} \vep_k \le \vep_l/4$. 
 \item $\varpi_X(\dim X_k, \de_k),\varpi_Y(\dim Y_k, \de_k)\le \vep_k/4$
  
 \end{enumerate}  
  Fix now sequences $(X_k)_k$ in  $\mc G_X$ and  $(Y_k)_k$   in $\mc G_Y$ whose respective unions are dense, and such that $\sum_{k<l} X_k \con_{\vep_l}  X_l$ and $\sum_{k<l} Y_k \con_{\vep_l}  Y_l$.  
Using that both $X$ and $Y$ are $\mc G$-Fraïssé Banach spaces and that $\bigcup_k X_k$ and $\bigcup_k Y_k$ are dense in $X$ and $Y$, respectively,   we are going to find two sequences of integers $(m_k)_k$ and $(n_k)_k$ with $m_k\le n_k<m_{k+1}$,  and  $\ga_k\in \Emb_{\de_{m_k}}(X_{m_k},Y_{n_{k}})$, $\eta_k\in \Emb_{\de_{n_k}}(Y_{n_{k}},X_{m_{k+1}})$ such that   
\begin{equation}\label{i5iou4776783rtyy}
	 \nrm{\eta_k\circ \ga_k -i_{X_{m_k}, X}  }\le \vep_{m_k} \text{ and }\nrm{\ga_{k+1}\circ \eta_k -i_{Y_{n_{k}, Y  }  }  }\le {\vep_{n_k}}.
\end{equation}
  Suppose that $\ga_k :X_{m_k}\to Y_{n_{k}}$ is defined.    Since $Y_{n_{k}}\in \mc G_Y$,   we can  find $f\in \Emb(Y_{n_{k}}, X)$ such that $\nrm{f \circ \ga_k -i_{X_{m_k}, X } }\le \varpi_X(\dim X_{m_k}, \de_{m_k})+\de_{m_k}$.   Let $m_{k+1}>n_{k}$ be large enough so that there is some   linear mapping $\iota:f(Y_{n_{k}})\to X_{m_{k+1}}  $ such that   $\nrm{\iota - i_{f(Y_{n_{k}}),X} }\le \de_{n_k}/2$. Then it follows that $\eta_k:=\iota\circ f\in \Emb_{\de_{n_{k}}}(Y_{n_{k}}, X_{m_{k+1}})$  and $\nrm{\eta_k \circ \ga_k -\id_{X_{m_k}}}\le \vep_{m_k}$.  Similarly one finds $n_{k+1}$ and $\ga_{k+1}$. 
We re-enumerate as follows: Let $\widehat{X}_k:=X_{m_k}$, $\widehat{Y}_k:= Y_{n_k}$, $\widehat{\vep}_k:= \vep_{m_k}$, $\widehat{\de}_k:=\de_{m_k}$ for every $k$. 
\clam
Fix $x\in X$ and $\vep>0$. Let $m$ be such that $\{x\}\con_{\vep/2} \widehat{X}_m$ and   $\widehat\vep_{m-1}\le \vep/2$. Then for every $k,l\ge m$ and $v\in X_k$, $w\in X_l$ such that $\nrm{v-x},\nrm{w-x}\le \vep\nrm{x}$ we have that 
\[ \nrm{\ga_k (v) -\ga_l (w)}\le 3 \vep \nrm{x}.\]
\fclam
Once this is established, we define $\ga: X\to Y$ as follows: Give $x\in X$, let $x[n]\in X_n$ be such that 
$\nrm{x-x[n]}=d(x, X_n)$. By the choice of the sequences $(X_n)_n$, $x[n]\to_n x$. By the previous claim, $(\ga_n(x[n]))_n$ is a Cauchy sequence. Let $\ga(x):=\lim_n \ga_n(x[n])$. It is easily seen that $\ga$ is a bounded linear mapping $\ga:X\to Y$ such that $\nrm{\ga}\le 1$. Similarly one defines $\eta:Y\to X$ and proves that $\ga\circ \eta= \id_Y$, $\eta\circ \ga=\id_X$. 
\prucl
 Fix all data, and suppose that $k\ge l$. If $k=l$, then $\nrm{\ga_k (v) -\ga_k(w)}\le \nrm{\ga_k}\nrm{v-w}\le 4 \vep\nrm{x}$. Suppose that $k=l +n$ with $n>0$. Set $w_0:= w$, and  for each $1\le j < n$, set $w_{j+1}:=  \eta_{l+j} (\ga_{l+j}(w_j))\in \widehat{X}_{l+j+1}$.  Notice that  $\nrm{\ga_{l+j}(w_j)}\le \prod_k (1+\de_k)^2\nrm{w_0}\le 4\nrm{x}$.  
  Now, it follows from \eqref{i5iou4776783rtyy}, 
 \begin{align}\label{lki5u5u1}
\nrm{\ga_{k} (w_n )-\ga_l (w)}=& \nrm{\ga_{k} (w_n )-\ga_l (w_0)}\le \sum_{j=0}^{n-1} \nrm{\ga_{l+{j+1}} (w_{j+1} )-\ga_{l+j} (w_{j})}   =  \nonumber \\
=&   \sum_{j=0}^{n-1} \nrm{\ga_{l+{j+1}} (\eta_{l+j} (\ga_{l+j}(w_j))      )-\ga_{l+j} (w_{j})}  \le \sum_{j=0}^{n-1} \widehat{\vep}_{l+j} \nrm{\ga_{l+j}}\nrm{w_j} \le  \nonumber \\
\le &4 \nrm{x}\sum_{j=0}^{n-1} \widehat{\vep}_{l+j} \le \frac{\vep}2 \nrm{x}. 
 \end{align}
On the other hand,  since $\nrm{w_{j+1}-w_j}= \nrm{\eta_{l+j}(\ga_{l+j}(w_j))-w_j}\le \widehat{\vep}_{l+j}\nrm{w_j}$,  again invoking \eqref{i5iou4776783rtyy}, 
\begin{align}\label{lki5u5u2}
\nrm{w_n- v}\le &  \sum_{j=0}^{n-1}\nrm{w_{j+1}-w_j}+\nrm{w_0- v} =\sum_{j=0}^{n-1}  \widehat{\vep}_{l+j}\nrm{w_j} + 2\vep\nrm{x}\le \widehat{\vep}_{l-1}\nrm{x} + 2\vep\nrm{x}\le \frac 52\vep \nrm{x}
\end{align} 
Combining \eqref{lki5u5u1} and \eqref{lki5u5u2} we obtain that $\nrm{\ga_{k}(v)-\ga_l(w)}\le 3\vep\nrm{x}$. 
\fprucl

\vspace{1cm}

 Note now that given $x\in X_{m_k}$ of norm 1, and $l\ge k$, we have that 
$\nrm{\ga_l (x)- \ga_{l+1}\circ \eta_l \circ \ga_l (x)}\le \vep_l \nrm{\ga_l(x)}\le \vep_l (1+\de_l) \le 2\vep_l$. Also,   
$\nrm{\ga_{l+1} (x)- \ga_{l+1}\circ \eta_l \circ \ga_l (x)}\le \nrm{\ga_{l+1}}\vep_l\le 2\vep_l$.  
So, given $l\ge k$ and $m\in \N$, we have that $\nrm{\ga_{l+m}(x)- \ga_l(x)}\le 4\sum_{j=l}^{l+m-1}\vep_j \le \vep_{l-1}$. This proves that $(\ga_l(x))_{l\ge k}$ is a Cauchy sequence, and similarly, $(\eta_l(y))_{l\ge k}$ is also Cauchy for $y\in Y_{n_k}$. we define $\ga: \bigcup_k X_{m_k}\to \bigcup_{k}Y_{n_k}$, and $\eta:  \bigcup_{k}Y_{n_k}\to \bigcup_k X_{m_k}$, by $\ga(x):=\lim_{l\ge k} \ga_k(x)$ and $\eta(y):=\lim_{l\le k} \eta_k(y)$ for $x\in X_{m_k}$ and $y\in Y_{n_k}$. The extensions of $\ga$ and $\eta$ to $X$ and $Y$, respectively are isometric embedding and one if the inverse of the other, so $X$ and $Y$ are isometric.   ${\it 2)}\implies {\it 1)}$ is trivial.
\end{proof}  

In particular,  if $X$ and $Y$ are separable Fraïssé spaces such that $\age(X)\equiv \age(Y)$, then $X$ is isometric to $Y$, denoted by $X\equiv Y$.  However we have the following stronger characterization. 	

  \begin{proposition}[Uniqueness]\label{oi43hjiio43437}  
For separable \auh spaces $X$ and $Y$ the following are equivalent.
  \begin{enumerate}[(a)]
  \item $X\equiv Y$.
  \item $\age(X)\equiv \age(Y)$.   
  \end{enumerate}

 \end{proposition}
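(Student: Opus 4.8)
The implication \emph{(a)}$\implies$\emph{(b)} is immediate: a surjective linear isometry $T\colon X\to Y$ carries each finite dimensional $F\sset X$ onto an isometric copy $T(F)\sset Y$, and $T^{-1}$ does the same in the other direction, whence $\age(X)\equiv\age(Y)$. The whole point is the converse, which I would obtain by a back-and-forth argument powered by the approximate ultrahomogeneity of the two spaces; it is essentially the $\de=0$, $\mc G=\age$ specialization of the construction in the proof of Theorem \ref{k3whiueuiuhiewew}, but now carried out with genuine isometric embeddings rather than $\de$-isometric ones, so that no $[\mc G]$ hypothesis is needed.

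The engine is the following extension step. \emph{Forth.} Given a finite dimensional $F\sset X$, an isometric embedding $\phi\in\Emb(F,Y)$, a finite dimensional $F'\sset X$ with $F\sset F'$, and $\vep>0$, there is $\phi'\in\Emb(F',Y)$ with $\nrm{\phi'\rest F-\phi}\le\vep$. Indeed $F'\in\age(X)$, so by \emph{(b)} there is some $\psi\in\Emb(F',Y)$; then $\psi\rest F$ and $\phi$ are two isometric embeddings of $F$ into $Y$, so the \auh of $Y$ furnishes $g\in\iso(Y)$ with $\nrm{g\circ(\psi\rest F)-\phi}\le\vep$, and $\phi':=g\circ\psi$ works. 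Interchanging the roles of $X$ and $Y$ (and invoking the \auh of $X$) gives the symmetric \emph{back} step.

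With this in hand I would fix dense sequences $(x_n)_n$ in $X$ and $(y_n)_n$ in $Y$ and a rapidly decreasing summable sequence $(\vep_k)_k$, and build adaptively finite dimensional subspaces $A_0\sset A_1\sset\cdots\sset X$ and $B_0\sset B_1\sset\cdots\sset Y$ with dense unions (forcing $x_k\in A_k$ and $y_k\in B_k$ from some stage on), together with isometric embeddings $\ga_k\in\Emb(A_k,Y)$ and $\eta_k\in\Emb(B_k,X)$. At the $k$-th back step I set $B_k:=\linspan(B_{k-1}\cup\ga_k(A_k)\cup\{y_k\})$ and use the back step to extend the isometric embedding $\ga_k(A_k)\to X$, $\ga_k(a)\mapsto a$, to $\eta_k\in\Emb(B_k,X)$, which forces $\nrm{\eta_k\circ\ga_k-\iota_{A_k}}\le\vep_k$; dually, at the next forth step I set $A_{k+1}:=\linspan(A_k\cup\eta_k(B_k)\cup\{x_{k+1}\})$ and extend $\eta_k(B_k)\to Y$, $\eta_k(b)\mapsto b$, to $\ga_{k+1}\in\Emb(A_{k+1},Y)$, forcing $\nrm{\ga_{k+1}\circ\eta_k-\iota_{B_k}}\le\vep_{k+1}$. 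These are exactly the two approximate-inverse conditions of \eqref{i5iou4776783rtyy}.

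Finally, the limiting argument is identical to the one in the proof of Theorem \ref{k3whiueuiuhiewew}: the telescoping estimate \eqref{lki5u5u1}--\eqref{lki5u5u2} (now with $\de=0$, so that each $\ga_k,\eta_k$ preserves norms exactly) shows, using only the two approximate-inverse conditions, that $(\ga_l(v))_l$ is Cauchy whenever $v$ approximates a fixed $x\in\bigcup_kA_k$, and symmetrically for $\eta$. The limits define norm-one linear maps $\ga\colon X\to Y$ and $\eta\colon Y\to X$ that are mutually inverse, hence surjective isometries, giving $X\equiv Y$. The one point that needs care, and which is the real technical obstacle, is precisely this bookkeeping: since the approximate embeddings are only approximately mutually inverse and their domains do not nest exactly, one must propagate the $\vep_k$-errors along the chain so that the telescoping sums converge and the two limits compose to the identities on dense subspaces. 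As that estimate has already been carried out in \eqref{lki5u5u1}--\eqref{lki5u5u2}, it transfers here verbatim with $\de=0$.
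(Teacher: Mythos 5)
Your proof is correct and is essentially the paper's own argument: the paper proves the nontrivial implication by exactly this back-and-forth, building increasing sequences $(X_n)_n$, $(Y_n)_n$ with dense unions together with embeddings $\ga_n\in\Emb(X_n,Y_n)$ and $\eta_n\in\Emb(Y_n,X_{n+1})$ satisfying your two approximate-inverse conditions, each extension step being obtained, as in your forth/back lemma, by taking an arbitrary embedding supplied by $\age(X)\equiv\age(Y)$ and correcting it with an isometry furnished by the \auh of the codomain. The only cosmetic difference is that the paper carries out the telescoping limit directly (which is simpler here, since the domains nest exactly and one may test Cauchyness at a fixed $x\in\bigcup_n X_n$) instead of quoting the estimates from the proof of Theorem \ref{k3whiueuiuhiewew}.
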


\begin{proof}
  We will find  $\con$-increasing sequences $(X_n)_n$ and $(Y_n)_n$ and isometric embeddings $\ga_n:X_n\to Y_n$ and $\eta_n:Y_n\to X_{n+1}$ such that
  \begin{enumerate}[1)]
  \item $X_n\in \mc F$ and $Y_n\in \mc G$ for every $n$, and $\bigcup_n X_n$ and $\bigcup_n Y_n$ are dense in $X$ and $Y$, respectively.
  \item $\nrm{\eta_n\circ \ga_n-\id_{X_n}}\le 2^{-n}:=\vep_n$.
  \item $\nrm{\ga_{n+1}\circ \eta_n-\id_{Y_n}}\le 2^{-n}$.
  \end{enumerate}
Once this is done, given $x\in X_n$ we have that   $\nrm{\ga_{n+1}(x)-\ga_n(x)}\le 2^{-n+1}$. So, $(\ga_n(x))_{m\ge n}$ is a Cauchy sequence. We define then $g_n: X_n\to Y$ by $g_n(x):=\lim_{m\ge n} \ga_m(x)$. Obviously, $g_{n}\rest X_n=\ga_n$, so we can define $g:\bigcup_n X_n\to Y$ piece-wise, and we extend it to $g:X\to Y$. Similarly one defines $h:Y\to X$. It is easy to see that $h \circ g= \id_X$ and $g\circ h= \id_Y$.  Let us argue that the sequences  above exist:  We fix $\{x_n\}_n$ and $\{y_n\}_n$  dense subsets of $X$ and $Y$, respectively with $x_0=y_0=0$. 
Let $X_0=Y_0:=\{0\}$, $\ga_0(0)=0$. Suppose defined $X_0\con \cdots \con X_n$ with $\{x_k\}_{k\le j}\con X_j$, $Y_0\con \cdots \con Y_n$ with $\{y_k\}_{k\le j}\con Y_j$,  $\ga_j:X_j\to Y_j$  for $j\le n$ and $\eta_j: Y_j\to X_{j+1}$ for $j<n$ such that   
$\nrm{\ga_{j+1}\circ \eta_{j}- \id\rest Y_{j}}, \nrm{\eta_{j}\circ \ga_{j}-\id\rest X_{j}}   \le \vep_{j}$ for every $j<n$.   We choose $X_{n+1}$ and $\eta_n: Y_n\to X_{n+1}$ as follows.   First fix  $\theta\in \Emb(Y_n,X) $. This is possible since $Y_n\in \age(Y)\equiv \age(X)$. Now let $g\in \iso(X)$ be such that $\nrm{g\circ \theta\circ \ga_n- \id_{X_n}}\le \vep_n$, and set $\eta_n:=g\circ \theta\in \Emb(Y_n,X)$.  Let $X_{n+1}:=X_n+ \im \eta_n+  \langle x_j\rangle_{j\le n+1}$. Then $\eta_n:Y_n\to X_{n+1}$ satisfies what we want. Similarly one can find the desired $Y_{n+1}$ and $\ga_{n+1}:X_{n+1}\to Y_{n+1}$.   
\end{proof}

\subsection{Classes of finite dimensional spaces}\label{fraisse_correspondence_sda}
The question addressed now is for which families $\mc G$ of finite dimensional spaces there is some separable Fraïssé Banach space $E$  such that $\mc G\equiv\age(E)$, or more generally, such that $\mc G\preceq\age(E)$ and $E\in [\mc G]$.   In the discrete algebraic case, this is the content of the classical  result by R. Fraïssé \cite{Fra} characterizing  ultrahomogeneous countable  first order structures in terms of properties of their classes of finitely generated substructures.  We present now  a similar characterization for Banach spaces (see also \cite{Lup} for similar results for {\em stable} Fraïssé operator spaces and systems, or \cite{BY}  in general for metric structures).

\begin{definition}[Fra\"\i ss\'e classes]
Let $ \mc G$ be a class of   finite dimensional Banach spaces. 
\begin{enumerate}[(a)]
\item $ \mc G$ has the  \emph{hereditary property \hp} when for every $ X\in  \mc G$ and every $Y$, if $\Emb(Y,X)\neq \buit$, then   $Y\in  \mc G_\equiv$,
\item $ \mc G$ has the \emph{amalgamation property \ap} when $\{0\}\in \mc G$ and  for every  $X,Y,Z\in  \mc G$  and every isometric embeddings   $\ga:X\to Y$ and $\eta:X\to Z$ there is $H\in  \mc G$  and isometric embeddings $i:Y\to H$ and $j:Z\to H$ such that $i\circ \ga =j\circ \eta$.
\item $ \mc G$ has the \emph{near amalgamation property \nap} when $\{0\}\in \mc G$ and  for every $\vep>0$, $X,Y,Z\in  \mc G$  and every isometric embeddings   $\ga:X\to Y$ and $\eta:X\to Z$ there is $H\in  \mc G$  and isometric embeddings $i:Y\to H$ and $j:Z\to H$ such that $\nrm{i\circ \ga -j\circ \eta}\le \vep$.

\item $ \mc G$ is a  {\em weak amalgamation class}    when  $\{0\}\in \mc G$ and   for every $\vep>0$ and $X\in  \mc G$ there is $\de>0$ such that for every $Y,Z\in  \mc G$ and $\de$-isometric embeddings   $\ga:X\to Y$ and $\eta:X\to Z$ there is $H\in  \mc G$  and isometric embeddings $i:Y\to H$ and $j:Z\to H$ such that $\nrm{i\circ \ga-j\circ \eta}\le \vep$;
\item $ \mc G$  is an {\em amalgamation class}   when $\{0\}\in \mc G$ and  for every dimension $k\in \N$  and $\vep>0$ there is $\de>0$ such that if  $X\in \mc G_k$,  $Y,Z\in \mc G$ and $\ga\in \Emb_{\de}(X,Y)$,  $\eta\in \Emb_{\de}(X,Z)$,  then there is   $H\in  \mc G$  and isometric embeddings  $i:Y\to H$ and $j:Z\to H$ such that $\nrm{i\circ \ga-j\circ \eta}\le \vep$.

\item $ \mc G$ is a {\em Fraïssé class} when it is hereditary $d_\mr{BM}$-compact amalgamation class.  

\end{enumerate}	

\end{definition}	
The modulus of a (weak) amalgamation class is defined similarly to the modulus of a (weak) $\mc G$-Fraïssé space: 
the class $\mc G$ has modulus $\varpi:\N\times\R^+\to \R^+$  when $\{0\}\in \mc G$ and   for every $\de>0$  every $k$ and every $\vep>0$,  if $X\in  \mc G_k$, $Y,Z\in  \mc G$  and $\ga\in \Emb_{\de}(X,Y)$,  $\eta\in \Emb_{\de}(X,Z)$,  then there is   $H\in  \mc G$  and isometric embeddings $i:Y\to H$ and $j:Z\to H$ such that $\nrm{i\circ \ga-j\circ \eta}\le \varpi(k,\de)+\vep$.  We have the following interesting implication.

\begin{proposition}\label{loi34iooi547889659866}
\begin{enumerate}[1)]
\item  The amalgamation and the near amalgamation properties are equivalent for compact and hereditary families. 
 \item  The Banach-Mazur closure of an   hereditary amalgamation class is  a Fraïssé class. 
\end{enumerate}  

\end{proposition}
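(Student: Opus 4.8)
The implication \ap $\Rightarrow$ \nap is immediate, so in both parts the real task is to manufacture \emph{exactly} isometric amalgamating embeddings from data that commute only approximately. I would first isolate the common engine as a single compactness-and-limit lemma and then apply it twice. The lemma I have in mind reads: if $\mc K$ is a $d_\mr{BM}$-compact hereditary family, $\ga\in\Emb(X,Y)$, $\eta\in\Emb(X,Z)$, and for each $n$ there are $H_n\in\mc K$ and $(1+\de_n)$-isometric embeddings $i_n\colon Y\to H_n$, $j_n\colon Z\to H_n$ with $\de_n\to0$ and $\nrm{i_n\circ\ga-j_n\circ\eta}\le c_n$, then there exist $H\in\mc K$ and \emph{isometric} $i\colon Y\to H$, $j\colon Z\to H$ with $\nrm{i\circ\ga-j\circ\eta}\le\limsup_n c_n$. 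To prove it I would replace $H_n$ by the span $W_n:=i_n(Y)+j_n(Z)$, whose dimension is at most $\dim Y+\dim Z$ and which lies in $\mc K_\equiv$ by \hp; after a subsequence the $W_n$ share a dimension and, by compactness of $\mc K$, converge to some $W\in\mc K$. Choosing $(1+o(1))$-isometries $T_n\colon W_n\to W$ witnessing the convergence, the operators $T_n\circ i_n$, $T_n\circ j_n$ form bounded sequences into the fixed finite dimensional space $W$, so a further subsequence converges to maps $i,j$; these are isometric as limits of $(1+o(1))$-isometric embeddings, and $\nrm{i\circ\ga-j\circ\eta}=\lim_n\nrm{T_n\circ(i_n\circ\ga-j_n\circ\eta)}\le\limsup_n c_n$ because $\nrm{T_n}\to1$.

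Part 1 then follows at once. Given $X,Y,Z\in\mc G$ and isometric $\ga,\eta$, I would apply \nap with errors $1/n$ to get $H_n\in\mc G$ and isometric $i_n,j_n$ with $\nrm{i_n\circ\ga-j_n\circ\eta}\le1/n$, and feed these into the lemma with $\mc K=\mc G$ (compact and hereditary by hypothesis) and $c_n=1/n$; the output is $H\in\mc G$ and isometric $i,j$ with $i\circ\ga=j\circ\eta$, which is \ap.

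For Part 2, set $\mc H:=\bmc{\mc G}$. Compactness of $\mc H$ is formal, since each $\mc H_k$ is a closed subset of the Banach--Mazur compactum of $k$-dimensional spaces. Heredity I would get by approximation: if $V$ embeds isometrically in some $W=\lim_n G_n$ with $G_n\in\mc G$, transporting the embedding through the witnessing near-isometries realizes $V$ as a $d_\mr{BM}$-limit of subspaces of the $G_n$, each in $\mc G_\equiv$ by \hp, whence $V\in\mc H$. The substance is the amalgamation property of $\mc H$. Fixing $k$ and $\vep$, I would let $\de_0>0$ be the parameter furnished by the amalgamation property of $\mc G$ for dimension $k$ and target error $\vep/2$, and put $\de:=\de_0/2$. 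Given $X\in\mc H_k$, $Y,Z\in\mc H$ and $\ga\in\Emb_\de(X,Y)$, $\eta\in\Emb_\de(X,Z)$, I would choose for each $n$ some $X_n\in\mc G_k$, $Y_n,Z_n\in\mc G$ and $(1+1/n)$-isometries $\phi_n\colon X\to X_n$, $\psi_n\colon Y\to Y_n$, $\chi_n\colon Z\to Z_n$. Then $\ga_n:=\psi_n\circ\ga\circ\phi_n^{-1}$ and $\eta_n:=\chi_n\circ\eta\circ\phi_n^{-1}$ are $\de_n$-isometric with $\de_n\to\de<\de_0$, so for large $n$ they lie in $\Emb_{\de_0}$ and the amalgamation property of $\mc G$ gives $H_n\in\mc G\subset\mc H$ and isometric $i_n\colon Y_n\to H_n$, $j_n\colon Z_n\to H_n$ with $\nrm{i_n\circ\ga_n-j_n\circ\eta_n}\le\vep/2$. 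The maps $\tilde i_n:=i_n\circ\psi_n$ and $\tilde j_n:=j_n\circ\chi_n$ are $(1+1/n)$-isometric embeddings of $Y,Z$ into $H_n$, and from the identity $\tilde i_n\circ\ga-\tilde j_n\circ\eta=(i_n\circ\ga_n-j_n\circ\eta_n)\circ\phi_n$ I would read off $\nrm{\tilde i_n\circ\ga-\tilde j_n\circ\eta}\le(\vep/2)(1+1/n)$. Feeding $(H_n,\tilde i_n,\tilde j_n)$ into the lemma with $\mc K=\mc H$ (compact and hereditary as just shown) produces $H\in\mc H$ and isometric $i,j$ with $\nrm{i\circ\ga-j\circ\eta}\le\vep/2\le\vep$; together with compactness and heredity this exhibits $\mc H$ as a Fraïssé class.

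The routine parts are the tracking of isometry constants under composition and the finite-dimensional operator compactness. The one real obstacle — and the reason both hypotheses appear in the statement — is the passage to the limit amalgam: the approximate amalgams $H_n$ can have unbounded dimension, and only after cutting down to the spans $i_n(Y)+j_n(Z)$ and invoking \hp does one remain inside a compact class and extract a $d_\mr{BM}$-limit, simultaneously forcing the limit embeddings to be exactly isometric while the commutation error stays under the prescribed bound.
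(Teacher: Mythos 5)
Your proof is correct and follows essentially the same route as the paper: in both parts one cuts the approximate amalgams down to the spans $i_n(Y)+j_n(Z)$, invokes heredity to stay in the class, extracts a Banach--Mazur limit of these spans, and takes accumulation points of the embeddings composed with near-isometries into the limit space --- the paper simply runs this argument inline twice (saying ``as before'' in Part 2) where you package it as a single reusable lemma. One cosmetic slip: your lemma is stated for isometric $\ga,\eta$ but in Part 2 you apply it to $\de$-isometric ones; this is harmless since your proof of the lemma never uses exactness of $\ga$ and $\eta$, so it should just be stated for arbitrary linear maps.
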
  
\begin{proof}
{\it 1)}: Fix a compact and hereditary family $\mc G$ with the \nap, and fix also  $X,Y,Z\in \mc G$ and isometric embeddings $\ga:X\to Y$ and $\eta:X\to Z$. Choose a sequence $(\vep_n)_n$ of strictly positive real numbers and  decreasing to zero, and for each $n$, let $V_n\in \mc G$ and $i_n\in \Emb(Y,V_n)$ and $  j_n\in \Emb(Z,V_n)$ such that $\nrm{i_n\circ \ga- j_n\circ \eta}\le \vep_n$. For each $n$, let  $W_n:=\im i_n +\im j_n\con V_n$. Since $\mc G$ is hereditary, $W_n\in \mc G$. By passing to a subsequence if needed, we assume that the all $W_n$ have the same dimension, and that $W_n$ converge with respect to the Banach-Mazur pseudometric to $W$, that belongs to $\mc G$. For each $n$, let $\theta_n: W_n\to W$ be such that $ \lim_n\max\{\nrm{\theta_n}, \nrm{\theta_n^{-1}}\}=1$.  Let $i: Y\to W$, $j:Z\to W$ be  accumulation points of $(\theta_n\circ i_n)_n$ and of $(\theta_n\circ j_n)_n$, respectively. It follows that  $i$ and $j$ are isometric embeddings and $i \circ \ga= j\circ \eta$.  

{\it 2)}: Fix an hereditary amalgamation class $\mc G$, and $\mc H$ be the Banach-Mazur closure of $\mc G$. Notice that $\mc H$ is also hereditary. Now fix a dimension $k$ and $\vep>0$. Let $\de>0$ be witnessing that $\mc G$ is an amalgamation class for the parameters $k$ and  $\vep/2$. We claim that $\de/2$ works for $\mc H$: Fix $X\in \mc H_k$, $Y,Z\in \mc H$ and $\ga\in \Emb_{\de/2}(X,Y)$, $\eta\in \Emb_{\de/2}(X,Z)$.  For each $n\ge 1$   let $\de_n:= \de/(12n)$, choose $X_0, Y_n, Z_n\in \mc G$ and $\theta^X:X_0\to X$, $\theta_n^Y: Y\to Y_n$,   $\theta_n^Z: Z\to Z_n$, surjective isomorphisms such that $\nrm{(\theta^X)^{-1}}=1$, $\nrm{\theta^X}\le 1+\de_1$ and $\theta_n^Y\in \Emb_{\de_n}(Y, Y_n)$ and $\theta_n^Z\in \Emb_{\de_n}(Z,Z_n)$. Notice that given $n\ge1$, $\ga_n:=\theta_n^Y\circ \ga\circ \theta^X\in \Emb_{\de}(X_n,Y_n) $ and $\eta_n:=\theta_n^Z\circ \eta\circ \theta^X\in \Emb_{\de}(X_n,Z_n) $ so we can choose $V_n\in \mc G$ and $i_n\in \Emb(Y_n,V_n)$, $j_n\in \Emb(Z_n,V_n)$ such that $\nrm{i_n\circ \ga_n -j_n\circ \eta_n}\le \vep/2$. Since $\mc G$ is hereditary, as before in {\it 1)} we may assume that $\dim V_n$ is constant, and the sequence $(V_n)_n$ converges in the Banach-Mazur norm to $V\in \overline{\mc G}^\mr{BM}$.  Choose $\theta_n^V: V_n \to V$ be such that $\theta_n^V\in \Emb_{\de_n}(V,V_n)$. Choose a convergent subsequence of $(\theta_n^V\circ i_n \circ \theta_n^Y)_n$ and of $(\theta_n^V\circ j_n \circ \theta_n^Z)_n$ with limits $i\in \Emb(Y,V)$ and $j\in \Emb(Z,V)$, respectively. Then $\nrm{i\circ \ga -j\circ \eta}\le \vep$. 
 \end{proof}  
In the previous proposition, the condition of being hereditary seems necessary: For example, Let $\mc F$ consists of all 2-dimensional  polyhedral spaces together with the spaces $\ell_\infty^n$ of any dimension $n$. Then $\mc F$ is an amalgamation class and its closure is the class of all 2-dimensional normed spaces together  with $\ell_\infty^n$ of any dimension, which does not have the near amalgamation property.

In classical Fraïssé theory for discrete algebraic structures, Fraïssé classes $\mc G$  have in addition the joint embedding property \jep:  For every $X,Y\in \mc G$ there is  some $Z\in \mc G$ such that $X$ and $Y$ can be isomorphically embedded into $Z$. However it is easy to see that this property in the context of Banach spaces is a property of amalgamation classes (because  $\{0\}\in \mc G$). 

 Amalgamation  families  with modulus  not depending on the dimension $k$ where introduced by M. Lupini in \cite{Lup}; they are said to have the {\em stable} near amalgamation property and in this case Fraïssé classes are called stable Fraïssé classes. Examples of them are the class of all finite dimensional Hilbert, and Banach spaces, corresponding to the two known stable Fraïssé spaces: the Hilbert and the Gurarij, respectively.  We do not know if the class $\age(L_p(0,1))$ is a Fraïssé class for $p\neq 4,6,8,\dots$.   
It is easy to see that $\age(E)$ has always (HP), (JEP) and when $E$ is \auh, \auhp, Fraïssé  then   $\age(E)$ has \nap, is a weak amalgamation class or is an amalgamation class, respectively, and in the (weak) Fraïssé case the respective moduli are the same (when $E$ is Fraïssé,  the compactness of $\age(E)$ follows from Theorem \ref{main_thm_fraisse}).  We call Fraïssé correspondence  the reverse implication, presented in Corollary \ref{ijoiriower3278766755}. Other limits and correspondences of this type have been studied in \cite{BY} and in \cite{Ku}. 

As for the  Fraïssé and the \auhp properties, there is also a metric explanation of  the relation between the weak amalgamation and the amalgamation properties. Given a family $ \mc G$ of finite dimensional spaces with the \jep and the \nap, we can define a ``Kadets-like'' pseudometric on each $ \mc G_k$,  	
$$\ga_ \mc G(X,Y):=\inf \conj{\La_Z(X_0, Y_0)}{ X_0,Y_0\in \mr{Age}_k(Z), \, Z\in  \mc G,\,  X_0\equiv X, \, Y_0\equiv Y}.$$
The following characterization is proved similarly as the equivalences in  Theorem \ref{main_thm_fraisse}. We leave the details to the reader. 
 \begin{proposition}\label{kooiio44432}   The following are equivalent.  
 \begin{enumerate}[(a)]
 \item  $ \mc G$ is an amalgamation class and $\ga_\mc G$ is a complete pseudometric. 
 \item $ \mc G$ is a weak amalgamation class and $\ga_\mc G$ is a complete pseudometric that is uniformly equivalent to $d_\mr{BM}$  on $ \mc G_k$ for every $k$.
 \item  $ \mc G$ is a $d_\mr{BM}$-compact weak amalgamation class. \qed

 \end{enumerate}  

 \end{proposition}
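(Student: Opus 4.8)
The plan is to imitate, almost verbatim, the proof of Theorem \ref{main_thm_fraisse}, with the role of the transitivity of the action $\iso(E)\acts\Emb_\de(X,E)$ taken over by the (near/weak) amalgamation property, and with ``finding $g\in\iso(E)$ carrying one embedding near another'' replaced by ``finding a common over-space $H\in\mc G$ into which the two relevant configurations embed isometrically.'' The one computational input that survives unchanged is Claim \ref{loi43jiori34443}: for any fixed ambient $Z$ and $\ga,\eta\in\Emb_\de(X,Z)$ one has $\La_Z(\ga X,\eta X)\le 2(1+\de)\nrm{\ga-\eta}$; I will use it with $Z$ ranging over members of $\mc G$. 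First I would record the analogues of Proposition \ref{lkklkir8}. (i) If $\mc G$ has the near amalgamation property then $\ga_\mc G$ is a pseudometric on each $\mc G_k$: symmetry is clear, and for the triangle inequality, given near-optimal witnesses $X_0,Y_0\hookrightarrow Z_1$ for $\ga_\mc G(X,Y)$ and $Y_1,W_1\hookrightarrow Z_2$ for $\ga_\mc G(Y,W)$, I amalgamate $Z_1$ and $Z_2$ over the common space $Y$ (using $Y_0\equiv Y\equiv Y_1$) to land in a single $H\in\mc G$; there the two copies of $Y$ are $\vep$-close, so Claim \ref{loi43jiori34443} plus the triangle inequality for $\La_H$ give $\ga_\mc G(X,W)\le\ga_\mc G(X,Y)+\ga_\mc G(Y,W)+O(\vep)$. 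Every class in (a)--(c) has the near amalgamation property (take $\de=0$), so $\ga_\mc G$ is a pseudometric throughout. (ii) The estimate $d_\mr{BM}(X,Y)\le 4k\La_Z(X,Y)$ for $X,Y\in\age_k(Z)$ with $\La_Z(X,Y)\le 1/(2k)$ is exactly Proposition \ref{lkklkir8} {\it 2)} (it concerns a single ambient space); taking the infimum over $Z\in\mc G$ shows $\id\colon(\mc G_k,\ga_\mc G)\to(\mc G_k,d_\mr{BM})$ is uniformly continuous. (iii) If $\mc G$ is a weak amalgamation class then $\ga_\mc G$ and $d_\mr{BM}$ are topologically equivalent on $\mc G_k$: if $X_n\to_\mr{BM}X$, I choose $\de$ from weak amalgamation at $X$, an isomorphism $\theta_n\colon X\to X_n$ with $\theta_n\in\Emb_\de(X,X_n)$, and amalgamate $\id_X$ and $\theta_n$ over $X$ to get $H\in\mc G$ and isometric $i\colon X\to H$, $j\colon X_n\to H$ with $\nrm{i-j\theta_n}$ small; since $\theta_n$ is onto, Claim \ref{loi43jiori34443} yields $\ga_\mc G(X,X_n)\le\La_H(i(X),j(X_n))\to 0$.

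With these in hand the implications run as in Theorem \ref{main_thm_fraisse}. For (a)$\Rightarrow$(b), amalgamation gives weak amalgamation, and the missing half of uniform equivalence follows from the analogue of the Claim in step 1)$\Rightarrow$2): if $\de$-isometries out of $X\in\mc G_k$ amalgamate to accuracy $\vep$ and $d_\mr{BM}(X,Y)\le\de/2$, then choosing an isomorphism $\theta\colon X\to Y$ with $\theta\in\Emb_\de(X,Y)$ and amalgamating $\id_X$ with $\theta$ produces $H$ and isometric $i,j$ with $\nrm{i-j\theta}\le\vep$, whence $\ga_\mc G(X,Y)\le\La_H(i(X),j(Y))\le 4\vep$ by Claim \ref{loi43jiori34443}. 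For (b)$\Rightarrow$(c), I extract a $d_\mr{BM}$-Cauchy subsequence from any sequence in $\mc G_k$ (the $k$-dimensional spaces are $d_\mr{BM}$-precompact), transport it to a $\ga_\mc G$-Cauchy sequence by uniform continuity, use completeness of $\ga_\mc G$ to get a $\ga_\mc G$-limit in $\mc G_k$, and finally push it back with the continuous identity to conclude $d_\mr{BM}$-compactness of $\mc G_k$.

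For (c)$\Rightarrow$(a), topological equivalence (iii) together with $d_\mr{BM}$-compactness makes $(\mc G_k,\ga_\mc G)$ compact, hence complete; it remains to verify the amalgamation-class property, for which I run the covering argument of step 3)$\Rightarrow$1) verbatim. For each $X\in\mc G_k$ I pick $\de_X\le 1$ so that $\de_X$-isometries out of $X$ amalgamate to accuracy $\vep/2$, cover $\mc G_k$ by finitely many balls $B_\mr{BM}(X_j,\de_{X_j}/3)$, and set $\de=\tfrac13\min_j\de_{X_j}$. Given $\ga\in\Emb_\de(X,Y)$ and $\eta\in\Emb_\de(X,Z)$, I choose $X_j$ near $X$ and an isomorphism $\theta\colon X_j\to X$ with $\nrm\theta\nrm{\theta^{-1}}\le e^{\de_{X_j}/3}$; then $\ga\circ\theta,\eta\circ\theta\in\Emb_{\de_{X_j}}(X_j,\cdot)$, so they amalgamate into some $H\in\mc G$ via isometric $i\colon Y\to H$, $j\colon Z\to H$ with $\nrm{i\ga\theta-j\eta\theta}\le\vep/2$, and hence $\nrm{i\ga-j\eta}\le(\vep/2)\nrm{\theta^{-1}}\le\vep$.

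The main obstacle is conceptual rather than computational: in the Fraïssé setting one has a single ambient space $E$ and exploits the transitivity of $\iso(E)$ to move one embedding onto another, whereas here there is no ambient space and one must manufacture, at each step, a common over-space $H\in\mc G$ by amalgamation. Establishing the triangle inequality for $\ga_\mc G$ and its topological equivalence with $d_\mr{BM}$ (items (i) and (iii)) is exactly where this substitution must be made carefully, since one amalgamates over a space $Y$ of the full dimension $k$ and must keep the accuracy of the glueing controlled through Claim \ref{loi43jiori34443}; once these are set up, the three implications are formally identical to those of Theorem \ref{main_thm_fraisse}.
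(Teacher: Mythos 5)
Your proposal is correct and takes essentially the same approach as the paper: the paper's own ``proof'' of Proposition \ref{kooiio44432} consists precisely of the remark that it is proved similarly to the equivalences in Theorem \ref{main_thm_fraisse} with details left to the reader, and your write-up carries out exactly that translation, substituting amalgamation into a common over-space $H\in \mc G$ for the action of $\iso(E)$, and reusing Claim \ref{loi43jiori34443}, the estimate of Proposition \ref{lkklkir8}, and the compactness covering argument. Nothing further is needed.
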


\begin{theorem}\label{oi43hjiio434371}
If $\mc G$ is an amalgamation class with modulus $\varpi$, then $\flim \mc G$ exists and  it has $\mc G$-Fraïssé modulus $\varpi^*$.

\end{theorem}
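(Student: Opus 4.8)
The plan is to realize $E=\flim\mc G$ by a metric Fra\"iss\'e construction and then read off both the homogeneity and its modulus from the amalgamation data. First I would fix a countable subfamily $\mc D\con\mc G$ that is $d_\mr{BM}$-dense in $\mc G$ (available since each $\mc G_k$ sits inside the separable Banach--Mazur space of $k$-dimensional spaces), and build an increasing chain $\{0\}=E_0\con E_1\con\cdots$ of members of $\mc G$ together with isometric inclusions $E_n\hookrightarrow E_{n+1}$. A bookkeeping function enumerates, cofinally, all tasks of the form: a finite-dimensional $X$ with a rational approximation of an embedding $X\to E_n$, a rational $\de$, and a target $Y\in\mc D$ with a prescribed isometric embedding $X\hookrightarrow Y$. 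At the stage assigned to such a task I would amalgamate $E_n$ and $Y$ over (the image of) $X$ using the amalgamation property with modulus $\varpi$, producing $E_{n+1}\in\mc G$. Setting $E:=\overline{\bigcup_n E_n}$ yields a separable Banach space; since each $E_n\in\mc G_E$ and $\bigcup_n E_n$ is norm-dense, the subspaces of members of $\mc G_E$ are $\La_E$-dense in $\age(E)$, i.e. $E\in[\mc G]$. That every member of $\mc G$ embeds isometrically, so that $\mc G\preceq\age(E)$ and the statement is meaningful, follows for $\mc D$ from the joint embedding property --- a consequence of amalgamation over $\{0\}\in\mc G$ (Proposition \ref{loi34iooi547889659866}) --- and then for all of $\mc G$ by the finite-representability argument used in the proof of Proposition \ref{iiiobgnghfgdsd}, since $\overline{\age(E)}^\mr{BM}=\bmc{\mc G}$.

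The genericity built into the chain yields two facts by a standard back-and-forth. The first is an extension property: for a finite-dimensional $A\con E$ lying in $\mc G_E$, any isometric embedding $A\hookrightarrow B$ with $B\in\mc G$, and any $\vep>0$, there is an isometric embedding $\phi\colon B\to E$ with $\nrm{\phi\rest A-\id_A}\le\vep$, because the corresponding task was met cofinally. The second is approximate $\mc G$-ultrahomogeneity for \emph{isometric} embeddings: given $X\in\mc G$ and $\ga,\eta\in\Emb(X,E)$, alternately applying the extension property on the two sides (as in Proposition \ref{oi43hjiio43437}) and using the density of $\bigcup_n E_n$ to force surjectivity produces $g\in\iso(E)$ with $\nrm{g\circ\ga-\eta}\le\vep$.

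It remains to compute the modulus on genuine $\de$-embeddings. Fix $X\in\mc G_k$, $\ga,\eta\in\Emb_\de(X,E)$ and $\vep>0$, and choose $\de'>\de$ with $\varpi(k,\de')\le\varpi^*(k,\de)+\vep/2$. Since $\bigcup_n E_n$ is dense and having distortion below $1+\de'$ is an open condition, I would perturb $\ga,\eta$ to $\de'$-isometric embeddings $\ga',\eta'$ whose images lie in a common finite stage $E_m\in\mc G$, with perturbations small relative to $\vep$. Amalgamating $\ga',\eta'\in\Emb_{\de'}(X,E_m)$ with modulus $\varpi$ gives $H\in\mc G$ and isometric $i,j\colon E_m\to H$ with $\nrm{i\circ\ga'-j\circ\eta'}\le\varpi(k,\de')+\vep/4$; extending $i$ to an isometric $\phi\colon H\to E$ with $\phi\circ i\approx\id_{E_m}$, and then straightening the inclusion $E_m\hookrightarrow E$ to $\phi\circ j$ by the ultrahomogeneity of the previous step, converts this into $g\in\iso(E)$ with $\nrm{g\circ\ga-\eta}\le\varpi(k,\de')+\vep\le\varpi^*(k,\de)+2\vep$, after absorbing the perturbation errors via Claim \ref{loi43jiori34443}. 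As $\vep$ is arbitrary, $E$ is $\mc G$-Fra\"iss\'e with modulus $\varpi^*$. Uniqueness of the separable space with these properties is exactly Theorem \ref{k3whiueuiuhiewew}, applied with $F$ a second such limit.

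The main obstacle is the bookkeeping needed to pin the modulus down to $\varpi^*$ rather than to some cruder bound: every $\de$-embedding into the completed space $E$ must be approximated by $\de'$-embeddings into the finite stages, and the errors introduced by this approximation, by the amalgamation step, and by the two back-and-forth straightenings must all be absorbed into a single $+\vep$. The passage from $\varpi$ to $\varpi^*=\inf_{\de'>\de}\varpi(k,\de')$ is forced precisely because one can place the perturbed data into a finite stage only as a $\de'$-embedding with $\de'>\de$, never as a $\de$-embedding on the nose; keeping the surjectivity of $g$ intact while respecting the norm bound $\nrm{g\circ\ga-\eta}$ throughout the alternating construction is the delicate part.
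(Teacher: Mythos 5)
Your overall skeleton — a generic chain of members of $\mc G$ built by amalgamation, followed by a back-and-forth — is the same as the paper's, but there is a genuine circularity at the decisive step. In your modulus computation you amalgamate $\ga',\eta'\in\Emb_{\de'}(X,E_m)$ \emph{abstractly}, obtaining $H\in\mc G$ with isometric $i,j\colon E_m\to H$, and then invoke your extension property to get an \emph{isometric} $\phi\colon H\to E$ with $\phi\circ i\approx \id_{E_m}$. But your bookkeeping tasks only involve targets $Y$ from the countable family $\mc D$, so genericity of the chain can only plant exact isometric copies of the countably many spaces in $\mc D$; the amalgam $H$ is an arbitrary member of $\mc G$ (a class that is in general uncountable up to isometry), and no task produces an isometric copy of it. The only route to an isometric embedding of such an $H$ is to plant almost-isometric copies (via Banach--Mazur approximations in $\mc D$, which yield only $\xi$-embeddings $H\to E$) and then correct them into a convergent sequence --- and that correction is precisely an application of the weak $\mc G$-Fraïssé property of $E$, the very thing being proved. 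The paper respects this order: the claim $\mc G\preceq\age(E)$ (the extension property over $A=\{0\}$) is derived \emph{after} the Fraïssé property, using the modulus $\varpi_{\mc G,E}$; your first paragraph likewise invokes the argument of Proposition \ref{iiiobgnghfgdsd} before any homogeneity is available. Note also that the obvious repair inside your framework --- using tasks to compare $\ga'$ and $\eta'$ each against a common planted isometric copy of $X$ and composing the two straightenings --- applies the amalgamation estimate twice and yields modulus $2\varpi^*$, not the claimed $\varpi^*$.

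The paper avoids all of this by baking \emph{self-amalgamation} into the chain itself: using Lemma \ref{oi788798434}, the stages $(X_n,I_n)$ are chosen so that for every $X$ in the countable dense family \emph{and} every earlier chain element, any two $\de_k$-embeddings $\ga,\eta\colon X\to X_n$ admit $J\in\Emb(X_n,X_{n+1})$ with $\nrm{I_n\circ\ga-J\circ\eta}\le\varpi(\dim X,\de_k)+\vep_n$. The two given $\de$-embeddings into $E$ are then compared against each other in a \emph{single} such step (which is what preserves the modulus $\varpi^*$), and the global isometry $g$ is manufactured directly as the limit of an approximately commuting infinite diagram of maps $J_s,L_s$ between chain elements, so that no external amalgam ever needs to be embedded into $E$. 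If you want to salvage your two-stage structure, your tasks must be reformulated so that both legs are $\de$-embeddings and the domain $X$ may itself be a chain element with both legs landing in the current stage $E_n$; with only an isometric second leg into $Y\in\mc D$, even the extension property for $B\in\mc D$ degrades after Banach--Mazur approximation, and the straightening you need at the end is of a $\xi$-embedding of $E_m$, not of an isometric one, which your "ultrahomogeneity of the previous step" does not cover.
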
  
The existence of these spaces have been proved  for stable Fraïssé Banach spaces, and for other structures in functional analysis 	in \cite{Lup}.

\cor[Fra\"\i ss\'e correspondence]\label{ijoiriower3278766755}
 The following are equivalent for a class   $\mc G$ of finite dimensional Banach spaces.
 \begin{enumerate}[(1)]
\item $\mc G$ is a Fraïssé class.
 \item $\mc G\equiv\mr{Age}(E) $ of a unique separable Fraïssé  Banach space $E=\flim \mc G$.  
\end{enumerate}
\fcor	
\prue
Suppose that $\mc G$ is a Fraïssé class, i.e. $\mc G$ is a compact, hereditary amalgamation class.   It follows from Theorem \ref{oi43hjiio434371} that $\flim \mc G$ exists. Let us see that 
 $\age(\flim\mc G)\equiv\overline{\mc G}^\mr{BM}$:  We already know that $\mc G\preceq \age(\flim\mc G)$. Let us see that $\age(\flim\mc G)\preceq \mc G$: Since $\flim\mc G\in [\mc G]$ and $\mc G$ is hereditary, we obtain that $\age(\flim \mc G)\preceq \overline{\mc G}^\mr{BM}=\mc G$. 
\fprue

\cor \label{ijoiriowersfsfd3278766755}
 Let $\mc G$  be a class of finite dimensional Banach spaces.
\begin{enumerate}[(1)]
\item If $\mc G$ is an hereditary amalgamation class, then $\flim\mc G$ is the unique separable Fraïssé Banach space $E$ such that $\age(E)\equiv \bmc{\mc G}$. 
\item If $E$ is a separable Banach space such that $\age(E)$ is an amalgamation class, then $\flim\age(E)$ is the unique   separable Fraïssé Banach space $X$  with an isometric copy of $E$ such that $\age(X)= \bmc{\age(E)}$. 
\end{enumerate}
 \fcor	
\prue
 {\it(1)}:    If $\mc G$ is an hereditary amalgamation class, then by Proposition \ref{loi34iooi547889659866} {\it2)}, $\bmc{\mc G}$ is a Fraïssé class,  so $\flim \bmc{\mc G}$ exists and it is a Fraïssé Banach space. Since $\flim \mc G$ also exists, it follows by  Proposition \ref{kjriojireee8877444} that $\flim \mc G=\flim \bmc{\mc G}$. {\it(2)}: Set $X:=\flim\age(E)$. We know from {\it(1)} that $X$ is the unique Fraïssé Banach space with $\age(X)=\bmc{\age(E)}$, and it follows from  Proposition \ref{iiiobgnghfgdsd} that $E$ can be isometrically embedded into $X$. 
\fprue

Before we present a proof of Theorem \ref{oi43hjiio434371}, we give some examples of classes of spaces and limits. 
\begin{example}
For $1\le p\le \infty$, the family $\mc F=\{\ell_p^k\}_{k\in \N}$  is clearly compact, and  it is an amalgamation class.  For $1\le p\neq 2<\infty$, this is done in Proposition \ref{3l4kirjio34riji43}  using  a work of G. Schechtman in \cite{Sch}.   In this case, the Fraïssé limit $\flim\{\ell_p^n\}_n$ is $L_p[0,1]$ (see Proposition \ref{3l4kirjio34riji43}). 

For $p=\infty$, it is rather easy to see that $\{\ell_\infty^n\}_n$ is an amalgamation class, whose Fraïssé limit is the Gurarij space $\mbb G$.

When $p=2$,     $\{\ell_2^k\}_k$ is also hereditary, because a subspace of a Hilbert space is a Hilbert space; so $\{\ell_2^k\}_k$ is a Fraïssé class, whose Fraïssé limit is $\flim\{\ell_2^n\}_n= \ell_2$ and it is automatically a Fraïssé  Banach space.

\end{example}

\begin{example}\label{oi34ioroi34r4}
 $\age(C[0,1])$ is  a Fraïssé class, and $\flim \age(C[0,1])= \mbb G$
 \end{example}
 
 \begin{proof} Being both $C[0,1]$ and the Gurarij space $\mbb G$  universal spaces, we obtain that $\age(C[0,1])=\age(\mbb G)$ is a Fraïssé class because $\mbb G$ is Fraïssé.\end{proof}

\begin{example}\label{12uihiu23ui324}
For $p\neq 4,6,8,\dots$, the class $\age(L_p(0,1))$ is  Fraïssé, whose Fraïssé limit is $L_p(0,1)$.   We will see in Theorem \ref{auh_for_L_p}  that for these $p$'s, $L_p(0,1)$ is weak-Fraïssé; moreover,   for all $p$'s $\age(L_p(0,1))$ is compact:  
Observe that in general,  given a Banach space, the completion $\overline{\age(E)}^\mr{BM}$ of $\age(E)$ under the Banach-Mazur pseudometric is exactly, modulo $\equiv$, equal to $\age(E_\mc U)$ for every non-trivial ultrafilter $\mc U$ over $\N$. It follows from the S. Kakutani  characterization of abstract $L_p$ spaces (see  \cite{Kak}  or \cite[Theorem 1.b.2. of Vol. 2]{LiTza})  that each $E_\mc U$ is isometric to some $L_p(\mu)$ (in fact, W. Henson proved in \cite[Theorem 2.4]{He} that this particular ultrapower is   isometric to the $\ell_p$-sum of $\mk c$ copies of $L_p([0,1]^\mk c)$, $\mk c$ being the cardinality of the continuum). Hence $\overline{\age(L_p(0,1))}^\mr{BM}\equiv\age(L_p(\mu))$. Finally, it is easy to see that $\age(L_p(\mu))\preceq \age(L_p(0,1))$.   
\end{example}

We continue with the proof of the Fraïssé correspondence. 
Recall that given a sequence $(X_n,\ga_n)_n$ where each $X_n$ is a normed space and $\ga_n\in \mr{Emb}(X_n,X_{n+1})$, the inductive limit  $\lim_n (X_n,\ga_n)$ of $(X_n,\ga_n)_n$ is defined as  the following normed space: We define isometric embeddings $\ga_{m,n}\in \Emb(X_m,X_n)$ for $m\le n$ as follows;    $\ga_{m,m}:= \id_{X_m}$ and $\ga_{m,n+1}:=\ga_n\circ \ga_{m,n}$;    let $V$ be the subspace of the product space $\prod_n X_n$ defined as
$$V:= \conj{(x_n)_n}{ \text{there is some $m$ such that $x_n=\ga_{m,n}(x_m)$ for all $n\ge m$}}.$$
In $V$ one defines the seminorm 
$$\nrm{(x_n)_n}:= \nrm{x_m}_{X_m}$$
where $m$ is such that $\ga_{m,n}(x_m)=x_n$ for all $n\ge m$. It is easy to see that $\nrm{\cdot}$ is well defined, and that 
$$N:=\conj{(x_n)_n\in V}{ \text{there is some $m$ such that $x_n=0$ for all $n\ge m$}}= \conj{(x_n)_n\in V}{ \nrm{(x_n)_n}=0}.$$
Let $V_0$ be the quotient space $V/N$ endowed with the norm $\nrm{\cdot}$.  Finally, let $\lim_n (X_n,\ga_n)$ be the completion of $V/N$. For each $m$, let $\ga_m^{(\infty)}: X_m\to \lim_n (X_n,\ga_n)$ be defined for $x\in X_m$ by 
$$ \ga_m^{(\infty)}(x):= (\overset{(m-1)}{\overbrace{0,\dots,0}}, x, \ga_{m,m+1}x, \ga_{m,m+2}x,\dots)+N.$$
Obviously, $\ga_m^{(\infty)}$ is an isometric embedding, and the sequence $(X^\infty_n)_n$, $X^\infty_n:=\ga_n^{(\infty)}(X_n)$, is increasing with dense union in $\lim_n(X_n,\ga_n)$.  
\begin{lemma}\label{oi788798434} 
Suppose that $ \mc G$ has the \jep. Then $\mc G$ is an amalgamation class  with modulus of stability $\varpi$ if and only if for every  $\De\con \R^+$ finite, $\vep>0$ and $\mc H\cup\{Y\}\con \mc G$ finite there is $Z\in  \mc G$  and some $I\in \Emb(Y,Z)$ such that for every $X\in \mc G$ and every $\de\in \De$  if $\ga,\eta\in \Emb_{\de}(X,Y)$  then there is $J\in \Emb(Y,Z)$ such that $\nrm{I\circ \ga - J \circ \eta}\le \varpi(\dim X,\de)+\vep$.  
\end{lemma}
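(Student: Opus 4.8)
The plan is to prove the two implications separately. The implication from the right-hand statement to the amalgamation-class property is the easier one and is where the hypothesis that $\mc G$ has the \jep is really used. Fix $k$, $\de>0$, $\vep>0$ and a diagram $Y\xleftarrow{\ga}X\xrightarrow{\eta}Z$ with $X\in \mc G_k$, $\ga\in \Emb_\de(X,Y)$ and $\eta\in \Emb_\de(X,Z)$. First I would use the \jep to pick $W\in \mc G$ together with isometric embeddings $e_Y\in \Emb(Y,W)$ and $e_Z\in \Emb(Z,W)$; then $e_Y\circ\ga$ and $e_Z\circ\eta$ both lie in $\Emb_\de(X,W)$. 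Applying the right-hand statement to the single space $W$ (with $\De=\{\de\}$ and $\mc H$ arbitrary) yields $W'\in \mc G$ and $I\in \Emb(W,W')$, and then, for the particular pair $e_Y\circ\ga,\,e_Z\circ\eta$, an embedding $J\in \Emb(W,W')$ with $\nrm{I\circ e_Y\circ\ga-J\circ e_Z\circ\eta}\le \varpi(k,\de)+\vep$. Setting $H:=W'$, $i:=I\circ e_Y$, $j:=J\circ e_Z$ completes this direction.

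For the converse, fix $\De$, $\vep$ and $\mc H\cup\{Y\}$. The first step is a compactness argument producing a \emph{finite} family of representative problems. For each $\de\in \De$ and each $k\le \dim Y$, I would encode an amalgamation problem $(X,\ga,\eta)$ with $\dim X=k$ and $\ga,\eta\in \Emb_\de(X,Y)$ by choosing an Auerbach basis of $X$ and recording the norm of $X$ together with the $2k$ images of the basis vectors, which all lie in $(1+\de)B_Y$. The Auerbach normalization forces the unit ball of $X$ to sit between the $\ell_1^k$ and $\ell_\infty^k$ balls, so the admissible norms of $X$ form a compact set; hence the whole space of problems is totally bounded, and I can select genuine representatives $(X_t,\ga_t,\eta_t)$, $t\le m$, with $X_t\in \mc G$, so that every problem lies $\rho$-close to one of them, meaning there is a $(1+\rho)$-isomorphism $\sigma\colon X\to X_t$ with $\nrm{\ga-\ga_t\circ\sigma}$ and $\nrm{\eta-\eta_t\circ\sigma}$ at most $\rho$ (up to a factor depending only on $\dim Y$).

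The second step builds $Z$ by iterating the pairwise amalgamation over these representatives. Start with $Z_0:=Y$ and $f_0:=\id_Y$. At step $t$, given $f_{t-1}\in \Emb(Y,Z_{t-1})$, apply the amalgamation-class property to $Z_{t-1}\xleftarrow{f_{t-1}\circ\ga_t}X_t\xrightarrow{\eta_t}Y$ (both legs are $\de_t$-isometric since $f_{t-1}$ is isometric), obtaining $Z_t\in \mc G$ and isometric $a_t\in \Emb(Z_{t-1},Z_t)$, $b_t\in \Emb(Y,Z_t)$ with $\nrm{a_t\circ f_{t-1}\circ\ga_t-b_t\circ\eta_t}\le \varpi(\dim X_t,\de_t)+\vep'$; set $f_t:=a_t\circ f_{t-1}$. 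After the last step put $Z:=Z_m$ and $I:=f_m\in \Emb(Y,Z)$. For the representative $t$ the embedding $J_t:=a_m\circ\cdots\circ a_{t+1}\circ b_t$ then satisfies $\nrm{I\circ\ga_t-J_t\circ\eta_t}\le \varpi(\dim X_t,\de_t)+\vep'$, because the remaining $a$'s are isometric. For an arbitrary problem $(X,\ga,\eta)$ that is $\rho$-close to representative $t$ via $\sigma$, I take $J:=J_t$ and, using that $I$ and $J_t$ are isometric and $\nrm\sigma\le 1+\rho$, the triangle inequality gives a bound of the form $C\rho+(1+\rho)(\varpi(\dim X,\de)+\vep')$, where $C$ depends only on $\dim Y$ and $\dim X=\dim X_t$, $\de=\de_t$. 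Since $\varpi(\dim X,\de)$ takes only finitely many values, choosing $\rho$ and $\vep'$ small enough relative to $\vep$ makes this at most $\varpi(\dim X,\de)+\vep$. A final application of the \jep enlarges $Z$ so that it also realizes isometric copies of every member of $\mc H$; post-composing $I$ and all the $J$'s with this isometry preserves all the estimates.

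I expect the compactness step to be the main obstacle. The difficulty is to set up the parametrization so that the space of amalgamation problems is genuinely totally bounded even though both the domain $X$ and its norm vary, and so that closeness of two problems produces the intertwining near-isometry $\sigma\colon X\to X_t$ needed to transfer a solution. The crucial point is that the error introduced by this transfer must be absorbable into the slack $\vep$ rather than into the modulus term $\varpi(\dim X,\de)$ — which is why the representatives must be fed into the amalgamation with their \emph{original} parameters $\de_t$ and dimensions. The Auerbach-normalization trick, confining the admissible norms of $X$ between the $\ell_1^k$ and $\ell_\infty^k$ balls, is exactly what makes the parametrizing space compact and is the heart of the argument.
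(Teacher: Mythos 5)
Your proposal is correct, and its forward direction runs on the same engine as the paper's proof: produce a finite list of representative problems, then chain the amalgamation property along that list, composing the resulting isometric embeddings so that each solved estimate survives to the final space (the tail of the chain is isometric, so earlier bounds are preserved). The genuine difference is how the finite list arises, and it makes your conclusion stronger. The paper takes, for each $X_j\in\mc H$ and $\de_l\in\De$, a finite $\vep/3$-net of the compact set $\Emb_{\de_l}(X_j,Y)$ and amalgamates pairs of net elements; this yields the conclusion only for $X\in\mc H$, which is clearly the intended reading of the lemma (it is exactly what is invoked in the proof of Theorem \ref{oi43hjiio434371}, where $\mc H$ is a finite set growing with $n$), despite the statement literally quantifying over all $X\in\mc G$. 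Your Auerbach-basis parametrization, with norms trapped between the $\ell_1^k$ and $\ell_\infty^k$ balls and basis images in $(1+\de)B_Y$, makes the space of problems with \emph{varying} domain totally bounded, so your representatives capture every $X\in\mc G_k$ with $k\le \dim Y$; and your transfer bookkeeping is right — the amalgamation is applied at the representative's own parameters, which coincide with $(\dim X,\de)$, and all transfer errors (of size controlled by $\dim Y$, $\rho$, $\vep'$ and the finitely many relevant values of $\varpi$) are absorbed into the slack $\vep$ rather than the modulus. So you prove the lemma as literally stated, which is strictly more than the paper's proof establishes. Two minor points: the final \jep enlargement to plant copies of $\mc H$ inside $Z$ is superfluous under either reading of the statement, and your converse direction via \jep (which the paper omits entirely, and which is indeed the one place that hypothesis is needed) is correct.
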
  
\begin{proof}
Fix all data.    Order $\mc H:=\{X_j\}_{j=1}^m$, $\De:=\{\de_{j}\}_{j=1}^{n}$.  For each $ 1\le j\le m$ and $1\le l\le n$, let   $\{\ga_k^{(j,l)}\}_{k=1}^s$ be a   $\vep/3$-dense subsets of $\Emb_{\de_l}(X_j,Y)$.    Inductively we find a sequence $(V_k)_{k=1}^{s^2 m n +1}$ in $ \mc G$, $V_1:=Y$ and isometric embeddings $I_k\in \Emb(V_k, V_{k+1})$, $1\le k\le s^2m n$, such that  for every $  1\le k_0,k_1 \le n$, $ 1\le j\le  m$ and $1\le l\le n$, setting $k:=s^2 (j-1) (l-1)+k_0 k_1 +1$    there is some $J\in \Emb(Y, V_{k+1})$ such that $\nrm{J\circ \ga_{k_0}^{(j,l)}- I_{k}\circ I_{k-1}\circ \cdots \circ I_1 \circ \ga_{k_1}^{(j,l)}   }\le \varpi(\dim X_j, \de_l)+\vep$.  Then $V:=V_{s^2mn+1}$ and $I:=I_{s^2m n}\circ I_{s^2m n-1}\circ \cdots \circ I_1$ work. 
\end{proof}  

\prue	[{\sc Proof of Theorem \ref{oi43hjiio434371}}]  
 The proof is an standard back-and-forth argument. Suppose that $ \mc G$ is an amalgamation class,  with modulus of stability $\varpi$.  
Let $\{\de_n\}_n:=\Q\cap [0,1]$, $\de_0=0$,  $(\vep_n)_n$ be a positive sequence such that $\sum_{n>m}\vep_n\le \vep_m$,   and let   $\mc A=\{Z_n\}_n\con  \mc G$ be a countable $d_\mr{BM}$-dense  subset of $ \mc G$.  We use Lemma \ref{oi788798434} to  find    a sequence $(X_n,I_n)_n$, where 
\begin{enumerate}[(a)]
\item  $X_n\in  \mc G$ and  $I_n\in \mr{Emb}(X_n,X_{n+1})$;
 \item for every $n\in \N$ and every $X\in \{Z_j\}_{j\le n}\cup\{X_j\}_{j\le n}$ and every $k\le n$, if $\ga,\eta\in \Emb_{\de_k}(X,X_n)$ then there is $J\in \Emb(X_n,X_{n+1})$ such that $\nrm{I_{n} \circ \ga - J\circ \eta}\le \varpi(\dim X,\de_k)+ \vep_n$;
 \item $\mr{Emb}(Z_m,X_{n+1})\neq \buit$ for every $m\le n$. 
\end{enumerate}	
For (c) we use the (JEP) of $ \mc G$, that we know is true for amalgamation classes.  We claim that the inductive limit $E:=\lim_n (X_n,I_n)$ is $\mc G$-Fraïssé with modulus $\varpi$, $E\in [\mc G]$    and that $ \mc G\preceq \age(E)$. 
\clam
$E$ is $\mc G$-Fraïssé.
\fclam	
\prucl
Fix $\de'>\de\ge 0$, $\vep>0$, and $X\in \mc G$ and fix $\ga,\eta\in \Emb_\de(X,E)$. 
Choose $n$ large enough such that
\begin{enumerate}[i)]
\item  $\vep_{n-1}<\vep$;
\item there are $j,k\le n$, $\de''<\vep/2$  such that
\begin{enumerate}[{ii}.1)]
\item $\de<\de_k<\de'$ and $(\varpi(\dim X, \de_k) +\vep)\de''<\vep/3$, 
  \item there is an onto map $\theta\in \Emb_{\de''}(X,Z_j)$  and $\ga_0,\eta_0\in \Emb_{\de_k}(Z_j, X_n)$ such that $\nrm{I_n^\infty\circ\ga_0\circ \theta- \ga},\nrm{I_n^\infty\circ\eta_0 \circ \theta -\eta}\le \vep/2$. 

\end{enumerate}  	
   
\end{enumerate}    
For each $r$ and $s$ set $I_r,r:=\id_{X_n}$; $I_{r, r+s+1}:= I_{r+s}\circ I_{r,r+s}$.   We find now $J_s\in \Emb( X_{n+2s},X_{n+2s+1})$  and $ L_s\in \Emb( X_{n+2s+1},X_{n+2s+2})$ such that 
\begin{enumerate}[(a)]
\addtocounter{enumi}{3}
\item $\nrm{J_0\circ \eta_0 -I_n\ga_0}\le \varpi(d, \de_k)+ \vep_n$, being $d:=\dim X$ 
\item  $\nrm{L_{s+1}\circ J_s-I_{n+2s,n+2s+2}}\le \vep_{n+2s+1}$ for every $s\ge 0$.
\item  $\nrm{J_s\circ L_s-I_{n+s-1,n+2s+1}}\le \vep_{n+2s}$ for every $s\ge 1$. 
  \end{enumerate}  
  Setting $\bar{\vep}_n:=\varpi(d,\de_k)+\vep_n$, we have the following  approximately commuting  infinite diagram. 	
\begin{center}
\begin{figure}[h]
\begin{tikzpicture}[descr/.style={fill=white,inner sep=0pt}]

\node at (-1.0,  1.2) (f1c1) {$Z_k$};

\node at (2.0, 1.2) (f1c2) {$X_{n}$};

\node at (5.0, 1.2) (f1c3) {$X_{n+1}$};
\node at (8.0, 1.2) (f1c4) {$X_{n+2}$};
\node at (11.0, 1.2) (f1c5) {$X_{n+3}$};
\node at (14.0, 1.2) (f1c6) {$X_{n+4}$};

\node at (2.0, -1.2) (f2c2) {$X_{n}$};

\node at (5.0, -1.2) (f2c3) {$X_{n+1}$};
\node at (8.0, -1.2) (f2c4) {$X_{n+2}$};
\node at (11.0, -1.2) (f2c5) {$X_{n+3}$};
\node at (14.0, -1.2) (f2c6) {$X_{n+4}$};

\path[->,font=\normalsize]

(f1c1) edge node[above] {$\ga_2$} (f1c2)
(f1c2) edge node[above] {$I_n$} (f1c3)
(f1c3) edge node[above] {$I_{n+1}$} (f1c4)
(f1c4) edge node[above] {$I_{n+2}$} (f1c5)
(f1c5) edge node[above] {$I_{n+3}$} (f1c6)

(f1c1) edge node[above] {$\eta_2$} (f2c2)
(f1c3) edge node[above] {$L_1$} (f2c4)
(f1c5) edge node[above] {$L_2$} (f2c6)

(f2c2) edge node[above] {$I_n$} (f2c3)
(f2c3) edge node[above] {$I_{n+1}$} (f2c4)
(f2c4) edge node[above] {$I_{n+2}$} (f2c5)
(f2c5) edge node[above] {$I_{n+3}$} (f2c6)

(f2c2) edge node[above] {$J_0$} (f1c3)
(f2c4) edge node[above] {$J_1$} (f1c5)

;

\node at (2.0,  .2) (node1) {\scalebox{2.8}{$\circlearrowleft$} };
\node at (2.0,  0.2) (vep) {\scalebox{.8}{$\bar{\vep}_n$} };
\node at (5,  0.2) (node2) {\scalebox{2.8}{$\circlearrowleft$} };
\node at (5,  0.2) (vep2) {\scalebox{.7}{$\vep_{n+1}$} };
\node at (8,  0.2) (node2) {\scalebox{2.8}{$\circlearrowleft$} };
\node at (8,  0.2) (vep2) {\scalebox{.7}{$\vep_{n+2}$} };
\node at (11,  0.2) (node2) {\scalebox{2.8}{$\circlearrowleft$} };
\node at (11,  0.2) (vep2) {\scalebox{.7}{$\vep_{n+3}$} };
\node at (14,  0.2) (vep2) {\scalebox{1.8}{$\cdots$} };

 \end{tikzpicture}
\caption{} \label{oi34ioroi4rwe11}
\end{figure}  
\end{center}

Given  a sequence  $x=(x_k)_k\in \bigcup_l X^\infty_l$, we define for each $k$, $ \overline{y}_k(x):= I_{n+2k+1}^\infty J_k(x_{n+2k})\in X^\infty_{n+2k+1}$. It follows from the (e) and (f) that $(\overline{y}_k(x))_k$ is a Cauchy sequence, so we define $J: \bigcup_l X^\infty_l\to E$, $J(x)=\lim_k \overline{y}_k(x)$, and then we extend it to $J:E\to E$. Similarly, we define, given $y=(y_k)_k\in \bigcup_l X^\infty_l$, one defines the Cauchy sequence  $\overline{x}_k(y):= I_{n+2k+2}^\infty L_k(x_{n+2k+1})\in X^\infty_{n+2k+2}$, and the corresponding $L: E\to E$. It is easy to see that $L\circ J=J\circ L= \id_E$. 

Set now  $\ga_1:=I_{n}^\infty\circ \ga_0$ and  $\eta_1:=I_{n}^\infty\circ \eta_0$. It follows that    $J(\eta_1(z))= \lim_{s} I_{n+2s+1}^\infty J_k(I_{n,n+2s} (\eta_0(z)))$ for every $z\in Z_k$. Now for each $s$ one has that 
$\nrm{J_s(I_{n,n+2s} (\eta_0(z))) - I_{n, n+2s+1}(\ga_0(z)) }\le \varpi(\dim X,\de_k)+\sum_{l\le 2k}\vep_{n+l}\le \varpi(\dim X,\de_k)+ \vep_{n-1}$. This means that  $\nrm{J\circ \eta_1 - \ga_1}\le \varpi(\dim X,\de_k)+ \vep_{n-1}$. Consequently,
\begin{align*}
\nrm{J\circ \eta- \ga}\le & \nrm{J\circ \eta_1\theta- \ga_1\theta}+ \nrm{J\circ \eta_1\theta- J\circ \eta}+\nrm{\ga_1\theta- \ga}\le   (\varpi(\dim X,\de_k)+ \vep_{n-1})\nrm{\theta}+ 2 \frac{\vep}{3} \le \\
\le & \varpi(\dim X, \de_k)+\vep\le \varpi(\dim X, \de')+\vep.   \qedhere
\end{align*}
\fprucl
This proves that $E$ is $\mc G$-Fraïssé with modulus $\varpi^*$. 
 By  definition of $E$, the sequence $( X_n^\infty)_n$ witnesses that $E\in [\mc G]$. 
 \clam
 $\mc G\preceq \age(E)$.
 \fclam
 \prucl
 For suppose that $Z\in \mc G_k$. 
 Recall that $(\vep_n)_n$ was chosen so that $\sum_{n\ge m+1}\vep_n\le \vep_m$. Find a decreasing positive sequence $(\de_n)_n$  such that $\varpi_{\mc G,E}(k,\de_n)\le \vep_n$ for every $n$. Since $Z\in \overline{\mc A}^\mr{BM}$ and $\mc A\con \age(E)_\equiv$,   we can find for each $n$ some $\ga_n\in \Emb_{\de_n}(Z,E)$. Now for each $n$, let $g_n\in \iso(E)$ be such that $\nrm{g_n \circ \ga_{n+1}-\ga_n}\le \vep_n$, and set $\eta_0:=\ga_0$; $\eta_{n+1}:=g_0\circ \cdots \circ g_{n}\circ \ga_{n+1}$.   Then $\nrm{\eta_{n+l}-\eta_{n}}\le \sum_{j=n}^{n+l-1} \vep_j \le \vep_{n-1}$, so $(\eta_n)_n$ is a Cauchy sequence of $\de_n$-embeddings from $Z$ into $E$, so its limit $\eta:Z\to E$ is an isometric embedding, and consequently $Z\in \age(E)_\equiv$.
 \fprucl
\fprue

\subsection{Characterization of homogeneities}\label{characterization_of_homogeneities}
We rephrase the homogeneity properties introduced in terms of algebraic, topological and  metric properties of some list of canonical functions. 
Let  $\mc N_k$ be the collection of all norms on the vector field $\mbb F^k$,
 let $\emb(\mbb F^k,E)$ be the family of all 1-1 linear mappings from $\mbb F^k$ into $E$.
 Given  a collection  $ \mc G$ of normed spaces over $\mbb F$, let  $\mc N_k( \mc G)$ be the collection of those norms $M\in \mc N_k$ such that $(\mbb F^k,M)\in  \mc G_\equiv$;   we define $\emb( \mc G_k):=\bigcup_{M\in \mc N_k( \mc G)}\Emb((\mbb F^k,M),E)$; we consider the natural action $\iso(E)\acts\Emb(X,E)$, $g\cdot \ga:=g\circ \ga$, and the consequent $\iso(E)\acts \Emb( \mc G_k,E)$. We have now      $\widehat{\nu}_{k}: \emb( \mc G_k,E)\to \mc N_k( \mc G)$, defined as the norm
 $\widehat{\nu}_{k}(\ga)(x):=\nrm{\ga x}_E$; obviously, $\widehat{\nu}_{k}$ is $\iso(E)$-invariant, so we have naturally defined the quotient mapping $$\nu_{k}: \emb(\mbb F^k,E)\quo \iso(E) \to \mc N_k(E),$$
  $\nu_{k}([\ga]):=\widehat{\nu}_{k}(\ga)$. In general,  $\nu_k(\emb( \mc G_k,E))\con \mc N_k( \mc G)$, and when $ \mc G\preceq \age(E)$ one has  $\nu_k(\emb( \mc G_k,E))= \mc N_k( \mc G)$

 The following  proposition rephrases the approximate $ \mc G$-homogeneity of $E$ in terms of an algebraic property of each $\nu_k$.

 \prop Suppose that $ \mc G\preceq \age(E)$. Then $E$ is approximately $ \mc G$-homogeneous if and only if each $\nu_{k}$ is 1-1 on $\emb( \mc G_k,E)\quo\iso(E)$.    \qed
 \fprop

The next proposition is a characterization of $E$ being weak $\mc G$-Fraïssé in terms of topological properties of each $\nu_k$. So, 
we naturally endow $\mc N_k$ and $\mr{emb}(\mbb F^k,E)$ with the topology of point-wise convergence, that coincides with the compact-open topology.  The following is easy to prove.

\begin{proposition}\label{oiij34i455}
  Both $\widehat{\nu}_{k}$ and $\nu_{k}$ are continuous.\qed
\end{proposition}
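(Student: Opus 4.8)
The plan is to reduce the statement to two elementary facts: the continuity of the norm $\nrm{\cdot}_E$ on $E$, and the universal property of the quotient topology. Recall that all the spaces involved carry the topology of pointwise convergence. The key remark is that, for a space of functions such as $\mc N_k$ equipped with the pointwise topology, a map $F$ into it is continuous if and only if, for every fixed $x\in\mbb F^k$, the composition $\ga\mapsto F(\ga)(x)$ with evaluation at $x$ is continuous; I would isolate this as the guiding principle.

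First I would prove continuity of $\widehat{\nu}_k$, viewing it as defined by $\widehat{\nu}_k(\ga)(x)=\nrm{\ga x}_E$ on all of $\emb(\mbb F^k,E)$ (so that the proposition's map is its restriction to $\emb(\mc G_k,E)$). Fix $x\in\mbb F^k$. By the very definition of the pointwise-convergence topology on $\emb(\mbb F^k,E)$, the evaluation map $\ga\mapsto\ga x\in E$ is continuous; composing it with the continuous norm $\nrm{\cdot}_E\colon E\to\R$ gives that $\ga\mapsto\nrm{\ga x}_E$ is continuous for each $x$. By the guiding principle, $\widehat{\nu}_k$ is continuous. (In passing, one checks $\widehat{\nu}_k(\ga)$ is genuinely a norm: $x\mapsto\nrm{\ga x}_E$ is a seminorm that is nondegenerate because $\ga$ is injective.)

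Next I would deduce continuity of $\nu_k$. Let $\pi\colon\emb(\mbb F^k,E)\to\emb(\mbb F^k,E)\quo\iso(E)$ be the quotient map, carrying the final topology. Since each $g\in\iso(E)$ is an isometry, $\widehat{\nu}_k(g\circ\ga)(x)=\nrm{g\ga x}_E=\nrm{\ga x}_E=\widehat{\nu}_k(\ga)(x)$, so $\widehat{\nu}_k$ is $\iso(E)$-invariant and factors as $\widehat{\nu}_k=\nu_k\circ\pi$. As $\widehat{\nu}_k$ is continuous by the previous step and $\pi$ is a topological quotient map, the universal property of the quotient topology yields at once that $\nu_k$ is continuous.

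I do not anticipate any real obstacle: the result is a formal consequence of the definitions once the topologies are correctly identified. The only points requiring mild care are to express continuity into a pointwise-convergence topology through the evaluation maps (rather than arguing with nets of norms directly), and to invoke the universal property of the quotient topology rather than attempt an explicit description of $\emb(\mbb F^k,E)\quo\iso(E)$, whose topology may be complicated since the $\iso(E)$-action need not be proper.
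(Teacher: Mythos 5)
Your proof is correct and is exactly the argument the paper has in mind: the paper states this proposition without proof (labelling it as easy), and your two steps — continuity of $\widehat{\nu}_k$ via the universal property of the pointwise (initial) topology composed with evaluations and the norm of $E$, then continuity of $\nu_k$ via $\iso(E)$-invariance and the universal property of the quotient topology — are the natural formal verification. No gaps.
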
  
We introduce two metrics; let  
$$\om(M,N):=\log\left(\max\{     \nrm{\id}_{(\mbb F^k,M),(\mbb F^k,N)},  \nrm{\id}_{(\mbb F^k,N),(\mbb F^k,M)}   \} \right),$$ that defines the point-wise topology on $\mc N_k$; the second one $d(\eta,\ga):=\nrm{\eta-\ga}_{\ell_1,E}:=\max_{j=1}^n \nrm{\eta(u_j)- \ga(u_j)}_E$ defines the pointwise topology on $\emb(\mbb F^k,E)$; notice that $d$  is $\iso(E)$-invariant for the natural action $g\cdot \ga:=g\circ \ga$ introduced above, and  that the quotient metric $\widehat{d}([\ga],[\eta]):=\inf_{g\in \iso(E)} \nrm{g\circ \ga-\eta}_{\ell_1^k,E}$  defines the quotient topology on $\emb(\mbb F^k,E)\quo \iso(E)$.

\prop\label{uyy66773434} Suppose that $ \mc G\preceq \age(E)$. 
\begin{enumerate}[1)]
\item  $E$ is  weak $\mc G$-Fraïssé  if and only if for every $k$ one has that 
$\nu_k$ is 1-1 on $\emb( \mc G,E)\quo\iso(E)$, and for every $\ga\in \emb( \mc G_k,E)\quo\iso(E)$ the mapping $\nu_k$ is continuous and open at $[\ga]$.\footnote{Recall that $f:X\to Y$ is open at $x$ when for every open neighborhood $U$ of $x$ there is some open neighborhood $V$ of $f(x)$ such that $V\con f(U)$.}  

\item Consequently, if  $E$ is weak  $\mc G$-Fraïssé, then    each restriction $\nu_k: \emb( \mc G,E)\quo\iso(E)\to \mc N_k( \mc G)$ is a homeomorphism.
\item $E$ is  \auhp  if and only   $\nu_{k}$ is a homeomorphism for every $k\in \N$.
\end{enumerate}

\fprop
\prue
{\it 1)}: Suppose that $E$ is weak $ \mc G$-Fraïssé, and fix $k\in \N$. We know that $\nu_{k}$ is 1-1, so let  $\ga\in \emb( \mc G,E)$, and we see that $\nu_k$ is open at $[\ga]$. Fix $\vep>0$;  we have to prove that there is some $\de>0$ such that if $\eta\in \emb(\mbb F^k,E)$ satisfies that $\om(\nu_k([\ga]), \nu_k([\eta]))<\de$,  then there is some $g\in \iso (E)$ such that $d(g \eta, \ga)<\vep$.  We set $M:=\nu_k(\ga)$ and  $X:=(\mbb F^k, M)$.   Let   $\xi>0$ be such that $\iso(E)\acts \Emb_\xi(X,E)$ is  $\vep/ \nrm{\id}_{X,\ell_1^k}$-transitive. We claim that $\de:=\log(1+\xi)$ works. For suppose that $\eta\in \emb(\mbb F^k, E)$  is such that   $\om(M,N)<\de$, where $N:=\nu_k(\eta)$. Then $\nrm{\eta}_{X,E}\le \nrm{\eta}_{(\mbb F^k, N),E}  \cdot e^{\om(M,N)}\le (1+\xi)$ and similarly one has that  $\nrm{x}_E\ge (1+\xi)^{-1} N(x)$, so $\eta\in \Emb_\xi(X,E)$. Hence there is some $g\in \iso(E)$ such that 
$\nrm{g\ga-\eta}_{X,E}\le  \vep/ \nrm{\id}_{X,\ell_1^k}$, and consequently $\nrm{g\ga-\eta}_{\ell_1^k,E}\le  \vep$.

Suppose now that for every $k$ one has that $\nu_k$ is $1-1$ on $\emb( \mc G,E)\quo\iso(E)$ and open at each $[\ga]$ with $\ga\in \emb( \mc G,E)$. Fix $X\in  \mc G$ and $\vep>0$;  let $M\in \mc N_k( \mc G)$ be such that $X\equiv (\mbb F^k,M)$ and let $\theta:    (\mbb F^k, M)\to X$ be an isometry, and $\ga\in \emb( \mc G,E)$ be such that $\nu_{k}(\ga)=M$;  let $\de$ be such that
 if $\eta\in \emb(\mbb F^k,E)$ is such that  $\om(M,\nu_{k}(\eta))\le \de$, then $\widehat{d}([\ga],[\eta])<\widehat{\vep}:= \vep/(2 \nrm{\theta^{-1}}_{X,\ell_1^k})$. We claim that $\iso(E)\acts\Emb_\de(X,E)$ is $\vep$-transitive; for suppose that $\eta_0,\eta_1\in \Emb_\de(X,E)$; we set $M_j:=\tau_k(\eta_j \circ \theta  )\in \mc N_k(E)$ for $j=0,1$. Since $\om(M,M_0),\om(M,M_1)\le \de$,  we get that $\widehat{d}([\eta_0\circ\theta], [\ga]   ) , \widehat{d}([\eta_1\circ\theta], [\ga] <\widehat\vep$, hence  $\widehat{d}([\eta_0\circ\theta],[\eta_1\circ\theta])<2\widehat\vep$;   this means that there is some $g\in \iso(E)$ such that $\nrm{g\circ \eta_0 \circ \theta-\eta_1 \circ \theta}_{\ell_1^k,E}<2\widehat\vep$, and hence
 $$\nrm{g\circ \eta_0 -\eta_1 }_{X,E}\le \nrm{g\circ \eta_0\circ \theta ,\eta_1 \circ \theta}_{\ell_1^k,E}\cdot   \nrm{\theta^{-1}}_{X,\ell_1^k}\le \vep.$$
 {\it 2)} and  {\it 3)}  is a direct consequence of {\it 1)} using  that continuous bijection  is a homeomorphism exactly when it is open at each point. 
 \fprue

We pass now to reformulate    $E$ being  $\mc G$-Fraïssé as a metric property of each $\nu_k$.  So, we introduce a metric on $\emb(\mbb F^k,E)$ in a way that bounded sets are sent to $\om$-bounded sets  by the mappings $\nu_k$. Given $\ga,\eta\in \emb(\mbb F^k,E)$, let  
$$\mathfrak{d}(\ga,\eta):= \max \left\{\nrm{\ga-\eta}_{\ell_1^k, E} , \left| \om(\nu_k(\ga), \nrm{\cdot}_{\ell_1^k}) - \om(\nu_k(\eta), \nrm{\cdot}_{\ell_1^k}) \right|\right\};$$
this is an $\iso(E)$-invariant metric  that defines the pointwise convergence topology on $\emb(\mbb F^k,E)$,  and the quotient topology is defined by the quotient metric 
$$\widehat{\mathfrak{d}}([\ga],[\eta]):=\max\left\{\inf_{g\in \mr{Iso}(E)} \nrm{\ga- g\eta}_{\ell_1^k,E},     \left| \om(\nu_k(\ga), \nrm{\cdot}_{\ell_1^k}) - \om(\nu_k(\eta), \nrm{\cdot}_{\ell_1^k}) \right|     \right\}   ;$$
We have the following
\begin{proposition}
 $\nu_k :(\emb(\mbb F^k, E)\quo \iso(E),   \widehat{\mathfrak{d}}) \to (\mc N_k(E),\om)$ is uniformly continuous on bounded sets.
\end{proposition}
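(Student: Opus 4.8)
The plan is to produce a single modulus of continuity that works on any $\widehat{\mathfrak{d}}$-bounded set, exploiting the fact that the second coordinate of $\widehat{\mathfrak{d}}$ controls exactly the degeneracy of the norms $\nu_k(\ga)$ that would otherwise destroy continuity. First I would fix a $\widehat{\mathfrak{d}}$-bounded set $B$ and a basepoint $[\ga_0]\in B$. Since $\widehat{\mathfrak{d}}([\ga],[\ga_0])\ge |\om(\nu_k(\ga),\nrm{\cdot}_{\ell_1^k})-\om(\nu_k(\ga_0),\nrm{\cdot}_{\ell_1^k})|$, the quantity $R:=\sup_{[\ga]\in B}\om(\nu_k(\ga),\nrm{\cdot}_{\ell_1^k})$ is finite. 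By the definition of $\om$ this says that $e^{-R}\nrm{x}_{\ell_1^k}\le \nu_k(\ga)(x)\le e^{R}\nrm{x}_{\ell_1^k}$ for every representative $\ga$ of a class in $B$ and every $x\in\mbb F^k$; in particular each such norm is bounded below by a fixed multiple of $\nrm{\cdot}_{\ell_1^k}$, with constant uniform over $B$.

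Next, for $[\ga],[\eta]\in B$ with $\widehat{\mathfrak{d}}([\ga],[\eta])<\de$, I would use the $\iso(E)$-invariance of $\widehat{\nu}_k$ to pass to a pointwise comparison. Since $\inf_{g}\nrm{\ga-g\eta}_{\ell_1^k,E}<\de$, I choose $g\in\iso(E)$ with $\nrm{\ga-g\eta}_{\ell_1^k,E}<\de$ and replace $\eta$ by $g\eta$; as $g$ is an isometry this leaves $\nu_k(\eta)$ unchanged. Writing $x=\sum_j a_j u_j$ and using $\sum_j|a_j|=\nrm{x}_{\ell_1^k}$ together with the triangle inequality, I obtain
\[ |\nu_k(\eta)(x)-\nu_k(\ga)(x)|=|\,\nrm{\eta x}_E-\nrm{\ga x}_E\,|\le \nrm{(\eta-\ga)x}_E\le \nrm{\ga-\eta}_{\ell_1^k,E}\,\nrm{x}_{\ell_1^k}<\de\,\nrm{x}_{\ell_1^k} \]
for all $x\in\mbb F^k$.

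Finally I would combine the two estimates. Dividing the last display by $\nu_k(\ga)(x)\ge e^{-R}\nrm{x}_{\ell_1^k}$ gives $|\nu_k(\eta)(x)/\nu_k(\ga)(x)-1|<\de e^{R}$ for all $x\neq 0$, so that once $\de e^{R}<1$,
\[ \om(\nu_k(\ga),\nu_k(\eta))=\log\max\Big\{\sup_{x\neq 0}\frac{\nu_k(\eta)(x)}{\nu_k(\ga)(x)},\ \sup_{x\neq 0}\frac{\nu_k(\ga)(x)}{\nu_k(\eta)(x)}\Big\}\le -\log(1-\de e^{R}). \]
Because $-\log(1-\de e^{R})\to 0$ as $\de\to 0$ with $R$ fixed, given $\vep>0$ it suffices to take any $\de<e^{-R}(1-e^{-\vep})$ to force $\om(\nu_k(\ga),\nu_k(\eta))<\vep$; this $\de$ depends only on $\vep$ and on the bound $R$ attached to $B$, which is precisely uniform continuity on bounded sets. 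The only genuinely delicate point is the uniform lower bound $\nu_k(\ga)(x)\ge e^{-R}\nrm{x}_{\ell_1^k}$: without it the ratios defining $\om$ could blow up as $\ga$ approaches a non-injective map, and it is exactly the $\om(\nu_k(\cdot),\nrm{\cdot}_{\ell_1^k})$-term incorporated into $\mathfrak{d}$, together with the restriction to bounded sets, that guarantees it.
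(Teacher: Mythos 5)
Your proof is correct and follows essentially the same route as the paper: both arguments hinge on the fact that the second component of $\widehat{\mathfrak{d}}$ forces the norms $\nu_k(\ga)$ to be uniformly equivalent to $\nrm{\cdot}_{\ell_1^k}$ on a bounded set (your lower bound $\nu_k(\ga)(x)\ge e^{-R}\nrm{x}_{\ell_1^k}$ is exactly the paper's uniform bound on $\nrm{\id}_{\nu_k(\ga),\ell_1^k}$), combined with the triangle-inequality estimate $|\nrm{\eta x}_E-\nrm{\ga x}_E|\le \nrm{\ga-\eta}_{\ell_1^k,E}\nrm{x}_{\ell_1^k}$ and the $\iso(E)$-invariance that lets one pass to a representative $g\eta$ realizing the infimum up to $\de$. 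The only cosmetic difference is bookkeeping: the paper runs the comparison multiplicatively and gets the Lipschitz bound $\om(\nu_k([\ga]),\nu_k([\eta]))\le e^K\,\widehat{\mathfrak{d}}([\ga],[\eta])$, whereas you prove an additive estimate first and then divide, obtaining $\om\le -\log\left(1-\de e^{R}\right)$, which yields the same uniform continuity on bounded sets.
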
  
\begin{proof}
 Fix $\ga_0\in \emb(\mbb F^k, E)$ and $K>0$, and let us see that $\nu_k$ is uniformly continuous on the $\widehat{\mathfrak{d}}$-ball with center $[\ga_0]$ and radius $K$. Given $x\in \mbb F^k$   we have that
 \begin{align*}
 \nrm{\eta(x)} \le &\nrm{\ga(x)}+ \nrm{\ga- \eta}_{\nu_k(\ga),E}\cdot \nrm{\ga(x)} \le \nrm{\ga(x)}+ \nrm{\ga- \eta}_{\ell_1^k,E} \cdot \nrm{\id}_{\nu_k(\ga),\ell_1^k} \nrm{\ga(x)}   \\
 \le &\left(1+ \widehat{\mk d}([\ga],[\eta])  \cdot \nrm{\id}_{\nu_k(\ga),\ell_1^k}  \right) \nrm{\ga(x)}
 \end{align*}
and similarly $\nrm{\ga(x)}\le (1+ \widehat{\mk d}([\ga],[\eta])   \cdot \nrm{\id}_{\nu_k(\ga),\ell_1^k} ) \nrm{\eta(x)}$. This means that 
\[\om(\nu_k([\ga]),\nu_k([\eta]))\le \log\left(1+ \widehat{\mk d}([\ga],[\eta])   \cdot \nrm{\id}_{\nu_k(\ga),\ell_1^k} \right)  \le     \nrm{\id}_{\nu_k(\ga),\ell_1^k} \widehat{\mk d}([\ga],[\eta]) \le e^K \widehat{\mk d}([\ga],[\eta]).\qedhere\]
 \end{proof}

This is the metric characterization of  being $\mc G$-Fraïssé. 
\prop \label{wekrwekrewkew}Suppose that $ \mc G\preceq \age(E)$.   
\begin{enumerate}[1)]

\item    $E$ is  $ \mc G$-Fraïssé if and only if  for every $k\in \N$  the mapping  
$\nu_{k}:(\mr{emb}(\mbb F^k,E)\quo \iso(E),\widehat{\mathfrak{d}})\to (\mc N_k(E),\om)$ is   uniformly open  on bounded subsets of $\mc N_k( \mc G)$, i.e., given $A\con \mc N_k( \mc G)$ that is $\om$-bounded and given $\vep>0$, there is some $\de>0$ such that if $\nu_k([\ga])\in A$ and $\eta\in \emb(\mbb F^k,E)$ is such that $\om(\nu_k([\ga]),\nu_k([\eta]))<\de$, then $\widehat d ([\ga],[\eta])<\vep$.

 Consequently, if $E$ is $ \mc G$-Fraïssé   then  every  restriction $\nu_{k}:(\mr{emb}( \mc G,E)\quo \iso(E),\widehat{\mathfrak{d}})\to (\mc N_k( \mc G),\om)$ is a homeomorphism  that is uniform when restricted to bounded sets. 
 \item  $E$ is  Fraïssé  if and only   for every $k\in \N$ one has that $\nu_{k}:(\mr{emb}(\mbb F^k,E)\quo \iso(E),\widehat{\mathfrak{d}})\to (\mc N_k(E),\om)$   is a homeomorphism  that is uniform when restricted to bounded sets.

\end{enumerate}
\fprop
\prue
{\it 1)} is proved following the same ideas as in the proof of Proposition \ref{uyy66773434} {\it 1)}.  {\it 2)} readily follows from {\it 1)}.   
\fprue

\section{Lattice and $\ell_p^n$-homogeneity}\label{oi4iuuuuttyy}
In the next Section we will prove that $L_p(0,1)$ for $p$ not even are Fraïssé Banach spaces. As we mentioned before this not the case for $p=4,6,8,\dots$. However ``partial'' homogeneity properties are valid for all $L_p(0,1)$. This is the case when dealing with partial isometries defined on $\ell_p^n$'s. The aim of this section is to study those. In trying to understand these embeddings and isometries of $L_p(0,1)$ the setting of {\em lattices} is very natural.  In this context we consider lattice isometries and lattice embeddings on a Banach lattice $X$, i.e., linear isometries or isometric embeddings which preserve the lattice structure. In the important case of a finite dimensional sublattice $F$, i.e. $F=\langle f_j\rangle_{j<n}$ where the $(f_j)_j$'s is a pairwise disjoint sequence of positive elements of $X$, a lattice isometric embedding is an isometric embedding $T$ such that  $(T(f_i))_j$ is a pairwise disjoint sequence of positive elements. For a complete information about them we refer the reader to \cite[Vol. 2]{LiTza}.   

 With this in mind, we may consider lattice versions of homogeneity, by replacing isometries (respectively isometric embeddings) by lattice isometries (resp. lattice isometric embeddings) and (finite dimensional) subspaces by (finite dimensional) sublattices. So, given a sublattice $Y$ of $X$, let $\Emb_\diamond(Y,X)$ be the space of lattice isometric embeddings from $Y$ into $X$,  let $\iso_\diamond(X)$ be the topological group of surjective lattice isometries on $X$ with the SOT, and let $\iso_\diamond(X)\acts \Emb_\diamond(Y,X)$ be the canonical action $g\cdot \ga:= \ga \circ g^{-1}$. In particular we shall be interested in the following definitions:

\defi \label{oi23io3444}Let $X$ be a Banach lattice.
\begin{enumerate}[(a)] 
\item  $X$ is {\em lattice ultrahomogeneous} \latuh if  $\iso_\diamond(X)\acts \Emb_\diamond(F,X)$ is transitive for every finite dimensional sublattice $F$ of $X$;
\item  $X$ is {\em approximately lattice ultrahomogeneous }\latauh  if $\iso_\diamond(X)\acts \Emb_\diamond(F,X)$ is $\vep$-transitive for every finite dimensional sublattice $F$ of $X$.
\end{enumerate} 
\fdefi

There is a third natural notion in the case of lattices which is as follows.
We say that an isometry (respectively isometric embedding) is disjoint preserving (or d.p.) if it sends disjoint vectors to disjoint vectors, and that a subspace is disjointly generated if it is generated by a sequence of disjoint vectors. So, the difference between these versions and the lattice ones is that now we do not impose positivity. Then a Banach lattice $X$ is {\em disjointly homogeneous} if any d.p. isometric embedding defined on a  disjointly generated finite dimensional subspace of $X$ extends to a global d.p. isometry. 
It is {\em approximately disjointly homogeneous} if for any d.p. isometry $t$ defined on a disjointly generated finite dimensional subspace $F$  of $X$ and for any $\epsilon>0$, there is a global d.p. isometry $T$
 such that $\|T\rest {F}-t\| \leq \epsilon$. These properties will not be as relevant as the previous ones because of the following observation.

\begin{proposition}\label{latt-disj} Assume $X$ is a K\"othe function   space (see \cite[Vol. 2]{LiTza}). If $X$ is
 (approximately) lattice homogeneous then it is 
(respectively approximately) disjointly homogeneous. 
\end{proposition}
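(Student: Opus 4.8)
The plan is to reduce the signed (disjointness-preserving) problem to the positive (lattice) one by passing to absolute values, invoke the hypothesis there, and finally restore the signs using multiplication operators, which are available precisely because $X$ is a Köthe function space. So let $F=\langle f_j\rangle_{j<n}$ be disjointly generated, with $(f_j)_j$ a pairwise disjoint (possibly sign-changing) family, and let $t\colon F\to X$ be a disjointness-preserving isometric embedding, so that $(t(f_j))_j$ is again pairwise disjoint. First I would record the two standard lattice identities for disjoint families, $|\sum_j a_j f_j|=\sum_j|a_j|\,|f_j|$ and the analogue for $(t(f_j))_j$; combined with $\|x\|=\||x|\|$ they give, for all scalars,
\[
\Big\|\sum_j a_j|f_j|\Big\|=\Big\|\sum_j a_j f_j\Big\|=\Big\|\sum_j a_j t(f_j)\Big\|=\Big\|\sum_j a_j|t(f_j)|\Big\|.
\]
Since the $|f_j|$ (resp.\ the $|t(f_j)|$) are positive, pairwise disjoint and nonzero, they generate the finite dimensional sublattice $\langle|f_j|\rangle_j$ of $X$, and the displayed isometry together with the fact that the $|f_j|$ are sent to the positive disjoint vectors $|t(f_j)|$ shows that the linear map $s\colon\langle|f_j|\rangle_j\to X$ determined by $s(|f_j|):=|t(f_j)|$ is a lattice isometric embedding; the inclusion $\iota\colon\langle|f_j|\rangle_j\hookrightarrow X$ is another.

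Now I would apply the hypothesis to the sublattice $\langle|f_j|\rangle_j$: if $X$ is \latuh, transitivity of $\iso_\diamond(X)\acts\Emb_\diamond(\langle|f_j|\rangle_j,X)$ yields $g\in\iso_\diamond(X)$ with $g(|f_j|)=|t(f_j)|$ for all $j$; if $X$ is \latauh, $\vep$-transitivity yields $g\in\iso_\diamond(X)$ with $\|g\circ\iota-s\|\le\vep$. For the sign correction I use that on a Köthe function space multiplication $M_\sigma\colon x\mapsto\sigma x$ by any measurable $\sigma$ with $|\sigma|\equiv1$ is a surjective disjointness-preserving isometry of $X$. Choosing the global $\{-1,+1\}$-valued functions $\theta$ equal to $\sgn f_j$ on $\supp f_j$ (and $1$ elsewhere) and $\eta$ equal to $\sgn t(f_j)$ on $\supp t(f_j)$ (and $1$ elsewhere), one has $M_\theta(f_j)=|f_j|$ and $M_\eta(|t(f_j)|)=t(f_j)$. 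I would then set $T:=M_\eta\circ g\circ M_\theta$, a surjective disjointness-preserving isometry as a composition of such maps. In the ultrahomogeneous case $T(f_j)=M_\eta(g(|f_j|))=M_\eta(|t(f_j)|)=t(f_j)$, so $T$ extends $t$. In the approximate case, writing $\hat x:=M_\theta x=\sum_j a_j|f_j|$ for $x=\sum_j a_j f_j\in F$ and noting $t(x)=M_\eta(s(\hat x))$, we get $T(x)-t(x)=M_\eta(g(\hat x)-s(\hat x))$, whence $\|T\rest F-t\|\le\|g\circ\iota-s\|\le\vep$, since $M_\eta,M_\theta$ are isometries and $\|\hat x\|=\|x\|$. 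This gives the respective conclusion.

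The main obstacle is exactly the passage between the positive and the signed settings: a disjointly generated subspace is in general not a sublattice (the $f_j$ change sign), and a lattice isometry produced by the hypothesis respects positivity and so cannot by itself reproduce the sign pattern of $t$. The Köthe function space hypothesis is what makes the correction possible, as it supplies the sign functions $\sgn f_j$ and $\sgn t(f_j)$ and the multiplication operators $M_\theta,M_\eta$ realizing them as genuine surjective disjointness-preserving isometries of $X$. The points requiring care are verifying that these multipliers are indeed isometries, that the composition remains disjointness preserving and surjective, and that the freedom in defining $\sigma$ off the relevant supports is harmless; the norm computations themselves are routine.
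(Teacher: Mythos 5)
Your proof is correct and takes essentially the same route as the paper's: you pass to absolute values using the sign-changing multiplication isometries (your $M_\theta$, $M_\eta$ are the paper's $u$ and $v^{-1}$), apply the (approximate) lattice homogeneity hypothesis to the sublattice $\langle |f_j|\rangle_{j<n}$, and conjugate back, so that your $T=M_\eta\circ g\circ M_\theta$ is exactly the paper's $v^{-1}Tu$. The only difference is presentational: you spell out the norm identities for disjoint vectors and the estimate $\|T\rest F - t\|\le\vep$ in the approximate case, which the paper leaves implicit.
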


\begin{proof} Assume $X$ is lattice homogeneous. Let $F=\langle f_j\rangle_{j<n}$ with $(f_j)_j$ pairwise disjoint, and $t$ an isometric d.p. map from $F$ onto $G=\langle tf_j\rangle_{j<n}$. There exist isometric maps $u$ and $v$ on $X$ acting by changes of signs, and sending each $f_j$ to $|f_j|$, and $tf_j$ to $|tf_j|$, respectively.
The map $vtu^{-1}$ is a lattice isometric map from $\langle |f_j|\rangle_{j<n}$ onto
$\langle |tf_j|\rangle_{j<n}$. If $T$ is a lattice isometry  on $X$ extending
$vtu^{-1}$ then $v^{-1}Tu$ is a d.p. isometry on $X$ extending $t$. The same proof holds
for approximate lattice homogeneity.
\end{proof}

\subsection{Stable approximate $\{\ell_p^n\}_n$-homogeneity of $L_p(0,1)$}

 We start with a few classical definitions. Let $1\le p<+\infty$.
A {\em simple} space is a finite dimensional subspace of $L_p(0,1)$ generated by simple functions. By {\em copy of $\ell_p^n$} we mean some linearly isometric copy of a finite dimensional $\ell_p^n$ inside $L_p(0,1)$. It is a classical result by S. Banach that if $1\le p<+\infty, p \neq 2$, any isometric embedding between $L_p$-spaces is automatically d.p. (\cite[Theorem 3.2.5]{FJ1}). In particular, any isometric embedding of  a copy of $\ell_p^m$ into a copy of $\ell_p^n$ is disjoint preserving, and any copy of $\ell_p^n$ is a disjointly generated subspace.
It is also an easy observation that, for $1\le p<+\infty$, any finite dimensional subspace generated by simple functions
in $L_p(0,1)$ is a  subspace of 
some simple copy of some $\ell_p^n$ (with same support).
S. Banach, \cite[pp. 178]{B}, stated the general formula of surjective isometries
on $L_p(0,1)$, $1 \leq p<+\infty, p \neq 2$; any such isometry $T$ is defined by
$$T(f)(t)=h(t) f(\phi(t)) \text{ for every $t\in [0,1]$},$$ where $\phi$ is a measurable non-singular transformation of $[0,1]$ onto itself, and $h$ is a function satisfying $|h|^p=d(\lambda \circ \phi)/d\lambda$, where $\lambda$ is the Lebesgue measure (see   \cite[Chapter 3]{FJ1}). The    description of isometric embeddings between $L_p$-spaces, called   the Banach-Lamperti Theorem,  and  a complete proof of it  was done by J. Lamperti in \cite{Lam}.  When $p=2$ the above formula defines a linear isometry but there are, obviously, other isometries on $L_2(0,1)$. 

Let $1\le p<+\infty$. Recall that a simple function of $L_p(0,1)$ is a measurable mapping having finitely many values. We shall say that
an isometry on $L_p(0,1)$ is {\em simple preserving} if it maps simple functions to simple functions.
If $A$ is a Lebesgue measurable subset of $[0,1]$ of positive measure, we denote by $L_p(A)$ the $p$-Lebesgue space associated to the measure space $(A, \mc L(A), \la\rest A)$ consisting of restricting the Lebesgue measure $\la$ to the Lebesgue measurable subsets of $A$.  In addition, if  $F$ is a subspace of  $L_p(A)$, then the support of $F$ is the union of the supports of all $f$, $ f\in F$; in particular, for a vector in, or a subspace of $X$,  {\em full support} in    $L_p(A)$ means that the corresponding support   is equal to  the support of $X$.

\lema\label{singleisometry} Let $1\le p<+\infty$. Let $A,B$ be subsets of $[0,1]$ of positive measure, and let $u \in L_p(A)$, $v \in L_p(B)$ be functions with full support and  with equal norms. Then there exists a (disjoint preserving) isometry $T$ from $L_p(A)$ onto $L_p(B)$ such that $T(u)=v$. 
Furthermore if $u$ and $v$ were simple functions then $T$ is simple preserving, and if $u$ and $v$ were non-negative then $T$ is a lattice isometry.\flema

\prue It is enough to show this for $A=[0,1]$ and  $u=\mathbbm 1_{[0,1]}$. 
Furthermore using a natural isometry between $L_p(B)$ and $L_p(0,1)$ we may  
assume $B=[0,1]$.
Since $|v|>0$, It is then clear that if $V(x)=\int_0^x |v(t)|^p dt$, then  $$T_v(f)(x)=f(V(x)) v(x) \text{ for every $x\in [0,1]$} $$
defines a linear isometry of $L_p(0,1)$ sending $u$ to $v$.   The fact that this map is disjoint preserving (and respectively simple preserving, a lattice isometry) is obvious from the definition of $T_v$. \fprue

It is well-known that when $1\le p <+\infty, p \neq 2$, any isometry of $L_p(0,1)$ sends full support vectors to full support vectors see \cite[Theorem 3.2.2]{FJ1}. 
This proves that
$L_p(0,1)$ cannot be ultrahomogeneous in this case, nor even lattice ultrahomogeneous  (so, as we commented before, the isometry group does not even act transitively on the unit sphere of $L_p(0,1)$ when $p \neq 2$). The next lemma will be useful to deal with perturbations in this context.

\lema\label{approx}  Let $1 \leq p<+\infty$. Let $F$ be a finite dimensional subspace of $L_p(0,1)$ and let $\vep>0$. 
Then there exists a linear, disjoint preserving, isometric embedding $\ga$ of $L_p(0,1)$ into itself, whose image does not have full support, and such that $\|(\ga-\id)\rest {F}\| \leq \vep$. 
If $F$ is a sublattice then $L$ can be chosen to be a lattice isometric embedding.\flema

\prue Write $F=\langle f_j\rangle_{j<n}$, where the $f_i$'s have norm $1$. It is enough to find some isometric embedding $\ga$ whose image does not have full support and
such that $\|(\ga-\id)(f_j)\| \leq \vep$ for all $j$ and for some $\vep$ small enough. By a perturbation argument we may also assume that each $f_j$ is continuous. Let $M={\rm max}_{1 \leq j \leq n}\nrm{f_j}_\infty$, and let $\delta(s)$ be a common modulus of uniform continuity of all $f_j$'s, i.e.  such that $|x-y| \leq \delta(s)$ implies $|f_j(x)-f_j(y)| \leq s$ for every $ j=1,\dots,n$. Let $\lambda<1$ be close enough to $1$ so  that
$\lambda \big( 1/\lambda-1)M+\delta({1}/{\lambda}-1)\big)^p+(1-\lambda)M^p <\vep$.  
We define an isometric embedding $\ga$ whose image is $L_p(0,\lambda) \subset L_p(0,1)$ by 
$$\ga(f)(x):= \mathbbm 1_{[0,\la]}(x) f(\frac{x}{\lambda}).$$
Then for $j=1,\ldots,n$ one has that 
\[
\|\ga(f_j)-f_j\|^p=\int_0^{\lambda} |\frac{1}{\lambda} f_j(\frac{x}{\lambda})-f_j(x)|^p dx
+\int_{\lambda}^1 |f_j(x)|^p dx \leq  \lambda \big( (\frac{1}{\lambda}-1)M+\delta(\frac{1}{\lambda}-1)\big)^p+(1-\lambda)M^p<\vep.\qedhere
\]
	
\fprue

\prop\label{ultracopylp} For every $1\le p<\infty$ the Banach lattice $L_p(0,1)$ is  approximately lattice homogeneous, or in other words, the action $\iso_\diamond(L_p(0,1))\acts \Emb_\diamond(\ell_p^n,L_p(0,1))$ is almost transitive. 
 \fprop

\prue  We start with the following. 
\clam \label{liwjriwejriwe}
Fix $n\in \N$, and let  $\mc E_{F}$ and $\mc E_{N}$ be the families of lattice embeddings of $\ell_p^n$ into $L_p(0,1)$ whose image has full support and does not have full support, respectively.   Then the canonical actions $\iso_\diamond(L_p(0,1))\acts \mc E_F$ and $\iso_\diamond(L_p(0,1))\acts \mc E_N$ are transitive. 

\fclam
\prucl
We prove only that $\iso_\diamond(L_p(0,1))\acts \mc E_F$ is transitive; the other proof is similar, and we leave the details to the reader.  Let $F$ and $G$ be two copies of some $\ell_p^n$, with full support, and
let $t$ be an isometry from $F$ onto $G$. 
Write $F=\langle f_j\rangle_{j<n}$, where the $f_i$'s are normalized disjointly supported, and let
$A_i:=\supp f_i$. Likewise we define $G=\langle tf_j\rangle_{j<n}$ and $B_i=\supp tf_i$.
By Lemma \ref{singleisometry}, we may for each $i$ define an isometry $T_i$ from $L_p(A_i)$ onto
$L_p(B_i)$ sending $f_i$ to $tf_i$. This defines a global linear isometry $T$ on $L_p(0,1)
=\oplus_i L_p(A_i)=\oplus_i L_p(B_i)$ which extends $t$.
\fprucl

 We use the above notation, where $t$ is an isometric embedding of $F$ onto $G$ in $L_p([0,1])$, and assume for example that $F$ does not have full support and $G$ has. By Lemma \ref{approx}, there exists an isometric embedding $t'$ of $F$ onto some $G'$ without full support with $\|t-t'\| \leq \epsilon$. By the Claim, $t'$ extends to a global isometry on $L_p(0,1)$ and we are done.
\fprue

\nota\label{oio3ui23343r4353}
Note that by Lemma \ref{singleisometry} the isometry  $T$ considered in the proof of Claim \ref{liwjriwejriwe}  is simple preserving when the $f_j$'s and $t f_j$ are  simple functions. 
\fnota

We conclude with the following very strong amalgamation property. 

\prop\label{3l4kirjio34riji43} Let $1\le p<\infty$. Then
\begin{enumerate}[1)]
\item the class $\{\ell_p^n\}_{n\in \N}$ is an amalgamation class with modulus independent of the dimension. 
\item $L_p(0,1)$ is the Fraïssé limit of $\{\ell_p^n\}_n$.
  
\end{enumerate}  

\fprop
For the proof we use the following remarkable result by G. Schechtman  (as observed by D. Alspach \cite{Als}).
\begin{theorem}[Schechtman \cite{Sch}]\label{kdmfksmfkdsw}
For any $1 \leq p<\infty$ there is a modulus of stability $\varpi_p:]0,\infty[\to ]0,\infty[$ such that    
$$\Emb_{\de}(\ell_p^n, L_p(\mu))\con (\Emb(\ell_p^n, L_p(\mu)))_{\varpi_p(\de)}.$$
for every $n\in \N$, $\de>0$ and finite measure $\mu$.   Consequently, for every $d,m\in \N$ there is $n\in \N$ such that 
$$\iota_{m,n}\circ \Emb_{\de}(\ell_p^d,\ell_p^m)\con (\Emb(\ell_p^d,\ell_p^n)_{\varpi_p(\de)},$$
where $\iota_{m,n}:\ell_p^m\to \ell_p^n$ is the canonical isometric embedding $(a_j)_{j<m}\in \mbb F^m\mapsto (a_0,\dots,a_{m-1},0,\dots,0)\in \mbb F^n$. 
\end{theorem}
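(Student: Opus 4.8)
The plan is to treat the first (dimension- and measure-free) stability statement as the substance and to obtain the displayed ``Consequently'' clause as a short discretization. Everything hinges on the fact that the modulus $\varpi_p$ must be independent of both the dimension $n$ and the measure $\mu$, so the heart of the matter is a \emph{dimension-free disjointification} of almost-$\ell_p^n$ families; this is precisely Schechtman's theorem \cite{Sch}, and Alspach's observation \cite{Als} is that it can be rephrased as the containment $\Emb_\de(\ell_p^n,L_p(\mu))\con(\Emb(\ell_p^n,L_p(\mu)))_{\varpi_p(\de)}$.

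I would first dispose of the corollary, assuming the first statement. Given $\ga\in\Emb_\de(\ell_p^d,\ell_p^m)$, identify $\ell_p^m$ with $L_p(\nu)$ for $\nu$ the counting measure on $m$ atoms, which is finite. Applying the first statement with $\mu=\nu$ and dimension $d$ yields an isometric $\widetilde\ga\in\Emb(\ell_p^d,L_p(\nu))=\Emb(\ell_p^d,\ell_p^m)$ with $\nrm{\ga-\widetilde\ga}\le\varpi_p(\de)$. Since $\iota_{m,n}$ is itself an isometric embedding, postcomposition is distance-preserving on operators, so $\iota_{m,n}\circ\widetilde\ga\in\Emb(\ell_p^d,\ell_p^n)$ and $\nrm{\iota_{m,n}\circ\ga-\iota_{m,n}\circ\widetilde\ga}=\nrm{\ga-\widetilde\ga}\le\varpi_p(\de)$; thus $n=m$ already works and any larger $n$ is harmless. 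Note that the inherited modulus is literally the $\varpi_p$ of the first statement, so it is independent of $d$ and $m$.

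For the first statement I would reduce to a probability space by normalizing $\mu$, and dispose of $p=2$ by the Hilbert-space computation already recorded in the excerpt (modulus $2\de$, which is dimension free). Assume $p\neq 2$ and set $f_i:=\ga(u_i^{(n)})$, so that the $\de$-isometry condition reads $(1+\de)^{-1}\nrm{a}_{\ell_p^n}\le\nrm{\sum_i a_i f_i}_p\le(1+\de)\nrm{a}_{\ell_p^n}$ for all scalars $a$. By the Banach--Lamperti rigidity recalled above (for $p\neq 2$ an isometric embedding between $L_p$-spaces is disjointness preserving, \cite{FJ1}), an isometric embedding of $\ell_p^n$ into $L_p(\mu)$ is exactly the span of a normalized, disjointly supported family. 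Hence it suffices to produce disjointly supported $g_1,\dots,g_n$ with $\max_i\nrm{f_i-g_i}_p\le\varpi_p(\de)$ and then extend $u_i^{(n)}\mapsto g_i/\nrm{g_i}_p$ linearly. The analytic engine is the quantitative defect in the relevant ($p$-dependent) Clarkson/uniform-convexity inequality: for $p\neq 2$ there is $c_p>0$ with, pointwise, $\bigl|\,|s+t|^p+|s-t|^p-2|s|^p-2|t|^p\,\bigr|\ge c_p\,\Psi_p(s,t)$, where $\Psi_p(s,t)$ vanishes exactly when $st=0$ and is comparable to the overlap density $\min(|s|,|t|)^p$. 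Testing the $\de$-condition on $a=u_i^{(n)}\pm u_j^{(n)}$ shows that $\int(|f_i+f_j|^p+|f_i-f_j|^p)\,d\mu$ differs from $2(\nrm{f_i}_p^p+\nrm{f_j}_p^p)$ by $O_p(\de)$, so integrating the defect yields a \emph{pairwise} overlap bound $\int\min(|f_i|,|f_j|)^p\,d\mu\le C_p\,\de$ that is uniform in $i,j,n,\mu$. One then disjointifies by assigning each point to the index $i$ maximizing $|f_i|$ and truncating, $g_i:=f_i\,\mathbbm 1_{A_i}$, the residual $\nrm{f_i-g_i}_p$ being governed by the overlaps of $f_i$ with the others.

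The genuine obstacle, and the reason this is cited rather than proved by soft compactness, is converting the many small pairwise overlaps into the required \emph{per-coordinate} bound $\max_i\nrm{f_i-g_i}_p\le\varpi_p(\de)$ with no loss depending on $n$. A fixed-$n$ ultraproduct argument would give a modulus $\varpi_{p,n}$ that a priori degrades as $n\to\infty$, since the total overlap $\sum_{j\neq i}$ of $f_i$ with the rest may grow with $n$; ruling out that these overlaps conspire to make a single $\nrm{f_i-g_i}_p$ large — typically via a greedy or iterative selection of the regions $A_i$ together with the global constraint that the $\de$-condition holds for \emph{all} $a$, including those supported on the overlapping indices — is exactly the content of Schechtman's construction, which I would invoke for the full quantitative estimate.
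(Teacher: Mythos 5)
Your proposal is correct and coincides with the paper's treatment: the paper gives no proof of this statement either, importing the dimension- and measure-free stability directly from Schechtman \cite{Sch} (as reformulated by Alspach \cite{Als}), so deferring the hard per-coordinate, $n$-independent disjointification estimate to that reference is exactly what the paper does. Your derivation of the ``Consequently'' clause --- applying the first statement with $\mu$ the counting measure on $m$ atoms and post-composing with the isometric embedding $\iota_{m,n}$, which preserves operator-norm distances, so that $n=m$ already suffices --- is valid and in fact slightly sharper than the paper's ``there is $n\in\N$'' phrasing.
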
  
\prue[{\sc Proof of Proposition \ref{3l4kirjio34riji43}}]
{\it 1)}: We may assume $p \neq 2$ and let $\mc F$ be the collection of finite dimensional simple subspaces of $L_p(0,1)$ without full support. 
\clam	
$\mc F$ has the amalgamation property. Consequently, $\{\ell_p^n\}_n$ has the amalgamation property.
\fclam
This, combined with Schechtman's result readily gives the proposition. 
\prucl
Let $E,F,G\in \mc F$, and let $\ga\in \Emb(E,F)$, $\eta\in \Emb(E,G)$. By the Claim \ref{liwjriwejriwe} (see the Remark \ref{oio3ui23343r4353}) there is a simple preserving $T\in \iso_\diamond(L_p(0,1))$ such that $T\circ \ga=\eta$.   Let $J\in \Emb(G,L_p(0,1)$ be simple preserving and  such that $TF+ J G$ does not have full support,  and let $V\in \mc F$ be  containing $T F+ J G$, let $I: F\to V$ be the restriction $I:=T\rest F$. Then clearly $I\circ \ga=J\circ \eta$.

{\it 2)}: From {\it 1)} and the Proposition \ref{ultracopylp}, we obtain that $L_p(0,1)$ is $\{\ell_p^n\}_n$-Fraïssé. Since $L_p(0,1)\in [\{\ell_p^n\}_n]$, we obtain from Theorem \ref{oi43hjiio434371} that $L_p(0,1)=\lim\{\ell_p^n\}_n$. 
\fprucl
\fprue

\section{The \sauhp property of $L_p(0,1)$: Approximate equimeasurability in $L_p$ spaces} \label{oi43894589754789}
The main result of this section is the following. 
\begin{theorem}\label{auh_for_L_p}
If $1\le p<\infty$, $p$ not even, then $L_p(0,1)$ is \sauhp.
\end{theorem}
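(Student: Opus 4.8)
The plan is to deduce the \sauhp property from the two facts already available in this section together with the approximate equimeasurability principle that is the analytic heart of \S\ref{oi43894589754789}. First I would invoke Theorem \ref{main_thm_fraisse}: since $\age(L_p(0,1))$ is $d_\mr{BM}$-compact (Example \ref{12uihiu23ui324}), the space $L_p(0,1)$ is \sauhp if and only if it is weak-Fraïssé. So it suffices to produce, for each fixed $X\in\age(L_p(0,1))$ and each $\vep>0$, a $\de>0$ such that any two maps $\ga,\eta\in\Emb_\de(X,L_p(0,1))$ can be matched to within $\vep$ by a global isometry. The case $\de=0$ is exactly Lusky's \auh (Example \ref{oiioio43447856}), so the genuinely new content is the uniform control as $\de\to0$, and this is precisely what approximate equimeasurability supplies.

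Next I would set up the measure-theoretic dictionary. Fix an Auerbach basis $(x_j)_{j<n}$ of $X$ and a distinguished positive full-support reference vector $x_0$ (the constant function $\mathbbm 1$ playing this role). To an embedding $\ga\colon X\to L_p(0,1)$ I associate the push-forward $\mu_\ga$ of $|\ga(x_0)|^p\,d\la$ under the ratio map $t\mapsto(\ga(x_j)(t)/\ga(x_0)(t))_{j<n}$, so that
$$\widehat{\mu_\ga}^{(p)}(a)=\int\Big|1+\sum_{j<n}a_j y_j\Big|^p\,d\mu_\ga(y)=\nrm{\ga(x_0)+\sum_{j<n}a_j\ga(x_j)}_p^p$$
for every $a=(a_j)_{j<n}$. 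This is the continuous analogue of the device of Plotkin and Rudin, and for $p\notin 2\N$ the characteristic $\widehat{\mu}^{(p)}$ determines $\mu$; here I would use its quantitative, approximate form.

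Now take $\ga,\eta\in\Emb_\de(X,L_p(0,1))$. Since each is a $\de$-isometry, $\nrm{\ga(x_0)+\sum_j a_j\ga(x_j)}_p^p$ and the corresponding quantity for $\eta$ both lie within a factor $(1+\de)^{\pm p}$ of the $X$-norm $\nrm{x_0+\sum_j a_j x_j}_X^p$, hence are uniformly close to each other on every compact set of parameters $a$. Feeding this into the approximate equimeasurability principle established below yields that $\mu_\ga$ and $\mu_\eta$ are close in the Lévy-Prohorov metric, with a modulus tending to $0$ as $\de\to0$. The final step is to convert this closeness of distributions into an actual isometry: approximating $\mu_\ga,\mu_\eta$ by finitely supported (simple) measures and invoking the Banach-Lamperti description of isometries of $L_p(0,1)$ together with Lemma \ref{singleisometry} and the full-support correction of Lemma \ref{approx}, exactly as in the proof of Proposition \ref{ultracopylp}, I can build $g\in\iso(L_p(0,1))$ with $\nrm{g\circ\ga-\eta}\le\vep$.

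The main obstacle is this last conversion, not the equimeasurability principle itself (which is deep but proved separately). Two points require care: first, the bookkeeping around the reference vector $x_0$, since $\ga$ and $\eta$ need not send a common vector to $\mathbbm 1$, so the ratio-map construction and the matching isometry must be arranged compatibly for both embeddings; and second, the quantitative passage from Lévy-Prohorov closeness of the push-forward measures to the norm estimate $\nrm{g\circ\ga-\eta}\le\vep$, which forces a discretization of the measures together with a continuity estimate for how the simple-function isometries of Lemma \ref{singleisometry} depend on the underlying partition. Once these uniform estimates are assembled they give the weak-Fraïssé modulus for each $X$, and Theorem \ref{main_thm_fraisse} then promotes it to the full \sauhp property of $L_p(0,1)$ for $p\notin 2\N$.
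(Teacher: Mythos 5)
Your overall route is the one the paper itself follows: reduce to the weak-Fraïssé property via the Banach--Mazur compactness of $\age(L_p(0,1))$ and Theorem \ref{main_thm_fraisse}, encode embeddings by push-forward measures whose $p$-characteristics are the norms, apply the approximate equimeasurability principle to get Lévy-Prokhorov closeness, discretize, and extend to a global isometry using the $\{\ell_p^n\}_n$-homogeneity of Proposition \ref{ultracopylp}. The discretization you postpone (``finitely supported approximation plus a continuity estimate'') is precisely what the paper carries out in Lemma \ref{ksmfkwewe}, via appropriate partitions into continuity sets and conditional expectations; that part of your plan is right in spirit, though it is new work, not contained in the proof of Proposition \ref{ultracopylp}.

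There is, however, a genuine gap at the point you dismiss as ``bookkeeping around the reference vector''. Your dictionary rests on the identity $\widehat{\mu_\ga}^{(p)}(a)=\nrm{\ga(x_0)+\sum_{j<n}a_j\ga(x_j)}_p^p$, where $\mu_\ga$ is the push-forward of $|\ga(x_0)|^p\,d\la$ under the ratio map. Computing the left-hand side one gets only $\int_{\{\ga(x_0)\neq 0\}}\bigl|\ga(x_0)+\sum_{j<n}a_j\ga(x_j)\bigr|^p\,d\la$, so the identity holds up to the defect $\int_{\{\ga(x_0)=0\}}\bigl|\sum_{j<n}a_j\ga(x_j)\bigr|^p\,d\la$, i.e.\ it holds exactly only when $\ga(x_0)$ has full support in $\ga(X)$. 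For isometric embeddings this is the classical consequence of equimeasurability, but for a $\de$-isometric embedding nothing in your argument controls this defect: without such control the characteristics of $\mu_\ga$ and $\mu_\eta$ need not be multiplicatively close even though both maps are $\de$-isometries, and conversely Lévy-Prokhorov closeness of the ratio measures says nothing about what $\ga$ does on $\{\ga(x_0)=0\}$. The paper fills exactly this hole with Theorem \ref{kjnnjguytfyyuoipo} (approximate full support): for every $\vep>0$ there is $\de>0$ such that every $\ga\in\Emb_\de(X,L_p(\mu))$ sends a full-support vector of $X$ to an $\vep$-full-support vector of $\ga(X)$. This is itself a nontrivial consequence of approximate equimeasurability, proved by Hardin-type arguments (Lemmas \ref{oij4t3u9887yuhifgitfr} and \ref{lkq2hiuherpotjtt}), not a notational adjustment. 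With it in hand, one cuts $\ga(X)$ down to $\supp\ga(u)$ losing at most $\vep$, rotates $u$ and the cut-down $\ga(u)$ to constant functions by Lemma \ref{singleisometry}, and only then runs the measure comparison in the unital setting of Lemma \ref{ksmfkwewe}. A minor further repair: compare each of $\ga,\eta$ with the fixed identity embedding of $X$ rather than with each other, so as to respect the quantifier structure of Theorem \ref{oi4jiri4rjer}, in which $\de$ depends on data fixed on one side.
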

Lusky's  proof of the approximate ultrahomogeneity of the spaces $L_p(0,1)$ for  $1\le p <\infty$, $p\notin 2\N$ is a consequence of the following result, known as the {\em equimeasurability principle}, proved independently by A. I. Plotkhin \cite{Plo1}   and  W. Rudin  \cite{Ru}. For the field $\mbb F=\R,\C$, recall that given a finite measure space $(\Om, \Sig,\mu)$,  we denote by $L_p(\Om,\Sig,\mu;\mbb F)$, or simply $L_p(\mu)$, the Lebesgue space of all $p$-integrable functions $f: \Om\to \F$.  
\begin{theorem}[Equimeasurability principle] \label{oi4jiorjeiwrew44}
Suppose that $1\le p<\infty$,  $p\notin 2\N$, $(\Om_0, \Sig_0,\mu_0)$, $(\Om_1, \Sig_1,\mu_1)$ are finite measure spaces, and $f_0,\dots,f_{n-1}\in L_p( \mu_0)$ and  $g_0,\dots,g_{n-1}\in L_p(\mu_1)$ are such that  
 $$\int  \left| 1 + \sum_{j<n} a_j f_j(\om) \right|^p d\mu_0(\om)= \int  \left| 1 + \sum_{j<n} a_j g_j(\om) \right|^p d\mu_1(\om)$$
  for every scalars $a_0,\dots, a_{n-1}\in \mbb F$. 
Then, $F=(f_j)_j$ and $G=(g_j)_j$ are   {\em jointly equidistributed}; that is, for every Borel subset $A\con \mbb F^n$, 
$$F_*\mu_0(A)=\mu_0(\conj{\om\in \Om_0}{(f_j(\om))_{j<n}\in A})=  G_*\mu_1(A)=\mu_1(\conj{\om\in \Om_1}{(g_j(\om))_{j<n}\in A}).$$
\end{theorem}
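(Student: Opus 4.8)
The plan is to recast the statement in terms of the pushforward measures $\mu := F_*\mu_0$ and $\nu := G_*\mu_1$ on $\mathbb{F}^n$, and to show that the hypothesis forces $\mu = \nu$ (which is exactly joint equidistribution). Both are finite Borel measures with $p$-integrable coordinate functions, of equal total mass (take all $a_j=0$), and the hypothesis reads
$$\int_{\mathbb{F}^n}\left|1 + \langle a, x\rangle\right|^p\,d\mu(x) = \int_{\mathbb{F}^n}\left|1 + \langle a,x\rangle\right|^p\,d\nu(x) \qquad (a \in \mathbb{F}^n),$$
where $\langle a, x\rangle = \sum_{j<n} a_j x_j$. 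The first move is to reduce to dimension one by projecting: for a fixed $\theta \in \mathbb{F}^n$ set $P_\theta(x) := \langle\theta, x\rangle \in \mathbb{F}$ and consider the marginals $\lambda := (P_\theta)_*\mu$ and $\rho := (P_\theta)_*\nu$, finite measures on $\mathbb{F}$ of finite $p$-th moment. Restricting the hypothesis to scalar multiples $a = z\theta$, $z\in\mathbb{F}$, and using the change-of-variables formula yields $\int_{\mathbb{F}}|1 + zw|^p\,d\lambda(w) = \int_{\mathbb{F}}|1 + zw|^p\,d\rho(w)$ for all $z \in \mathbb{F}$. If this forces $\lambda = \rho$, then the distributions of all (complex-)linear functionals agree; since the real parts $\operatorname{Re}\langle a,\cdot\rangle$ exhaust every real-linear functional on $\mathbb{F}^n \cong \mathbb{R}^N$, the Cramér--Wold theorem gives $\mu = \nu$.

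Everything thus reduces to a one-dimensional lemma, with $\mathbb{F}$ viewed as $\mathbb{R}^d$, $d=1$ in the real case and $d=2$ in the complex case: if $\sigma := \lambda - \rho$ is a finite (signed or complex) measure on $\mathbb{R}^d$ with $\int(1+|w|)^p\,d|\sigma|(w) < \infty$ and $\int|1+zw|^p\,d\sigma(w) = 0$ for every $z$, then $\sigma = 0$. Here I would first rescale: for $z \neq 0$ the homogeneity $|1+zw|^p = |z|^p\,|w-(-1/z)|^p$ converts the hypothesis into $\int|w-\tau|^p\,d\sigma(w) = 0$ for all $\tau \neq 0$, and since $\tau \mapsto \int|w-\tau|^p\,d\sigma(w)$ is continuous (dominated convergence, via the moment bound) the equality extends to $\tau = 0$. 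In other words, the convolution $(|\cdot|^p * \sigma)(\tau) = \int|\tau - w|^p\,d\sigma(w)$ vanishes identically.

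The heart of the matter, and the point where $p \notin 2\mathbb{N}$ is indispensable, is the Fourier analysis of this vanishing convolution. Since $p>0$, the radial function $|\cdot|^p$ is locally integrable of polynomial growth, hence a tempered distribution; its Fourier transform is radial and homogeneous of degree $-p-d$, so on $\mathbb{R}^d \setminus\{0\}$ it equals $C_{p,d}\,|\xi|^{-p-d}$, where the classical constant $C_{p,d}$ is a nonzero multiple of $\Gamma\big(\tfrac{p+d}{2}\big)/\Gamma\big(-\tfrac p2\big)$. This constant is \emph{nonzero precisely when $-p/2$ is not a pole of $\Gamma$}, that is, when $p$ is not a nonnegative even integer; since $p\ge 1$ and $p\notin 2\mathbb{N}$, this holds. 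To exploit it I would test $|\cdot|^p*\sigma = 0$ against Schwartz $g$ with $\widehat g$ supported in $\mathbb{R}^d\setminus\{0\}$: Fubini gives $\int(|\cdot|^p * g)\,d\sigma = 0$, and since $\widehat{|\cdot|^p * g} = \widehat{|\cdot|^p}\,\widehat g = C_{p,d}|\xi|^{-p-d}\widehat g(\xi)$ (the origin-supported part of $\widehat{|\cdot|^p}$ being killed by $\widehat g$), Fourier inversion turns this into
$$\int_{\mathbb{R}^d} C_{p,d}\,|\xi|^{-p-d}\,\widehat g(\xi)\,\widehat\sigma(-\xi)\,d\xi = 0.$$
As $\widehat g$ ranges over all of $C^\infty_c(\mathbb{R}^d\setminus\{0\})$ and $C_{p,d}|\xi|^{-p-d}\neq 0$ there, the continuous function $\widehat\sigma$ must vanish on $\mathbb{R}^d\setminus\{0\}$, hence everywhere, so $\sigma=0$. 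This yields $\lambda=\rho$, then $\mu=\nu$, and the equidistribution of $F$ and $G$.

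The main obstacle is exactly the computation of $\widehat{|\cdot|^p}$ together with the verification that its leading homogeneous coefficient $C_{p,d}$ is nonvanishing off the even integers: this is the quantitative form of the Plotkin--Rudin phenomenon and the only step that genuinely separates $p\notin 2\mathbb{N}$ from the excluded values $p=2,4,6,\dots$, where $|\cdot|^p$ is a polynomial and its transform degenerates to a distribution supported at the origin, which carries no information about $\sigma$ away from $0$.
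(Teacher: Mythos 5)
Your argument is correct, but it takes a genuinely different route from the one the paper relies on. The paper never proves Theorem \ref{oi4jiorjeiwrew44} itself (it cites Plotkin and Rudin); the closest thing to a proof in the text is the Hardin-style machinery of \S\ref{approx_equim}, where Proposition \ref{oiuioui78547878555} constructs an explicit nonzero, bounded, Lebesgue-integrable function $f(z)=\sum_{j=0}^{m}a_j|z+j|^p$ by solving a linear system that kills the polynomially growing terms in the binomial expansions (this is exactly where $p\notin 2\N$ enters there: for even $p$ such an $f$ would be a decaying polynomial, hence identically zero), and Lemma \ref{jwejrjw23dddqwe} then passes from $\int f(a+bz)\,d\mu=\int f(a+bz)\,d\nu$ to $\widehat{\mu}=\widehat{\nu}$ using only Fourier--Stieltjes transforms of finite measures and the relation $\widehat{f_c\ast\mu}=\widehat{f_c}\,\widehat{\mu}$, with no distribution theory. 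You instead take the Fourier transform of $|\cdot|^p$ itself as a homogeneous tempered distribution, equal off the origin to $C_{p,d}|\xi|^{-p-d}$ with $C_{p,d}$ a nonzero multiple of $\Gamma((p+d)/2)/\Gamma(-p/2)$, so that $p\notin 2\N$ enters as the nonvanishing of $1/\Gamma(-p/2)$; this is essentially Koldobsky's Fourier-analytic approach to the Plotkin--Rudin theorem (compare the survey \cite{KoKo} already in the paper's bibliography). Each route buys something: your reduction to one-dimensional marginals $(P_\theta)_*\mu$ followed by Cram\'er--Wold makes the $n$-dimensional statement self-contained, whereas the paper uses its one-dimensional lemma only for tightness and delegates the $n$-dimensional identification to the cited theorem; conversely, Hardin's construction is elementary and, crucially for this paper, robust under perturbation --- it survives the passage from equalities to $\de$-inequalities, which is what the approximate principle (Theorem \ref{oi4jiri4rjer}) requires, while your step extracting $\widehat{\sigma}=0$ by testing against smooth functions supported off the origin does not obviously quantify, since multiplication by $|\xi|^{-p-d}$ is unbounded both near $0$ and near $\infty$. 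Two points you should make explicit in a write-up: the nonvanishing of $C_{p,d}$ is the classical Gelfand--Shilov computation and must be cited or proved (rotation invariance plus homogeneity alone give $C|\xi|^{-p-d}$ away from the origin but do not exclude $C=0$, which is precisely what happens for even $p$), and Cram\'er--Wold for finite measures of equal total mass should be justified by normalization or directly by uniqueness of Fourier transforms of finite measures.
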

In other words,  if $\nrm{ \mathbbm{1}_{\Om_0} +\sum_j a_j f_j}_{L_p(\mu_0)}=\nrm{ \mathbbm{1}_{\Om_1} +\sum_j a_j g_j}_{L_p(\mu_1)}$  
 for every scalars $a_0,\dots, a_{n-1}\in \mbb F$, then   the {\em pushforward} Borel measures $F_* \mu_0$ and $G_*\mu_1$ are equal. Our demonstration relies on an  extension of the equimeasurability principle.  In order to state it we will use the well-known  {\em L\'evy-Prokhorov metric}. Let $(X,d)$ be a separable metric space.  Let $\mc B(X)$ be the $\sig$-algebra of Borel subsets of $X$, and let $\mc M(X)$ be the collection of Borel measures on $X$.   
\begin{definition}[Lévy-Prokhorov metric]
The {\em Lévy–Prokhorov metric} $ d_{\mc{LP}} :{\mathcal  {M}}(X)^{{2}}\to [0,+\infty )$ is defined by setting the distance between two finite  measures $\mu$  and $ \nu$  to be
$$ d_{\mc{LP}} (\mu ,\nu ):=\inf \left\{\varepsilon >0~|~\mu (A)\leq \nu (A_{{\varepsilon }})+\varepsilon \ {\text{and}}\ \nu (A)\leq \mu (A_{{\varepsilon }})+\varepsilon \ {\text{for all}}\ A\in {\mathcal  {B}}(X)\right\}.$$
\end{definition} 
This metric defines the complete convergence on $\mc M(X)$ (see \cite[Section 6]{Bi}).   
 We are in disposition to present the continuity statement associated to the equimeasurability principle. 
In the next,  $z_j: \mbb F^n\to \mbb F$ is the $j^\mr{th}$-projection mapping $(a_0,\dots,a_{n-1})\mapsto a_j$, and given a measure space $(\Om,\Sig,\mu)$ and  $\vphi\in L_p(\mu)$ positive, let $\vphi d\mu$ be the  measure on $\Sig$ having density $f$ with respect to $\mu$, that is, $(\vphi d\mu)(A):=\int_A \vphi d\mu$.

\begin{theorem}[Approximate equimeasurability principle]\label{oi4jiri4rjer}
Suppose that $1\le p<\infty$, $p\notin 2\N$,  and $(\Om_0, \Sig_0,\mu_0)$ is a  finite measure space. Then for every $\vep>0$, $I\con [0,p]$ finite and  $f_0,\dots,f_{n-1}\in L_p(\mu_0)$ there is $\de>0$ such that if  $(\Om_1,\Sig_1,\mu_1)$ is a  finite measure space and $g_0,\dots,g_{n-1}\in L_p(\mu_1)$ are such that 
$$\frac1{1+\de}\left\|  \mathbbm{1}_{\Om_1} +\sum_j a_j g_j\right\|_{L_p(\mu_1)}\le \left\| \mathbbm{1}_{\Om_0} +\sum_j a_j f_j\right\|_{L_p(\mu_0)}\le (1+\de)\left\|  \mathbbm{1}_{\Om_1} +\sum_j a_j g_j\right\|_{L_p(\mu_1)}$$ for every scalars $a_0,\dots, a_{n-1}$, then, 
\begin{equation*}  
  \max_{\al\in I, j<n}d_{\mc{LP}}(|z_j|^\al d(F_*\mu_0), |z_j|^\al d(G_*\mu_1))< \vep,
\end{equation*}  
that is, for every $\al\in I$, every $j<n$, and every Borel subset  $A\con \F^n$,
\begin{align*}
   \int_{F^{-1}(A)} |f_j(\om)|^\al d\mu_0(\om)\le &  \int_{G^{-1}(A_\vep)} |f_j(\om)|^\al d\mu_1(\om) +\vep  \text{ and } \\
    \int_{G^{-1}(A)} |f_j(\om)|^\al d\mu_1(\om) \le  &  \int_{F^{-1}(A_\vep) } |f_j(\om)|^\al d \mu_0(\om) +\vep ,
\end{align*}  
where, as before,  $F=(f_j)_{j<n}$ and $G=(g_j)_{j<n}$.
\end{theorem}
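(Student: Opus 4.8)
The plan is to derive the approximate principle from the exact equimeasurability principle (Theorem \ref{oi4jiorjeiwrew44}) by a compactness-and-contradiction argument, after transporting everything to the space of finite Borel measures on $\mbb F^n$. Writing $F=(f_j)_{j<n}$ and $G=(g_j)_{j<n}$, set $\mu:=F_*\mu_0$, a fixed finite measure with finite $p$-th moment, and $\nu:=G_*\mu_1$; then for $z=(z_j)_{j<n}$ and $a\in\mbb F^n$ one has $\int|1+\sum_j a_j f_j|^p\,d\mu_0=\widehat{\mu}^{(p)}(a):=\int_{\mbb F^n}|1+\langle a,z\rangle|^p\,d\mu(z)$, and likewise for $\nu$, so the entire statement involves only $\mu$ and $\nu$. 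Suppose the conclusion fails for some $\vep>0$; then there are a sequence $\de_m\downarrow 0$, finite measures $\nu_m$ on $\mbb F^n$ with $(1+\de_m)^{-p}\widehat{\nu_m}^{(p)}\le\widehat{\mu}^{(p)}\le(1+\de_m)^p\widehat{\nu_m}^{(p)}$ pointwise, and indices $\al\in I$, $j<n$ with $d_{\mc{LP}}(|z_j|^\al\,d\mu,|z_j|^\al\,d\nu_m)\ge\vep$ for all $m$.

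First I would establish uniform bounds and tightness. Testing the hypothesis at $a=0$ bounds the total masses $\nu_m(\mbb F^n)$, and testing at $a=te_j$ together with the triangle inequality in $L_p(\nu_m)$, namely $t\|z_j\|_{L_p(\nu_m)}\le\|1+tz_j\|_{L_p(\nu_m)}+\|1\|_{L_p(\nu_m)}\le(1+\de_m)\,\widehat{\mu}^{(p)}(te_j)^{1/p}+\nu_m(\mbb F^n)^{1/p}$, gives a uniform bound on $\int|z_j|^p\,d\nu_m$, hence on $\int|z|^p\,d\nu_m$. By Markov's inequality the family $\{\nu_m\}$ is tight, so by Prokhorov's theorem a subsequence converges weakly to a finite measure $\nu_\infty$ with $\int|z|^p\,d\nu_\infty<\infty$; testing against the bounded continuous function $1$ and using the squeeze at $a=0$ shows $\nu_\infty(\mbb F^n)=\mu(\mbb F^n)$.

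The crux is to identify $\nu_\infty$ with $\mu$, and the obstacle is that $|z|^p$ has critical growth, so $p$-th moment may a priori escape to infinity and the characteristic functional need not pass to the weak limit. To control this I would use a profile/compactification argument: the measures $\sigma_m:=(1+|z|^p)\,d\nu_m$ have uniformly bounded mass, so along a further subsequence they converge weakly-$*$ on the radial compactification $\overline{\mbb F^n}=\mbb F^n\sqcup S_\infty$ to a measure whose restriction to $\mbb F^n$ is $(1+|z|^p)\,d\nu_\infty$ and whose part on the sphere at infinity $S_\infty$ is some measure $\kappa$. Since $\psi_a(z):=|1+\langle a,z\rangle|^p/(1+|z|^p)$ extends continuously to $\overline{\mbb F^n}$ with boundary value $|\langle a,\omega\rangle|^p$ on $S_\infty$, passing to the limit in $\int\psi_a\,d\sigma_m=\widehat{\nu_m}^{(p)}(a)\to\widehat{\mu}^{(p)}(a)$ yields the identity $\widehat{\mu}^{(p)}(a)=\widehat{\nu_\infty}^{(p)}(a)+\int_{S_\infty}|\langle a,\omega\rangle|^p\,d\kappa(\omega)$ for all $a$.

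To finish I would homogenize: lift $\mu,\nu_\infty$ to $\mbb F^{n+1}$ via $z\mapsto(1,z)$ and $\kappa$ via $\omega\mapsto(0,\omega)$, obtaining measures $\mu',\nu_\infty',\kappa'$ for which, using that the $\kappa$-term is $p$-homogeneous in $a$, the identity above rescales into the fully homogeneous equimeasurability identity $\int|\langle b,w\rangle|^p\,d\mu'(w)=\int|\langle b,w\rangle|^p\,d(\nu_\infty'+\kappa')(w)$ for all $b\in\mbb F^{n+1}$. Here the hypothesis $p\notin 2\N$ is essential: the homogeneous form of the equimeasurability principle, which is exactly what the Plotkin–Rudin method establishes for such $p$, forces the $p$-weighted radial projections of $\mu'$ and of $\nu_\infty'+\kappa'$ to the sphere to coincide. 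Because $\mu'$ and $\nu_\infty'$ are carried by the affine chart $\{w_0\neq0\}$ while $\kappa'$ sits on the hyperplane at infinity $\{w_0=0\}$, matching supports forces $\kappa=0$ and $\mu=\nu_\infty$. In particular no $p$-th moment escapes, so along the subsequence $\nu_m\to\mu$ weakly with $\int|z|^p\,d\nu_m\to\int|z|^p\,d\mu$; this upgrades to uniform integrability of $|z_j|^\al$ for every $\al\in[0,p]$ (the subcritical cases $\al<p$ being automatic from the moment bound), whence $|z_j|^\al\,d\nu_m\to|z_j|^\al\,d\mu$ weakly, i.e.\ in $d_{\mc{LP}}$, contradicting $d_{\mc{LP}}(\cdot,\cdot)\ge\vep$. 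As indicated, the main difficulty is precisely ruling out the escape of $p$-th moment, i.e.\ proving $\kappa=0$, which is where the homogenization and the $p\notin 2\N$ rigidity do the work.
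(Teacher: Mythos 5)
Your proposal is correct in outline but takes a genuinely different route from the paper's. The paper first proves a sequential continuity theorem for $p$-characteristics (Theorem \ref{continuity_theorem}); its hard implication --- uniform convergence of $\widehat{\mu_k}^{(p)}$ on compacta implies complete convergence together with tightness of $(|z|^pd\mu_k)_k$ --- is handled by a Hardin-style device: an explicitly constructed function $f(z)=\sum_{j\le m}a_j|z+j|^p$ that is nonzero, bounded, integrable, and has $f(z)/|z|^p$ bounded near the origin (Proposition \ref{oiuioui78547878555}; this is where $p\notin 2\N$ enters, since for even $p$ the linear system would force $f\equiv 0$), which is fed into a convergence criterion proved via Fourier--Stieltjes transforms and Helly's theorem (Lemma \ref{jwejrjw23dddqwe}). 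The quantitative statement is then extracted by the same outer compactness-and-contradiction scheme you use (Corollary \ref{quantitative_continuity}). You instead attack the possible escape of $p$-th moment head-on by concentration--compactness on the radial compactification, producing a defect measure $\kappa$ on the sphere at infinity and the limit identity $\widehat{\mu}^{(p)}(a)^p=\widehat{\nu_\infty}^{(p)}(a)^p+\int_{S_\infty}|\langle a,\omega\rangle|^pd\kappa(\omega)$ (note that with the paper's definition of $\widehat{\mu}^{(p)}$ as an $L_p$-norm it is the $p$-th powers that appear). Your route is shorter and makes the role of $p\notin2\N$ geometrically transparent; the paper's route is longer but self-contained and yields the full four-way continuity theorem, which is reused elsewhere.

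The one step you must shore up is the rigidity input that kills $\kappa$. What your argument needs is injectivity, for $p\notin 2\N$, of the transform $\tau\mapsto\int_S|\langle b,\theta\rangle|^pd\tau(\theta)$ on (symmetrized) finite measures on the unit sphere of $\F^{n+1}$, i.e.\ a projective/homogeneous uniqueness theorem valid for measures that may charge the hyperplane at infinity. This is a true classical fact (via the Funk--Hecke/Gegenbauer expansion of $|t|^p$, whose even-degree coefficients vanish exactly when $p\in2\N$; see the Koldobsky--K\"onig survey already in the bibliography, or Kanter), and it only identifies symmetrized projections, which is enough for your support argument. But it is strictly stronger than Theorem \ref{oi4jiorjeiwrew44} as stated: the affine Plotkin--Rudin principle concerns finite measures on $\F^n$ and simply cannot see the mass of $\kappa'$ sitting on $\{w_0=0\}$, so the phrase ``exactly what the Plotkin--Rudin method establishes'' glosses over a step that needs either a precise citation or its own proof. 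Once $\kappa=0$ is secured you could in fact finish more economically: the limit identity collapses to $\widehat{\mu}^{(p)}=\widehat{\nu_\infty}^{(p)}$, and the affine uniqueness theorem then gives $\mu=\nu_\infty$ directly, after which your uniform-integrability upgrade and the L\'evy--Prokhorov contradiction go through as written. With that reference supplied, your argument is complete.
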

Here we are using the standard euclidean metric on $\mbb F^n$, and  the fact  that  $F_* \mu_0,G_*\mu_1\in \mc M(\mathbb F^n)$. Also, we are using that  that $\int \vphi(z) d(F_*\mu)(z)= \int \vphi(F(\om))d\om$ for every positive Borel function $\vphi$, that can be easily proved by first assuming that $\vphi$ is a characteristic function of a Borel subset of $\mbb F^n$, use linearity of the integral, and then use the monotone convergence theorem.

 A  well-known consequence of the equimeasurability principle is that if $\ga: X\con L_p(\mu_0)\to L_p(\mu_1)$ is an isometric embedding, and $u\in X$ has {\em full support} in $X$, then $\ga u$ has full support in  $\ga X$. Similarly,  Theorem \ref{oi4jiri4rjer} will provide   the corresponding approximate result. Given  $\vep>0$, we say that $f\in X$ has $\vep$-full support if $\nrm{P_{\{u=0\}}\rest X}\le \vep$. We have then the following.
\begin{theorem}[Approximate full support]\label{kjnnjguytfyyuoipo}
Suppose that $X$ is a finite dimensional subspace of $L_p(\mu)$ with $1\le p<\infty$, $p\notin 2\N$, $u\in X$ has full support in $X$ and suppose that $\vep>0$. Then there is some $\de>0$ such that if $\ga\in \Emb_\de(X, L_p(\mu))$ then   $\ga(u)$ has $\vep$-full support in $\ga (X)$.  
\end{theorem}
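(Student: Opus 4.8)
The plan is to reduce the statement to a coordinatewise ``defect'' estimate and then feed it into the approximate equimeasurability principle, Theorem \ref{oi4jiri4rjer}. First I would fix a basis $u,x_1,\dots,x_{n-1}$ of $X$ with $u$ the given full--support vector, and set $f_j:=x_j/u$ on $\supp X=\supp u$ and $\mu_0:=|u|^p\,d\mu$, so that $\nrm{\mathbbm 1+\sum_j a_j f_j}_{L_p(\mu_0)}^p=\nrm{u+\sum_j a_j x_j}_{L_p(\mu)}^p$ for all scalars. Writing $S_0:=\{\ga u=0\}\cap\supp(\ga X)$, the key observation is that on $S_0$ one has $\ga(\lambda u+\sum_j a_j x_j)=\sum_j a_j\,\ga x_j$, so $\nrm{P_{S_0}\rest\ga X}\le\vep$ is equivalent to a bound on the defect $\int_{S_0}|\sum_j a_j\ga x_j|^p\,d\mu$ relative to $\min_\lambda\nrm{\lambda u+\sum_j a_j x_j}^p$; since $x_1,\dots,x_{n-1}$ are independent modulo $\F u$, the quotient norm controls $\sum_j|a_j|$, and it suffices to bound each coordinate defect $\int_{S_0}|\ga x_j|^p\,d\mu$.

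The difficulty is that the natural range system $g_j:=\ga x_j/\ga u$, $\mu_1:=|\ga u|^p\,d\mu$, ignores $S_0$: one gets $\nrm{\mathbbm 1+\sum_j a_j g_j}_{L_p(\mu_1)}^p=\nrm{\ga(u+\sum_j a_j x_j)}^p-\int_{S_0}|\sum_j a_j\ga x_j|^p\,d\mu$, and the subtracted defect destroys exactly one of the two inequalities that the $\de$--isometry hypothesis of Theorem \ref{oi4jiri4rjer} requires. I would remedy this by regularising the reference: fix a full--support $v\in L_p(\mu)$ (say $v=\mathbbm 1_\Om$) and replace $\ga u$ by $\ga u+\tau v$ for small $\tau>0$, setting $g_j^\tau:=\ga x_j/(\ga u+\tau v)$ and $\mu_1^\tau:=|\ga u+\tau v|^p\,d\mu$. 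The reference now has full support, the identity $\nrm{\mathbbm 1+\sum_j a_j g_j^\tau}_{L_p(\mu_1^\tau)}^p=\nrm{\ga u+\tau v+\sum_j a_j\ga x_j}^p$ holds on all of $\Om$, and as $\tau\to0$ it converges to $\nrm{\ga(u+\sum_j a_j x_j)}^p$ uniformly in the multiplicative sense (using $\nrm{u+\sum_j a_j x_j}\ge\mathrm{dist}(u,\langle x_j\rangle)>0$ to bound ratios for bounded $a$, and leading homogeneity for large $a$). Thus the two--sided hypothesis of Theorem \ref{oi4jiri4rjer} holds with a modulus tending to $0$ as $\de,\tau\to0$, at the price of pushing the defect mass out to radius $\sim 1/\tau$ in the pushforward $G^\tau_*\mu_1^\tau$.

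With this in hand I would argue by contradiction. If the conclusion fails there are $\de_k\to0$ and $\ga_k\in\Emb_{\de_k}(X,L_p(\mu))$ with $\nrm{P_{S_0^{(k)}}\rest\ga_k X}\ge\vep_0$; expanding a near--extremal vector in the basis and using $\ga_k u=0$ on $S_0^{(k)}$ together with the norm equivalence above, some fixed coordinate $j^*$ satisfies $\int_{S_0^{(k)}}|\ga_k x_{j^*}|^p\,d\mu\ge\vep_2>0$ along a subsequence. I would then apply Theorem \ref{oi4jiri4rjer} with $I=\{p\}$ to the regularised systems, choosing $\tau_k\to0$ so the hypothesis modulus drops below the threshold the principle returns for a target accuracy $\vep'$, obtaining $d_{\mc{LP}}\big(|z_{j^*}|^p\,d(G^{\tau_k}_k)_*\mu_1^{\tau_k},\,|z_{j^*}|^p\,dF_*\mu_0\big)<\vep'$. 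But on $S_0^{(k)}$ the map $g^{\tau_k}_{j^*}=\ga_k x_{j^*}/(\tau_k v)$ has modulus $\sim 1/\tau_k$, so with weight $|z_{j^*}|^p$ the defect contributes mass $\int_{S_0^{(k)}}|\ga_k x_{j^*}|^p\,d\mu\ge\vep_2$ concentrated at radius $\to\infty$; choosing $R$ with $\big(|z_{j^*}|^p\,dF_*\mu_0\big)(\{|w|>R\})<\vep_2/4$ (tightness of this fixed finite measure) and $\vep'<\vep_2/4$, the Lévy--Prokhorov tail inequality $G(\{|w|>R\})\le F(\{|w|>R-\vep'\})+\vep'$ forces $\vep_2\le\vep_2/2$, a contradiction.

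The main obstacle is exactly the interaction flagged above: the subspace $\ga X$ may carry genuine mass on the null set of $\ga u$, and this defect is simultaneously the quantity to be bounded and the obstruction to a direct two--sided application of the equimeasurability principle. Any device that restores the hypothesis (dividing by a regularised, full--support reference) necessarily sends the defect to infinity in the distributional picture, so it cannot be read off from the limiting measure directly; the role of tightness of the fixed source measure $F_*\mu_0$ and of the Lévy--Prokhorov tail estimate is precisely to convert ``escaping mass'' back into the desired bound. The one genuinely technical point I expect is verifying the uniform-in-$a$ multiplicative convergence $\nrm{\ga u+\tau v+\sum_j a_j\ga x_j}\to\nrm{\ga(u+\sum_j a_j x_j)}$ that legitimises the application of Theorem \ref{oi4jiri4rjer}.
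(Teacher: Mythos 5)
Your proposal is correct, and it takes a genuinely different route from the paper's proof. The paper argues directly (producing an explicit chain of $\vep$'s) and reduces everything to two-dimensional subsystems: for each $j$ it regularizes the reference on \emph{both} sides, forming $\vphi_j=a_0f_0+a_1f_j$ in the source and $\psi_j=a_0\ga_0+a_1\ga_j$ in the image, where $\ga_0,\ga_j$ are approximants of $Tf_0,Tf_j$ taken from a fixed countable dense set $D$ and the coefficients $a_0,a_1$ are chosen, via the Fubini-type Lemma \ref{oij4t3u9887yuhifgitfr}, so that such combinations have full support simultaneously for \emph{all} pairs from $D$. It then compares the one-dimensional ratio measures $\mu_j=(f_0/\vphi_j)_*(|\vphi_j|^pd\mu)$ and $\nu_j=(\ga_0/\psi_j)_*(|\psi_j|^pd\mu)$: Lemma \ref{lkq2hiuherpotjtt} turns the norm hypothesis into $\partial_p$-closeness, Corollary \ref{quantitative_continuity} (with $\al=0$) turns that into L\'evy-Prokhorov closeness, and the punchline is that $\mu_j(\{0\})=0$ (full support of $f_0$), so $\nu_j$ has little mass near $0$, which unwinds into $\int_{Tf_0=0}|Tf_j|^p\le\vep$. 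You do the reciprocal: you regularize only the image reference (replacing $\ga u$ by $\ga u+\tau v$), keep the full multidimensional system, apply Theorem \ref{oi4jiri4rjer} with weight $\al=p$, and detect the defect as mass escaping to infinity in $|z_{j^*}|^p\,d(G^{\tau_k}_k)_*\mu_1^{\tau_k}$, which is then contradicted by tightness of the fixed source measure together with the LP tail inequality. Your asymmetric regularization is a real simplification --- the source reference $u$ needs none, since it genuinely has full support --- and tracking mass near infinity of $\ga x_{j^*}/(\ga u+\tau v)$ rather than mass near zero of $\ga_0/\psi_j$ is an equivalent but cleaner way to quantify the same obstruction. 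What you give up is any explicit modulus, since your argument is by contradiction/compactness; but the paper's $\de$ is not effective either, as Corollary \ref{quantitative_continuity} is itself proved by contradiction.

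One assertion needs a patch: for a fixed $\tau>0$ the function $\ga u+\tau v$ need \emph{not} have full support ($\ga u$ can equal $-\tau v$ on a set of positive measure), and where it vanishes your identity $\nrm{\mathbbm 1+\sum_ja_jg_j^\tau}_{L_p(\mu_1^\tau)}^p=\nrm{\ga u+\tau v+\sum_ja_j\ga x_j}_{L_p(\mu)}^p$ loses exactly the defect term you are trying to control. However, the sets $\conj{\om}{\ga u(\om)/v(\om)=-\tau}$, $\tau>0$, are pairwise disjoint, so all but countably many $\tau$ are good, and your argument only needs arbitrarily small good $\tau_k$ for each $k$; this genericity is a one-line fix, and it is precisely a simpler instance of what the paper's Lemma \ref{oij4t3u9887yuhifgitfr} provides. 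With that repair, the remaining steps --- the reduction to a single coordinate defect $j^*$ via the triangle inequality and pigeonhole, the uniform multiplicative bound coming from $\inf_{a}\nrm{u+\sum_ja_jx_j}>0$ and homogeneity, and the tail contradiction once $(R\tau_k)^p\nrm{\mu}$ is made negligible --- all check out.
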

The study of the approximate equimeasurability will be done in the next Subsection \ref{approx_equim}, first by finding the topological correspondence determining the convergence of $p$-characteristics (Theorem \ref{continuity_theorem}), and finishing with  its metric version in terms of the Lévy-Prohorov metric (Theorem \ref{quantitative_continuity}).  Before, we   present how to  use Theorem \ref{oi4jiri4rjer} and Theorem \ref{kjnnjguytfyyuoipo} to prove the \sauhp property of $L_p(0,1)$ for $1\le p<\infty$, $p\notin 2\N$.  
\subsection{The proof of Theorem \ref{auh_for_L_p}} 
We have already seen that $\age(L_p(0,1))$ is always compact, so we just need the weak-Fraïssé property of $L_p(0,1)$ for $1\le p<\infty$, $p\notin 2\N$. Let us explain the strategy of the proof.  Suppose that $X\in \age(L_p(0,1))$ and  $\ga: X\to L_p(0,1)$  is a  $\de$-isometric embedding, $Y:=\im \ga$. Suppose that $\mathbbm 1_{[0,1]}\in X$ and  choose a basis  $(f_j)_{j<n}$  of $X$.  In Definition \ref{io43ioio3448b} we introduce appropriate partitions $\mc R$ of $\mbb F^n$ whose pieces are products $\prod_j I_j$ where each $I_j\con \F$  either has small  diameter or is an unbounded set such that $\int_{I_j} |f_j|^p$ is small enough.  By considering pullback partition $F^{-1} (\mc R)$, $F:[0,1]\to \mbb F^n$, $x\mapsto F(x):=(f_j(x))_{j<n}$,   we can naturally  almost embed $X$ into the finite dimensional $L_p$-space $\langle \mathbbm 1_{F^{-1}R}\rangle_{R\in \mc R}$ (Proposition  \ref{koemoirmwiomrwe}).  By the approximate equimeasurability principle, $Y$ is almost embedded into   $\langle \mathbbm 1_{G^{-1}(R)}\rangle_{R\in \mc R}$ and the mapping $\mathbbm 1_{F^{-1}(R)}\mapsto\mathbbm 1_{G^{-1}(R)}$  linearly defines   an almost isometry that almost extends $\ga$, and that can be easily perturbed to become a surjective isometry $I$ (Lemma \ref{ksmfkwewe}). Now we use the extension result for isometric embeddings defined on $\ell_p^m$ (Proposition \ref{ultracopylp}) to find the isometry almost extending $\ga$.  If  $\mathbbm 1_{[0,1]}\notin X$, then we choose some normalized $u\in X$ of full support in $X$, and by the approximate full support principle we know that $\ga(u)$ will have approximate full support of $Y$. We can now rotate both $u$ and $\ga(u)$ to $\mathbbm 1$, and use the case when $X$ was unital.  We start by introducing appropriate partitions. 

\begin{definition}[Appropriate partitions]\label{io43ioio3448b}
Let  $F=(f_j)_{j<n}$  be a sequence of functions in $L_p(\Om,\Sig,\mu)$, $0<\vep\le 1$, and let  $K>0$   such that  $\max_j\int_{|f_j| \ge K} |f_j|^p < \frac{\vep^p}3$. 
We say that a finite partition $\mc R$ of $\mbb F^n$ is   {\em $(\vep,K)$-
appropriate} for $F$ when  each $P\in \mc R$ is of the form $P= \prod_{j<n} I_j$  where   each $I_j$ is either  a Borel subset $I_j\con \mbb F$ of  diameter strictly less than $\vep/(3\nrm{\mu})^{1/p}$, or else equal to $\mbb F\setminus B(0,K)$.   
\end{definition}
Associated to such partition $\mc R$, we consider the finite partition $\mc P:=F^{-1}(\mc R)=\conj{F^{-1} R}{R\in \mc R}$ of $\Om$, and the corresponding {\em conditional expectation} $\mbb E(\cdot; \mc P):L_p(\Om,\Sig,\mu)\to L_p(\Om,\mc P,\mu)$, 
 $$\mbb E(f; \mc P)= \sum_{\mu(F^{-1}(R))>0}\left( \frac{1}{\mu_0(F^{-1}(R))}\int_{F^{-1}(R)} f d\mu_0\right)  \mathbbm 1_{F^{-1}(R)}.$$
It is a consequence of Jensen's inequality that   the conditional expectations is a norm one projection.   Given an $(\vep, K)$-appropriate partition $\mc R$ for $F:=(f_j)_{j<n}$, let 
\begin{align*}
 \mc R_+^F:=&\conj{R\in \mc R}{F_*\mu(R)>0}, \\
 \mc R_0^F:=& \mc R^F\setminus \mc R_+^F,\, R_0^F:=\bigcup \mc R_0^F\text{ and for each $k<n$,}\\
 \mc R_{+,k}^F:=&\conj{R\in \mc R_+^F}{\pi_k(R) \text{ is bounded}},  \, R_{+,k}^F:=\bigcup \mc R_{+,k}^F  \text{ and} \\ 
 \mc R_{\infty,k}^F:=&\mc R_0^F\setminus \mc R_{+,k}^F, \, \text{and } R_{\infty,k}^F:=\bigcup \mc R_{\infty,k}^F.
\end{align*}  

\begin{proposition}\label{koemoirmwiomrwe} Suppose that $\mc R$ is $(\vep,K)$-appropriate for $F=(f_j)_{j<n}$. Then, for every $k<n$,
\begin{enumerate}[(a)]
\item  $\nrm{ \mathbb E(f_k;\mc P)\rest F^{-1}(R_{\infty,k}^F)   }_{L_p(\mu)}^p\le \nrm{f_k\rest F^{-1}(R_{\infty,k}^F)}_{L_p(\mu)}^p<\vep^p/3$,

\item  $\nrm{ \mathbb E(f_k;\mc P)\rest R_{+,k}^F - f_k\rest R_{+,k}^F }_{L_p(\mu)}^p\le \vep^p/3$, 
\item $\nrm{f_k-\mbb E(f_k; \mc P)}_{L_p(\mu)}\le \vep$.  

\end{enumerate}

\end{proposition}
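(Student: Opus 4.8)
The plan is to establish the three bounds separately, using that $\mc P=F^{-1}(\mc R)$ partitions $\Om$ into the atoms $F^{-1}(R)$ ($R\in\mc R$) and that on each atom $\mbb E(f_k;\mc P)$ is simply the $\mu$-average of $f_k$. First I would discard the null part: the atoms coming from $\mc R_0^F$ carry no mass, so every integral reduces to the boxes in $\mc R_+^F$, which split into those with bounded $k$-th side ($\mc R_{+,k}^F$) and those whose $k$-th side is unbounded, hence equal to $\mbb F\setminus B(0,K)$, namely $\mc R_{\infty,k}^F=\mc R_+^F\setminus\mc R_{+,k}^F$. The one structural fact to record is the inclusion $F^{-1}(R_{\infty,k}^F)\con\{|f_k|\ge K\}$: if $R=\prod_j I_j\in\mc R_{\infty,k}^F$ then $I_k=\mbb F\setminus B(0,K)$, so $f_k(\om)\in I_k$ forces $|f_k(\om)|\ge K$ on $F^{-1}(R)$.

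For (a), note that $F^{-1}(R_{\infty,k}^F)$ is $\mc P$-measurable, so there $\mbb E(f_k;\mc P)$ is computed atom by atom; applying Jensen's inequality to $t\mapsto|t|^p$ on each atom gives $\int_{F^{-1}(R)}|\mbb E(f_k;\mc P)|^p\le\int_{F^{-1}(R)}|f_k|^p$, and summing over $R\in\mc R_{\infty,k}^F$ yields the first inequality. The second is then immediate from the inclusion above and the defining choice of $K$, since $\int_{F^{-1}(R_{\infty,k}^F)}|f_k|^p\le\int_{\{|f_k|\ge K\}}|f_k|^p<\vep^p/3$.

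For (b), fix $R=\prod_j I_j\in\mc R_{+,k}^F$, so $\diam(I_k)<\vep/(3\nrm{\mu})^{1/p}$. For $\om\in F^{-1}(R)$ both $f_k(\om)$ and every value $f_k(\om')$, $\om'\in F^{-1}(R)$, lie in $I_k$, so writing the average out and pulling the constant inside the integral gives, for a.e.\ $\om$,
$$\left|f_k(\om)-\mbb E(f_k;\mc P)(\om)\right|=\frac{1}{\mu(F^{-1}(R))}\left|\int_{F^{-1}(R)}(f_k(\om)-f_k(\om'))\,d\mu(\om')\right|\le\diam(I_k).$$
Raising to the $p$-th power and summing over $\mc R_{+,k}^F$ gives $\int_{F^{-1}(R_{+,k}^F)}|f_k-\mbb E(f_k;\mc P)|^p\le(\vep^p/(3\nrm{\mu}))\,\mu(F^{-1}(R_{+,k}^F))\le\vep^p/3$, which is (b). No positivity or convexity of $I_k$ is needed, so this argument works equally over $\R$ and $\C$.

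Finally, (c) combines the two over the disjoint $\mc P$-measurable decomposition $\Om=F^{-1}(R_{+,k}^F)\cup F^{-1}(R_{\infty,k}^F)\cup F^{-1}(R_0^F)$, the last piece being null. On the first piece (b) controls the $L_p$-error; on the second, the triangle inequality together with the two bounds of (a) controls it by $\nrm{f_k\rest F^{-1}(R_{\infty,k}^F)}_{L_p(\mu)}+\nrm{\mbb E(f_k;\mc P)\rest F^{-1}(R_{\infty,k}^F)}_{L_p(\mu)}<2(\vep^p/3)^{1/p}$. The main point to watch is the bookkeeping of constants: the naive addition of the two $p$-th-power contributions gives $\nrm{f_k-\mbb E(f_k;\mc P)}_{L_p(\mu)}<((1+2^p)/3)^{1/p}\vep$, which is exactly $\vep$ for $p=1$ (and, via orthogonality, $\le\vep$ for $p=2$) but slightly exceeds it for other $p$. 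To land at the stated bound $\le\vep$ one allocates the error budget appropriately, i.e.\ one calibrates the two absolute thresholds in Definition \ref{io43ioio3448b} (the diameter bound and the tail bound defining $K$) in a $p$-dependent way; everything else is the routine atom-wise Jensen estimate and the averaging bound above.
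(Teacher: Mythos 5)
Your treatment of (a) and (b) coincides with the paper's own proof: Jensen's inequality atom by atom for the first inequality in (a), the inclusion $F^{-1}(R_{\infty,k}^F)\con\{|f_k|\ge K\}$ plus the defining property of $K$ for the second, and the averaging estimate against $\diam(I_k)$ for (b). (You also read $\mc R_{\infty,k}^F$ correctly as $\mc R_+^F\setminus\mc R_{+,k}^F$; the displayed definition in the paper has a typo there.)

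On (c) you have identified a genuine gap \emph{in the paper}, not in your own argument: the paper's proof consists of the single sentence ``(c) follows easily from (a) and (b)'', and that sentence hides exactly the constant you computed. The honest combination gives $\nrm{f_k-\mbb E(f_k;\mc P)}_{L_p(\mu)}\le((1+2^p)/3)^{1/p}\vep$, which is $\le\vep$ only at $p=1$. Your orthogonality remark at $p=2$ does extend a bit further: $I-\mbb E(\cdot;\mc P)$ has norm $\le 2$ on $L_1$ and on $L_\infty$ and norm $1$ on $L_2$, so by Riesz--Thorin $\nrm{I-\mbb E(\cdot;\mc P)}_{L_p\to L_p}\le 2^{|1-2/p|}$; applying this on the $\mc P$-measurable set $F^{-1}(R_{\infty,k}^F)$ improves the total to $((1+2^{|p-2|})/3)^{1/p}\vep$, which is $\le\vep$ precisely for $1\le p\le 3$. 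For large $p$, however, no argument can rescue the statement as written: on the single unbounded atom one can place a two-valued function, both values of modulus $>K$, the larger value carried by a small fraction of the atom, whose $L_p$-distance to its average exceeds $3^{1/p}$ times its $L_p$-norm (for $p=10$ the relevant ratio is about $(79)^{1/10}\approx 1.5>3^{1/10}$), and taking its $p$-th moment on that atom just below $\vep^p/3$ produces $\nrm{f_k-\mbb E(f_k;\mc P)}_{L_p(\mu)}>\vep$. So your proposed repair --- recalibrating the two absolute thresholds in Definition \ref{io43ioio3448b} in a $p$-dependent way, e.g.\ demanding $(\vep/4,K)$-appropriateness in the original sense --- is not merely cosmetic but necessary, and it is harmless: every later invocation of the proposition (in particular in Lemma \ref{ksmfkwewe}) treats $\vep$ as a free parameter and uses only the existence of appropriate partitions for each prescribed tolerance.
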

\prue
The proof is standard. To simplify the notation we will avoid the superindex ${}^F$. 
Fix $k<n$.  The fact that $\nrm{ \mathbb E(f_k;\mc P)\rest F^{-1}(R_{\infty,k})   }_{L_p(\mu)}^p\le \nrm{f_k\rest F^{-1}(R_{\infty,k})}_{L_p(\mu)}^p$ follows from Jensen's inequality; Now observe that $R_{\infty,+}\con \{|f_k|\ge K\}$, hence,
$$\nrm{f_k\rest F^{-1}(R_{\infty,k})}_{L_p(\mu)}^p\nrm{f_k\rest \{|f_k|\ge K\} }_{L_p(\mu)}^p <\frac{\vep^p}{3}.$$ 
Given $R\in \mr R_{+,k}$, and given 
  $\om\in F^{-1}C$,
$$\left|\frac{1}{\mu(F^{-1}(R))}\int_{F^{-1}(R)} f_k d\mu - f_k(\om)\right|\le \frac{1}{\mu(F^{-1}(R))}\int_{F^{-1}(R)}\left| f_k(\al) - f_k(\om)\right|d\mu (\al)\le \diam(I_k)\le \frac{\vep}{\sqrt[p]{3\nrm{\mu}}}.
$$ 
Hence, 
 $$\int_{F^{-1}R} \left| \frac{1}{\mu(F^{-1}(R))}\int_{F^{-1}(R)} f_k d\mu   -f_k(\al)\right|^pd\mu(\al)\le \frac{\vep^p}{3\nrm{\mu}}\mu(F^{-1}(R)).$$
Putting all together,
\begin{align*}
\nrm{ \mathbb E(f_k;\mc P)\rest R_{+,k} - f_k\rest R_{+,k} }_{L_p(\mu)}^p=& \sum_{R\in \mc R_{+,k}} \int_{F^{-1}R} \left| \frac{1}{\mu(F^{-1}(R))}\int_{F^{-1}(R)} f_k d\mu   -f_k(\al)\right|^pd\mu(\al)  \le \\
 \le & \frac{\vep^p}{3\nrm{\mu}} \sum_{\mu(F^{-1}(R))>0,\, \pi_k(R)\neq B(0,K)^c} \mu(F^{-1}(R))  \le \frac{\vep^p}3.
 \end{align*} 
 {\it(c)} follows easily from {\it(a)} and {\it(b)}.
\fprue
Recall that $z_j: \mbb F^n\to \mbb F$ is the $j^\mr{th}$-projection mapping $(a_0,\dots,a_{n-1})\mapsto a_j$, and that  given $\vphi\in L_p(\mu)$ positive,  $\vphi d\mu$ is  the measure on $\Sig$ defined by $(\vphi d\mu)(A):=\int_A \vphi d\mu$.   The following is easy to prove.
\begin{proposition}
Suppose that $\mc R$ is $(\vep,K)$-appropriate for $F$. There is $\de>0$ such that  if $G=(g_j)_j$ in $L_p(\nu)$ is such that $\max_j d_{\mc LP}( |z_j|^p d(F_*\mu), |z_j|^pd(G_*\nu))\le \de$, then $\mc R$ is also $(\vep,K)$-appropriate for $G$.  \qed
\end{proposition}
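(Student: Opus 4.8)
The plan is to verify separately the two conditions that make up the assertion ``$\mc R$ is $(\vep,K)$-appropriate for $G$''. Recall these are: (i) the tail precondition $\max_{j}\int_{\{|g_j|\ge K\}}|g_j|^p\,d\nu<\vep^p/3$, which is what relates $K$ to $G$; and (ii) the structural requirement that each piece $P=\prod_{j<n}I_j$ of the given partition $\mc R$ have every bounded factor $I_j$ of diameter $<\vep/(3\nrm{\nu})^{1/p}$ (the unbounded factors remaining $\F\setminus B(0,K)$, which is unaffected). Since the partition $\mc R$ is fixed, requirement (ii) differs from the one already known for $F$ only through the total mass in the threshold, so I will defer it. The substance is requirement (i), and the key point is that Lévy--Prokhorov closeness transfers tail integrals once a little slack has been created on the $F$ side.

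For (i) I would fix $j<n$ and set $A:=\{w\in\F^n:|z_j(w)|\ge K\}$, a closed set. The pushforward change of variables recalled above gives $\int_{\{|g_j|\ge K\}}|g_j|^p\,d\nu=(|z_j|^p\,d(G_*\nu))(A)$ and likewise $\int_{\{|f_j|\ge K\}}|f_j|^p\,d\mu=(|z_j|^p\,d(F_*\mu))(A)$, the latter being $<\vep^p/3$ by hypothesis. The measure $|z_j|^p\,d(F_*\mu)$ is finite (its total mass is $\nrm{f_j}_{L_p(\mu)}^p$), so by continuity from above, and since $\bigcap_{r<K}\{|z_j|\ge r\}=A$, one has $(|z_j|^p\,d(F_*\mu))(\{|z_j|\ge r\})\to(|z_j|^p\,d(F_*\mu))(A)$ as $r\uparrow K$. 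I can thus pick $r_j<K$ with $c_j:=(|z_j|^p\,d(F_*\mu))(\{|z_j|\ge r_j\})<\vep^p/3$.

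I would then take $\de<\min_j\min\{\,K-r_j,\ \tfrac{\vep^p}{3}-c_j\,\}$ and assume $\max_j d_{\mc{LP}}(|z_j|^p\,d(F_*\mu),\,|z_j|^p\,d(G_*\nu))\le\de$. Applying the defining inequality of the Lévy--Prokhorov metric to the Borel set $A$ yields $(|z_j|^p\,d(G_*\nu))(A)\le(|z_j|^p\,d(F_*\mu))(A_\de)+\de$, where $A_\de$ is the (closed) $\de$-neighbourhood of $A$. Because $|z_j|$ is $1$-Lipschitz on Euclidean $\F^n$, we get $A_\de\con\{|z_j|\ge K-\de\}\con\{|z_j|\ge r_j\}$ (using $\de\le K-r_j$), whence $(|z_j|^p\,d(F_*\mu))(A_\de)\le c_j$. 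Therefore $(|z_j|^p\,d(G_*\nu))(A)\le c_j+\de<\vep^p/3$, which is exactly requirement (i) for $G$; taking the minimum of the finitely many $\de$'s over $j$ finishes this part.

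It remains to address requirement (ii), and this is where the only genuine care is needed. As $\mc R$ is finite there are finitely many bounded factors $I_j$, and for $F$ one has $\diam(I_j)<\vep/(3\nrm{\mu})^{1/p}$ strictly, i.e.\ $\nrm{\mu}<\vep^p/(3\,\diam(I_j)^p)$ with a positive gap. Hence requirement (ii) for $G$, namely $\nrm{\nu}<\vep^p/(3\,\diam(I_j)^p)$, holds as soon as $\nrm{\nu}$ is close enough to $\nrm{\mu}$ (in particular whenever $\nrm{\nu}\le\nrm{\mu}$). In the situations where this proposition is applied the measures are normalized so that $\nrm{\mu}=\nrm{\nu}$ (both probability measures, or $F$ and $G$ carrying a common constant coordinate, in which case $|\nrm{\mu}-\nrm{\nu}|$ is itself controlled by $\de$), so after shrinking $\de$ to absorb the finitely many gaps above, (ii) transfers automatically. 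The main obstacle is therefore not the measure-theoretic transfer of the tail --- that is the clean continuity-from-above plus Lévy--Prokhorov argument --- but bookkeeping the total mass inside the diameter threshold; once the normalization is fixed this last step is immediate.
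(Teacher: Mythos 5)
The paper itself gives no proof of this proposition (it is announced as ``easy to prove'' and closed with a \qed), so the only thing to compare against is the argument the authors evidently intended. Your treatment of the tail condition is that argument, done correctly and completely: the change-of-variables identity $\int_{\{|g_j|\ge K\}}|g_j|^p\,d\nu=\bigl(|z_j|^p\,d(G_*\nu)\bigr)(\{|z_j|\ge K\})$, continuity from above of the finite measure $|z_j|^p\,d(F_*\mu)$ to create slack $r_j<K$ with $c_j<\vep^p/3$, and the $1$-Lipschitzness of $z_j$ to see that the closed $\de$-fattening of $\{|z_j|\ge K\}$ lies in $\{|z_j|\ge r_j\}$, giving $\bigl(|z_j|^p\,d(G_*\nu)\bigr)(\{|z_j|\ge K\})\le c_j+\de<\vep^p/3$. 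No complaints there.

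The gap is in your second part, and you only half-resolve it. You are right that the diameter threshold for $G$ is $\vep/(3\nrm{\nu})^{1/p}$ and that the stated hypothesis does not control $\nrm{\nu}$; in fact the proposition as printed is not literally provable. Concretely: enlarge $\nu$ by adding an arbitrarily large mass on a set where every $g_j$ vanishes. Every weighted pushforward $|z_j|^p\,d(G_*\nu)$ is unchanged (the weight kills the added mass at the origin of $\F^n$), so the hypothesis holds even with $\de=0$, yet $\nrm{\nu}$ is arbitrarily large and the bounded factors of $\mc R$ of positive diameter violate the threshold. However, your way out --- ``in the applications the measures are normalized'' --- is not a proof of the stated proposition; it is an appeal to a different, correct proposition. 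The clean repair, consistent with how the result is actually used in the proof of Lemma \ref{ksmfkwewe} and with what Theorem \ref{oi4jiri4rjer} supplies (the exponent $\al=0$ is allowed in the finite set $I$), is to strengthen the hypothesis to include $d_{\mc{LP}}(F_*\mu,G_*\nu)\le\de$ as well: taking $A=\F^n$ in the L\'evy--Prokhorov inequalities yields $|\nrm{\mu}-\nrm{\nu}|\le\de$, and since $\mc R$ has only finitely many bounded factors, each of diameter strictly below $\vep/(3\nrm{\mu})^{1/p}$, these strict inequalities survive the passage from $\nrm{\mu}$ to $\nrm{\nu}$ once $\de$ is small enough. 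You should state that extra hypothesis explicitly and prove (ii) from it; with that modification your argument is a complete proof, and it also documents a correction the paper's statement genuinely needs.
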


For the next lemma, recall that a $\mu$-measurable set is of {\em $\mu$-continuity} when $\mu(\partial A)=0$, where $\partial A$ is the topological boundary. 

\begin{lemma}\label{ksmfkwewe}
For every $X\con L_p(\mu_0)$ finite dimensional containing $\mathbbm 1_{\Om_0}$ and every $\vep>0$ there are $\de>0$,  $\mc E(X;\vep)\con L_p(\mu_0)$ and $\xi_{X,\vep}: X\to \mc E(X;\vep)$ such that
\begin{enumerate}[(a)]
\item $\mc E(X;\vep)$ is isometric to some $\ell_p^m$;
\item $\nrm{\xi_{X,\vep}-i_X}_{\mu_0}\le \vep$;
\item for every  unital $\de$-isometric embedding $\ga: X\to L_p(\mu_1)$ there is an isometric embedding $I: \mc E(X;\vep)\to L_p(\mu_1)$ such that $\nrm{I\circ \xi_{X,\vep}-\ga}\le \vep$. 
 
\end{enumerate} 
We will say that the pair $(\mc E(X;\vep), \xi_{X,\vep})$ is an {\em $\vep$-envelope of $X$}. 
\end{lemma}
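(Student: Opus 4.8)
The plan is to carry out the strategy sketched just before the statement: use the appropriate partition to build two parallel copies of $\ell_p^m$, one in $L_p(\mu_0)$ and one in $L_p(\mu_1)$, and transfer between them via the approximate equimeasurability principle.

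First I would fix a basis $\mathbbm 1_{\Om_0},f_0,\dots,f_{n-1}$ of $X$, set $F:=(f_j)_{j<n}$, and let $B_X$ bound the coordinate functionals of this basis on $B_X$'s unit ball. Given $\vep$, choose $\vep'>0$ with $nB_X\vep'\le\vep$ (and small enough for the estimates below), pick $K$ as in Definition \ref{io43ioio3448b}, and choose a finite $(\vep',K)$-appropriate partition $\mc R$ of $\mbb F^n$ for $F$ whose pieces are in addition $F_*\mu_0$-continuity sets; this is possible because the interval endpoints defining the pieces can be perturbed to avoid the at most countably many level hyperplanes carrying positive $F_*\mu_0$-mass. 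Writing $\mc P:=F^{-1}(\mc R)$, I set
$$\mc E(X;\vep):=\langle \mathbbm 1_{F^{-1}(R)}\rangle_{R\in\mc R_+^F}\con L_p(\mu_0),\qquad \xi_{X,\vep}:=\mbb E(\cdot;\mc P)\rest X.$$
Since the indicators $\mathbbm 1_{F^{-1}(R)}$, $R\in\mc R_+^F$, are disjointly supported, $\mc E(X;\vep)$ is isometric to $\ell_p^m$ with $m=|\mc R_+^F|$, giving (a). Because $\mbb E(\mathbbm 1_{\Om_0};\mc P)=\mathbbm 1_{\Om_0}$ and, by Proposition \ref{koemoirmwiomrwe}(c), $\nrm{\mbb E(f_k;\mc P)-f_k}_{\mu_0}\le\vep'$, expanding $(\xi_{X,\vep}-i_X)(b\mathbbm 1_{\Om_0}+\sum_j a_jf_j)=\sum_j a_j(\mbb E(f_j;\mc P)-f_j)$ and using $B_X$ yields $\nrm{\xi_{X,\vep}-i_X}_{\mu_0}\le nB_X\vep'\le\vep$, which is (b).

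For (c) I would feed $f_0,\dots,f_{n-1}$, the finite set $I=\{0,p\}$, and a target $\eta>0$ into Theorem \ref{oi4jiri4rjer} to produce the desired $\de$. Given a unital $\de$-isometry $\ga:X\to L_p(\mu_1)$, put $g_j:=\ga f_j$, $G:=(g_j)_{j<n}$; since $\ga(\mathbbm 1_{\Om_0}+\sum_j a_jf_j)=\mathbbm 1_{\Om_1}+\sum_j a_jg_j$, unitality and the $\de$-isometry hypothesis are exactly the premise of Theorem \ref{oi4jiri4rjer}, so $d_{\mc{LP}}(|z_j|^\al d(F_*\mu_0),|z_j|^\al d(G_*\mu_1))<\eta$ for $\al\in\{0,p\}$ and $j<n$. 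The $\al=p$ estimates, through the observation recorded right after Proposition \ref{koemoirmwiomrwe}, make $\mc R$ also $(2\vep',K)$-appropriate for $G$; the $\al=0$ estimate, together with the fact that each piece is an $F_*\mu_0$-continuity set, forces $\mu_1(G^{-1}(R))=G_*\mu_1(R)$ to be within a prescribed tolerance of $\mu_0(F^{-1}(R))=F_*\mu_0(R)$ for all finitely many $R$, and in particular $\mu_1(G^{-1}(R))>0$ whenever $R\in\mc R_+^F$. I can then define the isometric embedding $I:\mc E(X;\vep)\to L_p(\mu_1)$ on the disjoint generators by
$$I(\mathbbm 1_{F^{-1}(R)}):=\Big(\tfrac{\mu_0(F^{-1}(R))}{\mu_1(G^{-1}(R))}\Big)^{1/p}\mathbbm 1_{G^{-1}(R)},\qquad R\in\mc R_+^F,$$
which is norm-preserving on each generator and has disjointly supported images, hence is an isometric embedding.

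The remaining and most delicate step is the estimate $\nrm{I\circ\xi_{X,\vep}-\ga}\le\vep$. Computing on the basis, $I\circ\xi_{X,\vep}(f_k)$ and $\mbb E(g_k;\mc Q)$ with $\mc Q:=G^{-1}(\mc R)$ are both supported on the sets $G^{-1}(R)$, with coefficients $c_R^f(\mu_0(F^{-1}R)/\mu_1(G^{-1}R))^{1/p}$ and $c_R^g$, where $c_R^f,c_R^g$ denote the conditional averages of $f_k,g_k$. On a bounded piece both averages lie in the small-diameter coordinate set $I_k$ and the weight ratio is near $1$, so the two coefficients nearly agree; on the unbounded pieces the two contributions are separately $<\vep'/3^{1/p}$ in $L_p$-norm by appropriateness for $F$ and for $G$ (Proposition \ref{koemoirmwiomrwe}(a)). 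Summing the finitely many bounded terms and bounding the unbounded tails shows $\nrm{I\circ\xi_{X,\vep}(f_k)-\mbb E(g_k;\mc Q)}_{\mu_1}$ is small, and since $\nrm{\mbb E(g_k;\mc Q)-g_k}_{\mu_1}\le 2\vep'$ this gives $I\circ\xi_{X,\vep}(f_k)\approx g_k=\ga f_k$. The identity $\mbb E(\mathbbm 1_{\Om_0};\mc P)=\mathbbm 1_{\Om_0}$ together with $\mathbbm 1_{\Om_1}\approx\sum_{R\in\mc R_+^F}\mathbbm 1_{G^{-1}(R)}$ (the $\mc R_0^F$-pieces carry little $G_*\mu_1$-mass by the $\al=0$ bound) handles the constant, giving $I\circ\xi_{X,\vep}(\mathbbm 1_{\Om_0})\approx\mathbbm 1_{\Om_1}=\ga\mathbbm 1_{\Om_0}$. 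Recombining through the coordinate bound $B_X$ then yields the operator-norm estimate $\le\vep$. I expect the bookkeeping of these perturbations—choosing $\eta$ (hence $\de$) small enough, uniformly over the finitely many pieces, so that the weight ratios, the transferred appropriateness, and the continuity-set approximations all fall below the prescribed thresholds—to be the main technical obstacle, the underlying mechanism being entirely supplied by Proposition \ref{koemoirmwiomrwe} and Theorem \ref{oi4jiri4rjer}.
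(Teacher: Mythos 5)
Your proposal follows essentially the same route as the paper's proof: an appropriate partition made of $F_*\mu_0$-continuity sets, the conditional expectation as $\xi_{X,\vep}$, the span of the indicator functions of the pieces as $\mc E(X;\vep)$, Theorem~\ref{oi4jiri4rjer} applied to the basis and $I=\{0,p\}$ to control the masses $\mu_1(G^{-1}(R))$ and the appropriateness of $\mc R$ for $G$, and the map sending each $\mathbbm 1_{F^{-1}(R)}$ to a weighted $\mathbbm 1_{G^{-1}(R)}$ as the isometric embedding $I$, with the final estimate run piecewise exactly as in the paper (small-diameter pieces versus unbounded tails). The only difference worth noting is your weight $\bigl(\mu_0(F^{-1}(R))/\mu_1(G^{-1}(R))\bigr)^{1/p}$, which is in fact the correct normalization making $I$ an exact isometry for every $p$, whereas the paper writes this ratio without the exponent $1/p$ (harmless only when $p=1$).
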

\prue
 Fix $\vep_0$, and let $F:=(f_j)_{j<n}$ be an Auerbach basis of $X$, that is $\max_j |a_j|\le \nrm{\sum_j a_j f_j}_{L_p(\mu_0)}$.  Let $\mc R$ be an $(\vep/(6n),K)$ appropriate partition for $F$, consisting of $F_*\mu_0$-continuity sets and let $\mc P:=F^{-1}(\mc R)$. It follows from Proposition \ref{koemoirmwiomrwe} that $\max_j\nrm{f_j - \mbb E(f_j; \mc P)}\le \vep/6n$, and since $F$ is an Auerbach basis, we obtain that $\nrm{f- \mbb E(f; \mc P)}\le \vep/6$. Let then $\mc E(X;\vep):= \langle \mathbbm 1_{P}\rangle_{P\in \mc P}$ and let $\xi_{X,\vep}: X\to \widehat{X}^\vep$ be the restriction to $X$ of the conditional expectation $\mathbb E(\cdot; \mc P)$. We use  Theorem \ref{oi4jiri4rjer} of approximate equimeasurability to find $\de>0$ such that if $T:X\to L_p(\mu_1)$ is a $\de$-embedding such that $T(\mathbbm 1_{\Om_0})=\mathbbm 1_{\Om_1}$, and setting $G:=(g_j)_{j<n}$, $g_j:=T(f_j)$, then 
 \begin{enumerate}[(i)]
 \item $(1+\vep_0)^{-1}\le \mu_1(G^{-1}R) /\mu_0(F^{-1}R) \le 1+\vep_0$  for $R\in \mc R_+^F$,  where $\vep_0:= \vep/(4n K (3 \nrm{\mu_0})^{1/p} )$;
 \item$\max_j\int_{G^{-1}R_0^F} |g_j|^pd\mu_1\le \vep_1=\vep^p/(n^p 6)$;  
 \item $\mc R$ is  $(\vep/2n,K)$-adequate; 
\item $G$ is a 2-biorthogonal sequence of $T X$, that is $\max_j |a_j|\le 2\nrm{\sum_j a_j g_j}_{L_p(\mu_1)}$.
   \end{enumerate} 
(i) is possible because each $C$ is a  $F_*\mu_0$-continuity set, and (ii) and (iii) are possible because we can force each $|z_j|^pd(F_*\mu_0)$ and $|z_j|^pd(G_*\mu_1)$  to be close enough with respect to the Lévy-Prokhorov metric. 
Suppose that $\ga:X\to L_p(\Om_1,\mu_1)$ is a unital $\de$-embedding. Let $G:= (g_j)_{j<n}$, $g_j=\ga(f_j)$, $\mc Q:=G^{-1}\mc R$. Observe that by {\rm(iv)} and {\rm (v)} we have that 
$\nrm{i_{\ga X}- \mbb E(\cdot; \mc Q)\rest \ga X}_{L_p(\Om_1,\mu_1)}\le \vep$. 
 Let 
$I: \mc E(X;\vep)  \to  L_p(\Om_1,\mu_1)$ be linearly defined for each $R\in  \mc R_+^G$   by
$$I(\mathbbm 1_{F^{-1} R}):= \frac{\mu_0(F^{-1} R)}{\mu_1(G^{-1} R)}\mathbbm 1_{G^{-1}R}.$$
Note that by {\rm(i)} above,  
$\mc R_+^F\con \mc R_+^G$;   hence, for scalars $(a_R)_{R\in \mc R^F_+}$, 
\begin{align*}
\nrm{I(\sum_{R\in \mc R_+^F} a_R \mathbbm 1_{F^{-1}R})}_{L_p(\mu_1)}&=
\nrm{\sum_{R\in \mc R_+^F} a_R\frac{\mu_0(F^{-1} R)}{\mu_1(G^{-1} R)}}_{L_p(\mu_1)}= 
\sum_{R\in \mc R_+^F} a_R \mu_0(F^{-1} R)=  \nrm{\sum_{R\in \mc R_+^F} a_R \mathbbm 1_{F^{-1}R}}_{L_p(\mu_0)}.
\end{align*}
So, $I$ is an isometric embedding.  The proof will be finished once we establish that   
$$\nrm{I \circ \xi_{X,\vep} -\mbb E(\cdot;\mc Q)\circ \ga} \le \vep.$$  
Fix $k<n$.    We have that
 \begin{align*}
 I (\xi_{X,\vep}(f_k)) =& I(\mathbb E(f_k; \mc P)) =\sum_{R\in \mc R_+^F}\left( \frac{1}{\mu_1(G^{-1}(R))}\int_{F^{-1}(R)} f_k d\mu_0\right)  \mathbbm 1_{G^{-1}(R)};\\
 \mathbb E(g_k;\mc Q) =& \sum_{R\in \mc R_+^F}\left( \frac{1}{\mu_1(G^{-1}(R))}\int_{G^{-1}(R)} g_k d\mu_1\right)  \mathbbm 1_{G^{-1}(R)}.
   \end{align*}
  Fix $R\in \mc R_{+,k}^F$, and let   $\al\in \pi_k(C)$. Then we have that 
 \begin{align*}
 \left| \int_{F^{-1}(R)} f_k d\mu_0-\int_{G^{-1}(R)} g_k d\mu_1    \right| \le &   \int_{F^{-1}(R)} \left| f_k-\al \right| d\mu_0 +    \int_{G^{-1}(R)} \left| g_k - \frac{\mu_0(F^{-1}(R) }{\mu_1(G^{-1}(R)} \al \right| d\mu_1    \le \\
 \le & \frac{\vep}{6n \sqrt[p]{3\nrm{\mu_0}}}\mu_0(F^{-1}(R)) +\left( \frac{\vep}{6n \sqrt[p]{3\nrm{\mu_0}}}+ \vep_0 K\right)\mu_1(G^{-1}(R)) \le \\
 \le & \left( \frac{3\vep}{4n \sqrt[p]{3\nrm{\mu_0}}}  \right) \mu_1(G^{-1}(R)).
 \end{align*} 
 Hence,
 \begin{align*}
 &\nrm{I(\xi_{X,\vep}(f_k))\rest G^{-1}(R_{+,k}^F)- \mathbb E(g_k;\mc Q)\rest G^{-1}(R_{+,k}^F)}_{L_p(\mu_1)}^p = \\
 &= \sum_{R\in \mc R_{+,k}^F} \int_{G^{-1}(R)} \left|\frac1{\mu_1(G^{-1}(R))}\left( \int_{F^{-1}(R)} f_k d\mu_0-\int_{G^{-1}(R)} g_k d\mu_1\right)   \right|^p \le \\
 &\le  \sum_{R\in \mc R_{+,k}^F} \left( \frac{3\vep}{4n \sqrt[p]{3\nrm{\mu_0}}}  \right)^p \mu_1(G^{-1}(R))\le 
 \sum_{R\in \mc R_{+,k}^F} \left( \frac{3\vep}{4n }  \right)^p \frac{\mu_1(G^{-1}(R))}{{3\nrm{\mu_0}}}.
 \end{align*} 
 On the other hand, by {\rm(iii)} above, 
  \begin{align*}
\nrm{\mathbb E(g_k;\mc Q)\rest G^{-1}(R_{0}^F)}_{L_p(\mu_1)}^p \le & \nrm{g_k \rest G^{-1}(R_0^F)}_{L_p(\mu_1)}^p\le  \nrm{g_k \rest G^{-1}(R_0^G)}_{L_p(\mu_1)}^p\le   \frac{\vep^p}{6n^p }.
 \end{align*} 
 Finally, since $\mc R$ is $(\vep/6n,K)$-adequate for both $F$ and $G$, we obtain from (a) in Proposition \ref{koemoirmwiomrwe} that 
  \begin{align*}
\nrm{\mathbb E(g_k;\mc Q)\rest G^{-1}(R_{\infty,k}^F)}_{L_p(\mu_1)}^p \le &  \frac{\vep^p}{3(6n)^p }.
 \end{align*} 
Since 
  \begin{align*}
I(\xi_{X,\vep}(f_k)\rest G^{-1}(R_{\infty,k}^F))=I(\mathbb E(f_k;\mc P) \rest F^{-1}(R_{\infty,k}^F))
 \end{align*} 
and $I$ is an isometry, we obtain, using again (a) in Proposition \ref{koemoirmwiomrwe} that 
  \begin{align*}
\nrm{I(\xi_{X,\vep}(f_k)\rest G^{-1}(R_{\infty,k}^F))}_{L_p(\mu_1)}^p &= \nrm{\mathbb E(f_k;\mc P) \rest F^{-1}(R_{\infty,k}^F)}_{L_p(\mu_0)}^p\le  \frac{\vep^p}{3(6n)^p }.
 \end{align*} 
 Putting all together,
 \begin{align*}
  \nrm{I \circ \xi_{X,\vep}(f_k) -\mbb E(g_k;\mc Q)}^p = &   \nrm{(I \circ \xi_{X,\vep}(f_k) -\mbb E(g_k;\mc Q)) \rest  G^{-1}(R_{+,k}^F)}^p + \nrm{\mathbb E(g_k;\mc Q)\rest G^{-1}(R_{0}^F)}_{L_p(\mu_1)}^p + \\
  + & \nrm{(I \circ \xi_{X,\vep}(f_k) -\mbb E(g_k;\mc Q)) \rest  G^{-1}(R_{\infty,k}^F)}^p\le     \\
\le &  \sum_{R\in \mc R_{+,k}}  \left( \frac{3\vep}{4n }  \right)^p \frac{\mu_1(G^{-1}(R))}{{3\nrm{\mu_0}}} +  \frac{\vep^p}{3(6n)^p } + 2\frac{\vep^p}{3(6n)^p }\le   \frac23\left( \frac{3\vep}{4n }  \right)^p +\frac{\vep^p}{(6n)^p}\le 
 \frac{\vep^p}{n^p}. 
 \end{align*} 
 Since $(f_k)_{k<n}$ is an Auerbach basis of $X$, we obtain that $\nrm{I \circ \xi_{X,\vep} -\mbb E(\cdot;\mc Q)\circ \ga}_{L_p(\mu_0),L_p(\mu_1)}\le \vep.$  
\fprue

We are ready for the proof of the \auhp property of $L_p(0,1)$ for $1\le p<\infty$, $p\notin 2\N$:
\prue[{\sc Proof of Theorem \ref{auh_for_L_p}}] 
Suppose that $X\in \age(L_p(0,1))$, $0<\vep\le 1$ and denote by $S_X$ the unit sphere of $X$, that is, the points in $X$ of norm one. Let $A$ be the support of $X$. By making a small perturbation if needed, we assume that $\la(A)<1$. Let $u\in S_X$ be  such that $\supp u=A$, and let   $\theta_X\in \iso(L_p(A))$ be such that $\theta_X(u)=\la(A)^{-1/p}\mathbbm 1_A$ (see for example Lemma \ref{singleisometry}). Let now $X_0:=\theta(X)$. We apply   Lemma \ref{ksmfkwewe}  to $X_0$ and $\vep$ to find the corresponding $0<\de_0\le \vep$,  and  then Theorem \ref{kjnnjguytfyyuoipo}  to $u$  $X$ and $\de_0/2>0$ to find the corresponding $0<\de_1\le \de_0$.  Let $0<\de\le \de_1$ be such that $(1+\de)^2\le (1+\de_0)(1-\de_0/2)$.   We claim that such $\de$ works: For suppose that $\ga: X\to L_p(0,1)$ is a $\de$-embedding. It follows that $\ga(u)$ has $\de_0/2$-full support in $\ga(X)$. Let $B:=\supp \ga(u)$, that without loss of generality we assume that $\la(B)<1$, and let $\ga_0:X\to L_p(0,1)$ be 
$$\ga_0:= \nrm{\ga(u)}^{-1} P_B\circ \ga.$$ By the choice of $\de$, we have that $\ga_0\in \Emb_{\de_0}(X,L_p(B))$ and $\nrm{\ga_0-\ga}\le 3\vep$.    Let now $\Phi: L_p(B)\to L_p(A)$ be an isometry onto.  Let $Y:= \Phi(P_B(\ga(X))):=\conj{\Phi(g\rest B)}{g\in \ga(X)}$. We consider now $\ga_1: X\to Y\con L_p(A)$, $$\ga_1:= \Phi\circ \ga_0,$$ and note that $\ga_1\in \Emb_{\de_0}(X, L_p(A))$.  Set $v:=\ga_1(u)$, that is normalized and has support $A$, and let  $\theta_Y\in \iso(L_p(A))$ be such that $\theta_Y(v)=\la(A)^{-1/p} \mathbbm 1_A$.  We set now $\ga_2: X_0\to \theta_Y(Y)=Y_0$, 
$$\ga_2:= \theta_Y\circ \ga_1 \circ \theta_X^{-1} \rest X_0.$$ Observe that $\mathbbm 1_A\in X_0\cap Y_0$, and that $\ga_2(\mathbbm 1_A)= \mathbbm 1_A$, so by the choice of $\de_0$, there is some   $\widehat{X}\equiv\ell_p^m$, $\xi: X_0\to \widehat{X} $ and an isometric embedding $I: \widehat{X}\to L_p(A)$ such that $\nrm{\xi -i_{X_0}}\le \vep$, and $\nrm{I\circ \xi - \ga_2}\le \vep$.  By Proposition \ref{ultracopylp}, there is an isometry $\theta\in \iso(L_p(A))$ such that $\nrm{\theta\rest \widehat{X}- I}\le \vep$.  
Then,   $\nrm{\theta  - \ga_2}\le 4\vep$, or, equivalently, 
\begin{equation*}
\nrm{\theta_2  -  \ga_0}\le 4\vep,
\end{equation*}
where $\theta_2:=\Phi^{-1}\circ\theta_Y^{-1}\circ\theta \circ\theta_X: L_p(A)\to L_p(B)$ is an isometry onto.  Let $J\in \iso(L_p(0,1))$ be extending $\theta_2$.  Let us see that this is possible: since we have that 
$0<\la(A),\la(B)<1$, we have that $L_p(A^c)$ and $L_p(B^c)$ are isometric, so we fix a surjective isometry $\Theta: L_p(A^c)\to L_p(B^c)$, and then $J(f):= \theta_2(f\rest A) +\Theta(f\rest A^c)$ makes the job. Since we know that $\nrm{\ga-\ga_0}\le 3\vep$, putting all together we obtain that
\[\nrm{J \rest X - \ga}\le 7\vep. \qedhere \]
  
\fprue

\begin{corollary}
For $1\le p<\infty$,  $p\neq 4,6,\dots$ the class $\mr{Age}(L_p(0,1))$   of finite dimensional subspaces of $L_p(0,1)$ is a Fraïssé class whose limit is $L_p(0,1)$. 
\end{corollary}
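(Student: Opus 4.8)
The statement combines the Fraïssé property of $L_p(0,1)$ with the Fraïssé correspondence, so the plan is essentially to assemble pieces already in place rather than to prove anything new. First I would record that $L_p(0,1)$ is a Fraïssé Banach space for every $p\neq 4,6,8,\dots$. Indeed, if $p$ is not even this is precisely Theorem \ref{auh_for_L_p}, while the one remaining value $p=2$ is covered separately by the fact that $L_2(0,1)$ is a Hilbert space, hence Fraïssé with modulus $\varpi(k,\de)=2\de$. Thus across the whole range $p\neq 4,6,8,\dots$ the space $L_p(0,1)$ is Fraïssé.

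Next I would invoke the general observation, recorded in the discussion preceding the Fraïssé correspondence, that the age of a Fraïssé space is a Fraïssé class. Concretely, $\age(L_p(0,1))$ always has \hp and \jep; since $L_p(0,1)$ is Fraïssé it is an amalgamation class; and, being the age of a Fraïssé space, it is $d_\mr{BM}$-compact by Theorem \ref{main_thm_fraisse}. For this last point one may alternatively appeal to Example \ref{12uihiu23ui324}, where the $d_\mr{BM}$-compactness of $\age(L_p(0,1))$ is obtained for \emph{every} $p$ via the Kakutani characterization of abstract $L_p$-spaces. Putting these together, $\age(L_p(0,1))$ is a hereditary, $d_\mr{BM}$-compact amalgamation class, i.e.\ a Fraïssé class.

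Finally, the identification of the limit follows from the Fraïssé correspondence, Corollary \ref{ijoiriower3278766755}: a Fraïssé class is, up to $\equiv$, the age of a unique separable Fraïssé Banach space, namely its Fraïssé limit. Since $L_p(0,1)$ is itself a separable Fraïssé space whose age is $\age(L_p(0,1))$, the uniqueness clause forces $\flim\age(L_p(0,1))=L_p(0,1)$, which is the asserted conclusion.

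All the substantive work sits inside Theorem \ref{auh_for_L_p} (proved through the approximate equimeasurability principle) and inside the Fraïssé correspondence; once these are granted the corollary is purely formal bookkeeping. Accordingly I do not expect a genuine obstacle here. The only points demanding care are matching the hypotheses correctly — reconciling ``$p$ not even'' with ``$p\neq 4,6,8,\dots$'' — and remembering that the value $p=2$ falls outside Theorem \ref{auh_for_L_p} and must be handled through the Hilbert space example.
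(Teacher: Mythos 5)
Your proposal is correct and follows essentially the same route the paper intends: $L_p(0,1)$ is Fraïssé (Theorem \ref{auh_for_L_p} for $p\notin 2\N$, the Hilbert space example for $p=2$), the age of a Fraïssé space is automatically a hereditary, $d_\mr{BM}$-compact amalgamation class (compactness being available for all $p$ via the Kakutani/ultrapower argument of Example \ref{12uihiu23ui324} or via Theorem \ref{main_thm_fraisse}), and the uniqueness clause of the Fraïssé correspondence (Corollary \ref{ijoiriower3278766755}) identifies the limit as $L_p(0,1)$ itself. Your explicit treatment of the case $p=2$, which falls outside the hypothesis of Theorem \ref{auh_for_L_p}, is exactly the right bookkeeping and matches what the paper leaves implicit.
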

In a personal communication I. Ben Yaacov \cite{BeYa0} mentioned to us that there might be  connections of this result with the  Ryll-Nardzewski Theorem and   quantifier elimination in the context of continuous logic (see \cite[Sections 12 and 13]{BYBeHeUs}) .

\subsection{The approximate equimeasurability principle}\label{approx_equim}
As mentioned before,  Plotkhin     and Rudin    independently  proved that for $1\le p<\infty$,  $p \notin 2\N$,  the transform $ \widehat{\mu}^{(p)}(a):= \nrm{1+ \langle a,z\rangle}_{L_p(\mu)}$ determines the measure  $\mu$ (for which $\mbb E_\mu(|z|^p)<\infty$),  much like the    {\em Fourier-Stieltjes
transform}  $\widehat{\mu}(a):=\int e^{i \Re \langle a, u \rangle}d\mu(u)$ does for an arbitrary measure $\mu$.  In this case, there is also a continuity aspect of it,  called {\em L\'evy's continuity theorem}, stating that  for finite measures, if $ \widehat{\mu_n}\to \widehat{\mu}$ converges uniformly on compacta, then $\mu_n\to_n \mu$ completely.    The goal now is to see   that  a similar statement   holds for   the transform $\widehat{\mu}^{(p)}$. In fact, the distance between the transforms $\widehat{\mu}^{(p)}$ and $\widehat{\nu}^{(p)}$   will also determine the distance between  the finite measures $|z|^\al d\mu$ and  $|z|^\al d\nu$ for every $0\le \al\le p$.   Our proof follows some ideas of the  standard proof of Levy's continuity theorem (see for example \cite[Theorem 2.6.8]{Cu})  and the proof of  the equimeasurability principle given by C. D. Hardin in \cite[Theorem 1.1a]{Har} (see  also \cite[Theorem 3.3.2]{FJ1}).  

We start by recalling some basic concepts in measure theory. We refer the reader to \cite{Ha} or \cite{Cu} for more details. 
Given a separable metric space $(X,d)$, let $\mc M(X)$ be the collection of all finite   measures  on $\mc B(X)$, the class of Borel subsets of $(X,d)$. Given $\mu\in \mc M(X)$, $A\in \mc B(X)$ is called a {\em $\mu$-continuity set} if $\mu(\partial A)=0$, where $\partial A$ is the topological boundary of $A$. Recall that a sequence $(\mu_n)_n$ of finite   measures {\em converges weakly}  to $\mu\in \mc M(X)$   when $\mu_n(A)\to_n\mu(A)$  for every bounded continuity set $A\in \mc B(X)$. The sequence $(\mu_n)_n$ converges {\em completely} to $\mu$ if it converges weakly and $\nrm{\mu_n}\to_n \nrm{\mu}$.   It is well-known that $(\mu_n)_n$ converges completely to $\mu$ exactly when $(\mu_n)_n$ converges weakly to $\mu$ and $(\mu_n)_n$ is {\em tight}, or, equivalently, when   $\mu_n(A)\to_n \mu(A)$ for every continuity set $A\in \mc B(X)$. Recall that $(\mu_n)_n$ is tight if for every $\vep>0$ there is a compact set $K\con X$ such that $\sup_n \mu_n(X\setminus K)\le \vep$.  Given a function $f:X\to \R$ that is integrable with respect to $\mu\in \mc M (X)$ one defines the (signed) measure $fd\mu$ by $fd\mu(A):=\int_A f d\mu$.  The {\em Fourier-Stieltjes transform} of $\mu\in \mc M(\F^n)$ is the function $\widehat{\mu}:\F^n\to \C$, 
$$\widehat{\mu}(a)=\int e^{i\Re(\langle a,\bar b\rangle)} d\mu(b),$$ 
where $\Re(\al)$ is the real part of $\al$, and $\overline{ (b_1,\dots,b_n)}=(\bar b_1,\dots,\bar b_n)$ is the sequence of conjugates of $b_1,\dots,b_n$.  Notice that when $\mbb F=\C$, the definition above coincides with the standard definition of the Fourier-Stieltjes transform for measures on $\R^{2n}$, via the canonical identification of $\C$ with $\R^2$.  Recall also that given $\mu\in \mc M(\mbb F^n)$ and given $f:\mbb F^n\to \R$ that is integrable with respect to $\la$ and $\mu$,  one defines the convolution $f* \mu: \mbb F^n\to \R  $, 
$$(f*\mu)(b):=\int f(b-a)d\mu(a),$$
that corresponds to the convolution $(fd\la) * \mu$ of the measures $fd\la$ and $\mu$.  Notice that this is the case when $f: \F^n\to \R$ is continuous, bounded and $\la$-integrable.    A basic property we will use is that  $\widehat{f * \mu}=\widehat{f}   \cdot \widehat{\mu}=\widehat{f d\la}\cdot \widehat{\mu}$.

\begin{definition}
Let $\mc M^{(p)}(\mbb F^n)$ be the collection of all Borel measures on $\mbb F^n$ such that $|z|^pd\mu\in \mc M_n(\mbb F^n)$, that is,  $\int |z|^p d\mu(z)<\infty$. 
\end{definition}
In the previous definition $|z|$ is the euclidean norm of the vector $z$. It follows that $\mc M^{(0)}(\mbb F^n)=\mc M(\mbb F^n)$ is the collection of finite Borel measures on $\mbb F^n$,  and that $\mu\in \mc M^{(p)}(\mbb F^n)$ if and only if $\int |z_j|^pd\mu(z)<\infty$ for every $1\le j\le n$, where each $z\in \mbb F^n\mapsto z_j\in \F$ is the canonical $j^\mr{th}$-projection.    Recall that given a measurable function  $T:(\Om_0,\Sig_0)\to (\Om_1,\Sig_1)$ and a measure $\mu$ on $\Sig_0$ one defines the {\em pushforward measure} $T_*\mu $ on $\Sig_1$ by 
$(T_*\mu)(A):=\mu(T^{-1}(A))$ for $A\in \Sig_1$.  In particular, each sequence $F=(f_1,\dots,f_n)$ of elements of a Lebesgue space $ L_p(\Om,\Sig,\mu)$ defines the measure  $F_*\mu$    on  $\mbb F^n$, where $F$ is interpreted as the measurable function $F: \Om\to \mbb F^n$, $F(\om)=(f_1(\om),\dots,f_n(\om))$.  Notice that for $H(z_1,\dots,z_n)\in L_p(F_*\mu )$ one has that $\int H(z_1,\dots,z_n) d(F_*\mu)(z_1,\dots,z_n)=\int H(f_1(\om),\dots, f_n(\om))d\mu(\om)$.

\begin{definition}[$p$-characteristics]
Given $\mu\in \mc M^{(p)}(\mbb F^n)$, we define the \emph{$p$-characteristics}  $\widehat{\mu}^{(p)}:\mbb F^n\to [0,\infty[$ of $\mu$  by 
$$\widehat{\mu}^{(p)}(a):=\nrm{1+\langle a,z\rangle}_{L_p(\mu)}=\left(\int |1+ \langle a ,z\rangle|^p d\mu(z)\right)^\frac{1}{p} \hfill \text{for every $a\in \mbb F^n$.}$$
\end{definition}
With this terminology,   Plotkin   and Rudin  results  can be restated as follows.
\begin{theorem}[Uniqueness of the $p$-characteristics]
Suppose that $1\le p<\infty$, $p\notin 2\N$.
If $\mu,\nu\in \mc M^{(p)}(\mbb F^n)$ are such that $\widehat{\mu}^{(p)}=\widehat{\nu}^{(p)}$, then $\mu=\nu$. 
\end{theorem}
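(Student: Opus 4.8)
The plan is to reduce the $n$-dimensional uniqueness to a one-dimensional statement about the marginals of $\mu$ and $\nu$, and then to resolve that statement by Fourier analysis, with the hypothesis $p\notin 2\N$ entering exactly through the Fourier transform of $|x|^p$.

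First I would pass to marginals. For $a\in\F^n$ let $\sigma_a$ and $\tau_a$ be the pushforwards of $\mu$ and $\nu$ under the linear functional $z\mapsto\langle a,z\rangle=\sum_j a_j z_j$; since $\mu,\nu\in\mc M^{(p)}(\F^n)$ these are finite measures on $\F$ with finite $p$-th moment. Replacing $a$ by $sa$ for a scalar $s$ in the hypothesis $\widehat\mu^{(p)}=\widehat\nu^{(p)}$ (and recalling that equality of the norms is equivalent to equality of their $p$-th powers) gives, by bilinearity of the pairing, $\int_\F|1+sw|^p\,d\sigma_a(w)=\int_\F|1+sw|^p\,d\tau_a(w)$ for every scalar $s$ and every fixed $a$. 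If I can show that this forces $\sigma_a=\tau_a$ for every $a$, then the Cramér--Wold device finishes the proof: equality of all one-dimensional marginals gives $\widehat\mu(a)=\widehat{\sigma_a}(1)=\widehat{\tau_a}(1)=\widehat\nu(a)$ for all $a$, and uniqueness of the Fourier--Stieltjes transform yields $\mu=\nu$. Thus everything reduces to a uniqueness statement for a single measure on $\F$ (which is $\R^d$ with $d=1$ in the real case, $d=2$ in the complex case).

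Next I would convert the one-dimensional identity into a convolution equation. Writing $\phi_p(x):=|x|^p$ on $\R^d$ and splitting off the atom at the origin, for $w\neq0$ one has $|1+sw|^p=|w|^p\,|s-(-w^{-1})|^p$; pushing the finite measure $|w|^p\,d\sigma_a(w)$ (restricted to $w\neq0$) forward under the homeomorphism $w\mapsto -w^{-1}$ of $\R^d\setminus\{0\}$ produces a finite measure $\rho_{\sigma_a}$, again with finite $p$-th moment, such that
\[
\int_\F|1+sw|^p\,d\sigma_a(w)=\sigma_a(\{0\})+(\phi_p*\rho_{\sigma_a})(s),
\]
and similarly for $\tau_a$. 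Subtracting, the finite signed measure $\lambda:=\rho_{\tau_a}-\rho_{\sigma_a}$ satisfies $\phi_p*\lambda\equiv\mathrm{const}$.

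The crux, and the only place where the arithmetic of $p$ is used, is the Fourier transform of $\phi_p$. For $p\notin 2\N$ the tempered distribution $|x|^p$ on $\R^d$ has Fourier transform that agrees on $\{\xi\neq0\}$ with the function $c_{p,d}\,|\xi|^{-d-p}$, and the constant $c_{p,d}$ (a ratio of Gamma factors) is nonzero precisely because $p$ is not an even integer --- when $p\in 2\N$ the function $|x|^p$ is a polynomial and its transform is supported at the origin. Testing the identity $\phi_p*\lambda\equiv\mathrm{const}$ against Schwartz functions $\psi$ whose Fourier transforms are compactly supported in $\{\xi\neq0\}$ (these annihilate the constant, as $\int\psi=\widehat\psi(0)=0$) and applying Parseval shows that $c_{p,d}\,|\xi|^{-d-p}\,\widehat\lambda(\xi)$ vanishes for all $\xi\neq0$; since $c_{p,d}\neq0$ this gives $\widehat\lambda\equiv0$ off the origin, and because $\lambda$ is a finite measure its transform $\widehat\lambda$ is continuous, whence $\widehat\lambda\equiv0$ and $\lambda=0$. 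Undoing the change of variables $w\mapsto -w^{-1}$ then yields $\sigma_a=\tau_a$ away from $0$, while the common total mass (the case $s=0$) pins down the atoms at $0$, so $\sigma_a=\tau_a$. The main obstacle is exactly this Fourier computation: establishing the precise form of $\widehat{\phi_p}$ and the nonvanishing of $c_{p,d}$, together with the technical care needed to handle the singular distribution $\widehat{\phi_p}$ against the merely continuous function $\widehat\lambda$ --- which I handle by localizing to $\{\xi\neq0\}$, where $\widehat{\phi_p}$ is a smooth nonvanishing function and the products are unambiguous.
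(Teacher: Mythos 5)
Your proof is correct, but it takes a genuinely different route from the paper. The paper does not prove this theorem at all: it is quoted as the classical Plotkin--Rudin equimeasurability principle, and the Fourier-analytic machinery the paper builds nearby (for its \emph{approximate} version) follows Hardin \cite{Har}: one constructs a nonzero, bounded, integrable $f(z)=\sum_{j=0}^m a_j|z+j|^p$ (Proposition \ref{oiuioui78547878555}), turns the hypothesis into convolution identities $\widehat{f_c}\,\widehat{\mu}=\widehat{f_c}\,\widehat{\nu}$, and concludes $\widehat{\mu}=\widehat{\nu}$ by classical Fourier--Stieltjes uniqueness (Claim \ref{eiorwerewi44}); there the hypothesis $p\notin2\N$ enters because nontrivial linear combinations of translates of $|z|^p$ cannot vanish (for even $p$ these are polynomials, hence linearly dependent). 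You instead reduce to dimension one by Cram\'er--Wold, use the inversion $w\mapsto -w^{-1}$ to recast the identity as $\phi_p*\lambda\equiv\mathrm{const}$ for a finite signed measure $\lambda$ with finite $p$-th moment (a nice device, since it makes everything a finite-measure statement), and let $p\notin2\N$ enter through the nonvanishing of the Gamma-factor constant in $\widehat{|x|^p}=c_{p,d}\,|\xi|^{-d-p}$ off the origin. Both arguments exploit the same underlying phenomenon, but yours requires the Gelfand--Shilov calculus of homogeneous distributions, whereas Hardin's stays with classical $L_1$ functions --- which is also why the paper's variant perturbs well into the quantitative, approximate equimeasurability principle that is its real goal. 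One implementation point where your sketch should be tightened: rather than forming $\widehat{\phi_p*\lambda}$ directly (a product of a singular distribution with the merely continuous $\widehat{\lambda}$), move the convolution onto the test function. For $\psi$ with $\widehat{\psi}\in C^\infty_c(\R^d\setminus\{0\})$ the function $\phi_p*\psi$ is Schwartz, since its transform $c_{p,d}|\xi|^{-d-p}\widehat{\psi}(\xi)$ lies in $C^\infty_c(\R^d\setminus\{0\})$; then Fubini (justified by the finite $p$-th moment of $\lambda$) gives $0=\langle\phi_p*\lambda,\psi\rangle=\int(\phi_p*\psi)\,d\lambda$, and Parseval for finite measures yields $\int c_{p,d}|\xi|^{-d-p}\widehat{\psi}(\xi)\,\widehat{\lambda}(-\xi)\,d\xi=0$ for all such $\psi$, whence $\widehat{\lambda}$ vanishes off the origin and, by continuity, everywhere.
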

 This is the corresponding continuity statement for    $p$-characteristics.
\begin{theorem}[Continuity of the $p$-characteristics]\label{continuity_theorem}
Suppose that $1\le p<\infty$, $p\notin 2\N$.  The following are equivalent for a sequence $(\mu_k)_k $  and a measure $\mu$ all   in   $\mc M^{(p)}(\mbb F^n)$:
\begin{enumerate}[1)]
\item $(|z|^\al d\mu_k)_k$ converges completely to $|z|^\al d\mu$ for all $0\le \al \le p$;

\item $(|z|^p d\mu_k)_k$ converges completely to $|z|^p d\mu$ and $\nrm{\mu_k}\to_k \nrm{\mu}$;
\item $(\mu_k)_k$ converges completely to $\mu$ and  $(|z|^pd\mu_k)_k$ is tight;

 \item $(\widehat{\mu_k}^{(p)})_k$ converges to $\widehat{\mu}^{(p)}$ uniformly in all compacta of $\mbb F^n$. 

\end{enumerate}
\end{theorem}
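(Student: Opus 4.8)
The plan is to prove the three measure-theoretic conditions $1),2),3)$ equivalent by soft arguments, to establish the easy ``forward'' implication $3)\Rightarrow 4)$, and then to concentrate the real work on the converse $4)\Rightarrow 3)$, which is the analogue of the hard half of Lévy's continuity theorem. Throughout I would use that $\widehat{\mu}^{(p)}(0)^p=\nrm{1}_{L_p(\mu)}^p=\nrm{\mu}$, so that $4)$ already forces $\nrm{\mu_k}\to\nrm{\mu}$.

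First I would record the elementary reductions. The implication $1)\Rightarrow 2)$ is immediate (take $\al=p$ and $\al=0$). For $2)\Rightarrow 3)$, complete convergence of $|z|^p\,d\mu_k$ gives its tightness; to obtain complete convergence of $\mu_k$ itself one writes, for a continuity radius $\varepsilon$, $d\mu_k=|z|^{-p}\,d(|z|^p\,d\mu_k)$ on $\{|z|>\varepsilon\}$ (where $|z|^{-p}$ is bounded and a.e.\ continuous) and controls the mass on $\{|z|\le\varepsilon\}$ by differencing $f-f(0)$, which handles a possible atom of $\mu$ at the origin. The converse $3)\Rightarrow 1)$ is a truncation: for $0\le\al\le p$ and bounded continuous $f$ one splits $\int f\,|z|^\al\,d\mu_k$ into $\{|z|\le R\}$ (controlled by complete convergence of $\mu_k$) and $\{|z|>R\}$ (controlled by $|z|^\al\le|z|^p$ and tightness of $|z|^p\,d\mu_k$). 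Finally $3)\Rightarrow 4)$: writing $\widehat{\mu_k}^{(p)}(a)^p=\int|1+\langle a,z\rangle|^p\,d\mu_k$, the tail $\int_{|z|>R}$ is dominated by $C_A\int_{|z|>R}|z|^p\,d\mu_k$ uniformly for $a$ in a compact set $A$ (tightness of $|z|^p\,d\mu_k$), the truncated part converges by complete convergence of $\mu_k$, and uniformity on compacta follows from equicontinuity of $\{\widehat{\mu_k}^{(p)}\}$, since $a\mapsto\widehat{\mu_k}^{(p)}(a)$ is $1$-Lipschitz into $\nrm{|z|}_{L_p(\mu_k)}$ and these norms are uniformly bounded.

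The heart of the matter is $4)\Rightarrow 3)$. Here I would first produce uniform tightness via the symmetric second difference $D_k(a):=\tfrac12\big(\widehat{\mu_k}^{(p)}(a)^p+\widehat{\mu_k}^{(p)}(-a)^p\big)-\widehat{\mu_k}^{(p)}(0)^p=\int h_p(\langle a,z\rangle)\,d\mu_k$, where $h_p(s)=\tfrac12(|1+s|^p+|1-s|^p)-1\ge 0$ satisfies $h_p(s)\asymp\min(s^2,|s|^p)$, so in particular $h_p(s)\ge c>0$ for $|s|\ge 1$. Averaging $D_k$ over a sphere of radius $r$ in the $a$-variable, together with $\mathrm{avg}_{|a|=r}\mathbbm 1_{\{|\langle a,z\rangle|\ge 1\}}\ge c'$ once $|z|\ge C/r$, yields a Lévy-type tail bound $\mu_k(\{|z|\ge C/r\})\le C''\,\mathrm{avg}_{|a|=r}D_k(a)$; since $D_k\to D$ uniformly on compacta and $D(a)\to 0$ as $a\to 0$, this gives uniform tightness of $(\mu_k)$, while the companion average $\int_{|a|\le 1}D_k(a)\,da=\int H(z)\,d\mu_k$ with $H(z)\asymp\min(|z|^2,|z|^p)$ bounds $\sup_k\int|z|^p\,d\mu_k<\infty$. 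Promoting this to tightness of the heavier family $(|z|^p\,d\mu_k)$ is the delicate point, and it is exactly here that $p\notin 2\N$ is indispensable: one invokes the Fourier (Lévy--Khintchine/Hardin) representation of $|w|^p$ for non-even $p$ to identify convergence of $\widehat{\mu_k}^{(p)}$ with convergence of genuine Fourier--Stieltjes transforms of the measures $|z_j|^\al\,d\mu_k$, whence $\int|z|^p\,d\mu_k\to\int|z|^p\,d\mu$ and no mass escapes.

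Once tightness is in hand the argument closes softly: by Prokhorov every subsequence of $(\mu_k)$ has a further subsequence converging completely to some $\rho\in\mc M^{(p)}(\F^n)$ with $|z|^p\,d\mu_{k'}\to|z|^p\,d\rho$. Applying the already-proved soft direction $3)\Rightarrow 4)$ to this subsequence gives $\widehat{\mu_{k'}}^{(p)}\to\widehat{\rho}^{(p)}$ uniformly on compacta, while hypothesis $4)$ gives $\widehat{\mu_{k'}}^{(p)}\to\widehat{\mu}^{(p)}$; hence $\widehat{\rho}^{(p)}=\widehat{\mu}^{(p)}$, and the uniqueness theorem for $p$-characteristics (Plotkin--Rudin) forces $\rho=\mu$. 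As every subsequence has a sub-subsequence converging to the same limit $\mu$, the whole sequence satisfies $3)$, and $3)\Rightarrow 1)$ completes the equivalence. The single genuine obstacle is the uniform tightness of $(|z|^p\,d\mu_k)$ in the step above; everything else is soft measure theory or a direct consequence of the uniqueness theorem.
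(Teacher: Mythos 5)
Your soft implications are sound: the cycle $1)\Rightarrow 2)\Rightarrow 3)\Rightarrow 1)$ and the direction $3)\Rightarrow 4)$ are handled essentially as in the paper, the closing Prokhorov-plus-uniqueness argument matches the paper's, and your sphere-averaged second difference $D_k(a)=\int h_p(\langle a,z\rangle)\,d\mu_k(z)$ is a genuinely different (and, after small repairs, correct) route to uniform tightness of $(\mu_k)_k$ and to $\sup_k\int|z|^p d\mu_k<\infty$. (The two-sided asymptotics you state are off: $h_1$ vanishes identically on $[-1,1]$, and for $p>2$ one has $h_p(s)\asymp |s|^p$ at infinity rather than $\min(s^2,|s|^p)$; but only the lower bounds for large $|s|$ are actually used, and those hold.)

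The gap is your Step 3, and it is not a loose end but the entire content of $4)\Rightarrow 3)$. You assert that one can ``invoke the Fourier (Lévy--Khintchine/Hardin) representation of $|w|^p$'' to pass from uniform convergence of $\widehat{\mu_k}^{(p)}$ on compacta to convergence of the Fourier--Stieltjes transforms of the measures $|z_j|^\al d\mu_k$, hence to $\int|z|^p d\mu_k\to\int|z|^p d\mu$. No such black box exists: Hardin's paper proves the uniqueness (equimeasurability) statement, not any sequential continuity, and the Lévy--Khintchine-type formulas only express the $p$-characteristic as an integral transform of the ordinary Fourier transform, so what you need is an inversion of that transform along a sequence that converges merely uniformly on compacta --- that inversion \emph{is} the difficulty. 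That it cannot be soft is shown by even $p$: for $\mu_k=(1-k^{-2})\delta_0+k^{-2}\delta_k$ on $\R$ one has $\widehat{\mu_k}^{(2)}\to\widehat{\nu}^{(2)}$ uniformly on compacta with $\nu=\tfrac12(\delta_{-1}+\delta_1)$, yet $|z|^2 d\mu_k$ is the unit mass at $k$ and escapes to infinity; so any proof of your Step 3 must use $p\notin 2\N$ through an actual construction. The paper supplies exactly this: Proposition \ref{oiuioui78547878555} produces a nonzero, bounded, integrable $f(z)=\sum_{j\le m}a_j|z+j|^p$ with $f(z)/|z|^p\to a_0$ at $0$ (the solvability of the linear system behind $f$ is where non-evenness enters); Lemma \ref{jwejrjw23dddqwe} is a bespoke convergence criterion ($\nrm{\mu_k}\to\nrm{\mu}$ together with $\int f(a+bz)\,d\mu_k\to\int f(a+bz)\,d\mu$ for all $(a,b)$ forces complete convergence, proved by a maximum/tightness argument plus a convolution--Fourier uniqueness argument); and the final lemma applies this criterion \emph{twice} to each coordinate projection $\nu_k=(\pi_j)_*\mu_k$ --- first with $f$ itself, via $\int|c+bt|^p d\nu_k=|c|^p\bigl(\widehat{\nu_k}^{(p)}(b/c)\bigr)^p$, to get $\nu_k\to\nu$ completely, and then with the bounded continuous $F=f/|\cdot|^p$ tested against $|t|^p d\nu_k$, which is what rules out escape of $p$-th-moment mass. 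Until you either reproduce this two-stage argument or supply a genuine substitute, your proof of $4)\Rightarrow 3)$ is an assertion, not a proof.
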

  The proof of each implication  is done in several steps, being ${\it 4)}\implies {\it 3)}$ the more interesting  one.

 \prop\label{jjheorhthrhjtr}
 Let $f:(X,d)\to \R^+$ be a continuous function and suppose that $(\mu_k)_k$ and $\mu$ satisfy that $\int f d\mu_n,\int f d\mu <\infty$. If   $(\mu_k)_k$ converges weakly to $\mu$  and $(fd\mu_k)_k$ is tight, then  $(\mu_k)_k$ is tight and    $(f d\mu_k)_k$ converges completely to $fd\mu$.  Consequently ${\it 3)}\implies {\it 1)}$  of Theorem  \ref{continuity_theorem} holds for every $p$. 
 \fprop
\prue
By one of the several characterizations of weak convergence (Portmanteau Theorem, \cite[Theorem 2.1]{Bi}) we have to check that $\limsup_k \int_C f d\mu_k \le \int_C fd\mu$ for every closed subset $C\con X$. Fix $\vep>0$. and let $K\con X$ be a compact subset such that $\sup_k(fd\mu_k)(K^c)<\vep/2$. Now the function $g:= \mathbbm{1}_{K\cap C} \cdot f$ is upper semicontinuous and bounded, since $K\cap C$ is closed. Since $\mu_k\to_k \mu$ weakly, it follows that $\limsup_k \int g d\mu_k\le \int gd\mu$. Consequently,
$$\limsup_k \int_C f d\mu_k \le  \limsup_k \int g d\mu_k +\limsup_k \int_{K^c} f d\mu_k \le  \int g d\mu +\vep \le \int_C f d\mu +\vep.
$$ 
Since $\vep$ is arbitrary, we are done. 
\fprue

\begin{proposition}
Suppose that $(|z|^pd\mu_k)_k$ converges completely to $|z|^pd\mu$ and suppose that $\nrm{\mu_k}\to_k\nrm{\mu}$. Then $(\mu_k)_k$ converges completely to $\mu$. Consequently ${\it 2)}\implies {\it 3)}$  of Theorem  \ref{continuity_theorem} holds for every $p$. 
\end{proposition}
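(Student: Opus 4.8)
The plan is to reduce everything to pinning down the behaviour of the limit at the single point $0\in\mbb F^n$, which is where the two hypotheses genuinely interact: the measure $|z|^p\,d\mu$ forgets all the mass sitting at the origin, so the complete convergence of $(|z|^pd\mu_k)_k$ alone cannot control $(\mu_k)_k$ near $0$, and this gap is exactly what the extra assumption $\nrm{\mu_k}\to_k\nrm{\mu}$ is there to fill.

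First I would record two soft facts. Since $(|z|^pd\mu_k)_k$ converges completely, the moments $\int|z|^p\,d\mu_k$ are bounded, say by $M$; Chebyshev's inequality then gives $\mu_k(\{|z|>R\})\le R^{-p}\int|z|^p\,d\mu_k\le R^{-p}M$, so $(\mu_k)_k$ is tight. Together with the hypothesis $\nrm{\mu_k}\to\nrm{\mu}$ this makes $(\mu_k)_k$ uniformly bounded and tight, hence relatively compact for weak convergence by Prokhorov's theorem. It therefore suffices to show that every weakly convergent subsequence of $(\mu_k)_k$ has limit $\mu$; the whole sequence then converges weakly to $\mu$, and combined with $\nrm{\mu_k}\to\nrm{\mu}$ this is complete convergence, which is the content of 3) (the tightness of $(|z|^pd\mu_k)_k$ asserted in 3) being immediate from its complete convergence).

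So let $\mu_{k_j}\to\rho$ weakly along a subsequence; by tightness this is in fact complete convergence, and $\rho$ is a finite measure with $\int|z|^p\,d\rho\le\liminf_j\int|z|^p\,d\mu_{k_j}<\infty$ (portmanteau for the nonnegative continuous weight $|z|^p$), so $\rho\in\mc M^{(p)}(\mbb F^n)$. Now I would apply Proposition \ref{jjheorhthrhjtr} with $f(z)=|z|^p$: since $\mu_{k_j}\to\rho$ weakly and $(|z|^pd\mu_{k_j})_j$ is tight, it yields $|z|^p\,d\mu_{k_j}\to|z|^p\,d\rho$ completely. But by hypothesis $|z|^p\,d\mu_{k_j}\to|z|^p\,d\mu$ completely as well, and complete limits are unique, so $|z|^p\,d\rho=|z|^p\,d\mu$. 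Dividing by the strictly positive function $|z|^p$ on $\mbb F^n\setminus\{0\}$, i.e.\ using $\rho(B)=\int_B|z|^{-p}\,d(|z|^p d\rho)$ for Borel $B\con\mbb F^n\setminus\{0\}$, gives $\rho=\mu$ on $\mbb F^n\setminus\{0\}$.

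It remains to compare the atoms at $0$, and this is the one place the total-mass hypothesis is used. Complete convergence gives $\nrm{\rho}=\lim_j\nrm{\mu_{k_j}}=\nrm{\mu}$, the last equality being the hypothesis. Since $\rho$ and $\mu$ agree off the origin and have equal total mass, $\rho(\{0\})=\nrm{\rho}-\rho(\mbb F^n\setminus\{0\})=\nrm{\mu}-\mu(\mbb F^n\setminus\{0\})=\mu(\{0\})$, whence $\rho=\mu$. This identifies every subsequential weak limit as $\mu$ and finishes the proof. The main obstacle is precisely the invisibility of the origin to the weight $|z|^p$: all the real content lies in showing that $\rho$ and $\mu$ can differ only by an atom at $0$ and then annihilating that atom with the mass condition. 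An alternative, more computational route avoiding Prokhorov would split each $\mu$-continuity set $A$ as $(A\cap\{|z|\ge\de\})\cup(A\cap B(0,\de))$ for a continuity radius $\de$, handle the far part by integrating the bounded a.e.-continuous weight $|z|^{-p}\mathbbm 1_{A\cap\{|z|\ge\de\}}$ against $|z|^pd\mu_k$ via the portmanteau theorem \cite{Bi}, and control the near part through $\mu_k(B(0,\de))=\nrm{\mu_k}-\mu_k(\{|z|\ge\de\})\to\mu(B(0,\de))$, letting $\de\to0$.
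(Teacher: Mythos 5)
Your proof is correct, and its skeleton coincides with the paper's: tightness of $(\mu_k)_k$ inherited from the weighted sequence, reduction via Prokhorov/Helly to identifying an arbitrary completely convergent subsequential limit $\rho$, an application of Proposition \ref{jjheorhthrhjtr} with $f=|z|^p$ to get $|z|^pd\rho=|z|^pd\mu$, and finally the total-mass hypothesis to equate the atoms at the origin. Where you genuinely diverge is the step from $|z|^pd\rho=|z|^pd\mu$ to $\rho=\mu$ on $\mbb F^n\setminus\{0\}$: you dispose of it in one line by integrating the density $|z|^{-p}$, i.e. $\rho(B)=\int_B|z|^{-p}\,d(|z|^pd\rho)$ for Borel $B\con\mbb F^n\setminus\{0\}$, whereas the paper proves it by hand --- covering $\mbb F^n\setminus B(0,\de)$ by a countable partition into small-diameter pieces on which $|z|^p$ oscillates by a factor at most $1+\de$, deducing the Prokhorov-type inequalities $\mu(E)\le\nu(\overline{E})$ and $\nu(E)\le\mu(\overline{E})$, and then passing through common continuity sets. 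Your density identity is the cleaner and shorter route, and it makes the logic transparent: the weight $|z|^p$ is blind exactly at $0$, and the mass hypothesis exists only to fix that. The paper's partition argument is longer but more quantitative in spirit, producing explicit fattening inequalities of L\'evy--Prokhorov type in line with the metric estimates the section is heading toward (Corollary \ref{quantitative_continuity}); nothing in the proposition itself requires that extra precision. One small point worth keeping from your write-up: you verify explicitly that the subsequential limit $\rho$ lies in $\mc M^{(p)}(\mbb F^n)$ before invoking Proposition \ref{jjheorhthrhjtr}, a hypothesis which the paper's proof uses only tacitly.
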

\prue
The fact that $(|z|^pd\mu_k)_k$ converges completely gives  that it is tight, and since $p\ge 0$,  we obtain that $(\mu_k)_k$ is tight.  Since this implies that every subsequence of $(\mu_k)_k$ has a further completely  converging subsequence, the next claim implies that $(\mu_k)_k$ converges completely to $\mu$.
\clam
Every complete converging subsequence of $(\mu_k)_k$ converges to $\mu$.
\fclam
\prucl 
Suppose that $(\mu_k)_{k\in M}$ converges completely to $\nu$.  It follows from Proposition \ref{jjheorhthrhjtr} that $|z|^p d\mu_k\to_{k\in M} |z|^p d\nu(z)$ completely, so $|z|^pd\nu=|z|^p\mu$. We are going to see that this implies that $\nu=\mu$: 
Fix $\de>0$. Let $0<\ga\le \de$ be such that $(1+\de) (1-p\ga/\de)\ge 1$, and let $\mc P$ be a countable partition of $K_\de:=\mbb F^n \setminus B(0,\de)$ of (Borel) subsets of diameter at most $\ga$.  A simple computation gives that if $A\con K_\de$ as diameter at most $\ga$, then $\sup_{a\in A} |a|^p \le (1+\de)\inf_{a\in A}|a|^p$.  This implies that given  $P\in \mc P$ one has that 
$$\inf_{a\in P} |a|^p \mu(P)\le \int_{P} |z|^p d\mu(z)= \int_{P} |z|^pd\nu(z)\le \sup_{a\in P} |a|^p \nu(P)\le (1+\de)\inf_{a\in A}|a|^p\nu(P),$$
 and hence $\mu(P)\le (1+\de)\nu(P)$.  It follows that   given $E\con K_\de$ and  setting $\mc P_E:=\conj{P\in \mc P}{P\cap E\neq \buit}$ we have that $E\con \bigcup_{P\in \mc P_E} P\con (E)_\ga$, and  consequently,
$$\mu(E) \le \mu(\bigcup_{P\in \mc P_E} P)= \sum_{P\in \mc P_E} \mu(P)\le (1+\de) \sum_{P\in \mc P_E} \nu(P) \le (1+\de) \nu((E)_\ga)\le (1+\de) \nu((E)_\de).$$ 
Since $\de$ was arbitrary, we obtain that $\mu(E)\le \nu(\bar E)$ and $\nu(E)\le \mu(\bar E)$ for every Borel $E\con \F^n\setminus \{0\}$.  In particular $\mu(K_\de)=\nu(K_\de)$ for  every $\de>0$, so $\mu(\F^n\setminus \{0\})=\nu(\F^n\setminus \{0\})$. Using that $\nrm{\mu}=\nrm{\nu}$ we obtain that $\mu(\{0\})=\nu(\{0\})$.   Now fix a Borel continuity subset  $E\con \F^n$  for $\mu$ and $\nu$, and let $(\de_n)_n$ be a decreasing sequence of strictly positive real numbers  with limit 0 such that $\mu(\partial B(0,\de_n))=\nu(\partial B(0,\de_n))=0$. Since $\partial (E\cap K_{\de_n})\con \partial E \cup \partial B(0,\de_n)$, it follows that $E\cap K_{\de_n}$ is a continuity set, so $\mu(E\cap     K_{\de_n})=\nu(E\cap     K_{\de_n})$ for all $n$. Hence, $\mu(E\setminus \{0\})=\lim_{n\to \infty}\mu(E\cap     K_{\de_n})=\lim_{n\to \infty}\nu(E\cap     K_{\de_n})=\nu(E\setminus \{0\})$ and consequently 
$\mu(E)=\nu(E)$.  
\fprucl
 \fprue 		
We  need   the following simple estimates. 
\prop\label{oihir43iljkerfkpokgtriojre}
Suppose that  $\mu,\nu\in \mc M^{(p)}(\mbb F^n)$. Then
\begin{enumerate}[(a)]
\item $0\le \widehat{\mu}^{(p)}(a) \le    \nrm{\mu}^{1/p} + |a| \cdot \nrm{\,|z|^p d\mu}^{1/p}  $ for every $a\in \mbb F^n$;
\item $\widehat{\mu}^{(p)}$ is uniformly continuous; in fact, $|\widehat{\mu}^{(p)}(a)-\widehat{\mu}^{(p)}(b)|\le |a-b| \cdot \nrm{|z|^p d\mu}^{1/p}$. 
\end{enumerate}

\fprop
\prue
{\it (a)}:  Using Cauchy–Schwarz, 
$\widehat{\mu}^{(p)}(a)=\nrm{1+ \langle a, z\rangle}_{L_p(\mu)}\le \nrm{1}_{L_p(\mu)} + |a| \nrm{|z|}_{L_p(\mu)} =\nrm{\mu}^{\frac1p}+ |a| \nrm{|z|^p d\mu}^{\frac1p} .$
 {\it (b)}: $|\widehat{\mu}^{(p)}(a)-\widehat{\mu}^{(p)}(b)|\le \nrm{\langle a-b,z\rangle}_{L_1(\mu)} \le 
 |a-b| \nrm{|z|}_{L_1(\mu)}= |a-b| \cdot \nrm{|z|^p d\mu}^{1/p}$.  
\fprue

\prop
Suppose that we have the complete convergence $\mu_k\to_k \mu$ and   $|z|^p d\mu_k\to |z|^p d\mu$. Then  $(\widehat{\mu_k}^{(p)})_k$ converges to $\widehat{\mu}^{(p)}$ uniformly in all compacta of $\mbb F^n$.   Consequently ${\it 1)}\implies {\it 4)}$  of Theorem  \ref{continuity_theorem} holds for every $p$. 

\fprop
\prue
Fix $M>0$ and $\vep>0$.   We know that $(|z|^p\mu_k)_k$ is tight, so there is some $K\ge 1$ such that $\int_{|z|>K}|z|^pd\mu_k(z)\le \vep.$ Let $D$ be a finite $\vep$-dense subset of $B(0,M)$, and let $k_0$ be such that 
$$ \left|\int_{|z|\le K} |1+ \langle a,z\rangle|^p  d\mu_k(z)  -\int_{|z|\le K} |1+ \langle a,z\rangle|^p  d\mu(z)   \right|\le \vep$$ for every $a\in D$ and every $k\ge k_0$. This is possible since $\mathbbm 1_{|z|\le K} |1+\langle a,z\rangle|^p$ is bounded and      $\mu_k\to \mu$ completely.    Then for such $k$ and  $a$,  
\begin{align*}
|\widehat{\mu_k}^{(p)}(a)-\widehat{\mu}^{(p)}(a)| \le & 
\left|\int_{|z|\le K} |1+ \langle a,z\rangle|^p  d\mu_k(z)  -\int_{|z|\le K}  |1+ \langle a,z\rangle|^p  d\mu(z)   \right|    
+ \mu(B(0,K)^c)  +\\ 
+& \mu_k(B(0,K)^c)+    |a|^p((|z|^p d\mu)(B(0,K)^c)  + (|z|^p d\mu_k)(B(0,K)^c))\le   \vep(3+ 2|a|^p)\le \\
\le &\vep(3+2 M^p).  
\end{align*}
Using this and Proposition \ref{oihir43iljkerfkpokgtriojre} {\it(b)}, for every $|a|\le M$ and $k\ge k_0$, 
  \begin{align*}
|\widehat{\mu_k}^{(p)}(a)-\widehat{\mu}^{(p)}(a)| \le & \vep(\nrm{|z|^pd\mu} +\nrm{|z|^pd\mu_k} + 3+2 M^p). 
\end{align*}
This shows that $\widehat{\mu_k}^{(p)}\to \widehat{\mu}^{(p)}$ uniformly on $B(0,M)$.  
\fprue

We have already seen that    {\it 1)}, {\it 2)} and {\it 3)}  of Theorem \ref{continuity_theorem}   are equivalent ({\it 1)} implies {\it 2)} trivially  and also that ${\it 3)}\implies {\it 4)}$. We conclude by showing that   ${\it 4)}\implies {\it 3)}$. We start with the following interesting criteria for complete convergence extending the proof given by Hardin in \cite[Theorem 1.1a]{Har} of the equimeasurability principle for $n=1$.
\begin{lemma}\label{jwejrjw23dddqwe}
Suppose that $(\mu_k)_k$ and $\mu$ are measures in $\mc M(\mbb F)$ such that $\nrm{\mu_k}\to_k \nrm{\mu}$  and such that  there is a continuous  and bounded $0\neq f\in L_1(\mbb F,\la)$ such that $\int f(a+bz)d\mu_k(z)\to_k \int f(a+bz)d\mu(z)$ for every $(a,b)\in\F\times \F$. Then $(\mu_k)_k$ converges completely to $\mu$.  
\end{lemma}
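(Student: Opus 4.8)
The plan is to read this as a Lévy-type continuity statement. Since, by the discussion preceding the lemma, complete convergence means exactly weak convergence together with convergence of total masses (and $\nrm{\mu_k}\to\nrm{\mu}$ is assumed, so in particular $\sup_k\nrm{\mu_k}<\infty$), and since on the locally compact space $\mbb F$ weak convergence will follow from tightness plus identification of all subsequential limits, I would split the argument into three steps: a transfer step converting the hypothesis into convergence of integrals of \emph{fixed} test functions; a tightness step; and an identification step. Note that this is purely measure-theoretic: the exponent $p$ plays no role here.

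First I would record the transfer principle. For $\psi\in L_1(\mbb F,\la)$ and $b\in\mbb F$, Fubini's theorem (legitimate since $\int\!\int|\psi(a)|\,|f(a+bz)|\,d\mu_k(z)\,da\le\nrm{\psi}_{L_1}\nrm{f}_\infty\sup_k\nrm{\mu_k}<\infty$) gives
\[
\int_{\mbb F}\psi(a)\Big(\int f(a+bz)\,d\mu_k(z)\Big)\,da=\int g(bz)\,d\mu_k(z),\qquad g(y):=\int\psi(a)f(a+y)\,da .
\]
Because $\int f(a+bz)\,d\mu_k(z)$ is bounded by $\nrm{f}_\infty\sup_k\nrm{\mu_k}$ and converges, for each fixed $a$, to the same integral for $\mu$, dominated convergence (dominating by $\nrm{f}_\infty\sup_k\nrm{\mu_k}\,|\psi|\in L_1$) yields $\int g(bz)\,d\mu_k(z)\to\int g(bz)\,d\mu(z)$ for every $b$ and every $g$ of the form $g=\psi\star f$.

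Next, tightness, which I expect to be the main obstacle; the crucial device is the autocorrelation $h:=f^{*}\!* f$ with $f^{*}(x)=\overline{f(-x)}$. It has the form $\psi\star f$ with $\psi=\bar f\in L_1$, so the transfer principle applies to it, and $\widehat h=|\widehat f|^2\ge0$, so $h$ is continuous and positive definite with $|h(y)|\le h(0)=\nrm{f}_2^2>0$. Since $f$ is bounded and integrable, $f\in L_2$, hence $|\widehat f|^2\in L_1$ and $h\in C_0(\mbb F)$, i.e. $h(y)\to0$ as $|y|\to\infty$. Setting $u:=h(0)-\Re h\ge0$ one gets $u(0)=0$ and $u(y)\to h(0)>0$, so $u\ge h(0)/2$ outside some ball $B(0,M_0)$. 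By the transfer principle and the assumed mass convergence, $\int u(bz)\,d\mu_k(z)\to\int u(bz)\,d\mu(z)$ for each $b$; the right-hand side is small for $|b|$ small (dominated convergence as $b\to0$, since $u(bz)\to u(0)=0$), whereas $\int u(bz)\,d\mu_k(z)\ge\tfrac{h(0)}{2}\,\mu_k(\{|z|>M_0/|b|\})$. Choosing $b$ small and then absorbing finitely many indices (each $\mu_k$ being individually tight on $\mbb F$) yields uniform tightness of $(\mu_k)_k$.

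Finally, identification. By Prokhorov, tightness together with $\sup_k\nrm{\mu_k}<\infty$ makes $(\mu_k)_k$ relatively compact for weak convergence, and with $\nrm{\mu_k}\to\nrm{\mu}$ every subsequence has a further subsequence converging \emph{completely} to some finite $\rho$ with $\nrm{\rho}=\nrm{\mu}$. For such a subsequence, applying complete convergence to the bounded continuous function $z\mapsto f(a+bz)$ gives $\int f(a+bz)\,d\rho(z)=\int f(a+bz)\,d\mu(z)$ for all $a,b$. Writing $\sig:=\rho-\mu$, taking the Fourier transform in $a$ of the $L_1$ function $a\mapsto\int f(a+bz)\,d\sig(z)$ converts this into $\widehat f(\xi)\,\widehat{\sig}(c(b,\xi))=0$ for all $\xi,b$; fixing $\xi$ at a point where $\widehat f\neq0$ (which exists since $f\neq0$) and letting $b$ vary, the frequency $c(b,\xi)$ sweeps out all nonzero points, so $\widehat{\sig}$ vanishes off the origin, while $\widehat{\sig}(0)=\nrm{\rho}-\nrm{\mu}=0$. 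Hence $\widehat{\sig}\equiv0$ and $\rho=\mu$ by uniqueness of the Fourier--Stieltjes transform. Every completely convergent subsequence thus has limit $\mu$, and the standard subsequence argument upgrades this to complete convergence of $(\mu_k)_k$ to $\mu$.
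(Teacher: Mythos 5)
Your proof is correct, and its overall skeleton --- tightness, identification of subsequential limits via Fourier uniqueness, then the subsequence principle --- is the same as the paper's. The genuine difference is in the tightness step, and it is worth highlighting. The paper works with $f$ itself: it picks $z_0$ with $|f(z_0)|=\nrm{f}_\infty$, then translates and rescales so as to obtain a function close to $1$ on a large ball and smaller than $\vep$ outside a larger one; this implicitly uses that $f$ attains its supremum and that $f$ vanishes at infinity, neither of which follows from the stated hypotheses (continuous, bounded, in $L_1$, nonzero). These properties do hold for the specific $f$ constructed in Proposition \ref{oiuioui78547878555}, which is where the lemma gets applied, so the paper's argument is sound in context, but your route proves the lemma in the generality in which it is stated. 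Your two extra ingredients are the Fubini transfer principle (the hypothesis survives integration against any $\psi\in L_1$, hence holds for every function of the form $y\mapsto\int\psi(a)f(a+y)\,da$) and the choice $\psi=\bar f$, which produces the autocorrelation $h=f^{*}\ast f$; this $h$ automatically satisfies $|h|\le h(0)=\nrm{f}_{L_2}^2>0$ and $h\in C_0$ (since $\widehat h=|\widehat f\,|^2\in L_1$), which is exactly the package of properties the paper needs $f$ itself to have. Your identification step is essentially the paper's Claim \ref{eiorwerewi44}: both read the hypothesis as a convolution identity, take Fourier transforms, and use a point where $\widehat f\neq 0$ together with the scaling parameter ($b$ in your notation, $c$ in the paper's) to sweep all nonzero frequencies, the zero frequency being handled by equality of total masses; and the closing subsequence argument via Prokhorov/Helly is identical. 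In short, the paper's tightness argument is more elementary (no Fourier analysis enters at that stage), while yours is more robust and, as a bonus, closes the small gap between the lemma's hypotheses and what the paper's proof actually uses.
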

\prue
 We fix all data. Notice that $f\in L_p(\nu)$ for every $\nu\in \mc M(\mbb F)$, because $f$ is bounded and continuous, and $\nu(\mbb F)$ is finite.  
  \clam	
 $(\mu_k)_k$ is tight. 
 \fclam
 \prucl	
 Fix $\vep>0$.   Let $z_0$ be such that $|f(z_0)|=\nrm{f}_\infty$, and let $\de>0$ be such that $(1-\vep)|f(z)|\le |f(z_0)|$ for $|z-z_0|\le \de$. Fix $K>0$  such that $\mu(B(0,K)^c)\le \vep$ and     let $g(z):= |f(z_0)|^{-1} f((\de/K)z +z_0)$. Notice that $\int g(a+bz)d\mu_k\to_k \int g(a+bz)d\mu$ for every $(a,b)\in \mbb F\times \mathbb F$, that $1=g(0)=\max_z g(z)=\nrm{g}_\infty$, and that $1-\vep\le g(z)\le 1$ for every $|z|\le K$.  Let $L\ge K$ be such that $\max_{|z|\ge L} |g(z)|\le \vep$. 
Let $k_0$ be such that  $|\int g d\mu_k-\int  g d\mu|, |\nrm{\mu_k}-\nrm{\mu}|\le \vep$ for every $k\ge k_0$. Then for such $k$,  
\begin{align*}
\mu_k(B(0, L))\ge & \int_{|z|\le L} g(z) d\mu_k(z) \ge  \int g(z) d\mu_k(z) -\vep\nrm{\mu_k}\ge       \int g(z)  d\mu(z) - \vep(1+ \nrm{\mu_k}) \ge \\
\ge &  \int_{|z|\le K} g(z)  d\mu(z)    - (2+\nrm{\mu_k} )\vep \ge   (1-\vep)\mu(B(0,K))  - (2+\nrm{\mu_k} )\vep\ge\\
\ge &  (1-\vep)(\nrm{\mu}-\vep)   - (2+\nrm{\mu_k} )\vep\ge (1-\vep)(\nrm{\mu_k} -2\vep)  - (2+\nrm{\mu_k} )\vep = \\
=  & \nrm{\mu_k}  - (4+2\nrm{\mu_k})\vep.
\end{align*}
So, $\mu_k(B(0,L)^c)\le   (4+2\nrm{\mu_k}) \vep ,$
 and since $(\mu_k)_k$ is bounded, the previous inequality shows that  $(\mu_k)_k$ is tight. 
 \fprucl
 \clam \label{eiorwerewi44}
 Suppose that $\nu$ is such that $\nrm{\mu}=\nrm{\nu}$ and that   $\int f(a+bz)d\mu(z)=\int f(a+bz)d\nu(z)$ for every $a,b\in \mbb F$. Then $\mu=\nu$.
 \fclam
\prucl
This is essentially  proved in \cite{Har}. For the sake of completeness we give a proof.  Given $c\in \mbb F$, let $f_c(z):= f(cz)$.     For  $a,c\in \mbb F$ we have 
$(f_c*\mu)(a)=\int f_c(a-u) d\mu(u)=\int f(ca-cu) d\mu(u)=\int f(ca-cu) d\nu(u)=\int f_c(a-u) d\nu(u)=(f*\nu)(a)$, so, 
\begin{equation}\label{lwij3rijewirewrewr}
\widehat{f_c}\widehat{\mu}=\widehat{f_c*\mu}=\widehat{f_c*\nu}=\widehat{f_c}\widehat{\nu}
\end{equation} 
We prove that $\widehat{\mu}=\widehat{\nu}$, that, by the uniqueness of Fourier-Stieltjes transform, implies that $\mu=\nu$. So, fix  $ a\in \mbb F$. First of all, if $a=0$, then  
$\widehat{\mu}(0)=\nrm{\mu}=\nrm{\nu}=\widehat{\nu}(0)$.  So, suppose that $a\neq 0$; let $0\neq a_0\in \mbb F$ be such that $\widehat{f}(a_0)\neq 0$. This is possible because $f\neq 0$, and by the continuity of $\widehat{f}$. Set $c:=a/a_0$. It follows from      \eqref{lwij3rijewirewrewr} that
$$\frac{1}{c}\widehat{f}(a_0)\widehat{\mu}(a) = \widehat{f_c}(a)\widehat{\mu}(a)= \widehat{f_c}(a)\widehat{\nu}(a) = \frac{1}{c}\widehat{f}(a_0)\widehat{\nu}(a), $$
hence $\widehat{\mu}(a)=\widehat{\nu}(a)$. 
\fprucl
 We are going to see now  that every subsequence of $(\mu_k)_k$ has a further subsequence converging completely to $\mu$, which proves that $(\mu_k)_k$ converges completely to $\mu$: Fix a subsequence $(\mu_k)_{k\in M}$ of $(\mu_k)_k$;  by Helly's first Theorem, there is a further subsequence $(\mu_k)_{k\in N}$ of   $(\mu_k)_{k\in M}$ converging completely to $\nu$.  Since for each $a,b\in \mbb F$ the function $f(a+bz)$ is continuous and bounded, we have that $\int f(a+bz)d\mu_k(z)\to_{k\in N} \int f(a+bz) d\mu(z)$ for every $a,b\in \mbb F$.  Hence, $\int f(a+bz)d\mu(z)=\int f(b+bz)d\nu(z)$ for every $a,b\in \mbb F$, and this implies, by  Claim \ref{eiorwerewi44}, that $\nu=\mu$. 
\fprue

We are going to use Lemma \ref{jwejrjw23dddqwe} to show the implication ${\it 4)}\implies {\it 3)}$, so we have to find the appropriate function.

\begin{proposition}\label{oiuioui78547878555}
For every $p\in [1,\infty[\setminus  2\N$ and every   $m\ge 2\lfloor p \rfloor+6$ there exists a sequence $(a_j)_{j\le m}$ of real numbers such that the function $f(z):=\sum_{j=0}^m a_j |z+j|^p$ satisfies
\begin{enumerate}[(a)]
\item $f\neq 0$; 
 \item $f\in L_1(\la;\mbb F)$ and is bounded;
 \item $\lim_{z\to 0} \frac{f(z)}{|z|^p}=a_0$.  
\end{enumerate}

\end{proposition}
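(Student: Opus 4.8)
The plan is to convert (a)--(c) into a finite system of homogeneous linear equations in the unknowns $a_0,\dots,a_m$ and then to produce a nonzero solution by a dimension count. The one genuinely non-formal ingredient, and the only place where the hypothesis $p\notin 2\N$ is used, is the linear independence of the translates $|z+j|^p$, $0\le j\le m$: at the point $-j_0$ every summand $|z+j|^p$ with $j\ne j_0$ is $C^\infty$, while $|z+j_0|^p$ is not (for $p$ not an even integer $|w|^p$ fails to be smooth at $w=0$), so any vanishing linear combination must have $a_{j_0}=0$, and this for every $j_0$. Hence a nonzero coefficient vector automatically yields $f\ne 0$, and (a) comes for free once (b) and (c) are arranged. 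First I would record this independence lemma; it also explains why even integers are excluded, since then $|w|^p=w^p$ is a polynomial and independence breaks down.

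Next I would extract the linear conditions, which separate into a behaviour-at-infinity regime (responsible for (b)) and a behaviour-at-the-origin regime (responsible for (c)); this dichotomy is the source of the factor $2$ in $2\lfloor p\rfloor$. For (b): the binomial expansion $|z+j|^p=|z|^p\,|1+j/z|^p$ gives $f(z)\sim \sum_{k\ge0}\binom{p}{k}|z|^{p-k}\big(\sum_{j}a_jj^k\big)$ as $|z|\to\infty$, with the same power sums governing $+\infty$ and $-\infty$, so imposing $\sum_{j=0}^m a_j j^k=0$ for $k=0,\dots,\lfloor p\rfloor+1$, that is $\lfloor p\rfloor+2$ equations, forces $f(z)=O(|z|^{\,p-\lfloor p\rfloor-2})$ with exponent $<-1$; combined with the evident continuity of $f$ this delivers both boundedness and integrability. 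For (c): near $0$ the regular part $\psi(z):=\sum_{j\ge1}a_j(z+j)^p$ is analytic with $k$-th Taylor coefficient a nonzero multiple of $\sum_{j\ge1}a_j j^{\,p-k}$, so requiring $\sum_{j\ge1}a_j j^{\,p-k}=0$ for $k=0,\dots,\lfloor p\rfloor$, that is $\lfloor p\rfloor+1$ equations, forces $\psi(z)=o(|z|^p)$ and hence $f(z)/|z|^p=a_0+\psi(z)/|z|^p\to a_0$. Altogether this is at most $2\lfloor p\rfloor+3$ homogeneous equations in the $m+1\ge 2\lfloor p\rfloor+7$ unknowns, so a nonzero solution $(a_j)_{j\le m}$ exists.

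Finally I would assemble the verification: linear independence gives (a), the infinity relations give (b), and the origin relations give (c). The hard part is really the independence lemma, i.e.\ pinning down the exact failure of smoothness of $|w|^p$ at the origin when $p\notin 2\N$; the rest is a routine matching of the binomial expansion at infinity and the Taylor expansion at $0$ against the gauge $|z|^p$. The comfortable gap between the $2\lfloor p\rfloor+3$ constraints and $m\ge 2\lfloor p\rfloor+6$ also leaves slack to secure $a_0\ne 0$ when $p\notin\Z$: there the exponents $p-k$ are non-integral, hence disjoint from the integer exponents in the infinity relations, so $\sum_{j\ge1}a_j$ is unconstrained and (by a generalized Vandermonde argument on the distinct exponents) can be kept nonzero; for odd integer $p$ the two regimes overlap and force $a_0=0$, which is consistent since (c) then merely asserts $\lim_{z\to0}f(z)/|z|^p=0$.
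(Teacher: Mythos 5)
Your strategy is in outline the same as the paper's: kill the moment sums $\sum_j a_j j^k$ (governing infinity) and $\sum_{j\ge 1}a_j j^{p-k}$ (governing the origin) by a homogeneous linear system and win by counting equations against the $m+1$ unknowns. One point where you actually do better than the paper: your independence lemma for (a) — at $z=-j_0$ every translate $|z+j|^p$ with $j\neq j_0$ is smooth while $|z+j_0|^p$ is not when $p\notin 2\N$, so a nonzero coefficient vector forces $f\neq 0$ — is exactly the argument needed for (a), and the paper's proof takes a non-trivial solution of its system and never spells this verification out.

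There is, however, a genuine gap: your argument is written only for $\mbb F=\R$, whereas the proposition (and its application in Lemma \ref{jwejrjw23dddqwe}, where one needs $0\neq f\in L_1(\mbb F,\la)$ for both scalar fields) also covers $\mbb F=\C$; the paper proves precisely the complex case and gets the real one by restriction. Two steps fail over $\C$. First, the expansion $|1+j/z|^p=\sum_k\binom{p}{k}(j/z)^k$ is false for complex $z$, because $|w|^p\neq w^p$; one must factor $|z+j|^p=(z+j)^{p/2}(\overline{z}+j)^{p/2}$ on $\C\setminus\{z\le 0\}$ and expand in the double binomial series, as the paper does — the relevant coefficients are then the same moment sums, indexed by $r=k+l$. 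Second, and more substantively, the integrability threshold on $\C\cong\R^2$ is decay strictly faster than $|z|^{-2}$, not $|z|^{-1}$: your $\lfloor p\rfloor+2$ conditions at infinity give only $f(z)=O(|z|^{p-\lfloor p\rfloor-2})$, whose exponent lies in $[-2,-1)$, so $f\notin L_1(\C,\la)$ (for odd integer $p$ the decay is exactly $|z|^{-2}$, giving a logarithmic divergence). This is why the paper imposes $\sum_{j=0}^m a_jj^k=0$ for all $0\le k\le\lfloor p\rfloor+2$, one more equation than you use, yielding decay $O(|z|^{p-\lfloor p\rfloor-3})$, which is integrable in two dimensions. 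The near-zero regime needs only the expansion fix, not extra equations, since the surviving terms are $O(|z|^{\lfloor p\rfloor+1})=o(|z|^p)$. With these corrections your system becomes the paper's, with $2\lfloor p\rfloor+5$ equations, and the hypothesis $m\ge 2\lfloor p\rfloor+6$ still guarantees a non-trivial solution; the rest of your argument, including the side remark on when $a_0\neq 0$ can be arranged, goes through unchanged.
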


\prue We give the proof assuming that $\mbb F=\C$, but notice that the same function $f$  when restricted to the real numbers has the desired properties for  $\mbb F=\mbb R$. 
  In the next, the complex power $z^\al:=\exp(\al \log z)$ is defined on $U:=\C\setminus \{z\le 0\}$, and   $\log z$ is the principal brach of the logarithm.  
  Recall that for $|z|>m$ in $U$ and $0\le j\le m$  one has that 
\begin{equation*}
|z+j|^p=(z+j)^\frac{p}{2} (\overline{z}+j)^\frac{p}2 =\sum_{k,l=0}^\infty \binom{\frac{p}2}{k}\binom{\frac{p}2}{l} z^{\frac{p}2-k} \overline{z}^{\frac{p}2-l}  j^{k+l};
\end{equation*}
while if $|z|<1$ is in $U$,
\begin{equation*} 
|z+j|^p=(z+j)^\frac{p}{2} (\overline{z}+j)^\frac{p}2 =\sum_{k,l=0}^\infty \binom{\frac{p}2}{k}\binom{\frac{p}2}{l} j^{p-(k+l)} z^k \overline{z}^l;
\end{equation*}
Let $(a_j)_{j\le m}$ be a non-trivial solution of the system
$$
\begin{cases}
\sum_{j=0}^m a_j j^k=0 & \text{ $0 \le k\le \lfloor p \rfloor +2$}\\
\sum_{j=1}^m a_j j^{p-l}=0 & \text{ $0 \le l\le \lfloor p\rfloor+1$}.
\end{cases}
$$
Let $f(z):=\sum_{j=0}^m a_j |z+j|^p$. 
It follows   that for $|z|>m$ and in $U$,
\begin{align*}
|f(z)|=&\left|\sum_{j=0}^ma_j |z+j|^p\right|=\left|\sum_{j=0}^ma_j   \sum_{k,l=0}^\infty \binom{\frac{p}2}{k}\binom{\frac{p}2}{l}  z^{\frac{p}2-k} \overline{z}^{\frac{p}2-l}  j^{r}\right| = \\
=&
\left|\sum_{r\ge 0}  \left(\sum_{j=0}^ma_j j^{r}\right)    \sum_{k+l=r}  \binom{\frac{p}2}{k}\binom{\frac{p}2}{l}   z^{\frac{p}2-k} \overline{z}^{\frac{p}2-l}   \right|= \\
=&\left|\sum_{r= \lfloor p\rfloor +3}^\infty  \left(\sum_{j=0}^ma_j j^{r}\right)    \sum_{k+l=r}  \binom{\frac{p}2}{k}\binom{\frac{p}2}{l}   z^{\frac{p}2-k} \overline{z}^{\frac{p}2-l} \right| \le 
\sum_{r= \lfloor p\rfloor +3}^\infty  \left|\sum_{j=0}^ma_j j^{r}\right|     \sum_{k+l=r}  \left|\binom{\frac{p}2}{k}\binom{\frac{p}2}{l}\right|  |z|^{p-r} = \\
=&O(|z|^{\lfloor p\rfloor -p+2}),
\end{align*}
and by continuity, of $f$, the previous is true for all $|z|>m$.
Since $\lfloor p\rfloor -p+2>1$, the previous inequality shows that $f\in L_1(\la)$, and that $f$ is bounded.  On the other hand, for $|z|<1$ in $U$, and similarly as before,
\begin{align*}
|f(z) -a_0|z|^p|=&\left|\sum_{j=1}^ma_j |z+j|^p\right|=\left|\sum_{j=1}^ma_j   \sum_{k,l=0}^\infty \binom{\frac{p}2}{k}\binom{\frac{p}2}{l}  j^{p-k-l} z^k\overline{z}^{l} \right| = \\
=&
\left|\sum_{r\ge 0}  \left(\sum_{j=1}^ma_j j^{p-r}\right)    \sum_{k+l=r}  \binom{\frac{p}2}{k}\binom{\frac{p}2}{l}   z^{k} \overline{z}^{l}   \right|= \\
=&\left|\sum_{r= \lfloor p\rfloor +2}^\infty  \left(\sum_{j=1}^ma_j j^{p-r}\right)    \sum_{k+l=r}  \binom{\frac{p}2}{k}\binom{\frac{p}2}{l}   z^{k} \overline{z}^{l} \right| \le 
\sum_{r= \lfloor p\rfloor +2}^\infty  \left|\sum_{j=1}^ma_j j^{p-r}\right|     \sum_{k+l=r}  \left|\binom{\frac{p}2}{k}\binom{\frac{p}2}{l}\right|  |z|^{r} = \\
=&O(|z|^{\lfloor p\rfloor +1}) \text{ as $z\to 0$},
\end{align*}
and again by continuity of $f$, the previous inequality is true for all $|z|<1$, hence $\lim_{z\to 0}  |f(z) -a_0|z|^p|/|z|^p \le \lim_{z\to 0} O(|z|^{\lfloor p\rfloor +1})/|z|^p=0$. 
 \fprue 

\begin{lemma}
Suppose that $(\widehat{\mu_k})_k$  converges to $\widehat{\mu}$ uniformly on compacta of $\mbb F^n$. Then $(|z|^pd\mu_k(z))_k$ is tight and $(\mu_k)_k$ converges completely to $\mu$. Consequently,   ${\it 4)}\implies {\it 3)}$  in Theorem  \ref{continuity_theorem} holds.  
\end{lemma}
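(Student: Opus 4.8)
\prue
The plan is to reduce the $n$-dimensional statement to its one-dimensional projections and then feed those into the one-dimensional criterion of Lemma \ref{jwejrjw23dddqwe}, using the finely tuned function $f$ of Proposition \ref{oiuioui78547878555}. Throughout, for a direction $c\in\mbb F^n$ I write $\pi_c(z):=\langle c,z\rangle$ and $\mu^c:=(\pi_c)_*\mu$ for the pushforward onto $\mbb F$, so that $\widehat{\mu}^{(p)}(tc)^p=\int|1+tw|^p\,d\mu^c(w)$. Two preliminary observations are immediate. First, evaluating at $0$ gives $\nrm{\mu_k}^{1/p}=\widehat{\mu_k}^{(p)}(0)\to\widehat{\mu}^{(p)}(0)=\nrm{\mu}^{1/p}$, so $\nrm{\mu_k}\to\nrm{\mu}$ and $\sup_k\nrm{\mu_k}<\infty$. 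Second, for $|a|=1$ the reverse triangle inequality in $L_p(\mu_k)$ gives $\widehat{\mu_k}^{(p)}(a)\ge\bigl(\int|\langle a,z\rangle|^p\,d\mu_k\bigr)^{1/p}-\nrm{\mu_k}^{1/p}$; since $\widehat{\mu_k}^{(p)}$ converges uniformly on the unit ball to the continuous $\widehat{\mu}^{(p)}$, the directional moments $\int|\langle a,z\rangle|^p\,d\mu_k$ are bounded uniformly in $k$ and in $|a|=1$, and integrating over the unit sphere via the rotational identity $\int_S|\langle a,z\rangle|^p\,d\sigma(a)=c_{p,n}|z|^p$ produces a uniform bound $\sup_k\int|z|^p\,d\mu_k<\infty$.

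Next I would prove the complete convergence $\mu_k\to\mu$. Fix a direction $c$ and apply Lemma \ref{jwejrjw23dddqwe} to the one-dimensional measures $\mu_k^c$, $\mu^c$ with the function $f$ of Proposition \ref{oiuioui78547878555}. The mass hypothesis $\nrm{\mu_k^c}=\nrm{\mu_k}\to\nrm{\mu}=\nrm{\mu^c}$ holds by the above. For the integral hypothesis, fix $(s,t)$ with $t\neq0$ and $s+j\neq0$ for every $0\le j\le m$; expanding $f(s+tw)=\sum_{j=0}^m a_j|s+j|^p\,\bigl|1+\tfrac{t}{s+j}w\bigr|^p$ and integrating against $\mu_k^c$ turns each summand into $a_j|s+j|^p\,\widehat{\mu_k}^{(p)}\bigl(\tfrac{t}{s+j}c\bigr)^p$, which converges termwise by pointwise convergence of the $p$-characteristics, so $\int f(s+tw)\,d\mu_k^c(w)\to\int f(s+tw)\,d\mu^c(w)$. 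The finitely many exceptional values of $s$ are caught by equicontinuity: $f$ is uniformly continuous and $\sup_k\nrm{\mu_k^c}<\infty$, so the maps $s\mapsto\int f(s+tw)\,d\mu_k^c(w)$ are equicontinuous and convergence on the cofinite set of good $s$ propagates everywhere (the case $t=0$ being trivial). Thus $\mu_k^c\to\mu^c$ completely for every $c$, whence the Fourier–Stieltjes transforms satisfy $\widehat{\mu_k}(a)\to\widehat{\mu}(a)$ pointwise on $\mbb F^n$ (write $a=tc$); by the classical Lévy continuity theorem together with $\nrm{\mu_k}\to\nrm{\mu}$, the sequence $(\mu_k)_k$ converges completely to $\mu$.

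It remains to establish the tightness of $(|z|^p\,d\mu_k)_k$, whose crux is the $p$-moment convergence $\int|z|^p\,d\mu_k\to\int|z|^p\,d\mu$. Here the exact shape of $f$ is essential. Choosing in Proposition \ref{oiuioui78547878555} the solution with $a_0\neq0$ and using property (c), for fixed $c$ and $t\neq0$ one isolates the $j=0$ summand:
\begin{equation*}
|t|^p a_0\int|\langle c,z\rangle|^p\,d\mu_k=\int f(t\langle c,z\rangle)\,d\mu_k-\sum_{j=1}^m a_j|j|^p\,\widehat{\mu_k}^{(p)}\Bigl(\tfrac{t}{j}c\Bigr)^p.
\end{equation*}
The first term on the right converges by the weak convergence $\mu_k\to\mu$ (the integrand being bounded and continuous), and each term of the finite sum converges by pointwise convergence of the $p$-characteristics; applying the same identity to $\mu$ yields $\int|\langle c,z\rangle|^p\,d\mu_k\to\int|\langle c,z\rangle|^p\,d\mu$ for every $c$. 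Integrating over the unit sphere and invoking dominated convergence, legitimate by the uniform bound of the first paragraph, gives $\int|z|^p\,d\mu_k\to\int|z|^p\,d\mu$. Combining this with the weak convergence shows that for every $\mu$-continuity radius $R$ one has $\int_{|z|>R}|z|^p\,d\mu_k\to\int_{|z|>R}|z|^p\,d\mu$; choosing $R$ so the right side is small and treating the finitely many initial indices separately then delivers the tightness of $(|z|^p\,d\mu_k)_k$.

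The main obstacle is precisely the passage from pointwise convergence of the $p$-characteristics — which only controls integrals of the affine expressions $|1+\langle a,z\rangle|^p$ — to genuine control of the pure moment $\int|z|^p\,d\mu_k$. This is what forces the use of the specially constructed $f$: property (c) isolates the $|z|^p$-term, and the uniform $p$-moment bound is what makes the step from directional to total moments (integration over the sphere with dominated convergence) admissible. By comparison, the one-dimensional reduction through $\mu^c$ and the appeal to Lévy's theorem are routine.
\fprue
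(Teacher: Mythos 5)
Your route is genuinely different from the paper's, and apart from one point discussed below it is sound. The paper works only with the $n$ coordinate projections and proves tightness of $(|z|^p d\mu_k)_k$ \emph{first}: it applies Lemma \ref{jwejrjw23dddqwe} twice per coordinate (once to $\nu_k$ with $f$, once to the measures $|t|^p d\nu_k$ with $F=f(t)/|t|^p$), and only then obtains complete convergence of $(\mu_k)_k$ via Helly's theorem together with the Plotkin--Rudin uniqueness of $p$-characteristics applied to subsequential limits. You reverse the order: complete convergence comes first, by applying Lemma \ref{jwejrjw23dddqwe} to \emph{every} one-dimensional projection $\mu_k^c$ (a Cramér--Wold reduction) and then invoking the classical Lévy continuity theorem; tightness is then extracted from convergence of the $p$-moments, obtained by isolating the moment term and integrating over the sphere against the uniform bound of your first paragraph. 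This avoids both the second application of Lemma \ref{jwejrjw23dddqwe} and any appeal to the equimeasurability/uniqueness theorem, and your equicontinuity argument for the exceptional values $s\in\{0,-1,\dots,-m\}$ actually repairs a point the paper glosses over (for such $s$ the expansion of $\int f(s+tw)\,d\mu_k^c$ contains the term $a_j|t|^p\int|w|^p\,d\mu_k^c$, which is not controlled by the characteristics).

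The genuine gap is the phrase ``choosing in Proposition \ref{oiuioui78547878555} the solution with $a_0\neq 0$''. That proposition does not assert that such a solution exists, and for $p$ an odd integer --- which is within the scope of the lemma, since $p\notin 2\N$ allows $p=1,3,5,\dots$ --- no such solution exists: when $\lfloor p\rfloor=p$ the second block of equations contains (at $l=p$) the equation $\sum_{j=1}^m a_j j^{0}=\sum_{j\geq 1}a_j=0$, which combined with the $k=0$ equation $\sum_{j=0}^m a_j=0$ forces $a_0=0$ for \emph{every} solution. In that case your displayed identity degenerates to $0=0$ and gives no control on the moments; for non-integer $p$ a solution with $a_0\neq0$ does exist, but this requires a generalized Vandermonde/Descartes argument that you do not supply. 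The repair is immediate and preserves your structure: since $f\neq 0$, some coefficient $a_{j_0}$ is nonzero, and centering at $-j_0$ instead of at $0$ yields
\begin{equation*}
a_{j_0}|t|^p\int|\langle c,z\rangle|^p\,d\mu_k=\int f\bigl(-j_0+t\langle c,z\rangle\bigr)\,d\mu_k-\sum_{j\neq j_0}a_j|j-j_0|^p\,\widehat{\mu_k}^{(p)}\Bigl(\tfrac{t}{j-j_0}\,c\Bigr)^p,
\end{equation*}
where the integral on the right converges because $z\mapsto f(-j_0+t\langle c,z\rangle)$ is bounded and continuous and $(\mu_k)_k$ has already been shown to converge completely, and the finite sum converges termwise by pointwise convergence of the $p$-characteristics. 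Dividing by $a_{j_0}|t|^p$ gives the directional moment convergence, and the rest of your third paragraph, hence the whole proof, goes through unchanged.
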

\prue We start with the following: 
\clam
 $(|z|^pd\mu_k(z))_k$ is tight.
\fclam
\prucl
For each $1\le j \le n$, let $\mu_k^{(j)}:= (\pi_j)_*\mu_k  \in \mc M_p(\mbb F)$, where $\pi_j:\mbb F^n \to \mbb F$ is the canonical projection  $\pi_j(z_1,\dots,z_n)=z_j$.  Observe that for each $K>0$, 
\begin{equation}
\label{lokij4iorejtijoirtrete}\int_{\nrm{z}_\infty>K} \nrm{z}_\infty^pd\mu_k(z) \le \sum_{j=1}^n \int_{K<\nrm{z}_\infty=|z_i|} |z_i|^pd\mu_k(z)\le\sum_{j=1}^n \int_{K<|t|} |t|^pd\mu_k^{(j)}(t),
\end{equation}
so, it suffices to show that each $(|t|^pd\mu_k^{(j)}(t))_k$ is a tight sequence for each $1\le j\le n$. We fix one of such $j$, and to simplify the notation we set $\nu:=(\pi_j)_*\mu$ and $\nu_k:=\mu_k^{(j)}$ for every $k$.  Note that $\widehat{\nu_k}^{(p)}\to_k \widehat{\nu}$, so in particular $\nrm{\nu_k}\to_k \nrm{\nu}$. Let $f$ be a function as in Proposition \ref{oiuioui78547878555} for $m:=2\lfloor p \rfloor+6$. Then for each $a,b\in \mbb F$ one has that $\int f(a+bz)d\nu_k\to _k \int f(a+bz) d\nu$, so it follows from Lemma \ref{jwejrjw23dddqwe} that   $\nu_k\to_k \nu$ completely.  Since $\widehat{\nu_k}^{(p)}\to \widehat{\nu}$, in particular one has that $\nrm{|t|^p d\nu_k(t)}\to_k \nrm{|t|^p d\nu(t)}$.  Set $F(z):=f(z)/|z|^p$.  For each $a,b \in \mbb F$ we have that $F(a+bz) |z|^p$ is  bounded and continuous, so it follows that $\int F(a+bt) |t|^p d\nu_k(t) \to_k \int F(a+bt) |t|^p d\nu(t)$. Again using    Lemma \ref{jwejrjw23dddqwe} we obtain that $|t|^p d\nu_k(t) \to_k |t|^p d\nu(t)$ completely, so in particular $(|t|^p d\nu_k(t))_k$ is tight, as desired.  
\fprucl
Let us prove now that $\mu_k\to \mu$ completely. Since $(\mu_k)_k$ is tight, by Helly's first Theorem, it suffices to show that each completely convergent subsequence of $(\mu_k)_k$ converges completely to $\mu$; so, fix one such completely convergent subsequences $\mu_{k_l}\to_l \nu$; Since $(|z|^pd\mu_{k_l}(z))_l$ is in addition tight, it follows from the implication   ${\it 3)}\implies {\it 4)}$  of Theorem  \ref{continuity_theorem} that $\widehat{\mu_{k_l}}^{(p)}\to_l \widehat{\nu}^{(p)}$; in particular  $\widehat{\nu}^{(p)}= \widehat{\mu}^{(p)}$, and by the uniqueness theorem of the $p$-characteristics, we have that $\nu=\mu$, as desired. 
\fprue

\subsubsection{Inversion formulas for $\mbb F=\mbb R$ and $p\in 2\N+1$}
When working on real numbers and $p$ an odd integer, there is a more direct and elementary proof of $3)\implies 2)$  of Theorem \ref{continuity_theorem}.  
  
\begin{lemma}\label{inversion}
For $a\neq 0$ we have
$$\mu(]-\infty,a])=  \frac{1}{2}+\frac{1}{2\cdot(p!)  } \lim_{\vep\to 0^+} \frac{1}{\vep^p}\sum_{j=0}^p(-1)^{j+1} \binom{p}{j}\left|a+j\vep\right|^p \left(\widehat{\mu}^{(p)}\left(\frac{-1}{a+j\vep}\right)\right)^p  $$
\end{lemma}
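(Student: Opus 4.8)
The plan is to recognise the alternating binomial sum as a $p$-th forward finite difference of $g(s):=|s|^p$ and then pass to the limit under the integral sign. First I would rewrite the summands in a transparent form: since $\widehat{\mu}^{(p)}(a)^p=\int|1+az|^p\,d\mu(z)$, for $a+j\vep\neq 0$ one has
$$\left|a+j\vep\right|^p\Big(\widehat{\mu}^{(p)}\big(\tfrac{-1}{a+j\vep}\big)\Big)^p=\int\big|a+j\vep\big|^p\Big|1-\tfrac{z}{a+j\vep}\Big|^p\,d\mu(z)=\int\big|a+j\vep-z\big|^p\,d\mu(z),$$
valid once $\vep$ is small (so that $a+j\vep\neq 0$ for $j=0,\dots,p$). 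Writing $t:=a-z$ this gives
$$\sum_{j=0}^p(-1)^{j+1}\binom{p}{j}\left|a+j\vep\right|^p\Big(\widehat{\mu}^{(p)}\big(\tfrac{-1}{a+j\vep}\big)\Big)^p=\int\Big(\sum_{j=0}^p(-1)^{j+1}\binom{p}{j}g(t+j\vep)\Big)\,d\mu(z).$$
The inner sum is exactly the $p$-th forward difference $\Delta_\vep^p g(t)=\sum_{j=0}^p(-1)^{p-j}\binom{p}{j}g(t+j\vep)$, the crucial point being that $p$ is \emph{odd}, so $(-1)^{p-j}=(-1)^{j+1}$; this is where oddness enters essentially.

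Next I would compute the pointwise limit of $\Delta_\vep^p g(t)/\vep^p$ as $\vep\to 0^+$. Because $p$ is odd, $g(s)=|s|^p$ equals $s^p$ on $[0,\infty)$ and $-s^p$ on $(-\infty,0]$; hence $g\in C^{p-1}(\R)$ with $g^{(p-1)}(s)=p!\,|s|$ absolutely continuous and $g^{(p)}(s)=p!\,\sgn(s)$ for $s\neq 0$. Since the forward difference only samples $g$ at $t,t+\vep,\dots,t+p\vep$, for fixed $t>0$ all sample points are positive and $\Delta_\vep^p g(t)=p!\,\vep^p$ \emph{exactly}; the same holds at $t=0$ (all points are $\ge 0$), where one uses the identity $\sum_{j=0}^p(-1)^{p-j}\binom{p}{j}j^p=p!$; and for $t<0$ one gets $\Delta_\vep^p g(t)=-p!\,\vep^p$ as soon as $\vep<|t|/p$. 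Translating through $t=a-z$, the integrand $\Delta_\vep^p g(a-z)/\vep^p$ converges pointwise to $p!$ for $z\le a$ and to $-p!$ for $z>a$.

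Finally I would justify interchanging limit and integral. Using the representation $\Delta_\vep^p g(t)=\int_{[0,\vep]^p}g^{(p)}(t+s_1+\dots+s_p)\,ds$ (legitimate since $g^{(p-1)}$ is absolutely continuous) together with $|g^{(p)}|\le p!$ a.e., one obtains the uniform bound $\big|\Delta_\vep^p g(a-z)/\vep^p\big|\le p!$, which is $\mu$-integrable because $\mu$ is finite; dominated convergence then yields
$$\lim_{\vep\to 0^+}\frac1{\vep^p}\sum_{j=0}^p(-1)^{j+1}\binom{p}{j}\left|a+j\vep\right|^p\Big(\widehat{\mu}^{(p)}\big(\tfrac{-1}{a+j\vep}\big)\Big)^p=p!\big(\mu(]-\infty,a])-\mu(]a,\infty[)\big).$$
Normalising $\nrm{\mu}=1$ (which the leading constant $\tfrac12$ forces), the right-hand side equals $p!\big(2\mu(]-\infty,a])-1\big)$, and substituting into $\tfrac12+\tfrac1{2\cdot p!}(\cdots)$ gives $\mu(]-\infty,a])$, as claimed. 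The main obstacle is the behaviour at $t=0$, i.e.\ a possible atom of $\mu$ at $a$: there $g^{(p)}$ jumps, so the pointwise limit is genuinely one-sided, and it is precisely the forward-difference structure that absorbs this single point cleanly into $\mu(]-\infty,a])$ (rather than producing an ambiguous half-mass); carrying the uniform domination through this singularity is the delicate point.
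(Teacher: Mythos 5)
Your proof is correct, but it takes a genuinely different route from the paper's. The paper works with the function $G_p(x,a,\vep)=\frac12+\frac{1}{2(p!)\vep^p}\sum_{j=0}^p(-1)^{j+1}\binom{p}{j}|x-(a+j\vep)|^p$ --- in your notation this is exactly $\frac12+\frac{1}{2(p!)\vep^p}\Delta_\vep^p g(a-x)$ --- and proves the pointwise sandwich $\mathbbm 1_{]-\infty,a]}\le G_p(\cdot,a,\vep)\le \mathbbm 1_{]-\infty,a+\vep p]}$; integrating against $\mu$ and letting $\vep\to0^+$ (continuity from above of the finite measure $\mu$) then yields the lemma via the same algebraic identity $\int|x+c|^p\,d\mu(x)=|c|^p\bigl(\widehat{\mu}^{(p)}(1/c)\bigr)^p$ that you use at the outset. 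The two exact evaluations in the paper (value $1$ for $x\le a$, value $0$ for $x\ge a+\vep p$) coincide with your computations of $\Delta_\vep^p g$ on $t\ge 0$ and $t\le -p\vep$, so the real difference is the middle bound: the paper proves $0\le G_p\le 1$ --- equivalently your uniform estimate $|\Delta_\vep^p g|\le p!\,\vep^p$ --- by a rather delicate argument combining the symmetry $G_p(x,a,\vep)+G_p(2a+p\vep-x,a,\vep)=1$ with a Rolle-type count of the zeros of the even-order derivatives $G_p^{(2r)}$, whereas you obtain it essentially for free from the representation $\Delta_\vep^p g(t)=\int_{[0,\vep]^p}g^{(p)}(t+s_1+\cdots+s_p)\,ds$ (legitimate since $g^{(p-1)}(s)=p!\,|s|$ is Lipschitz) together with $|g^{(p)}|\le p!$ a.e.; your final step then uses dominated convergence where the paper uses the sandwich and monotone continuity of $\mu$. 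Your route is shorter, conceptually transparent (the limit is just $g^{(p)}=p!\,\sgn$), and handles a possible atom of $\mu$ at $a$ cleanly, as you note; the paper's route is more elementary in its tools and its sandwich gives slightly more quantitative information for fixed $\vep$, namely $\mu(]-\infty,a])\le\int G_p(x,a,\vep)\,d\mu(x)\le\mu(]-\infty,a+\vep p])$. One caveat, common to both arguments and correctly flagged by you: the constant $\frac12$ (rather than $\nrm{\mu}/2$) in the statement presupposes $\nrm{\mu}=1$.
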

Observe that the previous formula shows that a sequence $(\mu_k)_k$ in $\mc M^{(p)}(\R)$ converges completely to $\mu\in \mc M^{(p)}(\R)$ when $(\widehat{\mu_k}^{(p)})_k$ converges to $\widehat{\mu}^{(p)}$ uniformly on compacta. 
Given $a\in \R$ and $\vep>0$, let  
\begin{align*}
G_{p}(x,a,\vep):=&\frac12+\frac{1}{2(p!)   \vep^p} \sum_{j=0}^p(-1)^{j+1} \binom{p}{j} |x-(a+j\vep)|^p.
\end{align*}
\begin{center}
\begin{figure}[h]
\includegraphics[scale=.4]{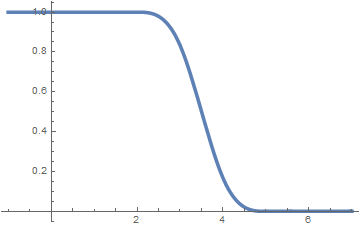}
\caption{$G_3(x,2,1)$}
\end{figure}
\end{center}

\begin{proposition}
$G_p(x,a,\vep) = 1$ if $x\le a$, $G_p(x,a,\vep)(x) = 0$ if $x\ge a+\vep p$ and  $ 0\le G_p (\cdot, a,\vep) \le 1$; consequently, 
\begin{equation}
\mu(]-\infty,a]) \le \int G_p(x,a,\vep) d\mu(x) \le  \mu(]-\infty,a+\vep p]);  
\end{equation} 
\end{proposition}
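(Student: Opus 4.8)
The plan is to establish the three pointwise properties of $G_p(\cdot,a,\vep)$ and then read off the integral bounds. Throughout I will use that $p$ is an \emph{odd} integer, which is exactly what makes the computation collapse, together with the elementary finite-difference identity $\sum_{j=0}^p(-1)^{j}\binom{p}{j}q(j)=(-1)^p\,\Delta^p q(0)$, which equals $0$ when $\deg q<p$ and $(-1)^p\,p!\,c$ when $q$ has degree $p$ with leading coefficient $c$.

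First I would compute the two endpoint values. For $x\le a$ every node satisfies $x-(a+j\vep)\le 0$, so $|x-(a+j\vep)|^p=\bigl((a-x)+j\vep\bigr)^p$ is, as a function of $j$, a polynomial of degree $p$ with leading coefficient $\vep^p$; the identity above then gives $\sum_{j=0}^p(-1)^{j+1}\binom{p}{j}|x-(a+j\vep)|^p=p!\,\vep^p$ (the sign being flipped because $p$ is odd), whence $G_p(x,a,\vep)=\tfrac12+\tfrac12=1$. For $x\ge a+p\vep$ every node satisfies $x-(a+j\vep)\ge 0$, so $|x-(a+j\vep)|^p=\bigl((x-a)-j\vep\bigr)^p$ has leading coefficient $(-\vep)^p=-\vep^p$, and the same identity yields the sum $-p!\,\vep^p$ and hence $G_p(x,a,\vep)=\tfrac12-\tfrac12=0$.

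The heart of the matter is the bound $0\le G_p\le 1$ on the transition interval, which I would obtain by proving that $G_p(\cdot,a,\vep)$ is non-increasing. The function $G_p$ is continuous, and off the finite node set, using $\tfrac{d}{dt}|t|^p=p\,\sgn(t)|t|^{p-1}$, one has
$$G_p'(x)=\frac{1}{2(p-1)!\,\vep^p}\sum_{j=0}^p(-1)^{j+1}\binom{p}{j}\sgn\bigl(x-(a+j\vep)\bigr)\,|x-(a+j\vep)|^{p-1}.$$
Writing $\sgn(t)|t|^{p-1}=2(t)_+^{p-1}-|t|^{p-1}$, the key point is again that $p$ is odd: since $p-1$ is even, $|t|^{p-1}=t^{p-1}$ is a genuine polynomial of degree $p-1<p$, so its $p$-th finite difference vanishes and the $|\cdot|^{p-1}$ part of the sum drops out, leaving $-2\sum_{j=0}^p(-1)^{j}\binom{p}{j}\bigl(x-(a+j\vep)\bigr)_+^{p-1}$. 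After the substitution $\tau=(x-a)/\vep$ and $\bigl(x-(a+j\vep)\bigr)_+^{p-1}=\vep^{p-1}(\tau-j)_+^{p-1}$ this equals $-2\vep^{p-1}(p-1)!\,M_p(\tau)$, where $M_p(\tau)=\frac{1}{(p-1)!}\sum_{j=0}^p(-1)^j\binom{p}{j}(\tau-j)_+^{p-1}$ is the cardinal $B$-spline, i.e.\ the Irwin--Hall density of a sum of $p$ independent uniform variables on $[0,1]$; in particular $M_p\ge 0$. Thus $G_p'(x)=-\vep^{-1}M_p\bigl((x-a)/\vep\bigr)\le 0$, so $G_p$ decreases from its value $1$ on $\,]-\infty,a]\,$ to its value $0$ on $\,[a+p\vep,\infty[\,$ and therefore stays in $[0,1]$.

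Finally the integral inequalities are immediate: since $G_p\ge 0$ and $G_p\equiv 1$ on $\,]-\infty,a]\,$ we get $\int G_p\,d\mu\ge\mu(]-\infty,a])$, and since $G_p\le 1$ and $G_p\equiv 0$ on $\,[a+p\vep,\infty[\,$ we get $\int G_p\,d\mu\le\mu(]-\infty,a+p\vep[)\le\mu(]-\infty,a+p\vep])$. I expect the only real obstacle to be the monotonicity step: one must notice that the even exponent $p-1$ turns $|t|^{p-1}$ into a polynomial (killing half of the sum) and recognise that what survives is a nonnegative $B$-spline, so that $G_p'=-\vep^{-1}M_p((x-a)/\vep)$; everything else is routine finite-difference bookkeeping.
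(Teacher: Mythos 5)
Your proof is correct, but it reaches the key bound $0\le G_p\le 1$ by a genuinely different route than the paper. The paper first exploits the symmetry $G_p(x,a,\vep)+G_p(2a+p\vep-x,a,\vep)=1$ to reduce to the half-interval $[a,a+\vep p/2]$, and then runs a Rolle-type zero-counting induction on the even-order derivatives $G_p^{(2r)}$ (each vanishing at both endpoints, with the top one piecewise affine) to conclude that $G_p'$ has at most one zero there, hence $G_p$ is monotone on that half-interval and takes values in $[\tfrac12,1]$. You instead differentiate once, use $\sgn(t)|t|^{p-1}=2(t)_+^{p-1}-t^{p-1}$ together with the vanishing of the $p$-th finite difference of the polynomial $t^{p-1}$ (this is where oddness of $p$ enters, exactly as in the paper's endpoint computation), and identify $G_p'(x)=-\vep^{-1}M_p\bigl((x-a)/\vep\bigr)$ where $M_p$ is the cardinal $B$-spline, i.e.\ the Irwin--Hall density of a sum of $p$ independent uniforms. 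This buys you more than the paper proves: global monotonicity of $G_p$ on all of $\R$, and in fact the exact identity that $1-G_p(\cdot,a,\vep)$ is the Irwin--Hall distribution function rescaled to $[a,a+p\vep]$; the argument is also shorter and avoids the symmetry step and the derivative-counting induction. The price is that you import the nonnegativity of $\sum_{j=0}^p(-1)^j\binom{p}{j}(\tau-j)_+^{p-1}$; this is classical (it is a probability density, by the standard inclusion--exclusion computation for sums of uniforms, or by Curry--Schoenberg for $B$-splines), but since it is the entire content of the inequality you are proving, you should either cite it explicitly or include the one-line convolution/induction argument; also note that for $p=1$ the derivative computation only holds off the two nodes, which your parenthetical remark about the finite node set already covers. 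The endpoint values and the final integral inequalities are handled exactly as in the paper.
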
			
\prue
 {\it (a)}: Suppose that $x\le a$; then $|x-(a+j\vep)|^p=(a-x+j\vep )^p$, so 
$$G_p(x,a,\vep)=\frac12+\frac12\frac{1}{p!  \vep^p} \sum_{k=0}^p \binom{p}{k} (a-x)^{p-k}(-\vep)^k\left( \sum_{j=0}^p(-1)^{j+1} \binom{p}{j} j^k\right) =1,$$
because $\sum_{j=0}^p(-1)^{j+1} \binom{p}{j} j^k=0$ for $0\le k<p$ and $\sum_{j=0}^p(-1)^{j+1} \binom{p}{j} j^p=p!$, because $p$ is odd.   The estimate for $x \geq a+\vep p$ is similar. Then note that $G_p(x,a,\vep)+G_p(2a+p\vep-x,a,\vep)=1$ so it is enough to check the inequality  
$0\le G_p (x, a,\vep) \le 1$ for $a \leq x \leq a+\vep p/2$. 
For $1 \leq r < p/2$ we consider the $2r$-derivative of $G_p(x,a,\vep)$ with respect to $x$,
$$G_p^{(2r)}(x,a,\vep)=\frac12\frac{1}{(p-2r)!  \vep^p} \sum_{j=0}^p(-1)^{j+1} \binom{p}{j} |x-(a+j\vep)|^{p-2r}.$$
and note that it assumes the value $0$ in $x=a$ and in  $x=a+\vep p/2$, and that
when $p=2r+1$ then it is affine on each interval $[a+j\vep,a+(j+1)\vep], 0 \leq j \leq p-1$ and therefore admits at most $\frac{p+1}{2}$ zeros on $[a,a+\vep p/2]$ (an easy computation shows that actually exactly $\frac{p+1}{2}$ zeros are obtained).
By standard analysis we also note that if for $r \geq 1$, $G_p^{(2r)}(x,a,\vep)$ admits at most $m$ zeros (including $a$ and $a+\vep p/2$) in $[a,a+\vep p/2]$,  then $G_p^{(2r-1)}(x,a,\vep)$ admits at most $m-1$ zeros (including $a$), and then again $G_p^{(2r-2)}(x,a,\vep)$ admits at most $m-1$ zeros in the same interval. From these two facts we deduce that
 $G_p^{\prime}(x,a,\vep)$ admits at most one zero in $[a,a+\vep p/2]$, which must be $a$, and therefore $G_p(x,a,\vep)$ is monotonous (actually decreasing) there and in particular takes values between $\frac 12$ and $1$. This concludes the proof. 
\fprue
Now Lemma \ref{inversion} follows from this Proposition, simply using that for $c\neq 0$ one has that  
$$\int |x+ c|^pd\mu(x)= |c|^p \left(\widehat{\mu}^{(p)}\left(\frac{1}{c}\right) \right)^p.$$

\subsubsection{Metrics on $\mc M(\mbb F^n)$}
We give now a quantitative version of the Continuity Theorem \ref{continuity_theorem}. 

\begin{definition}
 Given $\mu,\nu\in \mc M^{(p)}(\mbb F^n)$, we define 
 $$\partial_p(\mu,\nu):=\inf\conj{K\ge 1}{\frac{1}{K}\widehat{\nu}^{(p)}(a) \le  \widehat{\mu}^{(p)}(a) \le K\widehat{\nu}^{(p)}(a) \text{ for all $a\in \mbb F^n$}}.$$
\end{definition}  
Such $K$ always exists because the  basic sequence  $(\mathbbm 1_{\mbb F^n}, z_0,\dots,z_{n-1}\rangle)$ in  $L_p(\mu)$ and  in $\con L_p(\nu)$ must be equivalent. The function $\partial_p$ is a multiplicative pseudometric, that because of the   uniqueness of the $p$-characteristics, it is a multiplicative metric.  It is easy to see that $\partial_p$ defines the compact convergence on $\mc M^{(p)}(\mbb F^n)$.

We give now a quantitative version of the continuity theorem of the $p$-characteristics. 
Given $\theta:\N\to \N$, let 
$$\mc T_\theta:=\conj{\mu\in \mc M(\mbb F^{n})}{\nrm{\mu}\le \theta(0) \text{ and } \mu(\mbb F^n\setminus B(0,\theta(m+1)))\le \frac1{2^{m}}  \text{ for every $m\in \N$}}.$$
 Notice that $\bigcup_{\theta\in ^{\N}\N}\mc T_\theta=\mc M(\mbb F^n)$, where $^{\N}\N$ is the collection of mappings $\theta: \N\to \N$. As a consequence of the continuity theorem, we obtain the following.

\begin{corollary}[Quantitative Continuity Theorem]\label{quantitative_continuity} 
Let $\theta:\N\to \N$ and let $\vep>0$. 
\begin{enumerate}[(a)]
\item Given $0\le \al\le p$ there is some $\de>0$ such that if $|z|^pd\mu \in   \mc M^{(p)}(\mbb F^n)\cap \mc T_\theta$, $\nrm{\mu}\le \theta(0)$ and $\nu\in \mc M^{(p)}(\mbb F^n)$    is such that $\partial_p(\mu,\nu)<1+\de$, then $d_{\mc{LP}}(|z|^\al d\mu,|z|^\al d\nu)<\vep$.  

\item  There is some $\de>0$ such that if $\mu,\nu\in \mc M^{(p)}(\mbb F^n)$  are such that $\nrm{\mu},\nrm{\nu}\le \theta(0)$,  $|z|^pd\mu,|z|^pd\nu\in \mc T_\theta$ and     $d_{\mc{ LP}}(\mu,\nu)<\de$, then $\partial_p( \mu, \nu)<1+\vep$.  
\end{enumerate}	
\end{corollary}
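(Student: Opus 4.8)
The plan is to deduce both statements from the Continuity Theorem \ref{continuity_theorem} by a compactness (contradiction) argument, exploiting that the constraint ``$\in \mc T_\theta$'' forces uniform boundedness of mass together with uniform tightness, and hence relative compactness for the complete convergence via Prokhorov's theorem. Throughout I will use freely that on $\mc M(\mbb F^n)$ the Lévy–Prokhorov metric metrizes complete convergence, so that $d_{\mc{LP}}$-smallness and complete convergence are interchangeable, and that $\partial_p$ metrizes the topology of uniform convergence of the $p$-characteristics on compacta, as recorded just before the statement. Note also that tightness of $(|z|^pd\mu_k)_k$ yields tightness of $(\mu_k)_k$, since $\mu_k(B(0,R)^c)\le R^{-p}\int_{B(0,R)^c}|z|^pd\mu_k$ for $R\ge 1$.

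For \emph{(a)}, suppose the conclusion fails for some $\theta$, $\vep$, and $0\le \al\le p$. Then for each $k$ there are $\mu_k,\nu_k\in \mc M^{(p)}(\mbb F^n)$ with $|z|^pd\mu_k\in \mc T_\theta$, $\nrm{\mu_k}\le\theta(0)$, $\partial_p(\mu_k,\nu_k)<1+1/k$, yet $d_{\mc{LP}}(|z|^\al d\mu_k,|z|^\al d\nu_k)\ge\vep$. The hypotheses make $(\mu_k)_k$ bounded and tight, so after passing to a subsequence $\mu_k\to\mu$ completely; Proposition \ref{jjheorhthrhjtr} with $f=|z|^p$ then gives $|z|^pd\mu_k\to|z|^pd\mu$ completely, and hence, by Theorem \ref{continuity_theorem}, $|z|^\al d\mu_k\to|z|^\al d\mu$ completely for every $0\le\al\le p$ and $\widehat{\mu_k}^{(p)}\to\widehat{\mu}^{(p)}$ uniformly on compacta. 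Since $\partial_p(\mu_k,\nu_k)\to1$, the characteristics $\widehat{\nu_k}^{(p)}$ are multiplicatively trapped around $\widehat{\mu_k}^{(p)}$, so $\widehat{\nu_k}^{(p)}\to\widehat{\mu}^{(p)}$ uniformly on compacta as well; the implication $4)\Rightarrow3)\Rightarrow1)$ of Theorem \ref{continuity_theorem} then yields $|z|^\al d\nu_k\to|z|^\al d\mu$ completely for every such $\al$. Translating back to $d_{\mc{LP}}$, both $d_{\mc{LP}}(|z|^\al d\mu_k,|z|^\al d\mu)$ and $d_{\mc{LP}}(|z|^\al d\nu_k,|z|^\al d\mu)$ tend to $0$, contradicting the lower bound $\vep$.

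For \emph{(b)}, suppose the conclusion fails for some $\theta,\vep$; then there are $\mu_k,\nu_k$ with $\nrm{\mu_k},\nrm{\nu_k}\le\theta(0)$, $|z|^pd\mu_k,|z|^pd\nu_k\in\mc T_\theta$, and $d_{\mc{LP}}(\mu_k,\nu_k)<1/k$, but $\partial_p(\mu_k,\nu_k)\ge1+\vep$. As before the four families $(\mu_k)_k,(\nu_k)_k,(|z|^pd\mu_k)_k,(|z|^pd\nu_k)_k$ are bounded and tight, so along a subsequence $\mu_k\to\mu$, $\nu_k\to\nu$, $|z|^pd\mu_k\to|z|^pd\mu$, $|z|^pd\nu_k\to|z|^pd\nu$ completely; since $d_{\mc{LP}}(\mu_k,\nu_k)\to0$ we conclude $\mu=\nu$. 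By Theorem \ref{continuity_theorem} both $\widehat{\mu_k}^{(p)}$ and $\widehat{\nu_k}^{(p)}$ converge to $\widehat{\mu}^{(p)}=\widehat{\nu}^{(p)}$ uniformly on compacta, so that $\partial_p(\mu_k,\mu)\to1$ and $\partial_p(\nu_k,\nu)\to1$ by the metrization fact; the multiplicative triangle inequality $\partial_p(\mu_k,\nu_k)\le\partial_p(\mu_k,\mu)\,\partial_p(\mu,\nu_k)$ then forces $\partial_p(\mu_k,\nu_k)\to1$, contradicting $\partial_p(\mu_k,\nu_k)\ge1+\vep$.

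The delicate point, and the one deserving care, is the last step of \emph{(b)}: passing from uniform-on-compacta convergence of the $p$-characteristics to convergence in the \emph{global, multiplicative} distance $\partial_p$. This is precisely where the claim that $\partial_p$ metrizes compact convergence is used, and it in turn rests on the growth and Lipschitz estimates of Proposition \ref{oihir43iljkerfkpokgtriojre}, which control $\widehat{\mu}^{(p)}(a)$ for large $|a|$ uniformly over the tight, bounded-mass family parametrized by $\theta$; without such uniform control of the tails the ratio $\widehat{\mu_k}^{(p)}/\widehat{\nu_k}^{(p)}$ could in principle misbehave at infinity even while the characteristics agree on every compactum. Conceptually, the whole corollary is the assertion that a continuous bijection between the two compact (for complete convergence) descriptions of the $\theta$-parametrized family is automatically a uniform homeomorphism, and that the weighting maps $\mu\mapsto|z|^\al d\mu$ are uniformly continuous into $(\mc M(\mbb F^n),d_{\mc{LP}})$ there.
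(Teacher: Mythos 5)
Your part (a) is correct, and it follows essentially the same compactness--contradiction scheme as the paper's own proof; in fact it is more careful at the one delicate point. The paper's argument for (a) passes through the claim that $\lim_k\partial_p(\mu_k,\mu)=1$ for the extracted complete limit $\mu$, which tacitly uses the assertion (stated in the paper without proof) that uniform convergence of $p$-characteristics on compacta implies $\partial_p$-convergence. You never use that direction: you only use the trivial implication that $\partial_p(\mu_k,\nu_k)\to1$, together with the uniform bounds of Proposition \ref{oihir43iljkerfkpokgtriojre} on compacta, forces $\widehat{\nu_k}^{(p)}\to\widehat{\mu}^{(p)}$ uniformly on compacta, and then you invoke $4)\Rightarrow 3)\Rightarrow 1)$ of Theorem \ref{continuity_theorem}. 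That route is sound (and, as explained below, it genuinely repairs a defect in the paper's own write-up of (a)).

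Part (b) is where there is a real gap, and it cannot be patched: the step ``$\widehat{\mu_k}^{(p)}\to\widehat{\mu}^{(p)}$ and $\widehat{\nu_k}^{(p)}\to\widehat{\mu}^{(p)}$ uniformly on compacta, hence $\partial_p(\mu_k,\mu)\to1$ and $\partial_p(\nu_k,\mu)\to1$ by the metrization fact'' is false, and indeed statement (b) itself fails as written. The obstruction is not, as you suggest, the behavior of the ratio at infinity (which upper growth and Lipschitz estimates such as Proposition \ref{oihir43iljkerfkpokgtriojre} might control), but the absence of any \emph{lower} bound on the characteristics: $\partial_p$ is a ratio metric, nothing in the hypotheses of (b) keeps $\widehat{\mu}^{(p)}$ away from zero at finite points, and $d_{\mc{LP}}$-closeness only gives additive control, which is powerless near zeros of $\widehat{\mu}^{(p)}$. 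Concretely, take $n=1$, $\mbb F=\R$, write $\delta_x$ for the Dirac measure at $x$, and for small $t>0$ set $s:=t^{2p}$ and
\[
\mu_t:=(1-s)\,\delta_1+s\,\delta_0,\qquad \nu_t:=(1-s)\,\delta_{1+t}+s\,\delta_0 .
\]
All four measures $\mu_t,\nu_t,|z|^pd\mu_t,|z|^pd\nu_t$ have mass at most $2^p$ and are supported in $B(0,2)$, so they lie in $\mc T_\theta$ for $\theta$ constant equal to $\lceil 2^p\rceil+2$, and $d_{\mc{LP}}(\mu_t,\nu_t)\le t\to 0$. Yet $\big(\widehat{\mu_t}^{(p)}(-1)\big)^p=s$ while $\big(\widehat{\nu_t}^{(p)}(-1)\big)^p=(1-s)t^p+s$, so for small $t$
\[
\partial_p(\mu_t,\nu_t)^p\ \ge\ \frac{(1-s)t^p+s}{s}\ \ge\ \frac{t^p}{2\,t^{2p}}\ =\ \frac{1}{2t^p}\ \longrightarrow\ \infty .
\]
Hence no $\de>0$ can work in (b). The same family kills the ``metrization fact'' you rely on, in exactly the direction you need it: $\mu_t\to\delta_1$ completely and $\widehat{\mu_t}^{(p)}\to\widehat{\delta_1}^{(p)}$ uniformly on compacta, but $\partial_p(\mu_t,\delta_1)=\infty$ for every $t$, because $\widehat{\delta_1}^{(p)}(-1)=0<\widehat{\mu_t}^{(p)}(-1)$. (This also shows the intermediate claim $\partial_p(\mu_k,\mu)\to1$ in the paper's proof of (a) is unjustified under its hypotheses, which is why your rerouting of (a) matters.)

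To be fair, you did flag this step as the delicate point, but you misdiagnosed it as a tail issue and treated the metrization claim as available because the paper records it; the paper in fact gives no proof of (b) at all (``the proof of (b) is similar, we leave the details to the reader''), and the claim that $\partial_p$ defines compact convergence is erroneous. A correct version of (b) would require an additional uniform non-degeneracy hypothesis, e.g.\ restricting to a class of measures $\mu$ for which $\inf_{|a|\le R}\widehat{\mu}^{(p)}(a)\ge c(R)>0$ uniformly (equivalently, staying uniformly away from measures supported on affine hyperplanes); under such a hypothesis your compactness argument would go through, since on compacta one could then convert additive closeness of characteristics into multiplicative closeness, and the tail behavior could indeed be handled by the growth estimates you cite.
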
	
\prue
{\it (a)}:  Suppose otherwise; we can find  sequences $(|z|^pd\mu_k)_k$ in  $\mc M^{(p)}(\mbb F^n)\cap \mc T_\theta$  and with $\sup_k \nrm{\mu_k}\le \theta(0)$, and  $(\nu_k)_k$ in  $\mc M^{(p)}(\mbb F^n)$ such that $\lim_k \partial_p(\mu_k,\nu_k)=1$ and $\inf_k d_{\mc{LP}}(|z|^\al d\mu_k, |z|^\al d\nu_k)\ge \vep$.   Since $(\mu_k)_k$ is bounded in norm and tight, there is a completely convergent subsequence to $\mu$. Without loss of generality, we assume that $(\mu_k)_k$ converges completely to $\mu$, Since $(|z|^pd\mu_k)_k$ is tight, it follows from the implications  $2. \implies 3.$ and $2. \implies 1.$  in Theorem \ref{continuity_theorem} that $\lim_k \partial_p(\mu_k,\mu)=1$ and that $\lim_k d_\mc{LP}(|z|^\al d\mu_k, |z|^\al d\mu)=0$. Resuming, $\lim_k \partial_p(\mu,\nu_k)=1$ and $\inf_k d_\mc{LP}(|z|^\al d\mu,|z|^\al d\nu_k)\ge \vep$, contradicting the implication $3. \implies 1.$  in Theorem \ref{continuity_theorem}.  The proof of {\it (b)} is similar. We leave the details to the reader.
\fprue

 \subsubsection{Approximate full support}  We finish the section by  proving the approximate full support principle in Theorem \ref{kjnnjguytfyyuoipo}. 
 We assume that  $L_p(\mu)$ is separable.
Recall that for a $\mu$-measurable subset $A$, $P_A$ denotes the Boolean projection on $L_p(\mu)$  defined by $P_A(f):= f\cdot \mathbbm{1}_A$, and that  given a subspace $X$  of $L_p(\mu)$, and let $\vep>0$, we say that $f$ has {\em $\vep$-full support in $X$}  if  $\nrm{P_{\{f=0\}}\rest X}\le \vep$. We recall that Theorem   \ref{kjnnjguytfyyuoipo}  states that if $u\in X$ has full support in $X$  then for every $\vep>0$  there is some $\de>0$ such that if $\ga\in \Emb_\de(X, L_p(\mu))$ then   $\ga u$ has $\vep$-full support in $\ga X$. 
We will follow the same strategy as in \cite[Section 3]{Har}.  
We need the following preliminary result. 
\lema\label{oij4t3u9887yuhifgitfr}
Let $D\con L_p(\mu)$ be countable. For every $0<\vep<1$ there are $a_1,a_2\in [0,1]$ such that for every $f_1,f_2\in D$ one has that $a_1f_1+a_2 f_2$ has full support in $\langle f_1,f_2\rangle$ and $1-a_1, a_2\le \vep$.  
  \flema

\prue
Fix $g_1,g_2\in L_p(\mu)$. Let $S:=\supp g_1\cup \supp g_2$ and let  
$g:S\times [0,1]\to \mbb F$, $g(\om,t):= (1-\vep)t g_1(\om)+\vep (1-t)g_2(\om)$.  We consider the product measure $\mu\times \la$ on $S\times [0,1]$, where $\la$ is the Lebesgue measure on $[0,1]$.   Let $A\con S\times [0,1]$ be the set $A:=\conj{(\om,t)\in S\times [0,1]}{ g(\om,t)=0}$. Notice that given $\om\in S$ we have that $\la(A_\om)=0$, where $A_\om:=\conj{t\in [0,1]}{(\om,t)\in A}$: We have that $t\in A_\om$ if and only if $t((1-\vep)g_1(\om)-\vep g_2(\om))=-\vep g_2(\om)$,  and since $\om\in S$, we obtain that $(1-\vep)g_1(\om)-\vep g_2(\om)\neq 0$, so $A_\om=\{-\vep g_2(\om) ((1-\vep)g_1(\om)-\vep g_2(\om))^{-1}\}$ is a singleton. We define also, given $t\in [0,1]$,  $A^t=\conj{\om \in S}{(\om,t)\in A}$.  Now using Fubini Theorem, 
$$0=\int_{\om\in S} \la(A_\om)d\mu(\om)=             (\mu\times \la)(A)=  \int_{t\in [0,1]} \mu(A^t) d\la(t).$$
So $H_{g_1,g_2}:=\conj{t\in [0,1]}{\mu(A^t)=0}$ has Lebesgue Measure 1 in $[0,1]$.  Notice that for $t\in H_{g_1,g_2}$ we have that $\mu(S\setminus \supp g(\cdot,t))=0$, so $g(\cdot,t)$ has full support in $\langle g_1,g_2\rangle$.  Therefore 
$\bigcap_{\{g_1,g_2\}\con D} H_{g_1,g_2}$ has measure 1, and $a_1=(1-t) \vep$,  $a_2=t(1-\vep) $ for $t$ in that intersection will work.   
\fprue

\lema\label{lkq2hiuherpotjtt}
Let  $X$ be a finite dimensional subspace of $L_p(\mu)$ and suppose that  $u\in X$  has full support in $X$ and it is normalized. Suppose that $T:X\to L_p(\mu)$ is an isomorphic embedding such that $Tu$ has full support in $TX$.  Given $f\in X$ of norm one,  let $\nu:= (f/u)_*( |u|^p d\mu)$ and    $\eta:=(Tf/Tu)_*(|Tu|^p d\mu)$. Then $\nu,\eta\in \mc M^{(p)}(\mbb F)$ and $\partial_p(\nu,\eta) \le  \max\{\nrm{T},\nrm{T^{-1}}\}$. 

\flema
\prue
This follows from the simple observation that $\widehat{\nu}^{(p)}(a)= \nrm{u + f}_{L_p(\mu)}$. 
 \fprue

\prue[{\sc Proof of Theorem \ref{kjnnjguytfyyuoipo}}]
Fix $0<\vep\le 1$, and a  finite dimensional subspace $X\con L_p(\mu)$.  Let $f_0\in X$ with full support. Without loss of generality we assume that $f_0$ is normalized.    Let $(f_1,\dots,f_n)$ be a normalized basis of $X$, and let $D$ be a countable dense subset of the centered ball $B(0,2)$ of $L_p(\mu)$ that contains $f_0,f_1,\dots,f_n$, and let $a_1,a_2\in [0,1]$ be the result of applying Lemma \ref{oij4t3u9887yuhifgitfr} to $D$ and $\vep_0$ such that $(4\vep_0)^p +\vep_0^p \nrm{\mu}+3\vep_0\le \vep$.         For each $1\le j\le n$, let  $\vphi_j:= a_0 f_0 +a_1 f_j$; notice that 
$\mu(\supp f_0\setminus \supp \vphi_j)=0$, so $ f_0/\vphi_j\in L_p( |\vphi_j|^p d\mu)$.  For each $j=1,\dots,n$, let $\mu_j:= (f_0/\vphi_j)_*(|\vphi_j|^pd\mu)$.  Observe that 
$$\mu_j(\{0\})= \int_{\frac{f_0}{\vphi_j}=0} |\vphi_j|^pd\mu=\int_{f_0=0} |\vphi_j|^pd\mu=0,$$
because $f_0$ has full support in $X$.  Let $0<\vep_1\le \vep_0$ be such that $\mu_j(B(0,2\vep_1))<\vep_0$ for every $j=1,\dots,n$, and let $\vep_2\le \vep_1$ be such that for $j=1,\dots,n$,  if $\nu\in \mc M^{(p)}(\mbb F)$ is such that $\partial_p(\mu_j,\nu)\le 1+\vep_2$, then $d_{\mc LP}(\mu_j,\nu)\le \vep_1$.   We claim that $\de:=\vep_2/2$ works. For suppose that $T:X\to L_p(\mu)$ is such that $\nrm{T},\nrm{T^{-1}}\le 1+\de$.   We will show that $\int_{Tf_0=0}|Tf_j|^p\le \vep$: 
Fix $1\le j\le n$. We assume that $f_j$ and $f_0$ are linearly independent, since otherwise $\supp Tf_0=\supp Tf_j$. 
Let   $\ga_0,\ga_j\in D$ and $0<\vep_3\le \vep_2$ be such that
\begin{enumerate}[(a)]
\item $  \vep_3 <\sqrt{\vep_2} \vep_1^{(2p+1)/p}$; 
\item $\int_{|Tf_j|>\frac{1}{(\vep_3)^{1/2}}} |Tf_j|^pd\mu<\vep_0$;
\item  $\nrm{Tf_0-\ga_0}, \nrm{Tf_j-\ga_j}, \nrm{T(\vphi_j) -\psi_j}\le \vep_3$ where $\psi_j=a_0 \ga_0 +a_1\ga_j$;
\item  the linear mapping $U$ defined by $U(f_0):=\ga_0$ and $U(f_j)=\ga_j$ satisfies that $\nrm{U},\nrm{U^{-1}}\le 1+\vep_2$
\end{enumerate}
Since $\psi_j:=a_0 \ga_0+a_1\ga_j$,   has full support in $\langle \ga_0,\ga_j\rangle$, it follows that    $\nu_j:= (\ga_0/\psi_j)_*(|\psi_j|^p d\mu) \in \mc M^{(p)}(\mbb F)$ and $\partial_p(\mu_j,\nu_j)\le 1+\vep_2$, by (d) above. Hence,  by Lemma \ref{lkq2hiuherpotjtt}, $d_{\mc LP}(\mu_j,\nu_j)\le \vep_1$.   It follows that  
\begin{equation}
\int_{|\frac{\ga_0}{\psi_j}|\le \vep_1} |\psi_j|^p d\mu = \nu_j(B(0,\vep_1))\le  \nu_j(B(0,2\vep_1)) +\vep_1\le  2\vep_0.
\end{equation}
Now observe that
\begin{align}
\int_{|\ga_0|\le \vep_1^2} |\psi_j|^p d\mu \le & \int_{|\ga_0|\le \vep_1^2 \& |\psi_j|\le \vep_1} |\psi_j|^p d\mu+  \int_{|\ga_0|\le \vep_1^2 \& |\psi_j|\ge \vep_1} |\psi_j|^p d\mu \le \vep_1^p \nrm{\mu} + 2\vep_0. 
\end{align}
Since  $0\le a_0\le \vep_0$ and $ 1-\vep_0  \le a_1\le 1$, it follows that $\nrm{Tf_j-\psi_j}\le \nrm{Tf_j-\ga_j}+ \nrm{\psi_j -\ga_j}\le \vep_3+\vep_0 (\nrm{\ga_0}+ \nrm{\ga_j})\le \vep_3+ 3\vep_0$. Hence, 
\begin{equation}
\int_{|\ga_0|\le \vep_1^2} |Tf_j|^p d\mu\le ( \vep_3+ 3\vep_0)^p + \vep_1^p \nrm{\mu} + 2\vep_0.
\end{equation}
Now,
\begin{equation}	\label{iiojhiuhuih441}
\int_{Tf_0=0} |Tf_j|^p d\mu \le  ( \vep_3+ 3\vep_0)^p + \vep_1^p \nrm{\mu} + 2\vep_0 + \int_{Tf_0=0 \& |\ga_0|> \vep_1^2} |Tf_j|^p d\mu.
\end{equation}
Since
\begin{equation}
\vep_0^{2p}\mu(Tf_0=0 \& |\ga_0|> \vep_1^2)\le \int_{Tf_0=0 \& |\ga_0|> \vep_1^2 } |\ga_0|^p d\mu \le  \nrm{Tf_0-\ga_0}^p\le  \vep_3^p
\end{equation}
it follows that 
\begin{align}
 \int_{Tf_0=0 \& |\ga_0|> \vep_1^2} |Tf_j|^p d\mu \le &  \int_{Tf_0=0 \& |\ga_0|> \vep_1^2 \& |Tf_j|\le 1/\sqrt{\vep_2}} |Tf_j|^p d\mu + \int_{Tf_0=0 \& |\ga_0|> \vep_1^2 \& |Tf_j|> 1/\sqrt{\vep_2}} |Tf_j|^p d\mu  \nonumber\\
 \le &    \left(\frac{\vep_3}{\sqrt{\vep_2} \vep_1^2}\right)^p +\vep_1 \le 2\vep_1 \label{iiojhiuhuih44}
\end{align}
Combining \eqref{iiojhiuhuih441} and \eqref{iiojhiuhuih44} we obtain that 
\[
\int_{Tf_0=0} |Tf_j|^p d\mu \le  ( \vep_3+ 3\vep_0)^p + \vep_1^p \nrm{\mu} + 2\vep_0 +2\vep_1 \le \vep. \qedhere
\]
\fprue

\section{Approximate Ramsey properties of $L_p$ spaces}\label{kemrkwmekorwew}
The {\em approximate Ramsey property (ARP)} is an extension of the near amalgamation property that is known to characterize the extreme amenability of the isometry group of \auh Banach spaces. This is a particular instance of the  Kechris-Pestov-Todorcevic (KPT)   correspondence (see Proposition   \ref{iuhuih43985445}) for Banach spaces.   We will give a proof of  the \arp of the class $\{\ell_p^n\}_n$, and we will relate it with some approximate Ramsey principles of certain regular partitions.  
Our proof uses a discrete  form of the method of concentration of measure applied to these partitions. 
 
\subsection{Approximate Ramsey properties of classes of finite dimensional spaces} 
 We start by recalling some combinatorial useful concepts and terminology.  Let $(A,d_A)$ be a metric space. Given $r\in \N$, an $r$-coloring of a set $A$ is simply a mapping $c:A\to r=\{0,1,\dots,r-1\}$. A \emph{monochromatic} set of an $r$-coloring $c$ of $A$ is a subset $B$ of $A$ on which $c$ is constant.      We say that $B\con A$ is $\vep$-monochromatic, $\vep\ge 0$, if there is some $ \widehat r \in r$ such that $B\con (c^{-1}\{\widehat r\})_\vep$.  A \emph{continuous coloring} of $A$ is a 1-Lipschitz mapping $c: A\to [0,1]$.

\defi
Let $\mc F$ be a  family of finite dimensional  normed spaces.
\begin{enumerate}[{\it(a)}]
\item $\mc F$  has the \emph{Approximate Ramsey Property \arp} when for every $X$ and $Y$ in $\mc F$  and every $\vep>0$ there exists $Z\in \mc F$ such that  every  continuous coloring $c$ of $\mr{Emb}(X,Z)$    $\vep$-stabilizes on  $\ga\circ \mr{Emb}(X,Y)$ for some $\ga\in \mr{Emb}(Y,Z)$, that is, such that
$$\mr{osc}(c\rest \ga\circ \mr{Emb}(X,Y))=\sup_{\psi,\eta\in \mr{Emb}(X,Y)}|c(\ga\circ \psi)-c(\ga\circ \eta)|<\vep;$$

\item $\mc F$  has the \emph{Approximate Ramsey $\mathit{Property}^+$ \arpp} when for every $X$ and $\vep>0$ there is $\de>0$ such that  for any $Y$ in $\mc F$  there exists $Z\in \mc F$ such that  every  continuous coloring $c$ of $\mr{Emb}_\de(X,Z)$    $\vep$-stabilizes on  $\ga\circ \mr{Emb}_\de(X,Y)$ for some $\ga\in \mr{Emb}(Y,Z)$;

 \item $\mc F$  has the \emph{Steady Approximate Ramsey $\mathit{Property}^+$} \sarpp with modulus of stability $\varpi: \N\times [0,\infty[\to [0,\infty[$ when for every $X\in \mc F_k$, every $Y\in \mc F$, $\vep>0$ and  $\de\ge 0$   there exists $Z\in \mc F$ such that  every  continuous coloring $c$ of $\mr{Emb}_{\varpi(k,\de)}(X,Z)$    ($\varpi(k,\de)+\vep$)-stabilizes on  $\ga\circ \mr{Emb}_{\de}(X,Y)$ for some $\ga\in \mr{Emb}(Y,Z)$. 

It is defined in \cite{BaLALuMbo2} that when $\varpi$ does not depend on the dimension, $\mc F$ is said to have the {\em stable} approximate Ramsey $\mathit{Property}^+$.
\end{enumerate}

\fdefi
Up to now, the following classes are known to have approximate Ramsey properties.
\begin{example}
 The class  $\{\ell_2^n\}_n$  of finite dimensional euclidean spaces has the \sarpp.  First of all, M. Gromov and V. Milman \cite{GrMi} proved that the unitary group $\iso(\ell_2)$ with its strong operator topology is a Lévy group, so it is extremely amenable. Since $\ell_2$ is \uh,  this last fact is equivalent to saying that $\age(\ell_2)$ has the \arp (see Theorem \ref{iuhuih43985445} below). Moreover, $\ell_2$ is \sauhp, hence $\age(\ell_2)$ is an amalgamation class. This implies that $\age(\ell_2)\equiv\{\ell_2^n\}_n$ has the \sarpp (see Proposition    \ref{oiu43rio34hjoi443}). 
\end{example}

\begin{example}
For $1\le p<\infty$, $p\notin 2\N$, the class $\age(L_p(0,1))$ has the \sarpp:  The \arp of $\age(L_p(0,1))$ is a consequence of the fact that $\iso(L_p(0,1))$ is extremely amenable, proved by  T. Giordano and V. Pestov  \cite{GiPe}, and that those $L_p$ spaces are \auh.  Moreover, we proved  that  these  spaces  $L_p(0,1)$ are  \sauhp, so, $\age(L_p(0,1))$ is an amalgamation class. 
\end{example}
	
\begin{example}
For all $1\le p\neq 2<\infty$ the class $\{\ell_p^n\}_n$ has the \sarpp:  We give a direct proof   in Section \ref{kemrkwmekorwew}    of the \arp of $\{\ell_p^n\}_n$. This, and the fact that $\{\ell_p^n\}_n$ has the amalgamation property, proved in Proposition \ref{3l4kirjio34riji43}   gives the desired \sarpp of $\{\ell_p^n\}_n$, and that also gives another proof of the extreme amenability of the isometry group $\iso(L_p(0,1))$.      
\end{example}

\begin{example}
The classes $\{\ell_\infty^n\}_n$, the finite dimensional {\em polyhedral spaces}, and all finite dimensional normed spaces have the \sarpp (proved by D. Bartošová, M. Lupini, B. Mbombo and the second author of this paper, in  \cite{BaLALuMbo2}; see also \cite{BaLALuMbo1}). 
\end{example}

The \arp has the following   reinterpretation in terms of finite colorings. 
\prop	 \cite{BaLALuMbo2}
For a class $\mc F$   of finite dimensional spaces the following are equivalent:
\begin{enumerate}[1)]
\item $\mc F$ has the approximate Ramsey property;
\item $\mc F$ has the discrete approximate Ramsey property, that is, for every $X$ and $Y$ in $\mc F$, every $r\in \N$ and every $\vep>0$ there exists $Z\in \mc F$ such that  every $r$-coloring of $\mr{Emb}(X,Z)$ has a $\vep$-monochromatic set of the form $\ga\circ \mr{Emb}(X,Y)$ for some $\ga\in \mr{Emb}(Y,Z)$. 
 
\end{enumerate}
\fprop
Similar equivalences are true for the \arpp and the \sarpp.	
\prue	 For the sake of completeness we sketch the proof. ${\it 2)} \implies {\it 1)}$: Given any continuous coloring $c:\Emb(X,Z)\to [0,1]$ and given $\vep>0$, one can induce the discretization of $c$, $\widehat{c}:\Emb(X,Z)\to r$, where $r$ is chosen such that there is a partition of $[0,1]$ into $r$-many disjoint intervals of diameter less than $\vep$. Since this assignment does not depend on $X$ or $Z$, we can use {\it 2)}  to deduce {\it 1)}.  
The proof of ${\it 1)} \implies  {\it 2)}$ is done by induction on the number of colors $r$: Given an $r+1$-coloring $c:\Emb(X,Z)\to r+1=\{0,1,\dots,r\}$, we can define the induced continuous coloring $\widehat{c}(\ga):=(1/2)d(\ga,c^{-1}(r))$, and then use the inductive hypothesis for $r$ and {\it 1)}.
\fprue

Similarly to the case of discrete structures, approximate Ramsey properties extend the corresponding amalgamation properties. 
\prop
Suppose that $\mc F$ has the \jep.
If $\mc F$ has the \arp, \arpp, \sarpp, then $\mc F$ has the  \nap, is a weak amalgamation class, is an amalgamation class, respectively.
\fprop
\prue
We only prove that \arpp   implies \napp; the other implications are proved in a similar way. 
Suppose that $X\in \mc F$ and $\vep>0$.  We claim that  $0<\de\le 1$  witnessing the \arpp for the initial parameters $X$ and $\mc F$ works. For suppose that $\ga \in \Emb_\de(X,Y)$,    $\eta \in \Emb_\de(X,Z)$. Let $V\in \mc F$, and $f\in \Emb(Y, V)$, and $g\in \Emb(Z,V)$. By the \arpp, we can find $W\in \mc F$ such that for the particular coloring $c:\Emb_\de(X,W)\to [0,1]$, $c(h):=(1/4)d(h, \Emb(V,W)\circ f\circ  \ga)$ we can find $\ro\in \Emb(V,W)$ for which $\osc(c\rest \ro \circ \Emb_\de(X,V))\le \vep$. Observe that $c(\ro \circ f\circ \ga)=0$, so there is $\nu\in \Emb(V,W)$ such that $\nrm{\nu\circ g \circ \eta-\ro\circ f \circ \ga}\le \vep$, as desired.
 \fprue 
\begin{problem}
Does there exist a   Fra\"\i ss\'e class of finite dimensional spaces not having the \arp?
\end{problem}

\begin{proposition}\label{oiu43rio34hjoi443}
Let $\mc F$ be a class of finite dimensional normed spaces. Then, 
\begin{enumerate}[1)]
 \item $\mc F$ has the \arpp if and only if  $\mc F$ has   the \arp and \napp.
  \item $\mc F$ has   the \sarpp if and only if $\mc F$ has  the \arp and it is an amalgamation class.
\end{enumerate}   
\end{proposition}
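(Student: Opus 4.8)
The plan is to prove the two equivalences by separating directions; since the forward implications are already essentially recorded, the real work lies in the two reverse directions, which I would treat by a single template: \arp supplies the genuinely isometric Ramsey phenomenon, while the (weak) amalgamation property is used to drag arbitrary $\de$-isometric embeddings onto honest isometric embeddings after a harmless isometric post-composition. Throughout I keep \jep as a standing hypothesis, which is automatic on the amalgamation side since every weak amalgamation or amalgamation class contains $\{0\}$.

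For the forward directions nothing new is required. The implication \arpp $\Rightarrow$ \arp follows by extending any continuous coloring of $\Emb(X,Z)$ to a $1$-Lipschitz coloring of $\Emb_\de(X,Z)$ (a $[0,1]$-valued $1$-Lipschitz map on a subset always extends), applying \arpp, and restricting along $\Emb(X,Y)\sset\Emb_\de(X,Y)$; and \sarpp $\Rightarrow$ \arp follows even more directly by specializing to $\de=0$, where $\varpi(k,0)=0$. That \arpp forces $\mc F$ to be a \napp class and that \sarpp forces it to be an \snapp class is exactly the content of the preceding Proposition.

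The core of part 1 is \arp $+$ \napp $\Rightarrow$ \arpp. Fixing $X$ and $\vep>0$, I would use weak amalgamation to pick $\de>0$ so that any two $\de$-isometric embeddings of $X$ amalgamate within $\vep/5$; applied to an arbitrary $\ga\in\Emb_\de(X,Y)$ against $\id_X$, this says that every such $\ga$ becomes $(\vep/5)$-close to an isometric embedding after an isometric post-composition. Given $Y$, since $\Emb_\de(X,Y)$ is a compact subset of $\mathcal L(X,Y)$ I fix a finite $(\vep/5)$-net $\eta_1,\dots,\eta_N$ and iterate this amalgamation $N$ times, at each step amalgamating the pushed-forward $\de$-embedding with $\id_X$ and carrying the previous approximations forward along the new isometric maps; this produces a single $Y'\in\mc F$, an isometric $f\colon Y\to Y'$, and isometric embeddings $\bar\eta_1,\dots,\bar\eta_N\colon X\to Y'$ with $\nrm{f\circ\eta_i-\bar\eta_i}\le\vep/5$. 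I then apply \arp to $(X,Y')$ with parameter $\vep/5$ to obtain $Z$. For any continuous coloring $c$ of $\Emb_\de(X,Z)$, its restriction to $\Emb(X,Z)$ is $(\vep/5)$-stabilized on $\ga'\circ\Emb(X,Y')$ for some $\ga'\in\Emb(Y',Z)$; setting $\ga:=\ga'\circ f$, every $\eta\in\Emb_\de(X,Y)$ satisfies $\nrm{\ga\circ\eta-\ga'\circ\bar\eta_i}=\nrm{f\circ\eta-\bar\eta_i}\le 2\vep/5$ for a suitable net point, so a three-term triangle inequality (Lipschitzness of $c$ plus isometry-invariance of the metric) bounds $\mr{osc}(c\rest\ga\circ\Emb_\de(X,Y))$ by $\vep/5+4(\vep/5)=\vep$, which is \arpp.

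Part 2, \arp $+$ \snapp class $\Rightarrow$ \sarpp, runs along identical lines with the quantitative amalgamation modulus $\varpi$ in place of the qualitative weak amalgamation, and the step I expect to demand the most care is the bookkeeping of moduli. Now amalgamating $\eta_i$ against $\id_X$ costs $\varpi(\dim X,\de)+\vep'$ rather than an arbitrarily small amount, and since the triangle inequality approximates both endpoints of an oscillating pair this incurs a factor of two. I therefore do not expect to recover \sarpp with $\varpi$ itself but with the modulus $\varpi'(k,\de):=2\varpi(k,\de)+\de$ — still a modulus, since $\varpi(k,0)=0$ for an amalgamation class — which is legitimate because \sarpp only asserts the existence of some modulus. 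The same net-and-iterated-amalgamation construction followed by \arp then bounds the oscillation of $c$ on $\ga\circ\Emb_\de(X,Y)$ by $2\varpi(k,\de)+5\vep'$, which is below $\varpi'(k,\de)+\vep$ once $\vep'$ is small relative to $\vep$, while the summand $\de$ in $\varpi'$ guarantees $\de\le\varpi'(k,\de)$ so that the relevant $\de$-embeddings $\ga\circ\eta$ indeed lie in the domain $\Emb_{\varpi'(k,\de)}(X,Z)$ of $c$. The main obstacle in both parts is keeping the iterated amalgamation uniform — a single $Y'$ and a single isometric $f$ serving all net points simultaneously, with every approximation bound preserved under the successive isometric push-forwards — and, in part 2, selecting the output modulus so that the unavoidable factor-two loss is absorbed without violating the domain constraint.
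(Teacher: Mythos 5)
Your proposal is correct and follows essentially the same route as the paper: the paper isolates your net-and-iterated-amalgamation uniformization into a separate Claim (a uniform version of \napp, proved like Lemma \ref{oi788798434}), which yields a single isometric embedding $I\in\Emb(Y,V)$ with $I\circ\Emb_\de(X,Y)\con(\Emb(X,V))_{\vep/3}$, then applies the \arp to the pair $(X,V)$ and concludes with the same three-term triangle inequality. Your explicit bookkeeping in part 2 of the unavoidable factor-two loss (output modulus $2\varpi(k,\de)+\de$ rather than $\varpi$) is a detail the paper leaves implicit under ``2) is proved similarly,'' but it does not change the approach.
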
  
\begin{proof}
We use the following, that has to be compared with    Lemma \ref{oi788798434}  and that is proved similarly. 
\clam
$\mc F$ has  \napp  if and only if for every $\vep>0$ and  $X\in \mc F$  there is $\de>0$ such that  for  every $Y,Z\in \mc F$   there is some $V\in \mc F$ and some $I\in \Emb(Y,V)$ such that for every $\ga\in \Emb_{\de}(X,Y)$ and $\eta\in \Emb_\de(X,Z)$ there is $J\in \Emb(Z,V)$ such that $\nrm{I\circ \ga - J \circ \eta}\le \vep$.   \qed
\fclam
Now suppose that $\mc F$ has both the \arp and  \napp.  Fix $\vep>0$ and $X\in \mc F$. We use first the claim to find the corresponding $\de$ for $\vep/3$. Now given $Y\in \mc F$ we use the property of $\de$ to find $V\in \mc F$  and $I\in \Emb(Y,Z)$ such that, in particular, $I \circ \Emb_\de(X,Y)\con (\Emb(X,V))_{\vep/3}$.  Now we use the \arp of $\mc F$ applied to $X,V$ and $\vep$ to find $Z$, that we claim that it works for our purposes: For suppose that $c:\Emb_\de(X,Z)\to [0,1]$ is a continuous coloring.  By the \arp of $\mc F$, there is $J\in \Emb(V,Z)$ such that $\osc(c\rest J \circ \Emb(X,V))\le \vep/3$. Let us see that $\osc(c\rest J\circ I \circ \Emb_\de(X,Y))\le \vep$: For suppose that $\ga,\eta\in \Emb_\de(X,Y)$. There are $\iota,\xi\in \Emb(X,V)$ such that $\nrm{\iota-I\circ \ga}, \nrm{\xi-I\circ \eta}\le \vep/2$. Hence $|c(J\circ I\circ \ga)-c(J\circ I\circ \eta)|\le |c(J\circ I\circ \ga)-c(J\circ \iota)| +|c(J\circ \iota)-c(J\circ \xi)|+|c(J\circ \xi)-c(J\circ I\circ \eta)| \le \vep$. {\it2)} is proved similarly.
\end{proof}

The following  connects the approximate Ramsey property of $\age(E)$ and the extreme amenability of $\iso(E)$.  It is a slight extension of the correspondence given in  \cite{BaLALuMbo2}, and a  particular case of  the metric KPT correspondence for  metric structures (see \cite[Theorem 3.10]{MeTsa}). 
\begin{theorem}[KPT correspondence]\label{iuhuih43985445}
 Suppose that $E$ is \auh. The following   are equivalent:
 \begin{enumerate}[1)]
 \item  The group $\iso(E)$ with the strong operator topology is extremely amenable; that is, every continuous action of $\iso(E)$ on a compact space has a fixed point.
 \item $\age(E)$ has the \arp.

 \end{enumerate}    
 Suppose that $\mc G\preceq \age(E)$ is an amalgamation class such that $E\in [\mc G]$ (See Definition \ref{wlkrjwjrew87755}).  
   Then the previous statements  are equivalent to
 \begin{enumerate}[1)]\addtocounter{enumi}{2}
 \item  $\mc G$ has the \arp (equivalently \sarpp).
   
 \end{enumerate}  
\end{theorem}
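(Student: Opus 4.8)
The plan is to prove the two blocks separately: first the general equivalence \textit{1)} $\Leftrightarrow$ \textit{2)} for an arbitrary \auh space $E$, which is the metric Kechris--Pestov--Todorcevic correspondence and follows the template of \cite{MeTsa} and \cite{BaLALuMbo2}; and then, under the additional hypotheses on $\mc G$, the transfer \textit{2)} $\Leftrightarrow$ \textit{3)}. The engine for \textit{1)} $\Leftrightarrow$ \textit{2)} is the dictionary between colourings and functions on $G:=\iso(E)$. Fix $X\in\age(E)$ and a base embedding $\iota_X\in\Emb(X,E)$. Because $E$ is \auh, the orbit $G\circ\iota_X$ is dense in $\Emb(X,E)$, so $\Emb(X,E)$ is, up to arbitrarily small error, the homogeneous $G$-space $G/H_X$ with $H_X=\{g:g\circ\iota_X=\iota_X\}$ the pointwise stabiliser; consequently a continuous ($1$-Lipschitz) colouring $c$ of $\Emb(X,E)$ lifts to a bounded right-uniformly continuous function $\tilde c(g):=c(g\circ\iota_X)$ on $G$, and conversely every such function is, after localisation, of this form. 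Under this dictionary the \arp of $\age(E)$ and the finite oscillation stability of right-uniformly continuous functions on $G$ become the same statement, and Pestov's characterisation identifies the latter with the extreme amenability of $G$.

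For the concrete direction \textit{1)} $\Rightarrow$ \textit{2)} I would argue by contradiction. If the \arp failed for some $X,Y\in\age(E)$ and $\vep>0$, then for each $Z\in\age(E)$ there would be a colouring $c_Z$ of $\Emb(X,Z)$ with $\mr{osc}(c_Z\rest \ga\circ\Emb(X,Y))\ge\vep$ for every $\ga\in\Emb(Y,Z)$. Regarding $\Emb(X,E)=\bigcup_{Z\in\age(E)}\Emb(X,Z)$ as a directed union and fixing an ultrafilter $\mc U$ refining the order filter on the directed set $\age(E)$, the limit $c(\ga):=\lim_{Z\to\mc U}c_Z(\ga)$ defines a single $1$-Lipschitz colouring of $\Emb(X,E)$ that remains bad on every $\ga\circ\Emb(X,Y)$ with $\ga\in\Emb(Y,E)$, each such $\ga$ factoring through some $Z$. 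Lifting $c$ to $\tilde c$ on $G$ and feeding a finite $\vep$-net of the compact set $\Emb(X,Y)$, transported into $G$ via \auh, into the oscillation-stability property guaranteed by \textit{1)}, one obtains a $\ga$ on which $c$ oscillates by less than $\vep$, contradicting badness. For \textit{2)} $\Rightarrow$ \textit{1)} I would run this in reverse: given a bounded right-uniformly continuous $f$, a finite $F\subseteq G$ and $\vep>0$, localise $f$ to a subspace $X\in\age(E)$ (so that $f$ depends on $g\rest X$ up to $\vep$), collect the images $h(\iota_X(X))$, $h\in F$, into a single $Y\in\age(E)$, apply the \arp to $X,Y,\vep$ to obtain $Z$ and a stabilising $\ga\in\Emb(Y,Z)$, and finally use \auh to approximate $\ga$ by some $g_0\in G$; then $f$ has oscillation $\lesssim\vep$ on $g_0F$, which is exactly Pestov's criterion.

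Finally, for \textit{2)} $\Leftrightarrow$ \textit{3)} the equivalence between the \arp and the \sarpp of $\mc G$ is immediate from Proposition \ref{oiu43rio34hjoi443}\,{\it 2)}, since $\mc G$ is assumed to be an amalgamation class. The heart of the matter is transferring the \arp between $\mc G$ and $\age(E)$, and here the hypothesis $E\in[\mc G]$ (Definition \ref{wlkrjwjrew87755}) is precisely what is needed: it makes the family $\mc G_E$ of finite-dimensional subspaces of $E$ $\La_E$-dense and cofinal in $\age(E)$. Thus the oscillation-stability argument above, which a priori ranges over all of $\age(E)$, can be carried out using only domains and copies drawn from $\mc G$, every finite-dimensional subspace being approximable to within $\vep$ by a subspace of a member of $\mc G_E$; Claim \ref{loi43jiori34443}, converting norm-closeness of embeddings into $\La_E$-closeness of their images, together with the amalgamation modulus of $\mc G$, keeps the accumulated errors controlled and converts the resulting $\de$-isometric data back into genuine isometric embeddings.

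I expect this last bookkeeping to be the main obstacle. Unlike the discrete KPT correspondence, where all embeddings are exact, here one must continually trade between $\Emb$ and $\Emb_\de$, between right-uniform continuity of functions on $G$ and their finite-dimensional localisation, and between copies living in $\mc G$ and copies living in $\age(E)$, arranging that the successive choices of $\vep$ and $\de$ compose into a single quantitative estimate. Controlling this chain of approximations, rather than any single conceptual step, is where the real work of the proof lies.
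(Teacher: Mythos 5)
Your proposal is correct, and its skeleton coincides with the paper's: both arguments rest on (i) an ultrafilter over $\age(E)$ to pass between colourings of the finite-dimensional $\Emb(X,Z)$'s and colourings of $\Emb(X,E)$, (ii) extreme amenability applied to the compact space of $1$-Lipschitz colourings, (iii) \auh to identify $\iso(E)$-orbits with embedding spaces, and, for part \textit{3)}, (iv) the density coming from $E\in[\mc G]$ together with the amalgamation modulus of $\mc G$ (plus Proposition \ref{oiu43rio34hjoi443} for \arp $\Leftrightarrow$ \sarpp). The packaging differs in two respects, though. For \textit{1)} $\Rightarrow$ \textit{2)} you argue contrapositively, gluing the bad colourings $c_Z$ into a single bad colouring of $\Emb(X,E)$ by an ultrafilter limit and then invoking Pestov's finite-oscillation-stability characterisation of extreme amenability; the paper argues directly, constructing an explicit $\iso(E)$-equivariant bijection $\Phi$ between $\mr{Lip}(\Emb(X,E),[0,1])$ and an ultraproduct of the spaces $\mr{Lip}(\Emb(X,Y),[0,1])$, producing a fixed point in the orbit closure of the given colouring (constant, by \auh), and transferring back through $\Phi$ — which yields the slightly stronger conclusion that the set of good $Z$'s is $\mc U$-large, not merely nonempty. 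Your route is shorter because Pestov's criterion absorbs the fixed-point argument that the paper carries out by hand; the paper's is self-contained. Two pieces of bookkeeping that your sketch leaves implicit but that must be handled as the paper does: in \textit{2)} $\Rightarrow$ \textit{1)} the colouring of $\Emb(X,Z)$ induced by a uniformly continuous function on $\iso(E)$ is only approximately well defined and not Lipschitz, so one must route through the discrete (finitely many colours) form of the \arp; and oscillation-stability criteria stabilise translates on one fixed side, so an inversion such as the paper's choice $H=F^{-1}$, $Y=\sum_{g\in H}gX$ is needed to convert the stabilised copy into $U$-smallness of $F\cdot(g\cdot p)$. Neither is a gap — they are exactly the approximation chain you flag at the end — but they are where your ``bookkeeping'' actually lives.
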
  
Before giving the proof, we state two interesting consequences.
\cor
 Suppose that $\mc G$ is an hereditary family with the \sarpp.   
 \begin{enumerate}[1)]
 \item  There is a unique  separable Fraïssé Banach space $E$ such that $\age(E)\equiv \bmc{\mc G}$ and such that its isometry group is extremely amenable. Moreover $E=\flim\bmc{\mc G}$.
  \item $\bmc{\mc G}$ is \sarpp.  
 \end{enumerate}   
\fcor  
\begin{proof}
{\it1)} and {\it2)}: Set $\mc H:=\bmc{\mc G}$. We know by Proposition \ref{oiu43rio34hjoi443} {\it 2)}    that $\mc G$ is an amalgamation class, hence, by Proposition  \ref{loi34iooi547889659866} {\it2)},   also $\mc H$ is an amalgamation class. Hence, $\mc H $ is a Fraïssé class. Let $E$ be its Fraïssé limit $\flim\mc H$. The family $\mc G$ fulfills the conditions in the  last part of Theorem \ref{iuhuih43985445}, so $\iso(E)$ is extremely amenable and $\mc H$ has the \arp.  
\end{proof}  

\prue[Proof of Theorem \ref{iuhuih43985445}]
 We prove that {\it 1)} implies {\it 2)}:   We need to introduce some concepts. Given  two metric spaces $(A,d_A)$ and $(B,d_B)$, let $\mr{Lip}(A,B)$ be the collection of 1-Lipschitz mappings from $A$ to  $B$. When $A$ is compact, we endow it with the uniform metric $d(c,d):=\sup_{a\in A} d_B(c(a),d(a))$. Observe that when $B$ is also compact, $(\mr{Lip}(A,B),d)$ is also compact. 
For each $W\in \age(E)$, let $\langle W\rangle :=\conj{X\in \age(E)}{W\con X}$.  Note that $\{\langle W\rangle\}_{W\in \age(E)}$ has the finite intersection property.  Let $\mc U$ be a non-principal ultrafilter on $\age(E)$  containing all $\langle W\rangle$.   Define the ultraproduct 
$$\mr{Lip}_\mc U(\Emb(X,E),[0,1]):= \left(\prod_{X\con Y\in \age(E)} \mr{Lip}(\Emb(X,Y),[0,1]    )  \right)/\sim_\mc U,$$ where $(c_Y)_Y\sim_\mc U (d_Y)_{Y}$ if and only if for every 
$(\ga_j)_{j<n}$ in  $ \Emb(X,E)$, and every $\vep>0$ one has that  $\conj{Y\in \langle \sum_{j<n} \im \ga_j\rangle}{  |\max_{j<n}  |c_Y(\ga_j) -d_Y(\ga_j)|\le \vep }\in \mc U$.  We consider the canonical action of $\iso(E)$ in  $\mr{Lip}(\Emb(X,E), [0,1])$, $(g \cdot c)(\ga):= c (g \circ \ga)$, and the  corresponding  (algebraic) action $\iso(E)\acts\mr{Lip}_\mc U(\Emb(X,E),[0,1])$,  $g\cdot [(c_Y)_Y]_\mc U =[(d_Y)_Y]_\mc U $,  where each  $d_Y(\ga):= c_{g(Y)}(g \circ \ga)$. 
Finally, let  $\Phi:\mr{Lip}(\Emb(X,E), [0,1])\to \mr{Lip}_\mc U(\Emb(X,E),[0,1])$, $\Phi(c) =(c_Y)_{Y}$, where 
$c_Y(\ga):=c( \ga)$. 
\clam
$\Phi$ is a $\iso(E)$-bijection. 
\fclam
\prucl
Suppose that   $\Phi(c)= [(c_Y)_Y]_\mc U$  and $\Phi(g \cdot c)= [(d_Y)_Y]_\mc U$. Then for each $Y$ and $\ga\in \Emb(X,Y)$, $c_Y(\ga)=c(\ga)$ and  $d_Y(\ga)= (g\cdot c)(\ga)=c( g\circ \ga)$, so $g\cdot [(c_Y)_Y]_\mc U= [(d_Y)_Y]_\mc U$.  It is easy to see that $\Phi$ is 1-1. We prove that  $\Phi$ is onto: Fix $[(c_Y)_Y]_\mc U$. We are going to find $c$ such that $\Phi(c)=[(c_Y)_Y]_\mc U$. Fix $\ga\in \Emb(X,E)$. Since $\langle \im \ga\rangle\in \mc U$ and since $(c_Z(\ga))_{W\con Z}$ is a bounded sequence,   the $\mc U$-limit $c(\ga):=\lim_{Y\to \mc U} c_Y(\ga)$ exists. It is ease to see that $c\in \mr{Lip}(\Emb(X,E),[0,1])$ and that $\Phi(c)=[(c_Y)_Y]_\mc U$. 
\fprucl
Now suppose that  $\iso(E)$ is extremely amenable, and let us prove the \arp of $\age(E)$:  Fix $X,Y\in \age(E)$ and $\vep>0$, and let $c: \Emb(X,E)\to [0,1]$ be 1-Lipschitz. Let $d\in \overline{\iso(E) c}$ be such that $g \cdot d=d$ for every $g\in \Iso(E)$, i.e., $d(\ga)=d(g\circ \ga)$ for every $\ga\in \Emb(X,E)$. Since we are assuming that $E$ is \auh, it follows from this that $d$ is a constant function.    Now, since  $\Emb(X,Y)$  is compact, we can find $g\in \iso(E)$ such that $\sup_{\ga\in \Emb(X,Y)}|g\cdot c(\ga) , d(\ga)| \le \vep/2$.  Let us see that $\osc( c\rest g\circ \Emb(X,Y))\le \vep$:    For suppose that $\ga,\eta\in \Emb(X,Y)$;  Then, $|c(g\circ \ga)-c(g\circ \eta)|\le |c(g\circ \ga)-d(\ga)|+ |c(g\circ \eta)-d(\eta)|\le \vep$.  
Since $\Phi$ is a $\iso(E)$-bijection,  given $(c_Z)_{Z}\in \prod_{Z\in \age(E)} \mr{Lip}(\Emb(X,Z),[0,1])$ one has that 
$$\conj{Z\in  \age(E)}{  \text{there is $\ga\in \Emb(Y,Z)$ such that } \osc(c_Z\rest \ga\circ  \Emb(X,Y))\le \vep}\in \mc U,$$    
and consequently, 
$$\conj{Z\in  \age(E)}{  \text{$\forall c\in \mr{Lip}(\Emb(X,Z),[0,1])$   $\exists\ga\in \Emb(Y,Z)$ with} \osc(c \rest \Emb(X,Y))\le \vep}\in \mc U$$    

We prove that {\it 2)} implies  {\it 1)}. We use the following  known characterization of existence of a fixed point.
\clam
Let $G$ be a topological group,    $G\acts K$, and suppose that $p\in K$ has dense orbit.  The following are equivalent.
\begin{enumerate}[a)]
\item  There is a fixed point for the action  $G\acts K$.
\item For every entourage $U$ in $K$ and every finite set $F\con G$ there is some $g\in G$ such that $F \cdot( g \cdot  p)$ is $U$-small, that is, for every $f_0,f_1\in F$ one has that $(f_0 \cdot(g \cdot p), f_1\cdot( g \cdot p))\in U$.
\end{enumerate}

\fclam	
\prucl
We assume that all entourages considered are symmetric.	
For suppose that $q\in K$ is a fixed point; Fix $F\con G$ finite and an entourage $U$; let $V$ be an entourage such that $V\circ V\con U$. Using that $g\cdot : K\to K$ is uniformly continuous, we find an entourage  $W$ such that $gW\con V$ for every $g\in F$. Let $h\in G$ be such that $(h\cdot p,q)\in W$. It follows that $(g h \cdot p, q)=(g h \cdot p, g q)\in V$ for all $g\in F$; hence $(g h \cdot p, g' h\cdot p)\in U$.  Suppose now that { b)} holds, and for every finite set $F$ and entourage $U$ choose $g_{F,U}\in G$ such that $(F\cup\{e\})\cdot (g_{F,U}\cdot p)$ is $U$-small, hence $\{f\cdot (g_{F,U} \cdot  p)\}_{f\in F}\con U[g_{F,U}\cdot p]$ for every  $F$ and $U$. Then any accumulation point $q$ of $\{g_{F,U}\cdot p\}_{F,U}$ is a fixed point.  
\fprucl
 Now  suppose that $\age(E)$ has the \arp. Again, we assume that all entourages considered are symmetric.	 Fix  $\iso(E)\acts K$, $p\in K$, an entourage $U$ and a finite set $F\con \iso(E)$. Set $H:= F ^{-1}$, and  let $V$ be an entourage such that $V\circ V\circ V\circ V\con U$. For each $X\in \age(E)$, we define the pseudometric $d_X$ on $\iso(E)$ by $d_X(g,h):=\nrm{g\rest X-h\rest X}$. The family $\mc D=\{d_X\}_{X\in \age(E)}$ defines the left-uniformity of $\iso(E)$, hence its SOT.  Now the mapping $\iso(E)\to K$, $g\mapsto g^{-1} p$ is uniformly continuous with respect to the family $\mc D$, so there is some $X\in \age(E)$ and some  $\de>0$ such that $d_X(g,h)\le \de$ implies that $(g^{-1}\cdot p, h^{-1} \cdot p)\in V$. Let $Y:=\sum_{g\in H} g X$. Finally, let $\{x_j\}_{j<r}\con K$ be such that $K=\bigcup_{j<r} V[x_j]$, where $V[x]:=\conj{y\in K}{(x,y)\in V}$.  We apply the \arp of $\age(E)$ to $X$, $Y$, $\de/3$ and $r$ to find the corresponding $Z$.  We define the coloring $c:\Emb(X,Z)\to r$ for $\ga\in \Emb(X,Z)$ by choosing $g\in \iso(E)$ such that $\nrm{g\rest X-\ga}\le \de/3$, and then by declaring $c(\ga)=j$ if $j$ is (the first) such that $g^{-1} p \in V[x_j]$. By the Ramsey property of $Z$, we can find $\ro\in \Emb(Y,Z)$ and $j<r$ such that, in particular, for every $\eta\in \Emb(X,Y)$ there is some $g_\eta\in \iso(E)$  such that $(g_\eta)^{-1}\cdot p\in V[x_j]$ and $\nrm{\ro\circ \eta - g_\eta}\le 2\de/3$.   Choose $h\in \iso(E)$ such that $\nrm{h\rest Y-\ro}\le \de/3$. Then, for every $f\in H$, setting $\eta:= f\rest X_n$, then $d_X(h \circ f ,g_\eta)\le \de$, and $g_\eta^{-1} \cdot p\in V[x_j]$. Consequently, $(f_0 \circ h^{-1} \cdot p, f_1 \circ h^{-1} \cdot p) \in U$ for every $f_0,f_1\in F$, as desired.

Suppose that $\mc G \preceq \age(E)$ is an amalgamation class such that  $E\in [\mc G]$, that is, such that the collection of subspaces of elements of $\mc G_E$ is $\La_E$-dense in $\age(E)$.  Then the  family of pseudometrics $\{d_X\}_{X\in\mc G_E}$ defines the SOT of $\iso(E)$.    The strategy of the proof  used to see that {\it 2)} implies {\it 1)}, with the natural modifications, works here. We leave the details to the reader. 

Suppose now that  $\age (E)$ has the \arp. Fix
  $X,Y\in \mc G$, $r\in \N$, and $\vep>0$.  Let $0<\vep_0<\vep/3$ and let  $Z\in \age(E)$ be such that every $r$-coloring of $\Emb(X,Z)$  has an $\vep_0$-monochromatic set of the form $\ga\circ \Emb(X,Y)$ for some $\ga\in \Emb(Y,Z)$. Since $E\in [\mc G]$ and $\mc G$ is an amalgamation class, we can find $0<\de<\vep/(3\vep_0)-1$,   some $V\in \mc G$  and some   $\theta\in \Emb_\de(Z, V)$ such that for every $\ga\in \Emb(X,Z)$ and $\eta\in \Emb(Y,Z)$ there are isometric embeddings $i\in \Emb(X,V)$ and $j\in \Emb(Y,V)$ such that $\nrm{\theta\circ \ga-i},\nrm{\theta\circ \eta-j}\le \vep_0$. We claim that $V$ works for our purposes: For suppose that $c:\Emb(X,V)\to r$. We induce the coloring $\widehat{c}: \Emb(X,Z)\to r$ by choosing for each $\ga\in \Emb(X,Z)$ some $i\in \Emb(X,V)$ such that $\nrm{i- \theta\circ \ga}\le \vep_0$ and declare $\widehat{c}(\ga):=c(i)$. Let $\eta\in \Emb(Y,Z)$  and $\bar r<r$ be such that $\eta\circ \Emb(X,Y)\con (\widehat{c}^{-1}(\bar r) )_{\vep_0}$. Then one can show that   $j\circ \Emb(X,Y)\con (c^{-1}(\bar r) )_{\vep}$, where $j\in \Emb(Y,V)$ is such that $\nrm{j - \theta\circ \eta}\le \vep_0$. 
\fprue
Observe that the previous requirement on $\mc G$ is satisfied when $E=\flim \mc G$. 
Observe also that in the proof we are not assuming that $E$ is necessarily separable, and since for Fraïssé spaces we have that $\age(E)\equiv \age(E_\mc U)$ for any ultrafilter $\mc U$,  we obtain the following.
\cor	
Suppose that $E$ is a   Fraïssé space such that $\age(E)$ has the \arp then $\iso(E_\mc U)$ is extremely amenable for every ultrafilter $\mc U$ on $\N$. In particular, $\iso(\mbb G_\mc U)$ and $\iso(L_p(0,1)_\mc U)$ are extremely amenable non separable groups.  \qed
\fcor

\subsubsection{Multidimensional Borsuk-Ulam}\label{Borsuk-Ulam}
The approximate Ramsey property of the family $\{\ell_p^n\}_n$  has a natural reinterpretation  as a version of a multidimensional {\em Borsuk-Ulam Theorem}. Recall that the equivalent reformulation by  L. Lyusternik and  S. Shnirel’man \cite{LuSch} (see also \cite[Theorem 2.1.1]{Ma})   of the Borsuk-Ulam theorem states that  if the sphere $S_{\ell_2^{n+1}}$ is covered by $n + 1$ open sets, then one of these sets contains a pair $(x, -x)$ of antipodal points.   Given $0< p<\infty$,  let $\mathbf{n}_{\bm{p}}(d,m,r,\vep)$ be  the minimal number $n$ such that for every coloring  
$c:\mr{Emb}(\ell_p^d,\ell_p^n)\to r$
 there is  $\ga\in \mr{Emb}(\ell_p^m,\ell_p^n)\text{ and $i<r$} $ such that 
$\ga\circ\mr{Emb}(\ell_p^d,\ell_p^m)\con (c^{-1}\{i\})_\vep$. The \arp of $\{\ell_p^n\}_n$ is exactly the statement saying that   $\mathbf{n}_{\bm p}(d,m,r,\vep)$ exists. 

Recall that given $\de\ge 0$ and  a subset $A$ of a metric space $(X,d)$, one defines the (closed) $\de$-fattening $A_{\le \de}:=\conj{x\in X}{d(x,A)\le \de}$ and the (open) $\de$-deflating $A_{-\de}:=X\setminus ((X\setminus A)^c)_{\le \de}$. It is easy to see that $(A_{-\de})_{\le \de}\con A$.  We will say that an open covering $\mc U$ of a metric space $X$ is called $\vep$-\emph{fat}  when   $\{V_{-\vep}\}_{U\in \mc U}$ is also a covering on $X$.
 
 The following is a sort of Lebesgue's Number Lemma. 

 \prop
 Suppose that $X$ is a compact  metric space. Then every open covering of $X$ is an $\vep$-open covering for some $\vep>0$.   
 \fprop
 \prue
 Suppose that for some open covering $\mc U$ of $X$ 	such $\vep>0$ does not exists. For each $n\in \N$ we can find a point $x_n$ of $X$ not in $\bigcup_{U\in \mc U} U_{-2^{-n}}$. Since $X$ is compact, there is a subsequence $(x_{n_k})_k$ converging to some $x\in X$. Choose $U\in \mc U$ such that $x\in U$, and also $\de>0$ such that  the ball $B(x,2\de)\con U$. Observe that 
 $B(x,\de)\con (B(x,2\de))_{-\de}\con U_{-\de}  $. Now let $k\in \N$ be such that $\de n_{k}\ge 1$ and such that $d(x_{n_k},x)<\de$. It follows then that 
 $x_{n_k}\in U_{-\de}\con U_{-{2^{-n_k}}},$
 a contradiction. 
 \fprue
   
\defi
Given  $0<p\le \infty$, integers  $d,m$, and $r$ and $\vep>0$, let $\mathbf{n}_{\mathbf{BU,}\bm{p}}(d,m,r,\vep)$  be the minimal integer $n$ such that for every $\vep$-fat open covering $\mc U$ of $\mr{Emb}(\ell_p^d,\ell_p^n)$ with at most $r$ many pieces there exists $\ga\in \mr{Emb}(\ell_p^d,\ell_p^m)$ and some $U\in \mc U$ such that 
\begin{equation*}
\ga \circ \mr{Emb}(\ell_p^d,\ell_p^m)  \con U.
\end{equation*} 
\fdefi
Notice that by assigning to  each  $x\in S_{\ell_p^n} $   the embedding $1\mapsto x$ we can identify topologically $S_{\ell_p^n}$ and $\mr{Emb}(\ell_p^1,\ell_p^n)$. Since $\mr{Emb}(\ell_p^1,\ell_p^1)=\{\pm\mr{Id}\}$,   it follows that $\mathbf{n}_{\mathbf{BU,}\bm{p}}(1,1,r,\vep)$ is the minimal integer such that for every open covering of $S_{\ell_p^n}$ of cardinality $r$ there exists $U\in \mc U$ containing some pair of antipodal vectors.  
 Hence, by Borsuk-Ulam,  $\mathbf{n}_{\mathbf{BU,}\bm{p}}(1,1,r,\vep)\le r$ for every $\vep>0$. In this way, we have  Borsuk-Ulam Theorem is the following statement. 
\begin{theorem}[Lusternik and  Shnirel’man]
$\mathbf{n}_{\mathbf{BU,}\bm{p}}(1,1,r,\vep)\le  r $  for every $\vep>0$. 
\end{theorem}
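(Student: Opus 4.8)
The plan is to reduce the statement, through the identifications already set up in this subsection, to the classical Borsuk--Ulam theorem on the Euclidean sphere. First I would recall that the assignment $x\mapsto(1\mapsto x)$ identifies $\mr{Emb}(\ell_p^1,\ell_p^n)$ with the unit sphere $S_{\ell_p^n}$, and that $\mr{Emb}(\ell_p^1,\ell_p^1)=\{\pm\mr{Id}\}$. Consequently, if $\ga\in\mr{Emb}(\ell_p^1,\ell_p^n)$ corresponds to $x\in S_{\ell_p^n}$, then $\ga\circ\mr{Emb}(\ell_p^1,\ell_p^1)$ is exactly the antipodal pair $\{x,-x\}$. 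Under these identifications the defining condition for $\mathbf{n}_{\mathbf{BU,}\bm p}(1,1,r,\vep)$ becomes: some member of the covering contains a pair of antipodal points. Since every $\vep$-fat open covering is in particular an open covering, it suffices to prove the property for \emph{all} open coverings with at most $r$ pieces; thus the $\vep$-fatness hypothesis only shrinks the class of admissible coverings and may be dropped for the purpose of this upper bound.

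It is therefore enough to show that the property holds at $n=r$, for then the minimal $n$ for which it holds is $\le r$. That is, I would prove that every open covering of $S_{\ell_p^r}$ by at most $r$ sets has a member containing an antipodal pair. To transfer this from the $\ell_p$-sphere to the Euclidean one I would use the radial normalization $h\colon S_{\ell_p^r}\to S_{\ell_2^r}$, $h(x):=x/\nrm{x}_2$. This is a homeomorphism, with inverse $y\mapsto y/\nrm{y}_p$, both spheres being compact Hausdorff, and it is \emph{odd}, i.e.\ $h(-x)=-h(x)$. Hence $h$ pushes any open covering $\mc U$ of $S_{\ell_p^r}$ forward to an open covering $\{h(U)\}_{U\in\mc U}$ of $S_{\ell_2^r}$ of the same cardinality, and carries antipodal pairs to antipodal pairs; a member $h(U_0)$ of the pushed-forward covering containing $\{y,-y\}$ corresponds, via the odd inverse, to the member $U_0$ containing $\{h^{-1}(y),-h^{-1}(y)\}$. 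So the problem reduces to the Euclidean sphere $S_{\ell_2^r}$, which is the standard $(r-1)$-sphere.

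Finally I would invoke the classical Lusternik--Shnirel'man reformulation of Borsuk--Ulam recalled at the beginning of this subsection, in the case ``$n+1=r$'': if $S_{\ell_2^r}$ is covered by $r$ open sets, then one of them contains a pair of antipodal points. (If fewer than $r$ sets are used, one pads the covering with empty sets, and the antipodal pair necessarily lies in a nonempty member.) Applying this to $\{h(U)\}_{U\in\mc U}$ yields the required antipodal pair at $n=r$, and hence $\mathbf{n}_{\mathbf{BU,}\bm p}(1,1,r,\vep)\le r$ for every $\vep>0$.

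I do not expect a genuine analytic obstacle here: the content of the statement \emph{is} the classical topological Borsuk--Ulam theorem, and the bound is in fact tight (a covering of the $(r-1)$-sphere by $r+1$ small sets can avoid antipodal pairs, so no elementary argument can replace the theorem). The only points needing care are the two bookkeeping steps, namely that $\mr{Emb}(\ell_p^1,\ell_p^1)=\{\pm\mr{Id}\}$ really converts the covering condition into the antipodal-pair condition, and that the radial map is an \emph{odd} homeomorphism, so that the free $\mbb Z_2$-action underlying Borsuk--Ulam is preserved. One must also keep the indices aligned: the unit sphere of the $r$-dimensional space $\ell_p^r$ is the $(r-1)$-sphere, which is precisely the dimension at which a covering by $r$ sets is forced to contain an antipodal pair.
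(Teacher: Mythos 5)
Your proof is correct and follows essentially the same route as the paper: identify $\mr{Emb}(\ell_p^1,\ell_p^n)$ with $S_{\ell_p^n}$, note that $\mr{Emb}(\ell_p^1,\ell_p^1)=\{\pm\mr{Id}\}$ turns the covering condition into the existence of an antipodal pair inside one piece, and invoke the Lusternik--Shnirel'man form of Borsuk--Ulam. The only difference is that you make explicit two details the paper leaves implicit (dropping the $\vep$-fatness hypothesis for the upper bound, and transferring to the Euclidean sphere via the odd radial homeomorphism), which is a welcome clarification rather than a different argument.
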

   We have the following relation, easy to prove.  
 \prop
 For every $0< p\le \infty$ every $d,m$, $r$ and every $0<\vep<\de$ one has that 
 \begin{equation*}
\mbf{n}_{\bm p}(m,d,r,\de)\le  \mbf{n}_{\mbf{BU},\bm p}(d,m,r,\vep)\le \mbf{n}_{\bm p}(m,d,r,\vep). 
 \end{equation*} 
 \fprop
\begin{problem}
Is always  $\mbf{n}_{\mbf{BU},\bm p}(d,m,r,\vep)$ independent of $\vep$?
\end{problem}

\subsection{The \arp of the family $\{\ell_p^n\}_n$}\label{arp_of_ell_p_equisurjections}
We give a direct proof of  the  Approximate Ramsey property of  the family $\{\ell_p^n\}_n$ for $1\le p\neq 2<\infty$  and then of the \arpp of $\age(L_p(0,1))$ for $p\notin 2\N$.

\teor\label{it43784343}
 For $1\le p \neq 2<\infty$ the family $\{\ell_p^n\}_n$  has the \arp.
 \fteor
Its proof is done by relating this \arp with  the \arp of approximate equipartitions, that is shown to be true by an instance of the phenomenon of concentration of measure.  First of all, we try to reduce Theorem \ref{it43784343} to the case of $p=1$.  Notice that it follows from the Banach-Lamperti Theorem on isometries of $L_p(0,1)$, $p\neq 2,\infty$, that all isometry groups for those $p$'s are topologically isomorphic, as was observed and used in \cite{GiPe}. There is a similar fact concerning embeddings using the {\em Mazur mapping}. 
 
 \defi
Given $0< p,q<\infty$, and given $n$, let $\mr{M}_{p,q,n}:\ell_p^n\to \ell_q^n$, be the \emph{Mazur map} defined for $x\in \ell_p^n$ by
\begin{equation}
\mr{M}_{p,q,n}(x):=\sum_{\xi<n}  \mr{sign}(x(\xi)  ) |x(\xi) | ^{\frac{p}q} u_\xi.
\end{equation} 
\fdefi
The following facts are known and easy to prove:
\begin{enumerate}[(a)]
\item  $\mr{M}_{p,q,n}$ preserves the support and the signs of the coordinates; in fact, if $x$ and $y$ are disjointly supported then 
$\mr{M}_{p,q,n}(\la x+\mu y)=\mr{sign}(\la)|\la|^{p/q}\mr{M}_{p,q,n}(x)+\mr{sign}(\la) |\la|^{p/q} \mr{M}_{p,q,n}(y).$
\item   $\mr{M}_{q,p,n}\circ \mr{M}_{p,q,n}=\mr{Id}_{\ell_p^n}$.
\item  $\mr{M}_{p,q,n}$ is a uniform homeomorphism between the corresponding unit spheres (note that $\nrm{\mr{M}_{p,q,n}(x)}_q^q=\nrm{x}_p^p$).  If $\tau_{p,q}$ is  the modulus of uniform continuity of $\mr{M}_{p,q}$,  then
\begin{align*}
\tau_{p,q}(t) \le & 
\begin{cases}
\frac{p}{q}t & \text{ if $p\ge q$}\\
 c_{p,q} t^\frac{p}{q} & \text{ if $p<q$}.
\end{cases}
\end{align*}

\end{enumerate}  
For $1\le p, q<\infty$, $p,q\neq 2$, the Mazur mapping naturally extends to  $\mr{M}_{p,q,n}^d:\mr{Emb}(\ell_p^d,\ell_p^n)\to \mr{Emb}(\ell_q^d,\ell_q^n)$   defined for $\ga\in \mr{Emb}(\ell_p^d,\ell_p^n)$ by  
\begin{equation*}
\mr{M}_{p,q,n}^d({\ga}):=\mr{M}_{p,q,n}\circ \ga \circ\mr{M}_{q,p,d}.
\end{equation*} 
$\mr{M}_{p,q,n}^d({\ga})$ is a linear isometric embedding because $\ga$ sends disjointed supported to disjointed supported, and in fact, 
\begin{equation*}
\mr{M}_{p,q,n}^d({\ga})(\sum_{i<d}a_i u_i)=\sum_{i<d}a_i\mr{M}_{p,q,n}(\ga(u_i)).
\end{equation*} 
\prop
$\mr{M}_{p,q,n}^d$ is an uniform homeomorphism with modulus of continuity $\tau_{p,q}$ with inverse   $\mr{M}_{q,p,n}^d$.
\fprop
\prue
\begin{align*}
\nrm{\mr{M}_{p,q,n}^d(\ga) -\mr{M}_{p,q,n}^d(\eta)}_{q,q}=&\max_{x\in S_{\ell_q^d}}\nrm{\mr{M}_{p,q,n}(\ga(\mr{M}_{q,p,d} (x) ))- \mr{M}_{p,q,n}(\eta(\mr{M}_{q,p,d} (x) ))}_{q}=\\
=& \max_{y\in S_{\ell_p^d}}\nrm{\mr{M}_{p,q,n}(\ga(y ))- \mr{M}_{p,q,n}(\eta(y ))}_{q}\le 
\ \max_{y\in S_{\ell_p^d}}\tau_{p,q}(\nrm{\ga(y )- \eta(y )}_{p})= \\
=& {\om}_{p,q}(\nrm{\ga-\eta}_{p,p}), 
 \end{align*} 
 because $\tau_{p,q}$ is increasing. 
\fprue
Given $p,d,m\in \N$ and $\vep>0$, the integer $\mbf{n}_{\bm p}(d,m,r,\vep)$ is the minimal integer $n$ witnessing the \arp of $\{\ell_p^k\}_k$ for the initial parameters $d,m$ and $\vep$. 
We obtain the following
\prop\label{oio2injrkdfhiwehuiriuwbds}
$\mathbf{n}_{\bm p}(d,m,r,\vep)=\mathbf{n}_{\bm q}(d,m,r,\tau_{p,q}(\vep))$ for every $0< p,q<\infty$, $p,q\neq 2$.
\fprop

\prue
Fix $p,q$ as above, and fix all the parameters. Let $n:=  \mathbf{n_p}(d,m,r,\vep)$, and let $c:\mr{Emb}(\ell_q^d,\ell_q^n)\to r$. Let $\widehat{c}:\mr{Emb}(\ell_p^d,\ell_p^n)\to r$ be the induced coloring 
$\widehat{c}= c \circ \mr{M}_{p,q,n}^d$. Then, let $\ga\in \mr{Emb}(\ell_p^m,\ell_p^n)$, and $i<r$ be such that
$\ga\circ \mr{Emb}(\ell_p^d,\ell_p^m)\con (\widehat{c}^{-1}\{i\})_\vep$. 
 Let $\widehat{\ga}=\mr{M}_{p,q,n}^m(\ga)$. We claim that $\widehat{\ga}$ and $i<r$ do the job. Fix $\sig\in \mr{Emb}(\ell_q^d,\ell_q^m)$. Then $\bar \sig := \mr{M}_{q,p,m}^d(\sig)\in \mr{Emb}(\ell_p^d,\ell_p^m)$, so there is some  $\psi \in \mr{Emb}(\ell_p^d,\ell_p^n)$ with $c(\mr{M}_{p,q,n}^d(\psi))=i$ and $\nrm{\ga\circ \bar \sig - \psi}_{p,p}\le \vep$. Hence $\nrm{\mr{M}_{p,q,n}^d(\ga\circ \bar \sig )-  \mr{M}_{p,q,n}^d(\psi)}_{q,q}\le \tau_{p,q}(\vep)$, and since $\mr{M}_{p,q,n}^d (\ga\circ \bar \sig )=\mr{M}_{p,q,n}^d (\ga\circ \mr{M}_{q,p,m}^d (\sig) )= \mr{M}_{p,q,n}^m(\ga)\circ \sig$, we are done. 
\fprue
So, in order to have the \arp of $\{\ell_p^n\}_n$, $p\neq 2,\infty$, it suffices to prove the \arp of $\{\ell_1^n\}_n$, and this is what we do next.  The proof we give is a byproduct of an extension of the fact that  $\mathbf{n}_1(1,m,r,\vep)$ exists, proved by J. Matou\v{s}ek and V.  R\"{o}dl  \cite{Ma-Ro},   and the existence of the Ramsey number corresponding to unital embeddings, i.e. those embeddings preserving the unit.  The proof of the existence of  $\mathbf{n}_1(1,m,r,\vep)$ was done, as we mentioned before, by   Matou\v{s}ek and   R\"{o}dl using combinatorial methods (spread vectors), and, independently,    by   E. Odell, H. P. Rosenthal and Th. Schlumprecht in \cite{Od-Ro-Schl}, by using  tools of Banach space theory, such as different type of bases.

We introduce some notation. In $\ell_1^n$, we  denote by $\mathbbm 1$ to the sequence $(1/n)\sum_{j<n}^n u_j=(1/n,\dots,1/n)\in \mbb F^n$.   We will denote by $\Emb((\ell_1^d,\mathbbm 1), (\ell_1^n,\mathbbm 1))$ the collection of unital isometric embeddings.  We have the following consequence of the equimeasurability principle of Plotkin and Rudin mentioned above in \S \ref{oi43894589754789}    ( Theorem \ref{oi4jiorjeiwrew44}).
\begin{proposition}
 An isometric embedding $\ga: \ell_1^d\to \ell_1^n$ is unital if and only if $d|n$ and  $\ga(u_j)= (d/n) \sum_{k\in s_j} u_k$ for $j<d$ such that  $\{s_j\}_{j<d}$ is a {\em $d$-equipartition} of $n$, that is,  $\#s_j=\#s_l= n/d$ for every $j,l<d$.   \qed
\end{proposition}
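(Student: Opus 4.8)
The plan is to prove both directions of the characterization of unital isometric embeddings $\ga:\ell_1^d\to \ell_1^n$. The easy direction (the ``if'') is a direct computation: assuming $d\mid n$, that $\{s_j\}_{j<d}$ is a $d$-equipartition of $n$, and that $\ga(u_j)=(d/n)\sum_{k\in s_j}u_k$, I would first check that $\ga$ is a genuine isometric embedding by computing $\nrm{\ga(\sum_j a_j u_j)}_1=\sum_j |a_j|\,(d/n)\,\#s_j=\sum_j|a_j|$, using $\#s_j=n/d$ and the disjointness of the supports $s_j$. Then $\ga(\mathbbm 1)=\ga((1/d)\sum_{j<d}u_j)=(1/d)\sum_j (d/n)\sum_{k\in s_j}u_k=(1/n)\sum_{k<n}u_k=\mathbbm 1$, so $\ga$ is unital.

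For the substantial direction (the ``only if''), I would start from an arbitrary unital isometric embedding $\ga$ and apply the equimeasurability principle (Theorem~\ref{oi4jiorjeiwrew44}) to pin down the distribution of the images. The key observation is that $\ga$ unital means $\ga(\mathbbm 1)=\mathbbm 1$, i.e. the constant function on the normalized counting measure space $(d,\mu_0)$ is sent to the constant function on $(n,\mu_1)$, where $\mu_0$ gives each of the $d$ atoms mass $1/d$ and $\mu_1$ gives each of the $n$ atoms mass $1/n$. Writing the coordinate functions $f_j=d\,u_j$ (so that $\mathbbm 1+\sum_j a_j f_j$ ranges over all functions with the constant term), the hypothesis that $\ga$ is a unital isometry translates into the equality of the $L_1$-norms $\nrm{\mathbbm 1+\sum_j a_j f_j}_{L_1(\mu_0)}=\nrm{\mathbbm 1+\sum_j a_j g_j}_{L_1(\mu_1)}$ for all scalars, where $g_j$ are the suitably scaled images. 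By Theorem~\ref{oi4jiorjeiwrew44} with $p=1\notin 2\N$, the pushforward measures $F_*\mu_0$ and $G_*\mu_1$ on $\mbb F^d$ coincide.

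From the equidistribution I would extract the combinatorial structure. Since $F_*\mu_0$ is the distribution of the vector $(f_j)_{j<d}$ under $\mu_0$, and $\ga$ maps the disjointly supported $u_j$ to disjointly supported images (because $p=1\neq 2$ forces any isometric embedding of $L_p$-spaces to be disjointness-preserving, as recalled in the paragraph before Lemma~\ref{singleisometry}), the atoms of $\ga(u_j)$ partition $\{0,\dots,n-1\}$ into sets $s_j:=\supp \ga(u_j)$. Equidistribution of $F_*\mu_0$ and $G_*\mu_1$ forces each value that $f_j$ takes with a given $\mu_0$-mass to be matched by $g_j$ with the same $\mu_1$-mass; combined with disjointness and the unital normalization, this forces $\ga(u_j)$ to be a constant multiple of $\mathbbm 1_{s_j}$ with the constant determined by isometry, and forces $\mu_1(s_j)=\mu_0(\{j\})=1/d$, i.e. $\#s_j=n/d$ for every $j$. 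In particular $d\mid n$ and $\{s_j\}_{j<d}$ is a $d$-equipartition, and the constant is $(d/n)$ as required.

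The main obstacle I anticipate is the careful bookkeeping in translating the analytic equimeasurability conclusion into the exact combinatorial normalization: one must track precisely how the counting measures $\mu_0,\mu_1$ scale under the identification of $\ell_1^d$ with an $L_1$-space, verify that the disjointness-preserving property genuinely yields a \emph{partition} (rather than merely disjoint supports that fail to cover), and confirm that the cardinalities are forced to be \emph{exactly} equal rather than merely summing correctly. The positivity and the exact matching of atomic masses are where the unital hypothesis does the essential work, ruling out any rearrangement that splits or merges atoms unevenly.
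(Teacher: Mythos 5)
Your proof is correct and takes essentially the same route as the paper, which states this proposition as an immediate consequence of the Plotkin--Rudin equimeasurability principle (Theorem \ref{oi4jiorjeiwrew44}) and records no further details: identifying $\ell_1^d$ and $\ell_1^n$ with $L_1$ of the normalized counting measures, noting that a unital isometry preserves the norms $\nrm{\mathbbm 1 + \sum_j a_j f_j}$, and reading off the structure from the equality $F_*\mu_0=G_*\mu_1$ is exactly that derivation. The only remark worth making is that your separate appeal to Banach--Lamperti disjointness preservation is redundant, since the joint equidistribution already forces $(g_0(k),\dots,g_{d-1}(k))\in\{d e_j\}_{j<d}$ for every $k<n$, which yields in one stroke the disjointness of the supports $s_j$, the fact that they cover $n$, the constancy $g_j=d\,\mathbbm 1_{s_j}$, and the cardinalities $\#s_j=n/d$ (hence $d\,|\,n$).
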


\defi\label{oi32rioj32oi234}
Given $d|n$ let $\mc{EQ}(n,d)$ be the set of \emph{equipartitions} of $n$ with $d$ many pieces.  Given in addition $d|m|n$, and $\mc R\in \mc{EQ}(n,m)$ let $\langle \mc R\rangle_d^{\mr{eq}}$ be the collection of $d$-equipartitions of $n$ coarser than $\mc R$. 

Partitions of $n$ with $d$-many pieces are canonically identified with  surjections $F: n\to d$ such that $\min F^{-1}(i)<\min F^{-1}(j)$ for every $i<j<d$, called {\em rigid surjections}. In this way, $d$-equipartitions are identified with rigid surjections $F: n\to d$ such that $\# F^{-1}(j)= n/d$ for every $j<d$. 
\fdefi

It follows that $d$-dimensional unital subspaces of $\ell_1^n$ that are isometric to $\ell_1^d$ are the of the form  $\langle (d/n) \sum_{k\in s_j} u_k\rangle_{j<d}$ for $d$-equipartitions $\{s_j\}_{j<d}$ of $n$.  This means that the following are equivalent:
\begin{enumerate}[(i)]
\item The class $\{(\ell_1^n,\mathbbm{1})\}_n$ has the {\em Structural Ramsey Property}, that is, for every $d,m$ and every $r$ there is $n$ such that every $r$-coloring of the collection $\binom{(\ell_1^n,\mathbbm{1})}{(\ell_1^d,\mathbbm{1})}$ of unital subspaces of $\ell_1^n$ isometric to $\ell_1^d$ has a monochromatic set of the form $\binom{ X}{(\ell_1^d,\mathbbm{1})}$ for some unital $X\in \binom{(\ell_1^n,\mathbbm{1})}{(\ell_1^m,\mathbbm{1})}$.
\item The collection of equipartitions have the Structural  Ramsey Property, that is,  for every $d,m$ and $r$ there is $n$ such that  every $r$-coloring  of $\mc{EQ}(n,d)$ has a monochromatic set of the form $\langle \mc R\rangle_{d}^\mr{eq}$     for some             $R\in \mc{EQ}(n,m)$.   

\end{enumerate}

It is interesting to compare the previous equivalence between Ramsey properties with what happens in the case of $p=\infty$:  Observe that   $\binom{(\ell_1^n,\mathbbm{1})}{(\ell_1^d,\mathbbm{1})}$  is exactly the collection of  unital {\em sublattices} of $\ell_1^n$ of dimension $d$.  Now  we define  $\binom{(\ell_\infty^n,\mathbbm{1})}{(\ell_\infty^d,\mathbbm{1})}$ in the same way as the collection of $d$-dimensional unital (i.e. containing the unit $\sum_{j<n}u_j$ of $\ell_\infty^n$) sublattices of $\ell_\infty^n$. Then we have the following similar equivalence. 
\begin{enumerate}[(i)]\addtocounter{enumi}{2}
\item The class $\{(\ell_\infty^n,\mathbbm{1})\}_n$ has the {\em Structural} Ramsey Property, that is, for every $d,m$ and every $r$ there is $n$ such that every $r$-coloring of the collection $\binom{(\ell_\infty^n,\mathbbm{1})}{(\ell_\infty^d,\mathbbm{1})}$ of unital subspaces of $\ell_\infty^n$ isometric to $\ell_\infty^d$ has a monochromatic set of the form $\binom{ X}{(\ell_\infty^d,\mathbbm{1})}$ for some unital $X\in \binom{(\ell_\infty^n,\mathbbm{1})}{(\ell_\infty^m,\mathbbm{1})}$.
\item The collection of partitions have the Structural  Ramsey Property, that is,  for every $d,m$ and $r$ there is $n$ such that  every $r$-coloring  of the collection $\mc{E}(n,d)$  of partitions of $n$ with $d$ many pieces has a monochromatic set of the form $ \langle \mc P\rangle$, the collection of $d$-partitions coarser than $\mc P$,     for some             $\mc P\in \mc{E}(n,m)$.   

\end{enumerate}
This latter statement is the well-known  {\em Dual Ramsey Theorem} of Graham  and Rothschild \cite{GrRo}, that was recently used in \cite{BaLALuMbo2}  to prove the \arp of $\{\ell_\infty^n\}_n$.  We have the following open problem posed by A. S. Kechris,  M. Sokić and S. Todorcevic in \cite{KeSoTo}. 
\begin{problem} 
Does the collection of equipartitions have the structural Ramsey property?
\end{problem}
We now present a positive answer for the approximate version of this problem, and that will be used to show the \arp of $\{\ell_1^n\}_n$. First,  endow $\mathcal{EQ}(n,d)$ with the normalized {\em Hamming} metric as follows: We identify each equipartition $\mc P$ with the corresponding rigid surjection $F_\mc P$, and then we define 
$$d_\mr{H}(\mc P,\mc G):=d_\mr{H}(F_\mc P,F_\mc G):=\frac{1}{n} \#[ F_\mc P\neq F_\mc Q]=\frac{1}{n} \#\conj{j<n}{F_\mc P(j)\neq F_\mc Q(j)}$$
We will prove that
\begin{theorem}[Approximate Structural Ramsey Property of Equipartitions]\label{ASR_for_equip}
  For every $d|m$, every $r\in \N$ and every $\vep>0$ there is a multiple $n$ of $m$ such that for every $r$-coloring of $\mathcal{EQ}(n,d)$ has an $\vep$-monochromatic set of the form $\langle \mc R\rangle_d^\mr{eq}$ for some $\mc R\in \mathcal{EQ}(n,m)$. 
\end{theorem}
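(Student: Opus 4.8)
The plan is to deduce the statement from the phenomenon of concentration of measure on the symmetric group, applied to the metric measure space $(\mathcal{EQ}(n,d),d_{\mr H},\mu_n)$, where $\mu_n$ is the uniform probability measure. First I would record that $S_n$ acts transitively on $\mathcal{EQ}(n,d)$ by permuting the ground set $n$, exhibiting $\mathcal{EQ}(n,d)$ as a homogeneous space of $S_n$; the quotient map $S_n\to\mathcal{EQ}(n,d)$ is $1$-Lipschitz for the normalized Hamming metrics and pushes the uniform measure on $S_n$ forward to $\mu_n$. By Maurey's concentration inequality for $(S_n,d_{\mr H})$ the concentration function satisfies $\al_{S_n}(\vep)\le 2e^{-n\vep^2/C}$ for an absolute constant $C$, and since concentration passes to $1$-Lipschitz measure-preserving quotients we obtain $\al_{\mathcal{EQ}(n,d)}(\vep)\le 2e^{-n\vep^2/C}\to_n 0$ for every fixed $\vep>0$.

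Next, fix $r$ and $\vep$ and let $c:\mathcal{EQ}(n,d)\to r$ be a coloring. Some colour class $A:=c^{-1}(i)$ has $\mu_n(A)\ge 1/r$. A standard consequence of concentration is that, once $n$ is large enough that $\de_n:=\al_{\mathcal{EQ}(n,d)}(\vep)<1/r$, the fattening satisfies $\mu_n((A)_{2\vep})\ge 1-\de_n$: indeed $\al_{\mathcal{EQ}(n,d)}(\vep)<1/r$ forces $\mu_n((A)_\vep)>1/2$, and then $\mu_n((A)_{2\vep})=\mu_n(((A)_\vep)_\vep)\ge 1-\de_n$. Writing $C:=\mathcal{EQ}(n,d)\setminus (A)_{2\vep}$ for the bad set, we thus have $\mu_n(C)\le\de_n$, which tends to $0$.

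The remaining, purely combinatorial, step is to locate a single coarser equipartition $\mc R\in\mathcal{EQ}(n,m)$ whose subspace $\langle\mc R\rangle_d^{\mr{eq}}$ avoids $C$ entirely. Here I would use that each $\langle\mc R\rangle_d^{\mr{eq}}$ has cardinality $L:=|\mathcal{EQ}(m,d)|$, a number depending only on $m$ and $d$, and that by transitivity of $S_n$ the number of $\mc R\in\mathcal{EQ}(n,m)$ admitting a prescribed $\mc P\in\langle\mc R\rangle_d^{\mr{eq}}$ is independent of $\mc P$; double counting the pairs $(\mc R,\mc P)$ with $\mc P\in\langle\mc R\rangle_d^{\mr{eq}}$ then gives $\mathbb{E}_{\mc R}[\#(\langle\mc R\rangle_d^{\mr{eq}}\cap C)]=L\,\mu_n(C)\le L\de_n$. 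Choosing $n$ to be a multiple of $m$ with $\de_n<1/L$ forces this expectation below $1$, so some $\mc R$ satisfies $\langle\mc R\rangle_d^{\mr{eq}}\cap C=\emptyset$, i.e. $\langle\mc R\rangle_d^{\mr{eq}}\con (c^{-1}(i))_{2\vep}$. Running the whole argument with $\vep/2$ in place of $\vep$ yields the desired $\vep$-monochromatic set.

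The main obstacle I anticipate is the first paragraph: one must check carefully that the normalized Hamming metric attached by the paper to the rigid-surjection representatives is exactly the one for which the quotient $S_n\to\mathcal{EQ}(n,d)$ is genuinely $1$-Lipschitz and measure-preserving (dealing with the rigidification convention $S_d$-equivariantly), so that the sharp exponential concentration of $S_n$ transfers with usable constants. Once the Lévy estimate $\de_n\to 0$ is secured for this precise metric, the interplay between the measure of the bad set and the fixed cardinality $L=|\mathcal{EQ}(m,d)|$ of the coarser subspaces closes the argument immediately.
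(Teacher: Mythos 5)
Your final counting step is correct, and it is genuinely different from (and slicker than) what the paper does. The first-moment computation $\mbb E_{\mc R}\bigl[\#(\langle\mc R\rangle_d^{\mr{eq}}\cap C)\bigr]=L\,\mu_n(C)$, with $L=\#\mc{EQ}(m,d)$, is justified exactly as you say by $\mc S_n$-invariance, and it manufactures the coarsening family $\langle\mc R\rangle_d^{\mr{eq}}$ in one stroke. The paper never averages over $\mc R$: it proves concentration for the product space $({}^{n}d,d_\mr{H},\mu_\mr{C})$ by the martingale/length method, restricts it to $\de$-equisurjections (Proposition \ref{l3jrijjeferweew}: they have almost full measure), extracts an orbit $\mc S_m\circ F$ inside a fattened colour class by intersecting its $m!$ measure-preserving translates (Lemma \ref{i34rh34uh43uier4322323}), and only then converts orbits into coarsening families via the matching Propositions \ref{permut_equip} and \ref{hamming_composition}.

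However, the obstacle you flagged in your first paragraph is not a detail to be checked but a false claim, and this is a genuine gap. The quotient map $\mc S_n\to(\mc{EQ}(n,d),d_\mr{H})$, $\pi\mapsto\pi\cdot\mc P_0$, is not $1$-Lipschitz for the metric the paper puts on $\mc{EQ}(n,d)$ (Hamming distance of \emph{rigid} representatives); it is not $C$-Lipschitz for any constant $C$. Indeed, take $\mc P_0$ rigid with $0\in P_0$ and $\min P_1=1$, and let $\tau\in\mc S_n$ be the transposition of $0$ and $1$. Then $d_\mr{H}(\id,\tau)=2/n$, while $\tau\cdot\mc P_0$ differs from $\mc P_0$ only in the placement of the elements $0$ and $1$; but precisely because these are the minima of their pieces, the rigid labels of $P_0$ and $P_1$ get exchanged, so the two rigid surjections disagree on $2n/d-2$ coordinates and $d_\mr{H}(\mc P_0,\tau\cdot\mc P_0)=2/d-2/n$. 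Consequently Maurey's concentration on $\mc S_n$ does not transfer by the Lipschitz-quotient principle, and the estimate $\de_n:=\al_{\mc{EQ}(n,d)}(\vep)\le 2e^{-n\vep^{2}/C}$ — on which both your fattening bound $\mu_n((A)_{2\vep})\ge 1-\de_n$ and your choice of $n$ with $\de_n<1/L$ rest — is unproved for this metric. What your argument does prove is the theorem for the $\mc S_d$-invariant metric $\bar d(\mc P,\mc Q)=\min_{\rho\in\mc S_d}d_\mr{H}(F_{\mc P},\rho\circ F_{\mc Q})$, for which the quotient map \emph{is} $1$-Lipschitz and measure-preserving; since $\bar d\le d_\mr{H}$, fattenings for $\bar d$ are larger and that conclusion is formally weaker than the literal statement. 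The natural repair is to run your quotient-plus-averaging scheme where the paper runs its argument, namely on the labelled equisurjections $\eqp(n,d)$, where the Hamming metric is that of honest functions and the action $F\mapsto F\circ\pi^{-1}$ of $\mc S_n$ is transitive, $1$-Lipschitz and measure-preserving: this yields the $\de=0$ case of Theorem \ref{i34rh34uh43uier}, which is the statement actually used later for $\{\ell_p^n\}_n$. Passing from that labelled statement back to Theorem \ref{ASR_for_equip} with rigid representatives runs into exactly the relabelling phenomenon above (closeness of equisurjections does not imply closeness of their rigid representatives) — a subtlety which the paper's own one-line deduction also passes over, and which disappears if the metric on $\mc{EQ}(n,d)$ is read as $\bar d$.
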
  
To prove this, we introduce the notion of {\em approximate equipartitions and equisurjections} and using a discrete case of the method of concentration of measure we prove the approximate Ramsey result in Theorem \ref{i34rh34uh43uier} below, that easily implies Theorem \ref{ASR_for_equip}. 
\defi
Given two finite sets $S$ and $T$, let  ${}^TS$  be the set of mappings from $T$ to $S$, and let $\mr{Epi}(T,S)$ be the subset of the surjective ones.  Given $\de\ge 0$, let  $\eqp_\de(T,S)$ be the collection of all  \emph{$\de$-equisurjections}  $F:T\to S$; that is, those $T$  such that 
\begin{equation}
\frac{\#T}{\#S}(1-\de)\le \# F^{-1}(s) \le \frac{\#T}{\#S}(1+\de) \text{ for all $s\in S$}.
\end{equation} 
\fdefi
So $\de$-equisurjections are ``up-to $\de$'' {\em equisurjections}, that is, surjections $F:T\to S$ such that $\#F^{-1}(s)$ is always the same. 
Notice that when $\de<1$, we have 
\begin{equation}
\frac{1-\de}{1+\de}\le \frac{\#F^{-1}(s)}{\#F^{-1}(t)}\le \frac{1+\de}{1-\de}
\end{equation} 
for every  $F\in\eqp_\vep(T,S)$  and  $s,t\in S$.  The set $\mr{Equi}_0(T,S)$ will be denoted by $\mr{Equi}(T,S)$ and its elements equisurjections instead of $0$-equisurjections.

Finally, observe also that $\eqp_{\de_0}(S,R)\circ \eqp_{\de_1}(T,S)\con \eqp_{\de}(T,R)$ if $\de$ is such that    $(1-\de)\le (1-\de_0)(1-\de_1) \le  (1+\de_0)(1+\de_1)\le (1+\de)$.    We consider also ${}^TS$ as a metric space endowed with the normalized \emph{Hamming} distance 
\begin{equation}
d_\mr{H}(F,G):=\frac{1}{\#T} \#(\conj{t\in T}{F(t)\neq G(t)}).
\end{equation}
We will prove the following slight  generalization of the \arp of equisurjections. 
\teor[Approximate Ramsey property for $\de$-equisurjections] \label{i34rh34uh43uier} 
Let  $d|m$, $r\in \N$,   $\de\ge 0$ and $\vep>0$. There is  a multiple $n$  of $m$ such that    every $r$-coloring of $\eqp_{\de}(n,d)$ has an $(\de+\vep)$-monochromatic set of the form $\eqp_{\de}(m,d)\circ R$ for some  $R\in \eqp(n,m)$.
\fteor
It follows for example from the approximate equimeasurability principle that if $\ga:\ell_1^d\to \ell_1^n$ is a unital  approximate isometric embedding, then  $\ga(u_j)$ and $\ga(u_k)$ are almost disjointly supported for $j\neq k<d$, so these unital quasi isometric embeddings can be approximated by those linear mappings of the form $u_j\mapsto (1/{\#s_j})\mathbbm 1_{s_j}$ where $(s_j)_{j<d}$ is an approximate equisurjection of $n$.  As a consequence,  the \arp of approximate equipartitions is exactly a reformulation of the \arp of unital approximate isometric embeddings between $\ell_1^n$'s. 
The proof of Theorem \ref{i34rh34uh43uier} will be given later on Subsection \ref{oi43r4354343399}, but before we come back to it, we introduce a combinatorial tool that will be used. 
We recall   the notion of {\em spread vector} introduced by Matou\v{s}ek and  R\"{o}dl to prove the existence of $\mbf{n}_{\bm p}(1,m,r,\vep)$.
\defi
Given a vector $\mathbf{a}=(a_j)_{j<n}\in \R^n$, we say that $v\in c_{00}$ is a \emph{spread} of $\mathbf{a}$ if $v=\sum_{j<n} a_j u_{m_j}$ for some increasing sequence $(m_j)_{j<n}$ of integers. In this case, we write that $v=\mr{Spread}(\mathbf{a}, \{m_j\}_{j<n})$.
\fdefi
\teor[The Spreading Vector Theorem]\cite{Ma-Ro}\label{6267yrtr90rt}
For every $\vep>0$ and $m$ there exists $k$ and a normalized vector $\mathbf a\in S_{\ell_1^k}$ with the following property:  for each $j<m$, let $$x_j=x_j ^{(\vep,m)} := \sum_{l<k} a_l u_{k^2 j +k(l+1)}.$$ Then for every 
$x =\sum_j b_j x_j$ of norm 1 there is $s\con k^2 m$ of cardinality $k$ such that $\nrm{x-\mr{spread}(\mathbf a, s)}_1<\vep$ and such that   $s\con \bigcup_{b_j\neq 0}   [k^2j+k/2, k^2(j+1)+k/2[$. Consequently, for every isometric embedding $\ga: \ell_1^m \to \langle x_j\rangle_{j<m}$ there is an isometric embedding $\eta:\ell_1^d \to \ell_1^{k^2m}$ such that $\nrm{\ga-\eta}_{1,1}\le \vep$ and such that each $\eta(u_j)=\mr{spread}(\mathbf a, s_j)$ for some $s_j$.  
\qed\fteor
This statement and the classical Ramsey theorem easily proves the existence of $\mathbf{n}_1(1,m,r,\vep)$.

\prue[\sc Proof of Theorem \ref{it43784343}]  By Proposition \ref{oio2injrkdfhiwehuiriuwbds}, it suffices to prove the case $p=1$. We use  the Spreading Theorem \ref{6267yrtr90rt} applied to $m$ and $\vep/2$ to find the corresponding $\mathbf{a}=(a_l)_{l<k}\in S_{\ell_1^k}$. 
 Fix $d,m,r\in \N$ and $\vep>0$. 
We use the Approximate Ramsey property for  equipartitions in Theorem  \ref{i34rh34uh43uier} (see the comment after the statement)  applied to  $d=k^2m$, $k^2m$, number of colors $r$, $\de=0$,  and admitted error $\vep/2$ to find the corresponding $n$ divided by $k^2m$, i.e., $n$ has the property that for every $r$-coloring of $\Emb((\ell_1^{k^2m}, \mathbbm 1),(\ell_1^{n}, \mathbbm 1))$ has an $\vep/2$-monochromatic set of the form      
$\ga \circ \iso(\ell_1^{k^2m})$ for some unital isometric embedding $\ga\in \Emb((\ell_1^{k^2m}, \mathbbm 1),(\ell_1^{n}, \mathbbm 1))$. Let us see that this  $n$ works. We fix $c:\Emb(\ell_1^d,\ell_1^n)\to r$.   Let $\tau: \ell_1^m\to \ell_1^{k^2m}$ be the  linear mapping  defined by $\tau(u_j):= x_j^{(\vep/2,m)}$ for every $j<m$, and let $\iota: \ell_1^d\to \ell_1^{m}$ be the canonical isometric embedding $\iota(u_i):=u_i$ for every $i<d$.  Now let 
$\widehat{c}:\Emb((\ell_1^{k^2m},\mathbbm 1),(\ell_1^n,\mathbbm 1))\to r$ be defined  by $\widehat{c}(\ga):= c(\ga \circ \tau\circ \iota)$.  By the \arp, there is some unital $\ro:\ell_1^{k^2m}\to \ell_1^n$  and some $s<r$ such that $\tau \circ \iso(\ell_1^{k^2m})\con (\widehat{c}^{-1}(s))_{\vep/2}$.  
Finally, we claim that $(\ro \circ \tau)\circ \Emb(\ell_1^d,\ell_1^m)\con (c^{-1}(s))_\vep$: For suppose that $\ga\in \Emb(\ell_1^d,\ell_1^m)$. By the property of $(x_j^{(\vep,m)})_{j<m}$   in the spreading vector Theorem,  there are pairwise disjoint subsets $(s_j)_{j<d}$ of $k^2m$, each  of cardinality $k$, such that $\nrm{\tau(\ga(u_j))- \mr{spread}(\mathbf a, s_j)}_1<\vep/2$ for every $j<d$. Let now $\theta\in \iso(\ell_1^{k^2m})$ be such that $\theta(x_j^{(\vep,m)})=\mr{spread}(\mathbf a, s_j) $ for every $j<d$. Note that is possible because   $x_j^{(\vep,m)}=\mr{spread}(\mathbf a, \{k^2j+k(l+1)\}_{l<k} )$ and $(s_j)_{j<d}$ is a pairwise disjoint sequence.  It follows that 
$$\nrm{\theta\circ \tau \circ \iota -\tau \circ \ga}_{1,1} =\max_{j<d} \nrm{\theta(x_j^{(\vep,m)})- \tau(\ga(u_j))}_1\le  
\max_{j<d} \nrm{\mr{spread}(\mathbf a, s_j) - \tau(\ga(u_j))}_1\le\frac{\vep}2,$$
hence $\nrm{\ro \circ \theta\circ \tau\circ \iota -\ro \circ \tau\circ \ga}_{1,1}\le \vep/2$. Let $\psi:\ell_1^{k^2m}\to \ell_1^n$ be unital such that 
$\widehat{c}(\psi)=s$ and such that $\nrm{\psi - \ro\circ \theta}_{1,1}\le \vep/2$. It follows that $c(\psi \circ \tau\circ \iota)=s$ and 
\[ \nrm{\psi \circ \tau\circ \iota - \ro \circ \tau \circ \ga}\le \nrm{\psi \circ \tau\circ \iota - \ro \circ \theta \circ \tau \circ \iota}+
 \nrm{ \ro \circ \theta \circ \tau \circ \iota -\ro \circ \tau \circ \ga}\le \vep.\qedhere\]
\fprue

\subsubsection{The proof of the \arp for approximate equisurjections}   \label{oi43r4354343399}
The goal now is to give a proof of the \arp of approximate equisurjections   in Theorem \ref{i34rh34uh43uier}. This statement is a consequence of the concentration of measure property that approximate equisurjections have. Let us recall some basic fact and definitions on this. 
\defi
Recall that an  \emph{mm}-space is a  triple $(X,d,\mu)$ where $(X,d)$ is a metric space and $\mu$ is a (probability) measure on $X$. Recall that the \emph{extended concentration function} $\al_X(\de,\vep)$ for $\vep,\de>0$ is defined by 
\begin{equation*}
\al_X(\de,\vep)=1-\inf\conj{\mu((A)_\vep}{\mu(A)\ge \de}.
\end{equation*} 
The \emph{concentration function} $\al_X(\vep)$ is $\al_X(1/2,\vep)$.  A sequence $(X_n)_n$ of mm spaces is called \emph{L\'evy} when 
\begin{equation*}
\al_{X_n}(\vep)\to_n 0 \text{ for every $\vep>0$}.
\end{equation*} 
The sequence $(X_n)_n$ is called \emph{normal L\'evy} when there are $c_1,c_2>0$ such that
\begin{equation}\label{k43ijti4jtrre}
\al_{X_n}(\vep) \le c_1\exp(-c_2 \vep^2 n). 
\end{equation} 
We say that $(X_n)_n$ is \emph{asymptotically normal L\'evy} when there are $c_1,c_2>0$ such that for every $\vep>0$ there exists $n_\vep$ such that for every $n\ge n_\vep$ the inequality in \eqref{k43ijti4jtrre} holds. 
\fdefi
\prop
Suppose that $((X_n,d_n,\mu_n))_n$ is a normal L\'evy sequence, and suppose that $A_n\con X_n$ for every $n$ is such that  $\inf_n \mu_n(A_n)>0$. Then $(A_n, d_n, \mu_n(\cdot|A_n))_n$ is an asymptotic normal L\'evy sequence.  
\fprop
The proof is based on the following simple fact. 
\prop\label{3jrjjerers}
$\al_X(\de,\ro+\vep)\le \al_X(\vep)$ for every $\de$ such that $\al_X(\ro)<\de$.
\fprop
\prue
Fix $A$ such that $\mu(A)\ge \de$. We see first that  $\mu(A_\ro)\ge 1/2$:  Otherwise, $\mu( X\setminus A_\ro)\ge 1/2$, hence $\mu(A)\le 1- \mu((X\setminus A_\ro)_\ro)\le \al_X(\ro)<\de$, and this is impossible.
So it follows that $\mu(A_{\ro+\vep})\ge 1-\al_X(\vep)$.   
\fprue

A standard way to estimate the concentration functions is by studying lengths of filtrations. Recall that given a measure space $(\Om, \Sig)$, a \emph{filtration} is an $\con$-increasing sequence $(\Sig_n)_n$ of $\sig$-subalgebras of $\Sig$. For what we are interested in, we assume that the filtration is finite, starting and finishing with the trivial subalgebras $\{\buit,\Om\}$ and $\Sig$.  
A finite metric space $(\Omega,d)$ is of \emph{length} $l$  (see for example \cite{Mi-Sch}) if there are numbers $a_0,\dots,a_n$ and a {filtration} $(\mc F_{k})_{k=0}^n$ such that 
$l=(\sum_i a_i^2)^{1/2}$ and such that
for every $k$ and every $A,B\in \mc F_k$ such that $A,B\con C\in \mc F_{k-1}$ there is a bijection $\theta: A\to B$ such that $\max_{a\in A}d(a,\theta(a))\le a_k$.    It follows then  (see \cite[Theorem 4.2]{Le}) that 
if $(\Om,d)$ is of length $l$, and if  $\mu_C$ denotes the normalized counting measure on $\Om$,  then, 
\begin{equation}\label{iio58893478678238}
\al_{(\Om,d,\mu_C)}(\vep)\le \exp( -\vep^2/(8l^2)) 
\end{equation} 
It is well known that the \emph{mm space} $\mc X_n:=({}^nS,d_\mr{H},\mu_{\mr C})$, where, ${}^nS$ is the collection of functions from $n$ to $S$, $d_\mr{H}$ is the normalized Hamming distance and $\mu_\mr{C}$ is the normalized counting measure,  is of \emph{length} $1/\sqrt{n}$, so it follows from  the inequality \eqref{iio58893478678238} that    
\begin{equation}\label{lij34jrij4ieee323223}
\al_{\mc X_n}(\vep)\le \exp(-\frac{\vep^2 n}{8}).
\end{equation}

The following is a well-known result that follows from the weak law of large numbers (see below). We give an alternative proof using concentration  of measure.
\prop\label{l3jrijjeferweew}
Suppose that $\de>0$ and $\#S\ge 2$. Then there exists some $n_\de$ such that for every $n\ge n_\de$ one has that 
\begin{equation}\label{kjnklmerkiore89895445}
\mu(\eqp_{\de} (n,S))\ge 1- \exp(-\frac{\de^2}{9 (\#S(\#S-1))^2}n ). 
\end{equation} 
\fprop
\prue
For each $s\in S$, let     $A_s:=\conj{F\in {}^nS}{\#F^{-1}(s)\ge {n}/{\#S}}$.  Notice that $ \mu_\mr{C}(A_s)\ge 1/\#S$:  Observe that $\mu_\mr C(A_s)=\mu_\mr C(A_t)$ because the transposition $\pi$ sending $s$ to $t$  transforms  $A_s$ into $A_t$ in a measure preserving way. Since ${}^nS=\bigcup_{s\in S}A_s$, we obtain the desired bound. Now we see that 
 \begin{equation*}\label{lkdslkjk438438rttg}
 \bigcap_{s\in S} (A_s )_\de \con \eqp_{\de\#S(\#S-1)}(n,S):
 \end{equation*} 
Given $F\in   (A_s)_\de $, let $G\in A_s$ be such that $d_\mr C(F,G)\le \de$. It follows that
\begin{equation}\label{j4ioioe4iu5ui5588}
\#F^{-1}(s)\ge \#G^{-1}(s) -d_\mr C(F,G)\ge   \frac{n}{\#S}(1- \de\#S).
\end{equation}  
So, if $F\in \bigcap_{s\in S} (A_s)_\de$, then, using \eqref{j4ioioe4iu5ui5588},   one has that 
\begin{align} \label{j4ioioe4iu5ui5581}
\#F^{-1}(s)=n -\sum_{t\neq s} \#F^{-1}(t) \le &  n -  \frac{n}{\#S}(1- \de\#S)(\#S -1)=\frac{n}{\#S}(1+\#S(\#S-1)). 
\end{align}
Since by hypothesis $\#S\ge 2$, it follows from \eqref{j4ioioe4iu5ui5588} and \eqref{j4ioioe4iu5ui5581} that $F\in \eqp_{\de\#S(\#S-1)}(n,S)$. 
Let $\ga>0$ be such that $8/9<(1-\ga)^2$, and   let $n_\de$ be such that for all $n\ge n_\de$ one has that  
 \begin{equation*}\label{opoigero76889gg}
 \exp(-\frac{\de^2 (1- \ga)^2}{8}n )\le\frac{ \exp(-\frac{\de^2 }{9}n )}{\#S} \text{ and } \exp(-\frac{(\de\ga)^2 }{8}n )< \frac{1}{\#S}.
 \end{equation*} 
Fix  $n\ge n_\de$.  Then, $\al_{\mc X_n}(\de \ga) <\frac{1}{\#S}$. It follows from this,   \eqref{lij34jrij4ieee323223}  and Proposition \ref{3jrjjerers}  that $\al_{\mc X_n}(\frac1{\#S}, \de ) \le \al_{\mc X_n}( \de(1-\ga) )\le\frac{ \exp(-\frac{\de^2 }{9}n )}{\#S}$. Hence, for every $s\in S$, $\mu_\mr C((A_s)_\de)\ge 1-\frac{ \exp(-\frac{\de^2 }{9}n )}{\#S}$,
 so,  we have that 
 $$\mu_\mr C(\eqp_{\de \#S(\#S-1)} (n,S) )\ge  \mu_\mr C(\bigcap_{s\in S} (A_s)_\de)\ge 1- \exp(-\frac{\de^2 }{9}n ).$$ 
Since $\de>0$ is arbitrary, we obtain   \eqref{kjnklmerkiore89895445}.
 \fprue
As a consequence we have the following. 
\cor
Given $\de>0$, the sequence $(\eqp_\de(n,S) ,d_{\mr{H}},\mu_\mr{C})_n$ is an asymptotically normal L\'evy sequence. \qed
\fcor

\prop\label{permut_equip}
Let $T,S$ be two finite sets, $\de,\de'\ge 0$. 
For every  $\phi\in \eqp_\de(T,S)$ and $\psi\in \eqp_{\de'}(T,S)$ there is some permutation $\pi$ of $T$ such that $d_\mr{H}(\psi\circ \pi, \phi)\le (\de+\de')/2$. In particular, if in addition $\#S |\#T$, then $\mr{Equi}_{\de}(T,S)\con (\mr{Equi}(T,S))_{\de/2}$. 
\fprop
\prue
For each $s\in S$, let $A_s:=\phi^{-1}(s)$ and $B_s:=\psi^{-1}(s)$. Let also $S_0:=\conj{s\in S}{\#A_s\ge \#B_s}$ and $S_1:=S\setminus S_0$. By symmetry, without loss of generality we assume that $2\#S_0\le \#S$.    We define $\pi: T\to T$ as follows: For every  $s\in S_0$, let $g_s:B_s\to A_s$ be an injection, and similarly for $s\in S_1$, let $f_s:A_s\to B_s$ be also an injection.  Let $\pi:T\to T$ be any bijection such that $\pi\rest A_s=f_s$ for $s\in S_1$, and such that $\pi\rest g_s(B_s)= g_s^{-1}$ for every $s\in S_0$. It follows that for every $s\in S_1$ and every $t\in A_s$ one has that  $\phi(t)=s=\psi(\pi(t))$,  while for $s\in S_0$ and $t\in g_s(B_s)$, $t=g_s(\bar t)$ one has that  $\phi(t)=\phi(g_s(\bar t))=s= \psi(\bar t)=\psi(\pi(t))$. This means that 
\begin{align*}
d_\mr{H}(\phi, \psi \circ \pi) \le & \frac{1}{\#T} \sum_{s\in S_0} \# ( A_s\setminus g_s(B_s)) = \frac{1}{\#T} \sum_{s\in S_0} (\# ( A_s)-\#(B_s) )\le \\
\le & \#S_0( (1+\de)\frac{1}{\#S} - (1-\de')\frac{1}{ \#S})\le \frac12( \de+\de').  \qedhere
\end{align*}
\fprue

\prop\label{hamming_composition}
Let $\phi_0,\phi_1\in \eqp_{\de_0}(T,S)$ and $\psi_0,\psi_1\in  \eqp_{\de_1}(S,R)$. Then $d_\mr{H}(\psi_0\circ \phi_0,\psi_1\circ \phi_0) \le (1+\de_0)  d_\mr{H}(\psi_0,\psi_1)$ and $d_\mr{H}(\psi_0\circ \phi_0,\psi_0\circ \phi_1) \le d_\mr{H}(\phi_0,\phi_1)$.
\fprop 
\prue
We have that $d_\mr{H}(\psi_0\circ \phi_0,\psi_1\circ \phi_0) = (\#T)^{-1}\sum_{\psi_0(s)\neq \psi_1(s)} \# (\phi_0^{-1}(s) ) \le (\#T)^{-1}\cdot d_\mr{H}(\psi_0,\psi_1) \cdot (\#S)  (1+\de_0) \# T(\#S)^{-1}  $.  The other inequality is easy to check.
   \fprue

The following particular case of Theorem  \ref{i34rh34uh43uier} will be used later to show the general case.  In the next lemma, $\mc S_X$ denotes the group of permutations of a set $X$. 

 \begin{lemma}\label{i34rh34uh43uier4322323}
For every finite set  $X$,  $\de,\vep>0$ and $r\in \N$ there is   $n\in \N$ such that   every $r$-coloring of $\mr{Equi}_\de(n,X)$ has an $\vep$-monochromatic set of the form $\mc S_{X}\circ F$ for some   $F\in \mr{Equi}_\de(n,X)$.
 \end{lemma}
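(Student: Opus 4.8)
The plan is to exploit concentration of measure on the space $A:=\mr{Equi}_\de(n,X)$, equipped with the normalized Hamming metric $d_\mr{H}$ and the normalized counting measure $\mu_\mr{C}$, combined with a simple union bound over the finite group $\mc S_X$. Recall from the Corollary preceding the statement that $(\mr{Equi}_\de(n,X),d_\mr{H},\mu_\mr{C})_n$ is an asymptotically normal L\'evy sequence; in particular $\al_A(\vep')\to_n 0$ for every $\vep'>0$ once $n$ is large. First I would fix an $r$-coloring $c:A\to r$ and pick, by pigeonhole, a color $i<r$ whose class $A_i:=c^{-1}(i)$ satisfies $\mu_\mr{C}(A_i)\ge 1/r$. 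The goal then reduces to producing a single $F\in A$ whose entire orbit $\mc S_X\circ F$ lies in the fattening $(A_i)_\vep=(c^{-1}(i))_\vep$.

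The key observation is that each $\pi\in\mc S_X$ acts on $A$ by $\sig_\pi(F):=\pi\circ F$, and that $\sig_\pi$ is a measure-preserving isometry of $(A,d_\mr{H},\mu_\mr{C})$: it is a bijection of $A$ with inverse $\sig_{\pi^{-1}}$ (postcomposing with $\pi$ only relabels the fibers of a $\de$-equisurjection, so $A$ is preserved), it preserves $d_\mr{H}$ since $\pi$ is injective, and it preserves $\mu_\mr{C}$ since it is a bijection of a finite set. Consequently $\mu_\mr{C}(\sig_\pi^{-1}((A_i)_\vep))=\mu_\mr{C}((A_i)_\vep)\ge 1-\al_A(1/r,\vep)$ for every $\pi$, where $\al_A(1/r,\vep)$ is the extended concentration function. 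A union bound over the $|X|!$ elements of $\mc S_X$ then gives
\begin{equation*}
\mu_\mr{C}\Big(\bigcap_{\pi\in\mc S_X}\sig_\pi^{-1}\big((A_i)_\vep\big)\Big)\ge 1-|X|!\cdot\al_A(1/r,\vep).
\end{equation*}
Any $F$ in this intersection satisfies $\pi\circ F\in (A_i)_\vep$ for all $\pi$, i.e. $\mc S_X\circ F\con (c^{-1}(i))_\vep$, which is exactly the required $\vep$-monochromatic set (and $F$, being a point of $A$, is a $\de$-equisurjection).

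It therefore remains only to choose $n$ large enough that the right-hand side above is positive. The main point — and the only slightly delicate step — is that we must control the \emph{extended} concentration function $\al_A(1/r,\vep)$ rather than the ordinary one $\al_A(\vep)=\al_A(1/2,\vep)$, since the threshold $1/r$ may be much smaller than $1/2$. Here I would invoke Proposition \ref{3jrjjerers}: writing $\vep=\vep/2+\vep/2$, as soon as $n$ is large enough that $\al_A(\vep/2)<1/r$ (possible by the L\'evy property), we obtain $\al_A(1/r,\vep)\le \al_A(\vep/2)$, and the right-hand side tends to $0$. Hence for all sufficiently large $n$ one has $|X|!\cdot\al_A(1/r,\vep)<1$, the intersection is nonempty, and such an $n$ witnesses the statement.
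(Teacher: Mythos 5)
Your proof is correct and follows essentially the same route as the paper's: pick a color class of measure at least $1/r$, use the measure-preserving action of $\mc S_X$ on $(\mr{Equi}_\de(n,X),d_\mr{H},\mu_\mr{C})$ together with concentration and a union bound over the $\#X!$ translates to find a point $F$ whose full orbit lies in the $\vep$-fattening of that class. The only difference is that you make explicit (via Proposition \ref{3jrjjerers} and the L\'evy property) why $n$ can be chosen so that the \emph{extended} concentration function $\al_A(1/r,\vep)$ is small, a step the paper's proof leaves implicit.
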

\prue   
The group of permutations $\mc{S}_X$  of the set $X$ acts by bijections on $\mr{Equi}_\de(n,X)$, hence by $\mu_\mr{C}$-preserving transformations. This implies that if $A\con \mr{Equi}_\de(n,X)$ is such that  $\mu_\mr{C}(A)>1-(\#X!)$,  then $\bigcap_{\pi\in \mc{S}_X} \pi \cdot A\neq \buit$.  Let $n\in \N$ be such that $\al_{X_n}(1/r,\vep)<1/\#X!$, where  $X_n:=(\eqp_\de(n,X),d_\mr{H},\mu_\mr C)$.  Then $n$ works: Given a coloring $c:\eqp_\de(n,X)\to r$, let $j<r$ be such that $A=c^{-1}(j)$ has $\mu_\mr C$-measure $\ge 1/r$. Hence, $\mu_\mr C((A)_\vep)> 1-1/m!$. Let  $F\in  \bigcap_{\pi\in S_X} \pi ((A)_\vep)$. Then,  $\pi\cdot F\in (c^{-1}(j))_\vep$ for every $\pi \in \mc S_X$.  
\fprue

\begin{proof}[\sc Proof of Theorem \ref{i34rh34uh43uier}] 
Fix all parameters. We choose  $n\in \N$   given by Lemma  \ref{i34rh34uh43uier4322323} when applied to $m$, $\de,\vep$ and $r$, and such that $m|n$. We claim that such $n$ works. For suppose that $c:\mr{Equi}_\de(n,d)\to r$. Fix $\sig\in \mr{Equi}(m,d)$, and define $\widehat{c}:\mr{Equi}_\de(n,m)\to r$, by $\widehat{c}(\ro):= c(\sig\circ \ro)$.  By the choice of $n$, there is some $F\in \mr{Equi}_\de(n,m)$  and $j<r$ such that $S_m\circ F\con (\widehat{c}^{-1}(j))_\vep$.    We apply Proposition \ref{permut_equip}  to $F$ to find $F_0\in \mr{Equi}(n,m)$ such that $d_\mr{H}(F,F_0)\le \de/2$. We claim that $\mr{Equi}_\de(m,d)\circ F_0\con  (\widehat{c}^{-1}(j))_\vep$. For suppose that $\pi\in \mr{Emb}_\de(m,d)$.  We apply Proposition \ref{permut_equip} to  $\pi\in \mr{Emb}_\de(m,d)$ and $\sig\in \mr{Equi}(m,d)$  to find a permutation $\theta\in \mc S_m$ such that $d_\mr{H}(\sig\circ \theta, \pi)\le \de/2$.  Since  $S_m\circ F\con (\widehat{c}^{-1}(j))_\vep$, we can find $G\in \mr{Equi}_\de( n,m)$ such that $c(\sig\circ G)=\widehat{c}( G)=j$ and such that $d_\mr{H}(G, \theta\circ F)\le \vep$. We use Proposition \ref{hamming_composition} to conclude that 
\begin{align*}
d_\mr{H}(\pi\circ F_0, \sig \circ G)\le &  d_\mr{H}(\pi\circ F_0, \sig \circ \theta\circ F_0)+d_\mr{H}(\sig \circ \theta\circ F_0, \sig \circ G)\le  d_\mr{H}(\pi, \sig \circ \theta)+ d_\mr{H}(\theta\circ F_0, G)\le \\
\le & d_\mr{H}(\pi, \sig \circ \theta)+ d_\mr{H}(F_0, F)+ d_\mr{H}(\theta\circ F, G)\le \de +\vep. \qedhere
\end{align*}  
\end{proof}

 \section{Gurarij M-space}\label{gurarij_m_space}

 We conclude the paper by presenting a Fraïssé space in the category of $M$-spaces.
 Recall that a Banach lattice $X$ is called an $M$-space when $\nrm{\sum_{j<n} x_j}=\max_{j<n}\nrm{x_j}$ for every sequence $(x_j)_{i<n}$ of pairwise disjoint elements of $X$. In particular, for finite dimensional Banach lattices $X$ this   is equivalent to saying that $X$ is lattice isometric to $\ell_\infty^{\dim X}$.   We are going to prove that the class of  finite dimensional $M$-spaces is a Fraïssé class, and we will find the  corresponding Fraïssé $M$-space $\mbb G_\diamond$.  For this we revisit some results of  F. Cabello-Sanchez \cite{CS} on the existence of an 
almost transitive renorming of $C[0,1]$. To extend his results to the setting of extensions of partial isometries between finite dimensional subspaces,
we shall extend the notions of ultrahomogeneity which are specific to normed  lattices that were introduced in Section \ref{oi4iuuuuttyy}.   In this context, suppose that $A$ and $B$  are Banach lattices
 and  suppose that $\de>0$. Let $\ga:A\to B$ be a 1-1 linear mapping.   
\begin{enumerate}[(a)]
 \item $\ga$ is {\em $\de$-disjoint preserving} if $\nrm{|\ga(a_j)| \wedge |\ga(a_k)|} \leq \de$ for every $j \neq k$.
\item $\ga$ is {\em $\de$-positive} if  $ d(\ga(a), B_+)\le \de$ for every positive $a\in A_+$, where $A_+, B_+$ denote the corresponding set of positive elements. 
\item $\ga$ is a {\em $\de$-isometric disjoint-preserving embedding} when $\ga$ is   $\de$-disjoint preserving and $\de$-isometric embedding.
\item $\ga$ is a  {\em  $\de$-isometric lattice embedding} when it is  $\de$-disjoint preserving,    $\de$-positive and $\de$-isometric embedding.
\end{enumerate} 
Some of these definition already appear in \cite{OiTr}
Let $\Emb_\de^\diamond(A,B)$ and $\Emb_\de^\perp(A,B)$ be the collection of     $\de$-isometric lattice embeddings and  $\de$-isometric disjoint preserving embeddings, respectively. 
Let  $\mc E$ be a Banach lattice. We introduced in Definition \ref{oi23io3444} the notion of approximately lattice ultrahomogeneous  \latauh.  
Let $\age^\diamond(\mc E)$ be the collection of finite dimensional sublattices of $\mc E$. Similarly, let $\iso_\diamond(\mc E)$ and $\iso_\perp(\mc E)$  be the group of lattice   and  of disjoint-preserving isometries on $\mc E$, respectively. 
\begin{definition}		
We say that  $\mc E$ is  a {\em Fraïssé Banach  lattice} when there is a modulus of stability $\varpi:\N\times ]0,\infty[\to ]0,\infty]$ such that for every $X\in \age^\diamond_k(\mc E)$,   every $\de\ge 0$ and $\vep>0$ the canonical action $\iso_\diamond (\mc E)\acts \Emb_\de^\diamond(X,\mc E)$ is ($\varpi(k,\de)+\vep$)-transitive for every $k\in \N$ and every  $X\in \age_k^\diamond(\mc E)$. 
\end{definition}

It follows from Proposition \ref{ultracopylp} and Schechtman's Theorem \ref{kdmfksmfkdsw} that $L_p(0,1)$ is a Fraïssé  Banach lattice for every $1\le p<\infty$.   Now we present a new one. In \cite{CS}, F. Cabello-Sanchez defined $\mk X=(\Pi_{n=1}^{+\infty} L_{p_n}(0,1))_{\mathcal U}$, where $p_n \rightarrow +\infty$, and proved
that the non-separable Banach lattice $\mk X$ is an $M$-space that is transitive, meaning that the isometry group acts transitively on the unit sphere of $\eqs$. We prove with similar methods that his results may be improved as follows.

\begin{theorem}
   The lattice $\mk X$ is    a Fraïssé   Banach lattice with modulus $\varpi(k,\de)\le  3\cdot k \cdot \de$. In addition,    for every $X\in \age^\diamond_k(\mk X)$,   every $\de\ge 0$ and $\vep>0$ the canonical action $\iso_\perp (\mk X)\acts \Emb_\de^\diamond(X,\mk X)$ is ($3 \cdot k \cdot \de+\vep$)-transitive for every $k\in \N$ and every  $X\in \age_k^\diamond(\mk X)$.      

\end{theorem}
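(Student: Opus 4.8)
The plan is to deduce the lattice Fraïssé property of $\mk X$ from the corresponding property of the component spaces $L_{p_n}(0,1)$ via the ultrapower, isolating the role of the $M$-structure. First I would record that, since $\mk X$ is an $M$-space (Cabello-Sánchez, \cite{CS}), every $X\in\age^\diamond_k(\mk X)$ is lattice isometric to $\ell_\infty^k$, so it suffices to treat embeddings whose domain is $\ell_\infty^k$ with its disjoint positive generators $e_0,\dots,e_{k-1}$, which carry all the data. I would fix once and for all a reference embedding $\iota_0\in\Emb_0^\diamond(\ell_\infty^k,\mk X)$ given by $\iota_0(e_j):=[(k^{1/p_n}\mathbbm 1_{A_j})_n]_{\mc U}$ for a fixed partition of $[0,1]$ into $k$ sets of measure $1/k$; the computation $\|\sum_j a_j\iota_0(e_j)\|_{\mk X}=\lim_{\mc U}(\sum_j|a_j|^{p_n})^{1/p_n}=\max_j|a_j|$ confirms that $\iota_0$ is an isometric lattice embedding, and that in each coordinate it is a genuine lattice embedding of $\ell_{p_n}^k$ into $L_{p_n}(0,1)$.

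The proof then rests on two lemmas. The first is a \emph{perturbation lemma}: every $\gamma\in\Emb_\de^\diamond(\ell_\infty^k,\mk X)$ lies within $\tfrac32 k\de$, in the operator norm on $\ell_\infty^k$, of an exact lattice embedding $\gamma'\in\Emb_0^\diamond(\ell_\infty^k,\mk X)$. Here I would replace each $\gamma(e_j)$ first by its positive part (its negative part has norm at most $\de$ by $\de$-positivity), then disjointify by subtracting the lattice suprema of the pairwise meets (each of norm at most $\de$ by $\de$-disjointness), and finally renormalize; the $\de$-isometry hypothesis guarantees that the resulting positive, pairwise disjoint, normalized family spans a genuine lattice copy of $\ell_\infty^k$, and summing the positivity error, the $(k-1)\de$ disjointification error, and the renormalization error gives the bound. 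The band projections needed for the meets are available because $\mk X$, being an ultrapower, is sufficiently complete as a Banach lattice; alternatively one runs the same estimate coordinatewise in the $L_{p_n}(0,1)$ and passes to the $\mc U$-limit.

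The second is \emph{exact homogeneity}: $\iso_\diamond(\mk X)$ acts approximately transitively on $\Emb_0^\diamond(\ell_\infty^k,\mk X)$. Given an exact lattice embedding $\gamma'$, I would lift it to maps $\gamma'_n\colon\ell_\infty^k\to L_{p_n}(0,1)$ with $\gamma'(e_j)=[(\gamma'_n(e_j))_n]_{\mc U}$; for $\mc U$-almost every $n$ the images are, up to arbitrarily small error, disjoint positive and normalized, and since $d_{\mr{BM}}(\ell_{p_n}^k,\ell_\infty^k)=k^{1/p_n}\to_{\mc U}1$ they span a space $\xi_n$-isometric to $\ell_{p_n}^k$ with $\xi_n\to_{\mc U}0$. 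Applying Schechtman's Theorem \ref{kdmfksmfkdsw}, together with the automatic disjointness of $L_{p_n}$-isometries for $p_n\neq 2$, I clean $\gamma'_n$ to an exact lattice embedding $\widehat\gamma_n$ of $\ell_{p_n}^k$ into $L_{p_n}(0,1)$ with $\|\gamma'_n-\widehat\gamma_n\|\to_{\mc U}0$, and similarly for $\iota_0$. Then the approximate lattice homogeneity of $L_{p_n}(0,1)$ (Proposition \ref{ultracopylp}) supplies $T_n\in\iso_\diamond(L_{p_n}(0,1))$ with $\|T_n\circ\widehat\gamma_n-\widehat{(\iota_0)}_n\|$ as small as desired, and assembling $T:=[(T_n)_n]_{\mc U}\in\iso_\diamond(\mk X)$ yields $T\circ\gamma'=\iota_0$ in the $\mc U$-limit; countable saturation of the ultrapower upgrades this to an exact equality if one wishes.

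Finally I would combine the two lemmas in the standard way: given $\gamma_1,\gamma_2\in\Emb_\de^\diamond(\ell_\infty^k,\mk X)$, perturb each to exact embeddings $\gamma_1',\gamma_2'$ within $\tfrac32 k\de$, use exact homogeneity to find $g\in\iso_\diamond(\mk X)$ with $g\circ\gamma_1'=\gamma_2'$, and estimate $\|g\circ\gamma_1-\gamma_2\|\le\|g(\gamma_1-\gamma_1')\|+\|\gamma_2'-\gamma_2\|\le 3k\de$, with an extra $\vep$ absorbing every approximate step; this gives the modulus $\varpi(k,\de)\le 3k\de$. Since $\iso_\diamond(\mk X)\subseteq\iso_\perp(\mk X)$ and the target class $\Emb_\de^\diamond(X,\mk X)$ is unchanged, the very same $g$ witnesses the $(3k\de+\vep)$-transitivity of the $\iso_\perp(\mk X)$-action, which is the additional assertion. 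The main obstacle is the perturbation lemma: extracting the \emph{linear-in-$k$} constant forces one to carry out the positivity and disjointness corrections simultaneously and to control the renormalization precisely, and it requires justifying the requisite band projections (or the coordinatewise reduction) inside the non-separable ultrapower lattice $\mk X$.
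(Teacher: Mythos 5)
Your proposal follows essentially the same two-lemma strategy as the paper: the paper deduces the theorem from (i) an \emph{exact} lattice and disjointness-preserving ultrahomogeneity statement for $\mk X$, proved by representing a finite-dimensional sublattice coordinatewise and applying the approximate lattice homogeneity of each $L_{p_n}(0,1)$ (Proposition \ref{ultracopylp}) with errors $2^{-n}$ that vanish in the ultralimit, and (ii) a perturbation lemma (Lemma \ref{impbis}) replacing $\de$-isometric lattice embeddings by exact ones at cost linear in $k\de$. Within this common scheme your homogeneity lemma takes an unnecessary detour: since finite lattice operations pass to the ultraproduct, one may choose coordinate representatives of an exact lattice embedding that are already exactly disjoint, positive and normalized in each $L_{p_n}$ (this is the representation the paper quotes from the proof of Lemma 3.2 of \cite{CS}), so Proposition \ref{ultracopylp} applies directly to the coordinate maps and neither Schechtman's Theorem \ref{kdmfksmfkdsw}, nor band projections, nor countable saturation is needed; the errors $2^{-n}$ alone give exact equality in $\mk X$.

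The genuine gap is quantitative and sits exactly where you flagged it: your perturbation lemma does not deliver the constant $\tfrac32 k\de$ you assert, and your own error accounting contradicts it. Taking positive parts costs $\de$ per generator; the disjointification term $\bigvee_{i\neq j}\bigl(\ga(e_j)^+\wedge\ga(e_i)^+\bigr)$ has norm at most $\de$ \emph{only if} you invoke the $M$-space identity $\|u\vee v\|=\max\{\|u\|,\|v\|\}$ for positive $u,v$ — summing the meets to ``$(k-1)\de$'', as you write, destroys linearity in $k$ altogether (it leads to an operator-norm bound quadratic in $k$); and after these corrections the renormalization costs up to $3\de$ more. Even with optimal use of the $M$-structure this scheme gives about $3\de$ per generator, i.e.\ $3k\de$ in operator norm per embedding — the same constant as the paper's Lemma \ref{impbis}, whose single truncation of all coordinates of modulus $\le\de$ in a finite-dimensional $M$-space target restores positivity and disjointness simultaneously with error $\de$ and leaves only a $2\de$ renormalization. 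Moreover, your final triangle inequality pays the perturbation error twice, once for $\ga_1$ and once for $\ga_2$, so a per-embedding error of $3k\de$ yields $(6k\de+\vep)$-transitivity, and obtaining the stated $3k\de+\vep$ by this route would require the unproved bound $\tfrac32 k\de$ per embedding (a looseness which, to be fair, the paper's own deduction of the theorem from Lemma \ref{impbis} shares). Lastly, your reading of the ``in addition'' clause as trivial (larger group acting on the same set $\Emb_\de^\diamond(X,\mk X)$) is correct for the statement as printed, but note that the paper proves both lemmas in their $\perp$-versions precisely so as to also treat $\de$-isometric disjointness-preserving, not necessarily positive, embeddings, which your proposal does not cover.
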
  
The proof readily follows from the following two lemmas. 
\begin{lemma}
 The lattice $\mk X$ is   lattice $\mathit{ultrahomogeneous}$ and    disjointly $\mathit{ultrahomogeneous}$, that is, for every  $X\in \age^\diamond(\mk X)$  the action $\iso_\perp(\mk X)\acts \Emb^\perp(X,\mk X)$ is transitive.   
\end{lemma}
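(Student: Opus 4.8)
The plan is to exploit that $\mk X$ is a Banach-lattice ultraproduct of the spaces $L_{p_n}(0,1)$ and to promote the \emph{approximate} lattice homogeneity of each factor (Proposition \ref{ultracopylp}) to \emph{exact} homogeneity of the ultraproduct, in the spirit of the ultrapower argument given earlier. First I would reduce to a normal form for the data. Since $p_n\to\infty$, the lattice $\mk X$ is an $M$-space, so every $X\in\age^\diamond(\mk X)$ is lattice isometric to $\ell_\infty^k$ with $k=\dim X$, and a lattice embedding $\ga\in\Emb_\diamond(X,\mk X)$ is completely determined by the images $f_j:=\ga(e_j)$ of the disjoint positive unit generators: the $f_j$ form a pairwise disjoint positive normalized system. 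Thus proving that $\iso_\diamond(\mk X)\acts\Emb_\diamond(X,\mk X)$ is transitive amounts to the following: given two pairwise disjoint positive normalized systems $(f_j)_{j<k}$ and $(f'_j)_{j<k}$ in $\mk X$, find $g\in\iso_\diamond(\mk X)$ with $g(f_j)=f'_j$ for all $j$.

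Next I would lift to coordinates. Writing $f_j=[(f_{j,n})_n]_{\mc U}$ and $f'_j=[(f'_{j,n})_n]_{\mc U}$, the ultraproduct relations (disjointness, positivity, norm one) hold coordinatewise up to errors tending to zero along $\mc U$; by a standard disjointification (replacing each $f_{j,n}$ by a truncated positive part and normalizing, which changes the represented class by a null amount) I may assume that for every $n$ the family $(f_{j,n})_{j<k}$ is pairwise disjoint, positive, with $\nrm{f_{j,n}}=1$, and likewise for $(f'_{j,n})_{j<k}$. Then $F_n:=\langle f_{j,n}\rangle_{j<k}$ and $G_n:=\langle f'_{j,n}\rangle_{j<k}$ are sublattices of $L_{p_n}(0,1)$, each lattice isometric to $\ell_{p_n}^k$, and the two inclusions realize lattice embeddings of $\ell_{p_n}^k$ into $L_{p_n}(0,1)$. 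By the almost transitivity of $\iso_\diamond(L_{p_n}(0,1))\acts\Emb_\diamond(\ell_{p_n}^k,L_{p_n}(0,1))$ from Proposition \ref{ultracopylp}, for each $n$ there is $g_n\in\iso_\diamond(L_{p_n}(0,1))$ with $\nrm{g_n(f_{j,n})-f'_{j,n}}\le 1/n$ for all $j<k$. Setting $g:=[(g_n)_n]_{\mc U}$ yields a surjective lattice isometry of $\mk X$ (lattice operations and surjectivity pass to the ultraproduct through $g_n$ and $g_n^{-1}$), and since the coordinate errors tend to $0$ we get $g(f_j)=f'_j$ for every $j$. This establishes lattice ultrahomogeneity.

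For disjoint ultrahomogeneity I would follow the mechanism of Proposition \ref{latt-disj}. Given $X\in\age^\diamond(\mk X)$ generated by disjoint positive $(f_j)_{j<k}$ and $\ga,\eta\in\Emb^\perp(X,\mk X)$, I would build two sign-change isometries $u,v\in\iso_\perp(\mk X)$, obtained coordinatewise as multiplications by unimodular functions, sending $\ga(f_j)\mapsto|\ga(f_j)|$ and $\eta(f_j)\mapsto|\eta(f_j)|$ respectively; these exist because the $\ga(f_j)$ (resp. $\eta(f_j)$) are disjoint, so a single sign function undoes the signs of all of them simultaneously on each coordinate. Then $u\circ\ga$ and $v\circ\eta$ are \emph{lattice} embeddings of $X$, as they carry the positive disjoint generators to positive disjoint elements, so by the lattice ultrahomogeneity just proved there is $T\in\iso_\diamond(\mk X)$ with $T\circ u\circ\ga=v\circ\eta$; the isometry $g:=v^{-1}\circ T\circ u$ is then disjoint preserving and satisfies $g\circ\ga=\eta$. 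The main obstacle, and the step I would treat most carefully, is the coordinate normalization: realizing the exact disjointness, positivity and normalization of the representatives $(f_{j,n})_j$ and $(f'_{j,n})_j$ for all $n$ while keeping the represented ultraproduct elements unchanged, and verifying that the diagonal sequence $(g_n)_n$ induces a genuine element of $\iso_\diamond(\mk X)$ conjugating the data exactly rather than merely approximately.
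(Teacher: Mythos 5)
Your proposal is correct and is essentially the paper's own argument: both reduce to pairwise disjoint, positive, normalized generators, choose representatives that satisfy these constraints exactly in every coordinate $L_{p_n}(0,1)$, apply the approximate lattice homogeneity of each factor (Proposition \ref{ultracopylp}) to get coordinatewise lattice isometries with errors tending to zero, and pass to the ultraproduct to turn approximate into exact transitivity. The only (minor) divergence is in the disjointly homogeneous case, where you conjugate by sign-change isometries at the level of $\mk X$ to reduce to the lattice case already proved, while the paper instead repeats the coordinatewise argument using the approximate disjoint homogeneity of each $L_{p_n}(0,1)$ — both variants rest on the same sign-change mechanism of Proposition \ref{latt-disj}.
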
  
\prue   
 Let  $F,G$ be finite dimensional sublattices of $\mk X$ and $t$ be an isometry from $F$ onto $G$.
Write $F=\langle f_j\rangle_{j<d}$ and $G=\langle tf_j\rangle_{j<d}$, where $(f_j)_{j<d}$ is a  normalized, positive and pairwise disjoint sequence.  
 The vectors  $f_0,\ldots,f_{d-1}$  may be represented
as $f_j=(f^j_n)_n$, where for each $n$ the $(f_j^n)_{j<d}$'s is a  normalized, positive and pairwise disjoint sequence in   $L_{p_n}(0,1)$ (see in \cite[the proof of Lemma 3.2]{CS}), and the same holds for each $tf_j=(g^j_n)_n$. If we call $t_n$ the isometric map sending $f^j_n$ to $g^j_n$  we know by Lemma \ref{ultracopylp}  on the approximate lattice ultrahomogeneity of $L_{p}$'s that there exists a lattice isometry $T_n$ on $L_{p_n}$ such that
$\|T_{n}\rest F_n-t_n\| \leq 2^{-n}$. We note that $T=(T_n)_n$ is a lattice isometry on $\mk X$ and
that $T(f_j)=(T_n(f_j^n))_n=(t_n(f_j^n))_n=(g^j_n)_n=t(f_j)$, for each $j<d$.
Disjoint ultrahomogeneity follows from a similar proof and the fact that each $L_{p_n}(0,1)$ is approximately disjointly ultrahomogeneous (see Proposition \ref{latt-disj}).
\fprue

It seems to remain open whether $\eqs$ is actually ultrahomogeneous as a Banach space.

\nota  Using the terminology from Banach space theory (see Remark \ref{j34irjw4ierw566e}), it follows   that $\eqs$ is also of lattice    disposition, in the sense that  for any $F \subset G$, where $F$ and $G$ are finite sublattices of $\mk X$, and for any lattice isometric embedding $t$ of $F$ into $M$, there exists a lattice isometric embedding $T$ of $G$ into $\eqs$ such that $T\rest {F}=t$. \fnota
The other lemma which is needed is a form of amalgamation property, in the lattice setting, of the class of finite dimensional $M$-spaces: 
\lema\label{impbis}
Suppose that $A$ and $B$ are two finite dimensional $M$-spaces and suppose that $\de\ge 0$. Then $\Emb_\de^\diamond(A,B)\con (\Emb^\diamond(A,B))_{3\de \dim A}$ and  $\Emb_\de^\perp(A,B)\con (\Emb^\perp(A,B))_{3\de \dim A}$. 
\flema
\prue We write $A=\langle a_j\rangle_{j<m}$, $B=\langle b_j\rangle_{j<n}$ both normalized pairwise disjointly supported, and suppose that $\ga\in \Emb_\de^\diamond(A,B)$.   Let $g:A\to B$ linearly defined by $g(a_j):=\sum_{|b_k^*(\ga(a_j))|>\de} b_k^*(\ga(a_j)) b_k$, where $b_k^*$ is the sequence of functionals dual to $b_k$.  The fact that $\ga$ is an $\de$-isometric lattice embedding implies that   $(g(a_j))_{j<m}$ is pairwise disjointly supported, positive and 
$\|g(a_j)-\ga(a_j)\| \leq \de$ for all $j<m$.  Hence, $|\nrm{g(a_j)}-1|\le 2\de$ for all $j<m$.  Let $\xi:A\to B$ linearly defined by
$\xi(a_j):= g(a_j)/\|g(a_j)\|$.  It is clear that $\xi$ is a lattice isometric embedding. also,  $\|\xi(a_j)-\ga(a_j)\| \leq 3\de$; hence, 
   $\|\ga-\xi\| \leq 3\de\dim A$. 
   \fprue 
 This lemma should be compared with the fact that without lattice constraints one has  $\Emb_\de(\ell_\infty^d, \ell_\infty^n)\con \Emb(\ell_\infty^d,\ell_\infty^n)_\de$.

\prop  The $M$-space $\eqs$ admits a separable sublattice $\mbb G_\diamond$ that is a Fraïssé Banach lattice with modulus of stability $\varpi(k,\de)\le 3\cdot \de\cdot k$, and such that for every finite dimensional sublattice $X$ of $\mbb G_\diamond$, $\de\ge 0$ and $\vep>0$ the canonical action $\iso_\perp(\mbb G_\diamond)\acts \Emb^\perp_\de(X,\mbb G_\diamond)$ is $3\cdot \de \cdot \dim X +\vep$-transitive.  \fprop

\prue 
We will find a separable sublattice $\mbb G_\diamond$ of $\mk X$ and some countable dense set $D\con \bigcup_{n}\Emb^\perp(\ell_\infty^n, \mbb G_\diamond)$ such that
\begin{enumerate}[a)]
\item  for every $\ga,\eta \in D\cap \Emb^\perp(\ell_\infty^n,\mbb G_\diamond)$ there is some $T\in \iso_\perp(\mbb G_\diamond)$ such that $\eta= T\circ \ga$.  
\item $D\cap   ( \bigcup_{n}\Emb^\diamond(\ell_\infty^n, \mbb G_\diamond))$ is dense in $    \bigcup_{n}\Emb^\diamond(\ell_\infty^n, \mbb G_\diamond)$.
\item  for every $\ga,\eta \in D\cap \Emb^\diamond(\ell_\infty^n,\mbb G_\diamond)$ there is some $T\in \iso_\diamond(\mbb G_\diamond)$ such that $\eta= T\circ \ga$.  
\end{enumerate}  
 This, together with Lemma \ref{impbis} gives the Fraïssé property of $\mbb G_\diamond$ with the corresponding modulus. 
Let $Y_0$ be a separable sublattice of $\mk X$ and let  ${ D}_0\con \bigcup_n \Emb^\diamond(\ell_\infty^n,Y_0)$  be countable and dense in $\bigcup_n \Emb^\perp(\ell_\infty^n,Y_0)$ such that $C_0\cap (\bigcup_n \Emb^\diamond(\ell_\infty^n,Y_0))$ is dense in $\bigcup_n \Emb^\diamond(\ell_\infty^n,Y_0)$.

By the lattice and disjoint  ultrahomogeneity of $\mk X$, given  $\ga,\eta\in  D_0$, $\ga,\eta\in \Emb^\perp(\ell_\infty^n,Y_0)$  it is possible to select a global disjoint isometry $T_{\ga,\eta}$ on $\mk X$ such that $\eta=T_{\ga,\eta}\ga$, in a way that if $\ga,\eta$ are positive, then $T_{\ga,\eta}$ is also positive. Let $G_0$ be the countable subgroup of $\iso(\mk X)$ generated by the disjoint isometries $\conj{T_{\ga,\eta}}{ \ga,\eta \in D_0\cap(\bigcup_{n\in \N}\Emb^\perp(\ell_\infty^n, Y_0)}$, and let $Y_1$ be the separable sublattice generated
by the spaces $TY_0$ with $T \in G_0$.  In this way, we can find   $\con$-increasing sequences $(Y_k)_k$, $(D_k)_k$ and $(G_k)_k$ where each $Y_k$ is a separable Banach sublattices   of $\mk X$,    $D_k$  is a countable dense subset  of $\bigcup_n \Emb^\perp(\ell_\infty^n, Y_k)$ such that $D_k$ is also dense in $\bigcup_n \Emb^\diamond(\ell_\infty^n, Y_k)$, and $G_k$ is a countable subgroup of $\iso_\perp(\mk X)$   such that  for every $\ga,\eta\in D_k\cap \Emb^\perp(\ell_\infty^n,Y_k)$ there is  $T\in G_k$ such that $\eta= T\circ \ga$, that is positive if $\ga,\eta$ are so, and such that $Y_{k+1}$ is the  sublattice generated by $\bigcup_{T\in G_k} T Y_k$.  Let $Y:=\bigcup_k Y_k$, and let   $\ga,\eta\in D_k\cap \Emb^\perp(\ell_\infty^n,Y_k)$. Choose $T\in G_k$ such that $\eta=T\circ \ga$, and such that it is positive if $\ga,\eta$ are so. Since $T Y_l\con Y_{l+1}$ for all $l\ge k$, we have that $T\rest Y: Y\to Y$ is an isometric embedding; since $G_k$ is a subgroup, similarly we have that $T^{-1}: Y\to Y$, so $T$ is a surjective isometry on $Y$. Then the closure  $\mbb G^\diamond$   of $Y$ is the desired sublattice.  
\fprue

\nota   $\mbb G^\diamond$ is of steady approximate lattice and disjoint preserving  disposition with modulus $\le 3 \cdot k\cdot \de$; that is, for every $X\con Y$ both in $\age^\perp(\mbb G_\diamond)$, $\de\ge 0$, $\vep>0$, and   $\ga\in \Emb^\perp_\de(X,\mbb G_\diamond)$ there is some $\eta\in \Emb^\diamond(Y,\mbb G_\diamond)$ such that $\nrm{\eta\rest X-\ga}\le 3\cdot \dim X \cdot \de +\vep$, and $\eta$ is can be chosen to be positive when $X$, $Y$ are lattices and $\ga$ is positive as well.
\fnota

Of course since the above construction depends on choices of subspaces and embeddings, the lattice $\mbb G^\diamond$ is not unique, but lattice isometrically $\mbb G^\diamond$ is unique. Concerning its proof,  is worth noticing that the proof for the uniqueness principle for \auh Banach spaces (Theorem \ref{oi43hjiio43437})  does not seem to work directly; the reason is that, in general, given two finite dimensional  sublattices $A$ and $B$ of a Banach lattice $X$, the lattice generated by $A$ and $B$ may  not be finite dimensional.   On the other hand,  the approach in Theorem \ref{k3whiueuiuhiewew}, with the obvious modifications,   shows the next uniqueness statement. We leave the details to the reader.

\begin{theorem}
Suppose that $X$ and $Y$ are two separable Fraïssé Banach lattices. The following are equivalent.
\begin{enumerate}[1)]
\item  $\age^\diamond(X)\equiv \age^\diamond(Y)$.
\item $X$ and $Y$ are lattice isometric. \qed
  
\end{enumerate}  
\end{theorem}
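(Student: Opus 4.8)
The direction $2)\implies 1)$ is immediate: a surjective lattice isometry $T\colon X\to Y$ carries each finite dimensional sublattice $F\con X$ to a lattice isometric finite dimensional sublattice $T(F)\con Y$, and $T^{-1}$ does the reverse, so $\age^\diamond(X)\equiv\age^\diamond(Y)$. The whole content is in $1)\implies 2)$, and the plan is to run a back-and-forth intertwining argument that copies, mutatis mutandis, the proof of Theorem \ref{k3whiueuiuhiewew}, with linear $\de$-embeddings replaced everywhere by $\de$-isometric lattice embeddings $\Emb_\de^\diamond$ and the rotation groups $\iso(\cdot)$ replaced by the lattice isometry groups $\iso_\diamond(\cdot)$. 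Writing $\mc G:=\age^\diamond(X)\equiv\age^\diamond(Y)$ for the common lattice age, the first step is to fix a common modulus of stability $\varpi$ (the pointwise maximum of the moduli of $X$ and $Y$) and rapidly decreasing sequences $(\de_k)_k$, $(\vep_k)_k$ with $\prod_k(1+\de_k)\le\sqrt2$, $\sum_{k>l}\vep_k\le\vep_l/4$ and $\varpi(\dim X_k,\de_k),\varpi(\dim Y_k,\de_k)\le\vep_k/4$, exactly as in that proof.

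Next I would fix increasing chains of finite dimensional sublattices $(X_k)_k$ in $X$ and $(Y_k)_k$ in $Y$ with dense unions, so that the span of the earlier pieces is $\vep_l$-gap close to a sublattice inside the later one (the lattice analogue of $\sum_{k<l}X_k\con_{\vep_l}X_l$). The induction then builds integers $m_k\le n_k<m_{k+1}$ and $\de$-isometric lattice embeddings $\ga_k\in\Emb_{\de_{m_k}}^\diamond(X_{m_k},Y_{n_k})$, $\eta_k\in\Emb_{\de_{n_k}}^\diamond(Y_{n_k},X_{m_{k+1}})$ whose compositions approximate the inclusions, i.e. the lattice analogue of \eqref{i5iou4776783rtyy}. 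At the step where $\ga_k$ is given, I would use that $Y_{n_k}\in\mc G$ is lattice isometric to a sublattice of $X$ to pick $j\in\Emb^\diamond(Y_{n_k},X)$; then $j\circ\ga_k\in\Emb_{\de_{m_k}}^\diamond(X_{m_k},X)$, since composing with an isometric lattice embedding preserves the isometry, disjointness and positivity defects, and the lattice-Fraïssé property of $X$ produces $g\in\iso_\diamond(X)$ with $\nrm{g\circ j\circ\ga_k-i_{X_{m_k},X}}\le\varpi(\dim X_{m_k},\de_{m_k})+\de_{m_k}$. Setting $f:=g\circ j\in\Emb^\diamond(Y_{n_k},X)$ and gap-approximating $f(Y_{n_k})$ by a sublattice of $X_{m_{k+1}}$ yields $\eta_k$, and symmetrically one produces $\ga_{k+1}$. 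The telescoping estimate of the Claim in Theorem \ref{k3whiueuiuhiewew} then shows, for each $x\in X$, that $(\ga_k(x[k]))_k$ is Cauchy (where $x[k]\in X_{m_k}$ approximates $x$), so $\ga(x):=\lim_k\ga_k(x[k])$ defines a linear isometric embedding $\ga\colon X\to Y$; the symmetric construction gives $\eta\colon Y\to X$, and the approximate intertwining forces $\eta\circ\ga=\id_X$, $\ga\circ\eta=\id_Y$.

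The one genuinely new point, beyond transcribing the Banach-space argument, is to check that the limit map $\ga$ is a \emph{lattice} isometry and not merely a linear one. This is where I would use that each $\ga_k$ is only a $\de_{m_k}$-isometric lattice embedding: its disjointness defect $\nrm{|\ga_k a|\wedge|\ga_k b|}$ and positivity defect $d(\ga_k a,Y_+)$ are at most $\de_{m_k}\to 0$. Since the lattice operations $\vee,\wedge,|\cdot|$ are $1$-Lipschitz on any Banach lattice and $Y_+$ is norm-closed, passing to the limit gives $|\ga a|\wedge|\ga b|=0$ for disjoint $a,b$ and $\ga a\in Y_+$ for $a\in X_+$; hence $\ga$ (and likewise $\eta$) is an exact lattice homomorphism, and being a surjective linear isometry it is the desired lattice isometry.

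The main obstacle is the structural one already flagged in the Remark preceding the statement: one cannot imitate the simpler back-and-forth of Proposition \ref{oi43hjiio43437}, because the sublattice generated by two finite dimensional sublattices of $X$ need not be finite dimensional, so there is no finite dimensional ``$X_n+\im\eta_n+\langle x_j\rangle$'' to pass to. The intertwining scheme of Theorem \ref{k3whiueuiuhiewew} sidesteps this precisely because it never amalgamates sublattices: it works entirely with a priori chosen $\La$-dense increasing chains and with approximate embeddings between them. The only place requiring care is therefore the existence of such dense chains of finite dimensional sublattices, which I would extract from separability together with the local lattice structure implicit in $X,Y$ being Fraïssé Banach lattices (the analogue of the condition $X\in[\mc G]$ used throughout), exactly as in the examples $L_p(0,1)$ and $\mbb G_\diamond$.
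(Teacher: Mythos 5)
Your proposal follows exactly the route the paper itself prescribes: the paper's entire ``proof'' of this theorem is the sentence preceding it, namely that the approach of Theorem \ref{k3whiueuiuhiewew}, with the obvious modifications, gives the result, with all details left to the reader. Your transcription of that intertwining argument into the lattice category is faithful, and your treatment of the main lattice-specific point --- that the limit map is a lattice isometry, via the $1$-Lipschitz lattice operations, the norm-closedness of the positive cone, and (implicitly) the fact that positive elements of $X$ are limits of positive elements of $\bigcup_k X_{m_k}$ because each $X_{m_k}$ is a sublattice --- is correct and is indeed one of the intended ``obvious modifications.''

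There is, however, one step you assert rather than prove, and it is precisely the place where the lattice setting genuinely diverges from Theorem \ref{k3whiueuiuhiewew}: the existence, in each of $X$ and $Y$, of an approximately increasing chain of finite dimensional \emph{sublattices} with dense union. In the Banach space application of Theorem \ref{k3whiueuiuhiewew} with $\mc G=\age(E)$, the corresponding hypothesis $E\in[\mc G]$ is automatic: for a dense sequence $(x_j)_j$ the subspaces $\langle x_1,\dots,x_k\rangle$ form an honest increasing chain. For lattices this is not automatic, for exactly the reason quoted in the Remark before the theorem: the sublattice generated by two finite dimensional sublattices need not be finite dimensional, and in general there may be \emph{no} finite dimensional sublattice gap-close to containing two given elements. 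For instance, in $C[0,1]$ no finite dimensional sublattice approximately contains both $\mathbbm 1$ and the identity function $t\mapsto t$: a sublattice $\langle h_1,\dots,h_n\rangle$ with the $h_i$ disjoint, positive and continuous can only approximate $\mathbbm 1$ if the disjoint open sets $\{h_i>0\}$ cover the connected space $[0,1]$, forcing $n=1$ and the sublattice to consist of near-constant functions. ($C[0,1]$ is of course not lattice Fraïssé, but the example shows chain-density is not a consequence of separability.) Nor does it follow from the Fraïssé lattice property, which is a homogeneity statement about $\iso_\diamond(X)$ and says nothing about the abundance of sublattices; in Theorem \ref{k3whiueuiuhiewew} the analogous density is a \emph{hypothesis} ($E\in[\mc G]$, and condition 1) for the other space), not something derived. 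So a complete write-up must either verify this density for the spaces to which the theorem is applied --- it holds for $L_p(0,1)$ via simple functions, and can be arranged in the construction of $\mbb G_\diamond$ --- or add it as an explicit hypothesis, the lattice analogue of $X,Y\in[\mc G]$. With that point supplied, the rest of your argument goes through; without it, the induction cannot even start, since there are no chains $(X_k)_k$, $(Y_k)_k$ to intertwine.
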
 
\cor	
There is a   renorming of $C[0,1]$  that is a Fraïssé  Banach lattice.  
\fcor
\begin{proof}
According to \cite[Theorem 3.4]{CS}, every separable almost transitive infinite dimensional $M$-space is isomorphic to $C[0,1]$ (a consequence of Miljutin's Theorem).  
\end{proof}  

It is worth noting that $\mbb G_\diamond$,  although isomorphic to $C[0,1]$, cannot be  isometric to a $C(K)$ space itself. Indeed separable $C(K)$ spaces are not almost transitive apart from   the trivial case:

\nota If $K$ is a metrizable compact space with $|K| \geq 2$, then the group of  isometries on $C(K)$ is never almost transitive. \fnota

\prue Fix a compact metric space $(K,\mathbbm 1_K)$. First note that  isometries on $C(K)$ act by multiplication of a unimodular continuous function  with $f \mapsto f(\sigma)$ for some homeomorphism of $\sigma$,  so the orbit of $\mathbbm 1_K$ is the set of unimodular functions. So if $a \in K$, the function $x\mapsto f(x)=d_K(x,a)/\|d(x,a)\|_\infty$ is 
a norm $1$ function which is at distance $1$ for the orbit of $\mathbbm 1_K$. 
\fprue

We conclude this part by presenting new extremely amenable groups.
 
\teor The groups $\iso_\perp(\mbb G_\diamond)$ and $\iso_\diamond(\mbb G_\diamond)$  are extremely amenable.
  \fteor
We have the following correspondence.
\teor[KPT correspondence for Banach lattices]
Let $\mc E$ be a separable \auh Banach lattice. Then the following are equivalent
\begin{enumerate}[1)]
\item $\iso_\diamond(\mc E)$ is extremely amenable;
\item $\age^\diamond(\mc E)$ has the \arp.
 
\end{enumerate} 
 
\fteor
Its proof is exactly the obvious modification of that for Banach spaces in Theorem \ref{iuhuih43985445}.  
\teor\label{kjnuihuihuiuiur}  The class $\age_\diamond(\eqs)$ of finite dimensional sublattices with  lattice  or   disjoint preserving  isometric embeddings has the \arp.  
\fteor
In its proof we will use the Dual Ramsey theorem by Graham and Rothschild. Recall that given two finite  linear orderings $\mbf R=(R,<_R)$ and $\mbf S=(S,<_S)$, a mapping $\sig: S\to R$ is a {\em rigid surjection} when  $\sig$ is a surjection such that for every $r_0<_R r_1$ one has that $\min_{<_
S}\sig^{-1}(r_0)<_S \min_{<_
S}\sig^{-1}(r_1)$.  Let $\mr{Epi}(\mbf S,\mbf R)$ be the collection of rigid surjections from $\mbf S$ onto $\mbf R$. 
\begin{theorem}[Dual Ramsey Theorem \cite{GrRo}] 
For every finite linearly ordered sets $\mbf R$ and $\mbf S$, and every $r$ there is some linearly ordered set $\mbf T$ such that every $r$-coloring of $\mr{Epi}(\mbf T,\mbf R)$ has a monochromatic set of the form $\mr{Epi}(\mbf S,\mbf R)\circ \sig$ for some $\sig\in \mr{Epi}(\mbf T,\mbf S)$. 
\end{theorem}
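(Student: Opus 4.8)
The plan is to reduce the statement to a purely combinatorial Ramsey property of rigid surjections and then invoke the Dual Ramsey Theorem of Graham and Rothschild. First I would record the standing reductions. Since $\eqs$ is an $M$-space, every finite dimensional sublattice is lattice isometric to some $\ell_\infty^k$, so $\age_\diamond(\eqs)\equiv\{\ell_\infty^n\}_n$ and it suffices to prove the \arp of the class $\{\ell_\infty^n\}_n$ with the lattice isometric embeddings $\Emb^\diamond$. The disjoint preserving case then follows from the lattice case by the sign-change argument already used in Proposition \ref{latt-disj}: pre- and post-composing with the isometries that send each generator to its modulus turns a d.p. embedding into a lattice one and back. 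Finally, using the discrete (finitely many colours) reformulation of the \arp, I would fix $d,m,r$ and $\vep$, fix an $r$-colouring $c$ of $\Emb^\diamond(\ell_\infty^d,\ell_\infty^n)$, and look for an $\vep$-monochromatic set of the form $\ga\circ\Emb^\diamond(\ell_\infty^d,\ell_\infty^m)$.

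The combinatorial core is the identification, recorded through statements (iii)--(iv) in Section \ref{kemrkwmekorwew}, of the \emph{unital} lattice embeddings $(\ell_\infty^d,\mathbbm 1)\to(\ell_\infty^n,\mathbbm 1)$ with the partitions of $n$ into $d$ pieces, that is, with the rigid surjections $n\to d$; moreover composition of such embeddings corresponds exactly to composition of rigid surjections. Thus the Dual Ramsey Theorem, applied with $\mbf R=d$, $\mbf S=m$ and $\mbf T$ the integer it produces, yields at once the exact structural Ramsey property of the unital lattice embeddings of $\ell_\infty$'s: any $r$-colouring of $\Emb^\diamond((\ell_\infty^d,\mathbbm 1),(\ell_\infty^n,\mathbbm 1))$ admits a monochromatic set $\ga\circ\Emb^\diamond((\ell_\infty^d,\mathbbm 1),(\ell_\infty^m,\mathbbm 1))$ for a unital $\ga$.

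It remains to pass from unital, weight-one embeddings to arbitrary lattice embeddings. A lattice embedding $\psi:\ell_\infty^d\to\ell_\infty^n$ is determined by a partial assignment $i\mapsto j(i)$ of target to source coordinates together with a weight $w_i\in(0,1]$, normalised so that each source fibre attains the value $1$; only the combinatorial assignment is visible to the Dual Ramsey Theorem, while the weights form a genuine continuum. I would handle the weights by approximation: since $c$ is $1$-Lipschitz for the metric $\max_j\|\psi u_j-\psi' u_j\|$, rounding all weights to a finite $\vep$-grid moves colours by at most $\vep$, and by Lemma \ref{impbis} together with the underlying $M$-space amalgamation every $\de$-embedding is within $3\de\dim$ of an honest one, so one may work throughout with genuine, weight-discretised embeddings. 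The dead coordinates and the intermediate weight levels would then be absorbed using the amalgamation property of the class and the approximate lattice ultrahomogeneity of $\eqs$ established above, which permit enlarging the combinatorial target alphabet so that the $\ga$ produced by Dual Ramsey, being itself a rigid surjection of high dimension, forces every composite $\ga\circ\psi$ into a single $\vep$-type; the $1$-Lipschitz bound then converts combinatorial monochromaticity into the required $\vep$-stabilisation of $c$ on the whole copy $\ga\circ\Emb^\diamond(\ell_\infty^d,\ell_\infty^m)$.

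The step I expect to be the main obstacle is precisely this reconciliation of the continuous weights with the discrete Dual Ramsey Theorem: weights \emph{multiply} under composition of embeddings, whereas symbols compose by substitution, so there is no composition-respecting coding of the weights as static letters of an enlarged alphabet. The plan is therefore to keep the Dual Ramsey application on the weight-free partition skeleton and to show separately---via the amalgamation of finite dimensional $M$-spaces (Lemma \ref{impbis}) and the disposition/ultrahomogeneity of $\eqs$---that for a skeleton-monochromatic $\ga$ of sufficiently high dimension the oscillation of $c$ over all weightings of the composites is controlled by the chosen $\vep$-grid. Particular care is needed to guarantee that the finitely many combinatorial types produced by dead coordinates and by the distinct weight levels receive a common colour; I would arrange this by iterating the Dual Ramsey Theorem over the finitely many relevant image-strata and by exploiting that, for such a rich $\ga$, all composites share a single stratum up to $\vep$.
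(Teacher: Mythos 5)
Your proposal does not prove the assigned statement. The statement is the Dual Ramsey Theorem of Graham and Rothschild itself: a purely combinatorial assertion that for all finite linear orders $\mbf R,\mbf S$ and every $r$ there is $\mbf T$ such that every $r$-coloring of the rigid surjections $\mr{Epi}(\mbf T,\mbf R)$ has an exactly monochromatic set of the form $\mr{Epi}(\mbf S,\mbf R)\circ \sig$. In the paper this theorem is not proved at all; it is quoted from \cite{GrRo} and then used as the combinatorial engine in the proof of Theorem \ref{kjnuihuihuiuiur} (the \arp of $\age_\diamond(\eqs)$, equivalently of the class $\{\ell_\infty^n\}_n$ with lattice or disjoint preserving embeddings). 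What you have sketched is precisely that downstream application: the identification of unital lattice embeddings $(\ell_\infty^d,\mathbbm 1)\to(\ell_\infty^n,\mathbbm 1)$ with rigid surjections $n\to d$, the discretization of weights on an $\vep$-grid, the perturbation Lemma \ref{impbis}, and the sign-change reduction of the disjoint preserving case to the lattice case (as in Proposition \ref{latt-disj}) -- all of which closely mirror the paper's actual proof of Theorem \ref{kjnuihuihuiuiur}. But your opening sentence announces the plan to ``invoke the Dual Ramsey Theorem of Graham and Rothschild,'' that is, to assume the very statement under proof. As a proof of the assigned theorem the argument is circular, and nothing in the proposal supplies the missing combinatorial content: an exact (not approximate) pigeonhole statement about colorings of rigid surjections cannot follow from the approximate, functional-analytic machinery of $\eqs$, which sits strictly downstream of it.

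To actually prove the Dual Ramsey Theorem you need an independent combinatorial argument. The standard route is through the Graham--Rothschild theorem on $n$-parameter sets: encode a rigid surjection $\mbf T\to\mbf R$ as a word over the alphabet $\mbf R$ subject to the first-occurrence condition, so that $\mr{Epi}(\mbf S,\mbf R)\circ\sig$ becomes the set of reductions of a fixed parameter word $\sig$, and then run the Hales--Jewett-type induction on parameter sets; some pigeonhole amplification of this kind is the irreducible core, and it is entirely absent from your proposal. One further remark on the part you did sketch: your concern that weights multiply under composition, and your plan to iterate Dual Ramsey over finitely many image-strata, are resolved in the paper's proof of Theorem \ref{kjnuihuihuiuiur} more economically by a \emph{single} application of Dual Ramsey over the product alphabet $\La=\De\times E$, where a letter records simultaneously a grid value $s(l/e)u_k$ and a candidate section datum $(f,\theta)$; comparing your stratification plan with that construction would be worthwhile, but it does not repair the circularity described above.
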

Note that a linear mapping $\ga:\ell_\infty^d\to \ell_\infty^n$ is a disjoint preserving embedding if and only if its dual operator $\ga^*:\ell_1^n\to \ell_1^d$ satisfies that 
$$\{u_k\}_{k<d}\con \{\pm \ga^*(u_j)\}_{j<n}\con [-1,1]\cdot \{u_k\}_{k<d}.$$  
In other words, if $A$ denotes the matrix representing $\ga$ in the unit bases of $\R^d$ and $\R^n$, then the row vectors of $A$ must be of the form $a_k u_k$ for some $|a_k|\le 1$, and for each $k<d$ there must be a row vector with $a_k=\pm 1$.  Let $\mr{Quo}^\perp(\ell_1^n,\ell_1^d)$ be the collection of such surjections $\sig$.   Similarly, $\ga$ is a lattice embedding when 
$$\{u_k\}_{k<d}\con \{ \ga^*(u_j)\}_{j<n}\con [0,1]\cdot \{u_k\}_{k<d}.$$  
Let $\mr{Quo}^\diamond(\ell_1^n,\ell_1^d)$ be the collection of such surjections $\sig$.  Given a 1-1 mapping $f:d\to m$ and  $\theta:=(\theta_k)_{k<d}\in \{-1,1\}^d$, let $\ga_{f,\theta}\in \Emb^\perp(\ell_\infty^d,\ell_\infty^m)$ be linearly defined by $\eta_{f,\theta}(u_k):= \theta_k u_{f(k)}$.  Observe that for every $\sig\in \mr{Quo}^\perp(\ell_\infty^d,\ell_\infty^m)$ there are $f$ and $\theta$ such that $\sig \circ \eta_{f,\theta}=\id_{\ell_\infty^d}$, and if in addition $\sig$ is positive, $\theta=(1)$  and consequently $\ga_{f,(1)}\in \Emb^\diamond(\ell_\infty^d,\ell_\infty^m)$. We choose one of those embedding  and we denote its parameters by $(f_\sig,\theta_\sig)$.  

\prue[{\sc Proof of Theorem \ref{kjnuihuihuiuiur}}]
We prove the \arp of finite dimensional $M$-spaces with respect to disjoint preserving in its dual form; that is, for every $d,m,r\in \N$  and every $\vep>0$ there is some $n$ such that every $r$-coloring of $\mr{Quo}^\perp(\ell_1^n,\ell_1^d)$ has an $\vep$-monochromatic set of the form $ \mr{Quo}^\perp(\ell_1^m,\ell_1^d)\circ \sig$ for some $\sig\in \mr{Quo}^\perp(\ell_1^n,\ell_1^m)$; the corresponding \arp  for lattice embeddings is proved similarly, and we leave the details to the reader.

Fix $d,m,r\in \N$ and $\vep>0$. Let  $e:=\lceil 1/\vep\rceil$, and  let   $\De:= \conj{s (l/e) u_k}{ s\in \{-1,1\},\, 0\le l\le t, \, k<d}$ ordered by $\prec$ such that if $l<l'$, then $s (l/e) u_k\prec s' (l'/e) u_{k' }$    Let also $\La:= \De\times E$, where $E=\{({f,\theta})\, : \, f:d \to m \text{ is 1-1 and }\theta\in \{-1,1\}^d\}$. We linearly order $E$ arbitrarily and then $\La$ by the corresponding  lexicographic ordering.   Let $n$, ordered canonically, be the result of applying the dual Ramsey Theorem for the parameters $\De$, $\La$ and  number of colors $r$. We claim that $n$ works. For suppose that  $c:\mr{Quo}^\perp(\ell_1^n,\ell_1^d)\to r$.  Let $\Phi: \epi(n,\De)\to \mr{Quo}^\perp(\ell_1^n,\ell_1^d)$ be linearly defined for $f\in \epi(n,\De)$ and $j<n$ by $\Phi(f)(u_j):=s \cdot a \cdot u_k$ if $f(j)=(a,s,k)\in A\times \{-1,1\}\times d$. Let $\widehat{c}:=c\circ \Phi$. By the ``Ramsey property'' of the number $n$, there is some $g\in \epi(n,\La)$ such that $\epi(\La,\De)\circ g $ is $\widehat{c}$-monochromatic with constant value $\bar r  <r$.   Let $\sig\in \mr{Quo}^\perp(\ell_1^n,\ell_1^m)$ be linearly defined for $j<n$ by $\sig(u_j):= s\cdot a \cdot \ga_{f,\theta}(u_k)$, if $g(j)=((a,s,k), (f,\theta))\in \La$. We see that $\mr{Quo}(\ell_1^m,\ell_1^d)\circ \sig\con (c^{-1}(\bar r))_\vep$ by establishing the following.
\clam	
For every $\tau\in \mr{Quo}(\ell_1^m,\ell_1^d)$ there is some rigid surjection $h\in \epi(\La,\De)$ such that $\nrm{\tau\circ \sig - \Phi(h\circ g)}\le \vep$. 
\fclam
\prucl
Fix such $\tau$, and $\la:=(s (l/e) u_k,({f,\theta}))\in \La$. If  $\tau(l \cdot u_{f(k)})=0$, then we declare $h(\la)=0$. Suppose that   $\tau(l \cdot u_{f(k)})\neq 0$. There are two cases to consider:
\begin{enumerate}[a)]
\item $(f,\theta)= (f_\tau,\theta_\tau)$.  We let $h(\la)= s(l/e)u_k$.
\item $(f,\theta)\neq  (f_\tau,\theta_\tau)$.   Write $\tau ( \ga_{f,\theta}(s(l/e) u_k))= b u_i\neq 0$, for some $b\in [-1,1]$ and $i<d$.  Let $0\le l\le e$ and $c\in \{-1,1\}$ be such that $ | b-  c l /e |\le \vep$ and be such that $j/e<|b|$, and we set $h(\la):= c (l/e) u_i$

\end{enumerate}  
We show   that $h$ is a rigid surjection by proving  that $\min_{\La} h^{-1}( 0)= (0, \min E)$, and  $\min_{\La} h^{-1}( s (l/e) u_k)= (s(l/e)u_k, ({f_\tau,\theta_\tau}))$.  The fact that $\min_{\La} h^{-1}( 0)= (0,\min E)$ is trivial; as for the other equality, set $\la:= (s (l/e)u_k, ({f_{\tau},\theta_{\tau}}))$.  First of all, $l\neq 0$, and  $\tau(s (l/e) u_{f_\tau(k)})=a \tau ( \ga_{f_\tau,\theta_\tau}(u_k))=a u_k$, where $a= t \theta_k (l/e)$ with $\theta_k$ being the $k^\mr{th}$-coordinate of $\theta_\tau$. So, in particular, $\tau(l u_{f_\tau(k)})\neq 0$, and consequently, by definition, $h( s (l/e) u_k ,({f_{\tau},\theta_\tau}))= s (l/e) u_k$. On the other hand, if $h( \la')=s (l/e)u_k$ with  $\la\neq \la'=(s' (l'/e)u_{k'},({f,\theta}))$, then necessarily $s' (l'/e)u_{k'}\neq s(l/e) u_k$, and consequently we are in case b) of the definition of $h$, so $l/e <\nrm{\tau ( \ga_{f',\theta'}(s'(l'/e) u_{k')})}\le \nrm{s'(l'/e)u_{k'}}= l'/e$, because $\tau$ is a contraction and $\ga_{f',\theta'}$ is an embedding. Hence, by the ordering on $\De$, $s(l/e) u_k \prec s'(l'/e) u_{k'}$ and consequently, $\la<\la'$. 

Finally, let us see that $\nrm{\tau\circ \sig - \Phi(h\circ g)}\le \vep$, that is, $\nrm{\tau(\sig(u_j))- \Phi(h\circ g)(u_j)}\le \vep$ for every $j<n$, because we consider operators from $\ell_1^n$ onto $\ell_1^d$. So, fix $j<n$, and set $\la=(s(l/e)u_k, ({f,\theta}))=g(j)$.  By definition, $\sig(u_j)= \ga_{f,\theta}(s (l/e) u_k)$, while  $\Phi(h\circ g)(u_j) = h(\la)$.  Suppose first that $\tau(\sig(u_j))=0$; then by definition of $h$, $h(\la)=0$, hence $\tau(\sig(u_j))=\Phi(h\circ g)(u_j)=0$. Suppose now that  $\tau(\sig(u_j))\neq 0$. Now suppose that $(f,\theta)=(f_\tau,\theta_\tau)$. It follows that $\tau(\sig(u_j))=\tau\circ   \ga_{f,\theta}(s (l/e) u_k)=s (l/e) u_k$, while $\Phi(h\circ g)(u_j) = h(\la)=s (l/e) u_k=\tau(\sig(u_j))$. Finally, suppose that  $(f,\theta)\neq (f_\tau,\theta_\tau)$. By the choice of $h(\la)$, $\nrm{ \tau(\sig(u_j)) -h(\la)}\le \vep$.
\fprucl
\fprue

\end{document}